\newcommand{\rel}{\ell}
\newcommand{\any}{{\diamond}}
\newcommand{\s}{{\mathsf s}}
\newcommand{\Gf}{{\Gamma}}
\newcommand{\T}{{\mathcal T}}
\newcommand{\VV}{{\mathcal V}}
\newcommand{\Mat}{\mathsf{Mat}}
\newcommand{\Ch}{\ensuremath {\operatorname{Cb}}}
\newcommand{\Conj}{\ensuremath {\operatorname{Conj}}}
\newcommand{\Env}{\ensuremath {\operatorname{Env}}}
\newcommand{\Alg}{\ensuremath {\operatorname{Alg}}}
\newcommand{\W}{\ensuremath {W}}
\newcommand{\SL}{\mathsf{SL}}
\newcommand{\GL}{\mathsf{GL}}
\newcommand{\GB}{\mathsf{B}}
\newcommand{\GH}{\mathsf{H}}
\newcommand{\GA}{\mathsf{A}}
\newcommand{\GN}{\mathsf{N}}
\newcommand{\GK}{\mathsf{K}}
\newcommand{\Grc}{{\mathsf{G}^*}}
\newcommand{\Gd}{{\mathsf{G}}}
 \newcommand{\DQ}{{{\mathscr D}_\Qm}}
  \newcommand{\Dk}{{{\mathscr D}_{\kk}}}
    \newcommand{\FQ}{F_{\Dk}}
\newcommand{\tcoev}{\stackrel{\longrightarrow}{\operatorname{coev}}}
\newcommand{\tev}{\stackrel{\longrightarrow}{\operatorname{ev}}}
\newcommand{\ev}{\stackrel{\longleftarrow}{\operatorname{ev}}}
\newcommand{\coev}{\stackrel{\longleftarrow}{\operatorname{coev}}}
\newcommand{\rcoev}{\stackrel{\longrightarrow}{\operatorname{coev}}}
\newcommand{\rev}{\stackrel{\longrightarrow}{\operatorname{ev}}}
\newcommand{\lev}{\stackrel{\longleftarrow}{\operatorname{ev}}}
\newcommand{\lcoev}{\stackrel{\longleftarrow}{\operatorname{coev}}}
\newcommand{\brk}[1]{{\left\langle{#1}\right\rangle}}
\newcommand{\ve}{\varepsilon}
\newcommand{\vp}{\varphi}
\newcommand{\e}{{\operatorname{e}}}
\newcommand{\slt}{{\mathfrak{sl}(2)}}
\newcommand{\Uq}{{U_q\slt}}
\newcommand{\unit}{\mathds{1}}
\newcommand{\cat}{\mathscr{C}}
\newcommand{\RR}{\mathscr{R}}
\newcommand{\Tang}{{{\operatorname{\mathcal{T}}}}}
\newcommand{\Diag}{{{\operatorname{\mathcal{D}}}}}
\newcommand{\Qm}{{{\mathcal{Q}}}}
\newcommand{\QX}{{{\operatorname{\mathsf{Q}}}}}
\newcommand{\Link}{{{\operatorname{\mathsf{L}}}}}
\newcommand{\D}{\mathscr{D}}
\newcommand{\Id}{\operatorname{Id}}
\newcommand{\bp}[1]{{\left(#1\right)}}
\newcommand{\qn}[1]{{\left\{#1\right\}}}
\newcommand{\qd}{{\mathsf d}}
\newcommand{\End}{\operatorname{End}}
\newcommand{\Hom}{\operatorname{Hom}}
\newcommand{\Aut}{\operatorname{Aut}}
\newcommand{\C}{\ensuremath{\mathbb{C}} }
\newcommand{\Z}{\ensuremath{\mathbb{Z}} }
\newcommand{\R}{\ensuremath{\mathbb{R}} }
\newcommand{\N}{\ensuremath{\mathbb{N}} }
\newcommand{\Q}{\ensuremath{\mathbb{Q}} }
\newcommand{\wt}{\widetilde}
\newcommand{\wh}{\widehat}
\newcommand{\wb}{\overline}
\newcommand{\ms}[1]{\mbox{\tiny$#1$}}
\newcommand{\oo}{\infty}
\newcommand{\et}{{\quad\text{and}\quad}}
\newcommand{\ets}{{\,\text{,}\quad}}
\newcommand{\gen}{\mathcal{G}}
\newcommand{\ideal}{I}
\newcommand{\Proj}{{{\operatorname{\mathsf{Proj}}}}}
\newcommand{\kt}{$\Bbbk$\nobreakdash-\hspace{0pt}}
\newcommand{\kk}{\Bbbk}
\renewcommand{\s}{\sigma}
\newcommand{\gp}{{\rightharpoonup}}
\newcommand{\gm}{{\rightharpoondown}}
\newcommand{\mt}{\mathsf t}
\newcommand{\mat}[2]{{\small
    \left(\begin{array}{cc}
      1&#2\\0&#1
    \end{array}\right)}}
\newcommand{\epsh}[2]
         {\begin{array}{c} \hspace{-1.3mm}
        \raisebox{-4pt}{\epsfig{figure=#1,height=#2}}
        \hspace{-1.9mm}\end{array}}
\newtheorem{theorem}{Theorem}[section]
\newtheorem{proposition}[theorem]{Proposition}
\newtheorem{lemma}[theorem]{Lemma}
\newtheorem{corollary}[theorem]{Corollary}
\newcounter{IntroCounter}
\theoremstyle{definition}
\newtheorem{example}[theorem]{Example}
\newtheorem{conclusion}[theorem]{Conclusion} 
\newtheorem{definition}[theorem]{Definition}
\theoremstyle{remark}
\newtheorem{remark}[theorem]{Remark}
\newcounter{exo} \newcounter{numexercice}
\renewcommand{\theexo}{\arabic{exo}}
\begin{document}
\title[Holonomy braidings, biquandles and quantum invariants]{Holonomy braidings, biquandles and quantum invariants of links with $\SL_2(\C)$ flat connections}
\author[C. Blanchet]{Christian Blanchet}
\address{Univ Paris Diderot,  IMJ-PRG, UMR 7586 CNRS,  F-75013, Paris, France} 
\email{christian.blanchet@imj-prg.fr}

\author[N. Geer]{Nathan Geer}
\address{Mathematics \& Statistics\\
  Utah State University \\
  Logan, Utah 84322, USA}
\thanks{This work is supported by the NSF FRG Collaborative Research Grant DMS-1664387. The research of Blanchet was supported by 
 French ANR project ModGroup ANR-11-BS01-0020.  
Research of Geer was partially supported by 
NSF grant  DMS-1452093.  Geer and Reshetikhin would like to thank 
Institut Math\'ematique de Jussieu, Paris, France for its generous hospitality. 
  All the authors would like to thank the Erwin Schr\"odinger Institute for Mathematical Physics in Vienna for support during a stay in the Spring of 2014.
 }\
\email{nathan.geer@gmail.com}

\author[B. Patureau-Mirand]{Bertrand Patureau-Mirand}
\address{UMR 6205, LMBA, universit\'e de Bretagne-Sud, universit\'e
  europ\'eenne de Bretagne, BP 573, 56017 Vannes, France }
\email{bertrand.patureau@univ-ubs.fr}
  \author{Nicolai Reshetikhin}
\address{Department of Mathematics\\
University of California, Berkeley\\
970 Evans Hall \#3840\\
Berkeley, CA 94720-3840, USA\\
and\\
KDV Institute for Mathematics\\
Universiteit van Amsterdam, \\
Plantage Muidergracht 24\\
1018 TV, Amsterdam, The Netherlands}
\email{reshetik@math.berkeley.edu}

\begin{abstract}
R.\ Kashaev and N.\ Reshetikhin introduced the notion of \emph{holonomy braiding} extending V. Turaev's homotopy braiding to describe the behavior of cyclic representations of the unrestricted quantum group $\Uq$ at root of unity.  In this paper, using quandles and biquandles we develop a general theory for Reshetikhin-Turaev ribbon type functor for tangles with quandle representations.  This theory applies to the unrestricted quantum group $\Uq$ and produces an invariant of links with a gauge class of quandle representations.
\end{abstract}

\maketitle
\setcounter{tocdepth}{3}
\tableofcontents

\section{Introduction}

Following Jones and Reshetikhin-Turaev, it has been known for three decades that invariants of links and tangles can be obtained from quantum groups.
The relevant structure is that of a ribbon category and categories of modules over 
the restricted or unrolled  
 quantum groups are endowed with such structure, see for example \cite{Tu}.  This paper focus on defining tangle invariants from categories which are not ribbon but have a more general structure based on the so called \emph{holonomy braiding.}

  An important and well-studied
example of a category that is not ribbon comes from the non-restricted quantum group associated to a simple Lie algebra. 
 Such categories have been considered by a number of people including De~Concini, Kac,  Procesi, Reshetikhin, Rosso and others (see for example \cite{DK, DKP, DKP2, DPRR} and references within).  These categories have modules with vanishing quantum dimensions.  To define suitable invariants of tangles coming from such categories one needs additional algebraic and topological structures.   
  
In \cite{KR}, Kashaev and Reshetikhin generalize the Reshetikhin-Turaev link invariant construction to $G$-tangles:  tangles with a flat connection in a principal $G$-bundle over the complement of the tangle.  
     Their construction is based on the notion of a holonomy braiding: for certain pairs of objects $(V,W)$ in  a category $\cat$ there exists a \emph{holonomy braiding} $V\otimes W\to W^g \otimes V^g$ where $V^g$ and $ W^g$ are objects of $\cat$ (determined by $V, W$ and $G$) that may not be isomorphic to $V$ and $W$, respectively.  This map is meant to replace the braiding and can be represented in a diagram by: \; $ \put(-8,-6){{\footnotesize $V$}} \put(14,-6){{\footnotesize $W$}} \put(-9,6){{\footnotesize $W^g$}} \put(14,6){{\footnotesize $V^g$}}   \epsh{figcross}{12pt}$\;\;\;\;.  Thus, they consider diagrams where the coloring of the upper and lower strand of a crossing can both change.   Such transformations are accounted for by connections in the $G$-bundle and can be interpreted geometrically in terms of holonomies of the $G$-connection along certain paths (see \cite{KR}).  

The theory of \cite{KR} addresses the absence of the braiding but does not lead to a $G$-link invariant in the case of the non-restricted quantum group because of the following observations: (1) vanishing quantum dimensions force the original construction of \cite{KR} to be zero for all links and so only leads to invariants of open tangles and (2) the braiding is not defined on the whole category but instead on generic pairs of simple modules.   The theory of modified traces given in \cite{GPV, GP4} can be used to overcome the first observation.  As we will see, the second observation can be tackled with the use of gauge transformations.   

To describe coloring of tangles and their diagrams we use (bi)quandles.  
A (bi)quandle 
is an algebraic structure whose definition follows the Reidemeister
moves between classical knot diagrams.  The fundamental quandle of a
knot is an invariant which is stronger that the fundamental
group. Indeed, it was shown independently by Joyce \cite{Joyce} and
Matveev \cite{Ma} that the fundamental quandle classifies knots, while
the fundamental group does only for prime knots. The fundamental
quandle has both intrinsic or diagram based definition.  A biquandle is
a more general structure which can be used to study knots and links
via their diagrams.  It was shown by Lebed and Vendramin \cite{LV} 
that a biquandle produce a quandle structure in such a way 
that representations of the fundamental quandle can be described
combinatorially with the biquandle. In this situation the biquandle
will be called a factorization of its associated quandle.

 We now describe the main results of this paper. Let $\QX$
be a quandle, then a tangle together with a representation of its
fundamental quandle to $\QX$ will be called a $\QX$-tangle. We define
the notion of representation of a biquandle, which includes a
holonomic braiding.  Our first main theorem \ref{T:FunctorF} states the
existence of a ribbon functor on the category of $\QX$-tangle,
provided we have a representation of a biquandle factorization of
$\QX$.  This can be applied to the {\em semi-cyclic} version of
quantum $\slt$. For non restricted quantum $\slt$, formulas for the
biquandle involve rational functions on central characters which are
only generically defined.  We extend our theory to this setting; with
an appropriate notion of a representation of a generically defined
biquandle and using gauge action, we obtain similar ribbon functor.
Our second main theorem \ref{T:DefF'Generic} is about invariants of
links.  The above functor vanishes on $\QX$-links. We show that a
normalization using {\em modified dimension} produces an invariant
of gauge classes of $\QX$-links.  We apply the generically defined
theory to {\em cyclic representations} of non restricted quantum 
$\slt$: we compute the biquandle,
describe the associated quandle $\QX$ in relation with the conjugation
quandle of $\SL_2(\C)$ and show that we obtain an invariant of gauge
classes of $\QX$-links which is a complex number modulo a root of
unity.
 
 The existence of such an invariant is the first step in constructing a type of Homotopy Quantum Field Theory \cite{Tu2010, TuV2014} in a new context modeled on the representation theory of the non-restricted quantum group.  In particular, we hope the invariants given in the main example of this paper extend to invariants of 3-manifolds endowed with a flat $\SL(2,\C)$ connection.  Such invariants  ---and more generally invariants with flat $G_\C$ connection, where $G_\C$ is a complex simple Lie group ---are highly amenable to a physical formulation.  Indeed, quantum invariants of this type are prime candidates for the analytic continuation of Chern-Simons theory with compact gauge group $G$ \cite{Gu,W-anal}. 
 
The paper is organized as follows.  In Section \ref{S:Quandles} we
describe the intrinsic topological notion of our invariants in terms
of the fundamental quandle.  In Sections \ref{S:Biquandles} and \ref{S:BiquandleBraidings} we
formulate the algebraic setting underlying of our paper by considering
biquandles and their representations in a pivotal category.  In
Section \ref{S:GenericBiqu} we generalize the notions of Sections
\ref{S:Quandles}--\ref{S:BiquandleBraidings} to the setting of generically
defined biquandles in pivotal categories where the holonomic braiding
is not necessarily defined everywhere.  Section \ref{S:MainExCycle}
contains our main example coming from the cyclic modules of the
unrestricted quantum group of $\slt$.  The Appendices contain some
proofs.
\section{Quandles}\label{S:Quandles}
In this section we will recall some basic definitions and facts about
quandle \cite{Joyce,EN_book}.   
Quandles turns out to be the correct topological setting for this paper; they allow us to define intrinsic topological objects without working with diagrams. 
We consider links and tangles
with a representation of their fundamental quandle.
The motivating example is the conjugacy quandle associated to
$\SL_2(\C)$.  Using representation of the fundamental quandle over
this quandle, we will recover tangles with a flat connection in a
principal $\SL_2(\C)$-bundle over the complement of the tangle.

\subsection{Basic definitions}
\begin{definition} A \emph{quandle} is a set $\QX$ with a binary operation $(a,b)\to a \rhd b$  such that
\begin{enumerate}
\item \label{I:Qdist} for all $a,b,c\in \QX$, $a\rhd (b\rhd c)=(a\rhd b)\rhd(a\rhd c)$,
\item \label{I:Qmap!} for all $a,b\in \QX$ there is a unique $c\in \QX$ such that  $a=b\rhd c$,
 \item for any $a\in \QX$, $a\rhd a=a$.
\end{enumerate}
\end{definition}
A function $f:\QX \to \QX'$ between quandles is a \emph{homomorphism}
if $ f(a \rhd b)=f(a) \rhd f(b)$ for all $a,b\in \QX$.  For each
$a\in \QX$, Axioms \eqref{I:Qdist} and \eqref{I:Qmap!} imply that the
map $ (a\rhd \any):\QX\to \QX, $ given by $x\mapsto a\rhd x$ is
bijective homomorphism, where the symbol $\any$ is used to denote a
variable.  A \emph{subquandle} of a quandle $(\QX, \rhd)$ is a subset
$\mathsf{P}$ of $\QX$ such that the restriction of $\rhd$ to
$\mathsf{P}$ defines a quandle structure.

\begin{example}[The conjugacy quandle of a group]\label{E:ConjQuandle}
  Let $\Gd$ be a group.  The binary operation $g \rhd h=g^{-1}hg$
  defines a quandle structure on $\Gd$,  denote this quandle as
  $\Conj(\Gd)$.  Any subset of $\Gd$ that is closed under such conjugation
  is subquandle of this quandle.

  We are particularly interested in the case when $\Gd$ is equal to
  $\SL_2(\C)$: the set of $2\times 2$ matrices over $\C$ with
  determinant 1.  Here the quandle structure of $\Conj(\SL_2(\C))$ is
  given by conjugation of matrices:
  $$M\rhd M'=M^{-1}M'M.$$
  The quandle $\SL_2(\C)$ has a self-evident subquandle: 
  Its subset of non parabolic elements, i.e.\ matrices with traces not
  equal to $\pm2$.
\end{example}

\subsection{The fundamental quandle}
The fundamental quandle of a knot was first defined by Joyce \cite{Jo}
and Matveev \cite{Ma}.  In this subsection we give the definition of
the fundamental quandle of a tangle.

Here we use the contravariant concatenation of paths in which
$\gamma.\delta$ is defined when $\gamma(1)=\delta(0)$.  

A \emph{standard tangle} is an oriented framed tangle
${\Gf}\subset(0,+\oo)\times\R\times[0,1]$
 such that
$\partial{\Gf}={\Gf}\cap\R^2\times\{0,1\}=
\bp{\{1,\ldots,p\}\times\{0\}\times\{0\}}\cup\bp{\{1,\ldots,q\}\times\{0\}\times\{1\}}$
and ${\Gf}$ intersect $\R^2\times\{0,1\}$ transversally.  Let ${\Gf}$ be a
standard tangle, fix a base point $*$ in $\{0\}\times\R\times[0,1]$
and let $M_{\Gf}=\left(\R^2\times[0,1]\right)\setminus
{\Gf}$.  
Let $Q({\Gf},*)$ be the set of homotopy classes of continuous paths
$\gamma:[0,1)\to M_{\Gf}$ such that $\gamma(0)=*$ and $\lim_{t\to1}\gamma(t)$
exist and is equal to some point of the tangle $ {\Gf}$.  Loosely speaking,
$Q({\Gf},*)$ is the set of homotopy classes of paths from $*$ to points close
to ${\Gf}$.

To define the quandle structure we consider the \emph{augmentation} map
\begin{equation}
  \label{eq:hat}
  \wh{{\;\;}} :Q({\Gf},*)\to\pi_1(M_{\Gf},*) \text{ given by } 
a=[\gamma]\mapsto \wh a=[\gamma_\ve .m.\gamma_\ve^{-1}]
\end{equation}
where $m$ is a positive meridian of ${\Gf}$ around
$\lim_{t\to1}\gamma(t)$ and $\gamma_\ve=\gamma_{|[0,1-\ve]}$ for some
small $\ve$.  There is a left action of $\pi_1(M_{\Gf},*)$ on
$Q({\Gf},*)$ given by concatenation of the paths.
  \begin{lemma}
  The set $Q({\Gf},*)$ has a quandle structure defined by  
  the augmentation map and concatenation of paths: $$a\rhd b=\wh a^{-1}.b\,.$$
 \end{lemma}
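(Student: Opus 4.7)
I will verify the three quandle axioms for the operation $a\rhd b=\wh{a}^{-1}.b$, keeping track of the fact that the action of $\pi_1(M_{\Gf},*)$ on $Q(\Gf,*)$ by concatenation is well-defined (so $a\rhd b$ only depends on the classes $a,b$, since $\wh{a}$ depends only on $a$). Fix representative paths $\alpha,\beta,\gamma$ for $a,b,c$ with limits on $\Gf$, and write $\alpha_\ve,\beta_\ve,\gamma_\ve$ for their restrictions to $[0,1-\ve]$.

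\textbf{Axiom~(3), $a\rhd a=a$.} A direct computation gives
\[
a\rhd a=\wh{a}^{-1}.a=[\alpha_\ve m_\alpha^{-1}\alpha_\ve^{-1}\alpha]=[\alpha_\ve m_\alpha^{-1}\alpha_{|[1-\ve,1)}],
\]
since $\alpha_\ve^{-1}\alpha\simeq \alpha_{|[1-\ve,1)}$. The resulting path follows $\alpha$ up to time $1-\ve$, then winds once around the tubular neighborhood of $\Gf$ in the negative direction, and then continues to the limit point along the tail of $\alpha$. Sliding the loop $m_\alpha^{-1}$ along the tail to the very end, we obtain a path homotopic (through paths ending near $\Gf$) to $\alpha$ followed by a small meridional wrapping at the endpoint. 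This wrapping is absorbed by the freedom allowed in the endpoint near $\Gf$, hence this path is equivalent to $\alpha$ in $Q(\Gf,*)$. Thus $a\rhd a=a$.

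\textbf{Axiom~(2), invertibility.} Since the left action of $\pi_1(M_\Gf,*)$ on $Q(\Gf,*)$ is a group action, the equation $a=\wh{b}^{-1}.c$ has the unique solution $c=\wh{b}.a$, and this class lies in $Q(\Gf,*)$. So there is a unique $c$ with $a=b\rhd c$.

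\textbf{Axiom~(1), self-distributivity.} The crucial intermediate identity is
\[
\wh{a\rhd b}=\wh{a}^{-1}\,\wh{b}\,\wh{a}\qquad\text{in }\pi_1(M_\Gf,*).
\]
To see this, represent $a\rhd b$ by the path $\eta=\alpha_\ve m_\alpha^{-1}\alpha_\ve^{-1}\beta$. Its limit on $\Gf$ is the same as that of $\beta$, so the positive meridian at the endpoint is $m_\beta$. Applying the definition~\eqref{eq:hat} of the augmentation to $\eta$ and using $\eta_\ve\simeq \alpha_\ve m_\alpha^{-1}\alpha_\ve^{-1}\beta_\ve$ yields
\[
\wh{a\rhd b}=[\alpha_\ve m_\alpha^{-1}\alpha_\ve^{-1}\beta_\ve\,m_\beta\,\beta_\ve^{-1}\alpha_\ve m_\alpha\alpha_\ve^{-1}]=\wh{a}^{-1}\wh{b}\wh{a}.
\]
Given this, distributivity is a straightforward computation in the $\pi_1$-action:
\[
a\rhd(b\rhd c)=\wh{a}^{-1}.\bp{\wh{b}^{-1}.c}=(\wh{a}^{-1}\wh{b}^{-1}).c,
\]
while
\[
(a\rhd b)\rhd(a\rhd c)=\wh{a\rhd b}^{-1}.\bp{\wh{a}^{-1}.c}=\bp{\wh{a}^{-1}\wh{b}^{-1}\wh{a}\cdot\wh{a}^{-1}}.c=(\wh{a}^{-1}\wh{b}^{-1}).c,
\]
so both expressions agree.

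\textbf{Main obstacle.} The step requiring the most care is the endpoint argument for axiom~(3): one must justify that a small meridional loop located at the free endpoint of a path in $Q(\Gf,*)$ represents the trivial element, which amounts to working with the proper notion of homotopy (endpoints allowed to vary on, say, $\partial N(\Gf)$). Once this is formalized, the rest is essentially a bookkeeping exercise about concatenation and meridians, the identity $\wh{a\rhd b}=\wh{a}^{-1}\wh{b}\wh{a}$ being the algebraic shadow of the geometric fact that $\wh{\;}$ intertwines the quandle structure on $Q(\Gf,*)$ with the conjugation quandle on $\pi_1(M_\Gf,*)$.
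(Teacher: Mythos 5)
Your proof is correct and follows essentially the same route as the paper: the paper's proof consists exactly of the key identity $\wh{a\rhd b}=\wh a^{-1}.\wh b.\wh a$ followed by the remark that the quandle axioms then follow, which is what you verify. You simply fill in the details the paper leaves implicit, including the endpoint-absorption argument for $a\rhd a=a$ (the meridian loop being contractible once the endpoint is free to move near $\Gf$) and the $\pi_1$-action computations for the other two axioms, all of which are sound.
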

 \begin{proof}
   A direct calculation using the definition shows
   $\wh {a\rhd b}=\wh a^{-1}.\wh b.\wh a$.  Using this it is easy to see that
   $\rhd$ defines a quandle structure.
 \end{proof}
 The pair $(Q({\Gf},*),\rhd)$ is called the \emph{fundamental
   quandle} of ${\Gf}$.
\begin{figure}
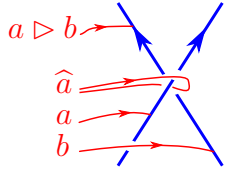

 \centering
 {\color{red}
   \[\begin{array}[t]{c} \epsh{fig5}{12ex}
     \put(-67,17){$a\rhd b$}\put(-52,0){$\wh
       a$} \put(-52,-10){$a$}\put(-52,-20){$b$}
     \end{array}\]
   \hspace{5ex}\ }
\caption{Quandle structure of the fundamental quandle.  There are many
  choices of convention made in this representation: base point on the
  left, ``contravariant'' composition in the fundamental groupoid,
  positive/negative meridian, upward orientation and the choice of
  what is a positive crossing.}
  \label{fig:FundQuandleDiag}
\end{figure}

 \subsection{$\QX$-tangles}
\begin{definition}
  Let $(\QX,\rhd)$ be a quandle.  A representation of the fundamental quandle, or \emph{$\QX$-tangle} for short, is a  standard 
 tangle ${\Gf}$ together with a quandle morphism
  $\rho:Q({\Gf},*)\to (\QX,\rhd)$. 
\end{definition}

To gain some feeling about quandles we have the following remark:

\begin{remark} \label{R:QtanGtan}
  (A) \textbf{(The fundamental quandle inside the fundamental group)}
  Let $\Gf$ be a link then we can consider the conjugacy quandle of
  the fundamental group $\Conj( \pi_1(M_{\Gf},*)) $.  The augmentation
  map $a\mapsto \wh a$ is a quandle morphism which is injective if and
  only if  a $\Gf$ is prime link (the two parallel arcs realizing a
  connected sum produce quandle elements with the same image in the
  fundamental group, see \cite[Theorem 3.5]{Ry}).  In this situation we
  can identify $Q({\Gf},*)\subset \Conj(\pi_1(M_{\Gf},*))$.

(B) \textbf{ ($ \QX$-tangles generalize $\Gd$-tangles)} The associated functor
  $\Conj$ from groups to quandles has a left adjoint $\Env$ that
  associates to a quandle $(\QX,\rhd)$ its envelope: the group freely
  generated by $\QX$ modulo the relations $a\rhd b=a^{-1}ba$. It is
  well known that the envelope of the fundamental quandle is the
  fundamental group, so when $\QX=\Conj(\Gd)$, we have
  $$\rho\in\Hom_{\text{quandle}}(Q({\Gf},*),\Conj(\Gd))
  \cong\Hom_{\text{group}}(\pi_1(M_{\Gf},*),\Gd)$$
  and a $ \QX$-tangle is just a $\Gd$-tangle.  Indeed, if
  $\hat\rho\in\Hom_{\text{group}}(\pi_1(M_{\Gf},*),\Gd)$ then its
  pre-composition with the augmentation map of Equation \eqref{eq:hat}
  is a quandle structure on $\Gamma$.
  In particular, if the
  quandle $\QX$ is a normal sub-quandle of $\Conj(\Gd)$ (that is
  $(\Gd\rhd \QX)\subset \QX$) then a $\QX$-tangle is the same as a
  $\Gd$-tangle $(\Gf,\rho)$ (see \cite{KR}) whose representation
  $\hat\rho:\pi_1(M_{\Gf},*)\to \Gd$ sends the meridians of $\Gf$ into
  $\QX$.
 \end{remark}

\begin{example}
\newcommand{\Hol}{\operatorname{Hol}}
Let $\QX=\Conj(\SL_2(\C))$.  If the complement $M_\Gf$ of a tangle
$\Gf$ is equipped with a
flat $\SL_2(\C)$-bundle trivialized at the base point, the holonomy
map $\Hol:\pi_1(M_\Gf,*)\to\SL_2(\C)$ is equivalent to a quandle
structure $\rho:Q({\Gf},*)\to\QX$.  Here the trivialization at
$*\in M_\Gf$ gives for any loop $\gamma$ in $M$ from $*$ to $*$ a
canonical lift $\wt \gamma$ in the bundle from
$\wt\gamma(0)=I_2\in\SL_2(\C)$ to $\Hol([\gamma]):=\wt\gamma(1)$.
This gives a one to one correspondence between $\QX$-tangles and
diffeomorphism class of such bundles.  Furthermore, changing the
trivialization of the bundle at the base point by a translation by
$x\in\SL_2(\C)$ correspond to changing the quandle map $\rho$ with
$x\rhd\rho$.  Hence gauge classes of $\QX$-tangle (defined in next
section) are in one to one correspondence with diffeomorphism classes
of locally flat $\SL_2(\C)$-bundles over $(M_\Gf,*)$.
\end{example}

Given a set $X$ let $W_X$ be the free monoid generated by pairs $(x,\epsilon)$
where $x\in X$ and $\epsilon\in \{+,-\}$.
If $w=(x_1,\ve_1)\cdots(x_p,\ve_p)\in W_X$, set
$-w=(x_1,-\ve_1)\cdots(x_p,-\ve_p)\in W_X.$ Remark that the set $X$ can be
identified with $\{(x,+):x\in X\}\subset W_X$.

A standard $\QX$-tangle $(\Gf,\rho)$ determines a word in $W_\QX$ which depends
on its bottom boundary: We first fix paths $\gamma_i$ which are above $\Gf$ going from the base point to
the $i$\textsuperscript{th} bottom edge $e_i$, i.e. 
$\gamma_i\subset\{x=0\}\cup\{z=0\text{ and }y<0\}$ and
$\lim_{t\to1}\gamma_i(t)=(0,i,0)$. Then
$$\partial_-\Gf=(\rho(\gamma_1),\ve_1)\cdots(\rho(\gamma_p),\ve_p)\in W_\QX$$
where
$\ve_i$ is a sign which is $+$ if
the orientation of $e_i$ is incoming and $-$ if it is outgoing.  Similarly,
one can define a word $\partial_+\Gf$ associated to the top boundary.

\begin{definition}
  The category $\Tang_\QX$ of \emph{$\QX$-tangles} is defined
  as follows.  An object in $\Tang_\QX$ is an element of $W_\QX$.  A morphism
  $\Gf:w\to w'$ is an isotopy class of a $\QX$-tangle $\Gf$ such that
  $\partial_+ \Gf=w'$ and $\partial_- \Gf=w$.

\end{definition}
As usual, the composition is given by gluing standard tangles along their top
or bottom boundaries.  An argument similar to Van Kampen's theorem ensures that
the structure of two $\QX$-tangles can be glued (we joint the base points by a path in
the plane $x=0$.  Similarly the disjoint union of $\Gf_1$ and $\Gf_2$ gives
the monoidal structure (we joint the base point of $\Gf_2$ to the base point
of $\Gf_1$ by a path above $\Gf_1$ so that
$\partial_\pm(\Gf_1\sqcup \Gf_2)=\partial_\pm(\Gf_1).\partial_\pm(\Gf_2)\in
W_\QX$).

The regular planar projection in the $(Oy)$ direction of a $\QX$-tangle $\Gf$
is a standard planar diagram $D$ (a planar oriented graph embedded in
$(0,+\oo)\times[0,1]$ with univalent vertices on the boundaries
$(0,+\oo)\times\{0,1\}$ and four-valent vertices with the overcrossing
information).  The
quandle morphism $\rho:Q({\Gf},*)\to (\QX,\rhd)$
gives a coloring of each
edge $e$ of $D$ by the element $\rho(\gamma_e)$ where $\gamma_e$ is a path
above $\Gf$ from the base point to a point of $\Gf$ which project on the edge.
At a crossing, the four pathes $\gamma_e$ are related in $Q(\Gf,x)$ by the
relation of Figure \ref{fig:FundQuandleDiag}.  Hence the $\QX$-colors of the
edges of $D$ form a \emph{quandle coloring}, 
that is the colors in a neighborhood of a crossing satisfy the
relationship given in Figure \ref{fig:Q-col}.  Remark that the two
edge forming the overcrossing strand have the same color.
\begin{figure}
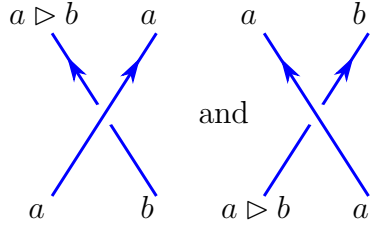

  \centering
  $$
  \epsh{fig1}{12ex}\put(-40,-30){$a$}\put(-5,-30){$b$}
  \put(-46,31){$a\rhd b$}\put(-5,31){$a$}\quad\text{ and
  }\epsh{fig2}{12ex}\put(-46,-30){$a\rhd b$}\put(-5,-30){$a$}
  \put(-40,31){$a$}\put(-5,31){$b$}$$
  \caption{$\QX$-coloring around a crossing}
  \label{fig:Q-col}
\end{figure}
Reciprocally, the Wirtinger-like presentation of $Q(\Gf,*)$ implies
that a quandle morphism $\rho:Q({\Gf},*)\to (\QX,\rhd)$ is uniquely
determined by the quandle $\QX$-coloring of a regular projection of
$\Gf$.  One easily see that the elementary diagrams involved in
Reidemeister moves (see a generating set of these moves in Figure
\ref{fig:RM}) have their coloring uniquely determined by their
boundaries and this lead to the notion of $\QX$-colored Reidemeister
move for $\QX$-colored diagrams.  
\begin{figure}
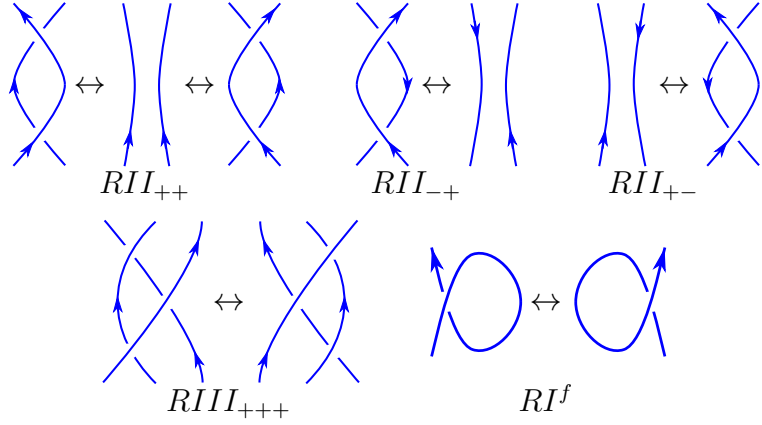

  \centering
  $$
  \begin{array}{ccc}
    \epsh{fig21}{12ex}\leftrightarrow\epsh{fig31}{12ex}
    \leftrightarrow\epsh{fig22}{12ex}&\hspace{3ex}
    \epsh{fig23}{12ex}\leftrightarrow\epsh{fig26}{12ex}&\hspace{3ex}
    \epsh{fig25}{12ex}\leftrightarrow\epsh{fig24}{12ex}\\
    RII_{++}&RII_{-+}&RII_{+-}
  \end{array}
  $$
  $$
  \begin{array}{cc}
    \epsh{fig27}{12ex}\leftrightarrow\epsh{fig28}{12ex}&\hspace{3ex}
    \epsh{fig29}{8ex}\leftrightarrow\epsh{fig30}{8ex}\\
    RIII_{+++}&\hspace{3ex}RI^f
  \end{array}
  $$

\caption{{Oriented framed Reidemeister moves.  We call $RII_{++}$ and $RIII_{+++}$ \emph{positive} and $RII_{-+}$ and $RII_{+-}$ \emph{negative}.}}
  \label{fig:RM}
\end{figure}

We define a category $\Diag_{\QX}$ of planar diagrams up to planar isotopy:
The objects of $\Diag_{\QX}$ are the same as those of $\Tang_\QX$ and
morphisms of $\Diag_{\QX}$ are formed by $\QX$-colored diagram.
Both categories $\Diag_{\QX}$ and $\Tang_\QX$ are pivotal with duality given by
standard cup and cap like diagrams and tangle.  In particular, the dual of
$w=(x_1,\ve_1)\cdots(x_p,\ve_p)\in W_X$ is $w^*=(x_p,-\ve_p)\cdots(x_1,-\ve_1)$.
\begin{theorem}\label{T:QD}
  The natural surjective pivotal functor $\Diag_{\QX}\to\Tang_\QX$
  induces bijections
  $\Diag_{\QX}/\equiv\quad\stackrel\cong\longrightarrow\quad\Tang_\QX$
  where $\equiv$ is the equivalence relation on diagrams generated by
  colored Reidemeister moves.  
\end{theorem}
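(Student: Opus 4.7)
The plan is to reduce everything to the classical Reidemeister theorem for oriented framed tangles together with the local fact recalled just before the statement: for each elementary Reidemeister move, the $\QX$-coloring of the interior of the disk in which the move is performed is uniquely determined by the colors on its boundary. Granting this local uniqueness, I expect the rest of the argument to be formal.

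For well-definedness and surjectivity of the map $\Diag_\QX/{\equiv}\to\Tang_\QX$ I will exploit the Wirtinger-like presentation of $Q(\Gf,*)$ already mentioned in the text: given a projection $D$ of $\Gf$, the arcs of $D$ generate $Q(\Gf,*)$ and the relations at each crossing are exactly those drawn in Figure~\ref{fig:Q-col}. Consequently $\QX$-colorings of $D$ correspond bijectively to quandle morphisms $Q(\Gf,*)\to\QX$. This gives both directions at once: starting from a $\QX$-colored diagram I read off the associated $\QX$-tangle, and conversely a $\QX$-tangle $(\Gf,\rho)$ projects to a diagram $D$ which inherits a $\QX$-coloring by $e\mapsto\rho([\gamma_e])$.

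For injectivity I would take two $\QX$-colored diagrams $(D_1,c_1)$ and $(D_2,c_2)$ mapping to the same $\QX$-tangle $(\Gf,\rho)$ and apply the classical Reidemeister theorem (for oriented framed tangles) to obtain a sequence of uncolored moves $D_1=D^{(0)}\to\cdots\to D^{(n)}=D_2$, interspersed with planar isotopies. Each intermediate diagram $D^{(i)}$ carries a canonical $\QX$-coloring $c^{(i)}$ pulled back from $\rho$ via the bijection above, with $c^{(0)}=c_1$ and $c^{(n)}=c_2$. Outside the disk $B_i$ in which the $i$\textsuperscript{th} move is performed, the arcs of $D^{(i-1)}$ and $D^{(i)}$ represent the same classes in $Q(\Gf,*)$, so $c^{(i-1)}$ and $c^{(i)}$ agree there and in particular on $\partial B_i$. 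The local uniqueness highlighted at the start of the plan then forces $c^{(i-1)}$ and $c^{(i)}$ to coincide on the interior as well; thus each step is a $\QX$-colored Reidemeister move, which yields $(D_1,c_1)\equiv(D_2,c_2)$.

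The main obstacle is precisely the local uniqueness input: one has to go through each of the moves $RI^f$, $RII_{++}$, $RII_{-+}$, $RII_{+-}$ and $RIII_{+++}$ of Figure~\ref{fig:RM} and verify, using the defining axioms of a quandle (in particular the bijectivity of the map $(a\rhd\any)$ granted by Axioms~\eqref{I:Qdist} and~\eqref{I:Qmap!}), that the coloring of every internal edge is forced by the boundary colors. Beyond this case-by-case verification, a minor technical point is to handle the base point and the chosen paths $\gamma_e$ carefully so that the Wirtinger identification is genuinely functorial under planar isotopy, ensuring that the intermediate diagrams $D^{(i)}$ indeed carry well-defined $\QX$-colorings coming from $\rho$.
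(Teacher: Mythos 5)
Your proposal is correct and follows essentially the same route as the paper: the paper's (one-line) proof likewise invokes the classical framed Reidemeister theorem and upgrades each move to a $\QX$-colored move, relying on the same two facts you isolate — the Wirtinger-type correspondence between $\QX$-colorings of a projection and quandle morphisms $Q(\Gf,*)\to\QX$, and the local uniqueness of colorings of the elementary move diagrams given their boundary colors, both stated in the text just before the theorem. Your write-up merely makes explicit the bookkeeping (intermediate diagrams, agreement outside the move disk) that the paper leaves implicit.
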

\begin{proof}
  This follows from the standard Reidemeister theorem which ensures that any
  isotopy of $\Gf$ translates into a finite sequence of framed Reidemeister
  moves that can be upgraded to $\QX$-colored framed Reidemeister moves
  between $\QX$-colored diagrams.
\end{proof}
\subsection{Gauge action}\label{SS:GaugeActionBiq}
Let $\vp:\QX\to\QX'$ be a quandle map (i.e.
$\vp(a\rhd b)=\vp(a)\rhd\vp(b)$).  Then post composing
a $\QX$-tangle $\rho: Q({\Gf},*)\to (\QX,\rhd)$
  with $\vp$ gives a
$\QX'$-tangle $\vp\circ\rho: Q({\Gf},*)\to (\QX',\rhd)$.
Similarly, changing all colors of a
$\QX$-diagram to their image by $\vp$ produces a $\QX'$-diagram.  These assignments induce functors  which we still denote by $\vp$:
\begin{equation}
  \label{eq:Q-functor}
  \vp:\Tang_{\QX}\to\Tang_{\QX'}\et \vp:\Diag_{\QX}\to\Diag_{\QX'}.
\end{equation}
These functors clearly commute with the functors of Theorem \ref{T:QD}.

In particular, the self distributivity of the quandle operation
implies that for any $a\in\QX$, $a\rhd\any$ is an automorphism of
$\QX$.  So if $\Gf$ is a $\QX$-tangle with
representation
$\rho:Q(\Gf,*)\to\QX$ and $b\in\QX$ then
$b\rhd\rho:\gamma\mapsto b\rhd\rho(\gamma)$ is also a
representation
on $\Gf$ and the action of $b\in\QX$ on a $\QX$-colored diagram $D$ is
just given by changing the color $c$ of every edge of $D$ with
$b\rhd c$.  We call this functorial action a \emph{gauge
  transformation} by $b$.

\section{Biquandles versus quandles}  \label{S:Biquandles}
Here we study a generalization of a quandle called a biquandle (for
more on biquandles see \cite{FJK, FRS93, CJKLS}).  Our motivation for studying
biquandles come from unrestricted quantum groups.  Kashaev and the
last author showed that even though the representation of the unrestricted
quantum group at root of unity is not braided, one can still define an 
outer automorphism $R$-matrix inducing
a holonomic braiding
 between some
simple modules.  We will see that these maps give a canonical
structure of a biquandle on the set of isomorphism classes of simple
projective modules.

As for quandles, biquandles were studied in classical knot theory.  In
particular, the axioms for a quandle are designed to allow colored
Reidemeister moves.  With biquandle coloring, all four colors
corresponding to the edges at a crossing can be different.  The algebraic
structure underlying the topological invariants we define later fit
into this setting.  Thus, biquandles allow us to consider colored
diagrams corresponding to our desired algebraic setting.  The main
point is that quandle colored tangles (topology) correspond to biquandle colored
diagrams of the tangle up to colored Reidemeister moves (algebra).

\subsection{Basic definitions}  
We will use the symbol $\any$ as a shortcut for denoting a variable
(``$f(\any)$'' means ``$x\mapsto f(x)$'') and if a map
$F:X\to Y\times Z$ has values in a cartesian product, for $i=1,2$,
$F_i$ is the $i$\textsuperscript{th} component of $F$.

\begin{definition}\label{D:biquandle} A \emph{biquandle} is a set $X$ with a bijective map
  $B=(B_1,B_2): X\times X\rightarrow X\times X$ which satisfies the following
  axioms: 
  \begin{enumerate}
  \item \label{DI1:biquandle} The map $B$ satisfies the set Yang-Baxter equation
    $$(\Id\times B)\circ(B\times \Id)  \circ(\Id\times B)=
    (B\times \Id) \circ(\Id\times B)\circ(B\times \Id).$$
  \item \label{DI2:biquandle} The map $B$ is sideways invertible: there exists a
    unique bijective map $S:X\times X\rightarrow X\times X$ such that
    $$S(B_1(x,y),x)=(B_2(x,y),y)$$
    for all $x,y\in X.$
  \item \label{DI3:biquandle} The map $S$ induces a bijection $\alpha:X\to X$
    on the diagonal:
    $$S(x,x)=(\alpha(x),\alpha(x))$$ for all $x\in X.$
     \end{enumerate} 
 \end{definition}
 \begin{remark}
   Axiom \ref{DI2:biquandle} is equivalently written as: for any
   $x\in X$, the maps $B_1(x,\any)$ and $B_2(\any,x)$ are bijections.
   Relaxing \ref{DI3:biquandle} gives a more general structure
   called a birack, for which a similar theory can be developed.
\end{remark}

In the rest of this section, let $(X,B)$ be a biquandle.  Let $D_\Gf$
be a regular projection of a standard tangle $\Gf$.  A
\emph{$X$-coloring} of $D_\Gf$ is a coloring of its edges by elements
of $X$ satisfying compatibility condition given by the biquandle $B$
for the four edges incident to any crossing as in Figure
\ref{fig:X-col}.
  \begin{figure}
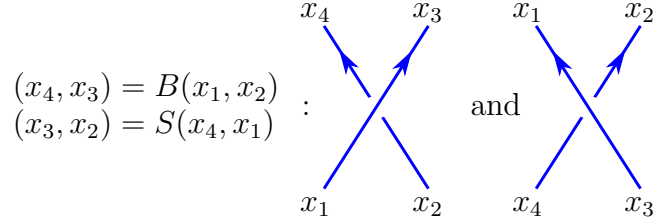

  \centering
  $$
  \begin{array}[c]{l}
    (x_4,x_3)=B(x_1,x_2)\\
    (x_3,x_2)=S(x_4,x_1)
  \end{array}
  :\epsh{fig1}{12ex}\put(-40,-30){$x_1$}\put(-5,-30){$x_2$}
  \put(-40,31){$x_4$}\put(-5,31){$x_3$}\quad\text{ and
  }\epsh{fig2}{12ex}\put(-40,-30){$x_4$}\put(-5,-30){$x_3$}
  \put(-40,31){$x_1$}\put(-5,31){$x_2$}$$
  \caption{$X$-coloring of a crossing}
  \label{fig:X-col}
\end{figure}

As in the case of quandle colored tangles, an $X$-coloring of $D_\Gf$
determines two words $\partial_-\Gf$ and $\partial_+\Gf$ in $W_X$.   On the other hand,  the axioms of a biquandle insure that any word associated to the top (resp.\ bottom) boundary of an oriented braid diagram can be extended uniquely to a $X$-coloring of the whole braid.  

\begin{definition}
  The category $\Diag_X$ of $X$-colored diagrams is defined
  as follows.  An object in $\Diag_X$ is an element of $W_X$.  A morphism
  $D:w\to w'$ is a planar isotopy class of a $X$-colored diagram $D$ such that
  $\partial_+ D=w'$ and $\partial_- D=w$.
\end{definition}
It is well known that the category of link diagrams colored by a ribbon category is generated by crossings, cups and caps (see Lemma 3.1.1 of \cite{Tu}).    The following lemma is the analogous lemma in the setting of $X$-colored diagrams.  
\begin{lemma}\label{L:cat_diag}
  As a tensor category,
  $\Diag_X$ is generated by the six families of elementary tangles:
  \begin{enumerate}
  \item positive crossing
    $\chi^+_{x_1,x_2}:(x_1,+)(x_2,+)\to
    (B_1(x_1,x_2),+)(B_2(x_1,x_2),+)$
    where $x_1,x_2\in X$ (see Figure \ref{fig:X-col}),
  \item negative crossing
    $\chi^-_{x_1,x_2}:(B_1(x_1,x_2),+)(B_2(x_1,x_2),+)\to
    (x_1,+)(x_2,+)$ where $x_1,x_2\in X$ (see Figure \ref{fig:X-col}),
  \item left evaluation: $\epsh{fig7}{2ex}=\lev_x:(x,-)(x,+)\to\emptyset$,
  \item left coevaluation: $\epsh{fig9}{2ex}=\lcoev_x:\emptyset\to(x,+)(x,-)$,
  \item right evaluation: $\epsh{fig6}{2ex}=\rev_x:(x,+)(x,-)\to\emptyset$,
  \item right coevaluation: $\epsh{fig8}{2ex}=\rcoev_x:\emptyset\to(x,-)(x,+)$.
\end{enumerate}
\end{lemma}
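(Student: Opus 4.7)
The plan is to apply the standard Morse-theoretic decomposition of planar tangle diagrams and then verify that the biquandle axioms ensure the $X$-coloring descends to a tensor-categorical expression in the listed generators. First, given an $X$-colored diagram $D\in\Diag_X$, I would put $D$ in general position with respect to the height function and slice it horizontally between its critical points so that each horizontal slab contains exactly one local extremum or one crossing. This is the classical technique used to prove Lemma 3.1.1 of \cite{Tu}: each slab is a horizontal tensor product of identity strands with a single distinguished elementary piece, and the vertical composition of all slabs recovers $D$. The $X$-labels inherited from $D$ restrict consistently to each piece.

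Second, I would check that every such elementary colored piece lies in one of the six families. A local extremum is a single strand whose two legs necessarily carry the same $X$-label $x$ and opposite orientations, so the piece is one of $\lev_x$, $\rev_x$, $\lcoev_x$, $\rcoev_x$ depending on the position of the turnaround and the orientation of the strand. For a crossing between two upward-oriented strands, the biquandle structure forces the colors at the top to be determined by those at the bottom via $B$ (positive case) or via $B^{-1}$ (negative case); invoking Axiom \ref{DI2:biquandle} of Definition \ref{D:biquandle}, a choice of any two of the four colors in the correct positions determines the remaining two, giving either $\chi^+_{x_1,x_2}$ or $\chi^-_{x_1,x_2}$.

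The main obstacle is handling crossings in which one or both strands are downward-oriented, since these do not appear in the generating list. The standard move is to rewrite such a crossing as an upward-upward crossing precomposed and postcomposed with cups and caps via zigzag identities; it is then a tensor-categorical combination of the six generators. One must verify that the $X$-colors assigned to the auxiliary arcs produced by the zigzag are internally consistent, and this is guaranteed by sideways invertibility of $B$ (Axiom \ref{DI2:biquandle}), which lets us read off the fourth color from any three, together with the diagonal map $\alpha$ of Axiom \ref{DI3:biquandle}, which matches the colors along any turnaround on a single strand. Once this last consistency is checked, no machinery beyond the classical decomposition of tangle diagrams into elementary pieces is required, and the lemma follows.
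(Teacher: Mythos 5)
Your proposal is correct and takes essentially the same route as the paper, whose entire proof is the one-line observation that up to planar isotopy every crossing can be made upward; your Morse-slab decomposition into elementary pieces and the rotation of non-upward crossings via cups, caps and zigzag identities are precisely the details behind that line. One minor remark: the diagonal map $\alpha$ of Axiom \eqref{DI3:biquandle} is not actually needed for the consistency check, since each generator $\lev_x$, $\rev_x$, $\lcoev_x$, $\rcoev_x$ carries the single color $x$ on both legs by definition and rotating a crossing leaves all four incident edge colors unchanged, the relation between them being governed by $B$ and the sideways map $S$ alone.
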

\begin{proof}
Up to planar isotopy, one can put all crossing upward.
\end{proof}

\begin{definition}{\bf $X$-colored Reidemeister moves.} 
  Let $D$ and $D'$ be $X$-colored diagrams whose underlying diagrams
  are related by a framed Reidemeister move.  We say that they are
  related by an {\em $X$-colored framed Reidemeister move} if the color assigned to an unmodified edge of $D$ and $D'$ is the same.   
\end{definition}
\begin{remark}\label{r:notBiquandle}
  The axioms of a biquandle are not used in the definition of
  $\Diag_X$ and in the notion of  a $X$-colored Reidemeister move (only the existence of $B$).
  The map $B$ could even be replaced by an arbitrary subset
  $\{(x_1,x_2,x_3,x_4)\}$ of $X^4$ used to define coloring of
  crossings as in Figure \ref{fig:X-col}.
\end{remark}
The theory of biquandles insures that any framed Reidemeister move from a
$X$-colored diagram gives rise to a unique colored Reidemeister move:
\begin{lemma}\label{L:isotopy}\
  \begin{enumerate}
  \item Each framed Reidemeister move 
  determines
   a canonical
    bijection between $X$-colorings of the two diagrams in the move.
  \item Let $T$ be a standard tangle with a regular projection $D$.  Any generic isotopy $f_t$ of $T$     induces a sequence of framed Reidemeister moves 
  $$D=D_0\to
    D_1\to\cdots\to D_n=D'.$$
      The associated bijection between
     colorings of $D$ and $D'$ only depends of the homotopy
    class of $f_t$.
  \end{enumerate}
\end{lemma}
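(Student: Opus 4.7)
My plan for part (1) is to verify, move by move, that the biquandle axioms supply a canonical bijection of $X$-colorings. For the positive move $RII_{++}$ (both strands oriented upward), the two crossings realize $B$ and $B^{-1}$; since $B$ is bijective, a coloring of the two bottom edges determines uniquely the two internal colors, and the resulting top coloring is unchanged, so the bijection with the trivial tangle coloring is immediate. For the oppositely oriented moves $RII_{-+}$ and $RII_{+-}$, the sideways invertibility axiom \ref{D:biquandle}(\ref{DI2:biquandle}), which furnishes the map $S$, is exactly what guarantees that a coloring of the four boundary edges determines the two internal colors in each side and that these two colored diagrams correspond under a canonical bijection. The move $RIII_{+++}$ relates the two iterated applications of $B$ on three strands in the two standard orders; by the set-theoretic Yang-Baxter equation \ref{D:biquandle}(\ref{DI1:biquandle}), they produce identical colorings on the six boundary edges. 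Finally, for the framed move $RI^f$, the diagonal axiom \ref{D:biquandle}(\ref{DI3:biquandle}) says precisely that the positive kink on a strand colored $x$ emits a strand colored $\alpha(x)$, and the same holds on the other side of the move, yielding the required bijection.

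For part (2), I would use a standard stratification argument on the space of isotopies of $T$. Generic isotopies form an open dense subset whose complement (non-regular projections) is codimension-one, and the classical Reidemeister theorem identifies these codimension-one strata with the framed Reidemeister moves of Figure \ref{fig:RM}. Two generic isotopies $f_t$ and $g_t$ in the same homotopy class are connected by a 2-parameter family $F_{s,t}$, which may be chosen generic in the sense that it meets the codimension-one stratum transversally on a collection of smooth arcs and meets the codimension-two stratum transversally in a discrete set of points. As $s$ varies, the sequence of framed Reidemeister moves can therefore only change at codimension-two events. These events are classified (see, e.g., the Carter-Saito/Roseman analysis of framed tangle movies): either two moves with disjoint support commute, a move is followed by its own inverse and both are removed, or one crosses an intrinsic codimension-two singularity (quadruple point, triple tangency, self-tangency with a cusp, and so on). Each such event yields a relation between two sequences of Reidemeister moves with the same endpoints.

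It remains to check that, for each such elementary relation, the composite bijection on colorings is the same on both sides. The permutation and cancellation cases are formal. The remaining ``quadrilateral'' relations unpack, after a diagrammatic inspection, to precisely the three biquandle axioms: the triple-point transition reduces to an iterated use of the set-theoretic Yang-Baxter equation \ref{D:biquandle}(\ref{DI1:biquandle}); cusp-type transitions reduce to the sideways invertibility axiom \ref{D:biquandle}(\ref{DI2:biquandle}) together with its compatibility with $B$; and the self-tangency transitions involving a strand with itself reduce to the diagonal axiom \ref{D:biquandle}(\ref{DI3:biquandle}). The main obstacle is to enumerate the codimension-two events completely and check each one; since the list is long but the verifications are mechanical, I would defer the full case-by-case analysis to an appendix and emphasize that the biquandle axioms are selected exactly so that these coherences are automatic.
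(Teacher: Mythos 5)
Your treatment of part (1) is correct and is exactly what the paper intends: the paper disposes of this part with the single sentence that it ``follows directly from the axioms of a biquandle,'' and your move-by-move verification (bijectivity of $B$ for $RII_{++}$, the sideways map $S$ of Axiom \eqref{DI2:biquandle} for $RII_{-+}$ and $RII_{+-}$, the set-theoretic Yang--Baxter equation \eqref{DI1:biquandle} for $RIII_{+++}$, and the diagonal Axiom \eqref{DI3:biquandle} for $RI^f$) is the content behind that sentence. For part (2) your setup also coincides with the paper's: both take a generic two-parameter family $(f_{t,s})$, observe that the non-regular values form a finite graph in $[0,1]\times[0,1]$ whose edges are framed Reidemeister moves, and reduce the statement to showing that the loop of moves around each vertex --- a movie move in the sense of \cite{CS} --- acts as the identity on colorings.

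The genuine gap is in how you discharge that last reduction. You propose to enumerate the codimension-two events and check each one, asserting that the ``quadrilateral'' relations ``unpack to precisely the three biquandle axioms,'' and you then defer the entire case-by-case analysis to an unwritten appendix. As written this leaves the decisive step unverified, and the assignment of axioms to events is not accurate as stated (self-tangency events are $RII$-type phenomena governed by sideways invertibility, not by the diagonal axiom, which only enters through $RI^f$; and the quadruple-point loop is a tetrahedron-type relation of eight $RIII$ moves, not a single application of \eqref{DI1:biquandle}). The paper avoids the enumeration altogether with one uniform observation that your proposal misses: in each movie-move loop, the part of the diagram that changes is contained in a flat closure of a braid diagram, and an $X$-coloring of a braid diagram is uniquely (over-)determined by the constant coloring of its boundary --- given the bottom colors, bijectivity of $B$ propagates the coloring upward with no choices. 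Hence the coloring outside the changing region, and therefore the induced bijection, cannot vary around the loop, with no case analysis needed. If you wish to keep your route, you must actually produce the finite list of framed movie moves and verify each; alternatively, replacing your deferred appendix by the braid-closure argument closes the gap in one step.
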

\begin{proof}
  The first statement follows directly from the axioms of a biquandle.  
For  the second statement, let $(f_{t,s})_{t,s\in[0,1]}$ be a generic homotopy of isotopies.   The values $(t,s)$ for which the projection of $f_{t,s}(T)$ is not a
  regular planar diagram form a finite graph in $[0,1]\times [0,1]$
  whose edges correspond to framed Reidemeister moves. 
   It is enough
  to show that the loop of Reidemeister moves around each vertex of this graph (i.e.\ a movie move, see 
  \cite{CS})  induces the identity on
  the set of coherent colorings of the initial diagram.  This follows
  from the fact that 
  the change in any diagram 
  in such a loop is contained in a flat closures of some braid diagram.   Hence, their
  coloring are uniquely (over-)determined by the constant coloring of
  their boundary.
\end{proof}
\subsection{Biquandle factorization of a quandle}

In \cite{LV}, Lebed and Vendramin show that each biquandle can be associated with a quandle structure, they actually prove something more general.  In our setting their ideas imply:
\begin{proposition} Let $(X,B)$ be a biquandle.  For $x,y\in X$ the operation $\rhd$ given by
\begin{equation}
  \label{eq:quandle}
  x\rhd y=B_1(x,S_1(x,y)):\qquad
  \epsh{fig3}{16ex}\put(-27,-30){$y$}\put(-29,0){$x$}
  \put(-44,30){$x\rhd y$}\put(1,0){$S_1(x,y).$}
\end{equation}
defines a quandle structure on $X$.  
\end{proposition}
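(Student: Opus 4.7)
My strategy is to verify the three quandle axioms in order of increasing difficulty. The unifying observation, used throughout, is a symmetric rewriting of the defining identity for $S$: by tracing the substitutions $p = B_1(x,y),\ q = x,\ r = B_2(x,y),\ s = y$ through $S(B_1(x,y),x) = (B_2(x,y),y)$ and using the bijectivity of $B_1(x,\any)$ from the Remark following Definition \ref{D:biquandle}, one obtains the $B\leftrightarrow S$ duality $B(q,s) = (p,r) \iff S(p,q) = (r,s)$. Applying this duality to the diagonal axiom $S(a,a) = (\alpha(a),\alpha(a))$ yields $B(a,\alpha(a)) = (a,\alpha(a))$, so $S_1(a,a) = \alpha(a)$ and therefore $a \rhd a = B_1(a,\alpha(a)) = a$, which is idempotency.

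Next I will verify that for each $x \in X$, the map $y \mapsto x \rhd y = B_1(x, S_1(x,y))$ is a bijection on $X$. The outer factor $B_1(x,\any)$ is bijective by the reformulation of sideways invertibility given in the Remark. For the inner factor $y \mapsto S_1(x,y)$, the duality shows that $S_1(x,y) = z$ is equivalent to the existence of some $b \in X$ with $B(y,b) = (x,z)$; since $B : X \times X \to X \times X$ is a bijection, the unique preimage $(y,b) = B^{-1}(x,z)$ provides a well-defined bijection $z \leftrightarrow y$ for each fixed $x$. Composing the two bijections yields the unique-solvability axiom.

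The main obstacle is self-distributivity $a \rhd (b \rhd c) = (a \rhd b) \rhd (a \rhd c)$, which I will establish diagrammatically rather than by direct substitution. The operation $x \rhd y$ is realized by the $2$-strand $X$-colored tangle of the figure---morally a meridian colored $x$ that encircles a strand colored $y$, producing an outgoing strand colored $x \rhd y$---and both sides of the identity are obtained by stacking three such gadgets on a $3$-strand background with identical boundary words. These two tangles are related by a framed planar isotopy whose Reidemeister decomposition uses one braid-like $RIII_{+++}$ together with a bounded number of $RII_{+-}$ and $RII_{-+}$ moves needed to slide meridians past one another, so by Lemma \ref{L:isotopy} the two $X$-colorings must agree on the matching output edges, which is precisely the self-distributivity relation. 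Algebraically this bookkeeping unpacks into the set-theoretic Yang--Baxter equation (axiom (i) of a biquandle) combined with the sideways invertibility (axiom (ii)) used to convert $S$-markings into genuine $B$-crossings; a purely symbolic verification is possible but considerably more cumbersome, and this diagrammatic viewpoint is the core of the Lebed--Vendramin argument \cite{LV}.
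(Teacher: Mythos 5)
Your proof is correct and takes essentially the same route as the paper, which defers to Lebed--Vendramin: idempotency from the diagonal axiom (via the $B$--$S$ duality you make explicit), unique solvability from sideways invertibility, and self-distributivity from the diagrammatic unknot-sliding argument (the paper cites Figure 5.4 of \cite{LV}, while you invoke Lemma \ref{L:isotopy} directly). The only cosmetic difference is that for the second quandle axiom the paper exhibits the explicit solution $z=B_1^{-1}(y,S_2(x,y))$, whereas you obtain bijectivity of $x\rhd\any$ by composing the two bijections $S_1(x,\any)$ and $B_1(x,\any)$.
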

\begin{proof}
  This proposition is essentially a special case of Proposition 5.7 of
  \cite{LV}.  However, in \cite{LV} Lebed and Vendramin choose a
  different convention.  So in Proposition 5.7 they define a slightly
  different operation $x\lhd y=\wb B_2(\wb S_2(x,y),y)$.  We change
  this convention to have biquandle maps associated to positive
  crossings and base point on the left.  However, the proof is
  essentially the same: The axiom $x\rhd x=x$ follows from Axiom
  \eqref{DI3:biquandle} of the definition of a biquandle.  Given
  $x,y\in X$ then $z=B_1^{-1}(y,S_2(x,y))$ is the unique element satisfying
  $x=y\rhd z$.  Finally, the Axiom \eqref{I:Qdist} of a definition of
  a quandle follows from an analogous argument using three unknots as
  in Figure 5.4 of \cite{LV}.
\end{proof}
We say $\QX$ is the quandle \emph{associated} to $(X,B)$.  We will denote by $\QX$ the set $X$ (or an isomorphic copy
$\Qm:X\stackrel{\sim}\to \QX$) with the quandle structure $\rhd$.  We
say that the biquandle $(X,B)$ is a {\em biquandle factorization} of the quandle
$(\QX,\rhd)$. 

Let $\QX$ be a quandle with a biquandle factorization $(X,B)$.  Next
we will discuss how $\QX$-colorings are related to $X$-colorings.  Let
$T$ be a standard tangle and let $(D,c)$ be a regular projection of
$T$ with a $X$-coloring $c$.  Recall
$M_{T}=\R^2\times[0,1]\setminus T$ has a base point $*$ to the left of
$T$.  Consider the set $P$ of continuous embedded paths in $M_T$ from
$*$ to $T$, i.e. $\gamma:[0,1]\to M_{T}$ such that $\gamma(0)=*$ and
$\gamma(1) \in {T}$.

We will define a map $f_{(D,c)}: P \to X$.  Given $\gamma \in P$, we
can think of $T\cup \gamma$ as a graph in $\R^2 \times [0,1]$.  Let
$D \cup \gamma$ be a regular projection of this graph such that
$\gamma$ meets $T$ on the left, i.e.\ when looking in the direction of
the orientation of edge of $T$ you see $\gamma$ on the left.  By
``pulling'' a neighborhood of $\gamma$ back to the base point $*$ we
obtain a isotopy $i$ which ends with $\gamma$ being a short line
segment near $*$.  This isotopy can be represented by a series of
colored Reidemeister moves.  After apply this sequence of colored
Reidemeister moves, the path $\gamma$ becomes a short line segment
near $*$ and the edge of $D$ which containing $\gamma(1)$ is colored
by an element $a \in X$.  We set $f_{(D,c)}(\gamma)=a$.  For an
example of how to compute $f_{(D,c)}(\gamma)$ see Figure
\ref{F:ExHowCompute}.
\begin{figure}
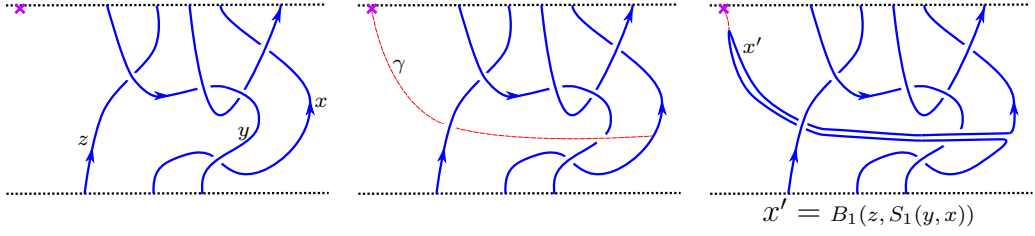

  \centering
$$\begin{array}[t]{ccc}
  \epsh{fig43}{14ex}\put(-4,0){\ms{x}}\put(-28,-9){\ms{y}}\put(-78,-12){\ms{z}}
&\put(12,12){\ms{\gamma}}\epsh{fig44}{14ex}
&\epsh{fig46}{14ex}\put(-90,17){\ms{x'}}\\&&x'=\ms{B_1(z,S_1(y,x))}
\end{array}$$\hspace{5ex}\ 
  \caption{The first image is an example of a $X$-colored diagram $(D,c)$.  The second image is the diagram with a path $\gamma$.  The third image is the result of the Reidemeister moves showing $f_{(D,c)}(\gamma)=B_1(z,S_1(y,x))$.  Here the $S_1(y,x)$ comes from the passing the edge labeled with $x$ over the edge labeled with $y$ and the $B_1$ appears after passing the edge under the edge colored with $z$.  }
  \label{F:ExHowCompute}
\end{figure}

In \cite{LV}, Lebed and Vendramin defined the notion of a guitar map which makes a
correspondence between colorings given by algebraic structures which are more general
than biquandles and quandles.  Applying this notion to our context and generalizing it 
to the categorical setting, we have the following theorem: 
\begin{theorem}\label{th:LV}
The map $f_{(D,c)}:P\to X$ induces a quandle morphism $f_{D,c}:Q(T,*) \to \QX$ from the fundamental quandle of $T$ to $\QX$.  Moreover, there exists a unique bijective functor
 $$\Qm: \Diag_X \to \Diag_\QX  $$
 defined by $(D,c) \mapsto (D,f_{(D,c)})$.  This functor induces a unique bijective functor
$$\wt \Qm: \Diag_X/\text{colored Reidemeister moves} \to \T_\QX  $$
defined by $(D,c) \mapsto (T,f_{(D,c)})$.
\end{theorem}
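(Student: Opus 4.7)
The plan is to establish the theorem in three stages: show that $f_{(D,c)}$ descends to a quandle morphism on the fundamental quandle $Q(T,*)$; assemble these maps into a bijective functor $\Qm$; and then check that $\Qm$ intertwines the colored Reidemeister equivalence on $\Diag_X$ with its analog on $\Diag_\QX$, so that $\wt\Qm$ arises by composition with the bijection of Theorem~\ref{T:QD}.

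For the first stage, fix $\gamma \in P$; the construction of $f_{(D,c)}(\gamma)$ requires an isotopy that drags a small neighborhood of $\gamma$ back to the basepoint, which decomposes into a finite sequence of framed Reidemeister moves. By Lemma~\ref{L:isotopy}(1) each such move gives a canonical bijection between $X$-colorings of consecutive diagrams, and by part~(2) any two such isotopies differ by a homotopy and so produce the same overall bijection; hence the $X$-color of the final edge depends only on the homotopy class of $\gamma$, and $f_{D,c}: Q(T,*) \to X$ is well defined. To check the quandle morphism property, one picks representatives of $a,b \in Q(T,*)$ realizing the local pattern of Figure~\ref{fig:FundQuandleDiag} and pulls them back to~$*$; the resulting computation of $f_{D,c}(a\rhd b)$ localizes to the two crossings of that figure and evaluates to $B_1(f_{D,c}(a), S_1(f_{D,c}(a), f_{D,c}(b)))$, which matches $f_{D,c}(a) \rhd f_{D,c}(b)$ by the defining formula of the quandle associated to $(X,B)$.

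To define $\Qm: \Diag_X \to \Diag_\QX$, identify objects via $W_X = W_\QX$ and on morphisms set $\Qm(D,c) = (D, f_{(D,c)})$. The output is a genuine $\QX$-colored diagram because the colors on the four edges at each crossing satisfy the pattern of Figure~\ref{fig:Q-col}, which follows at once from the quandle morphism property above applied to paths reaching the four edges from a common point near the crossing. Functoriality under composition and tensor product is immediate from the way paths above glued or juxtaposed tangles decompose with respect to their basepoints. For bijectivity on morphisms, observe that the paths $\gamma_i$ from $*$ to the bottom edges of $D$ can be chosen to avoid every strand before terminating, so $\Qm$ acts as the identity on the bottom boundary coloring; both $X$-colorings and $\QX$-colorings of $D$ are determined by their bottom boundary via forward propagation through the local rules of Figures~\ref{fig:X-col} and~\ref{fig:Q-col}, and these two propagations correspond under $\Qm$ by the formula $x \rhd y = B_1(x, S_1(x,y))$.

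For the final stage, any $X$-colored Reidemeister move from $(D,c)$ to $(D',c')$ is induced by a local isotopy of the underlying tangle, and the same Lemma~\ref{L:isotopy} then guarantees that this isotopy also produces the corresponding $\QX$-colored Reidemeister move from $\Qm(D,c)$ to $\Qm(D',c')$; composing with the bijection $\Diag_\QX/\equiv\;\cong\;\T_\QX$ of Theorem~\ref{T:QD} yields the bijective functor $\wt\Qm$. The main obstacle will be the local compatibility check at each elementary Reidemeister move, where one must chase a generic path $\gamma$ across the modified region and verify that both sides assign it the same $X$-color. This reduces to a finite list of local computations, each a direct consequence of one biquandle axiom of Definition~\ref{D:biquandle}: Reidemeister~III uses the set-theoretic Yang-Baxter equation, the mixed Reidemeister~II moves use sideways invertibility, and the framed Reidemeister~I move uses the diagonal bijection~$\alpha$.
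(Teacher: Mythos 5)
There is a genuine gap at the heart of your first stage: well-definedness of $f_{D,c}$ on $Q(T,*)$. The fundamental quandle consists of \emph{homotopy} classes of paths in $M_T$, and homotopic paths need not be isotopic (a path may be knotted, and during a homotopy it may cross itself). Consequently, for two homotopic but non-isotopic representatives $\gamma,\gamma'$, the graphs $T\cup\gamma$ and $T\cup\gamma'$ are not related by any isotopy, so no sequence of framed Reidemeister moves connects their diagrams, and Lemma \ref{L:isotopy} --- whose part (2) compares homotopic \emph{isotopies of a single fixed tangle} --- simply does not apply. Your sentence ``any two such isotopies differ by a homotopy'' conflates homotopy of isotopies with homotopy of paths, and even in the isotopic case two pull-back isotopies of the same graph need not be homotopic as isotopies. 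The paper closes this gap with two explicit local computations that are absent from your argument: when $\gamma$ is pulled back to the basepoint it appears as a \emph{doubled} strand carrying the same color with opposite orientations, and the biquandle axioms make such a double transparent (sliding it over or under any edge changes no colors); in particular, changing a \emph{self-crossing} of the pulled-back path leaves the coloring unchanged outside a neighborhood of that crossing. It is precisely this transparency-plus-self-crossing argument that upgrades isotopy invariance to the homotopy invariance needed to descend to $Q(T,*)$; without it the map $f_{D,c}$ is not shown to exist. (Your verification of the quandle-morphism property at a crossing, via three paths retracted to the basepoint and the identity $x\rhd y=B_1(x,S_1(x,y))$ of Equation \eqref{eq:quandle}, does match the paper's argument.)

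A second, lesser flaw is your bijectivity argument for $\Qm$. It is false that $X$-colorings (or $\QX$-colorings) of a general diagram are determined by the bottom boundary via forward propagation: this holds for braids, but a cup $\lcoev_x$ has empty bottom boundary and a free color $x$, and a crossingless closed component admits every color. So ``both propagations correspond under $\Qm$'' does not establish a bijection on morphism sets. A correct route is to construct the inverse of the guitar-type map directly --- given a $\QX$-coloring, push the color of each edge back out from the basepoint along the defining paths to recover the unique compatible $X$-coloring, as in \cite{LV} --- or to argue edge-by-edge using the injectivity of the sideways maps; the paper itself treats this point briskly, but your proposed mechanism would fail already on the elementary cup and cap diagrams of Lemma \ref{L:cat_diag}. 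Your final stage (compatibility of $\Qm$ with colored Reidemeister moves, then composing with Theorem \ref{T:QD}) is sound in outline and agrees with the paper, but it inherits the first gap, since it presupposes that $f_{(D,c)}$ is well defined on homotopy classes.
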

\begin{proof}
  First, we will show that for each $(D,c)$ in $ \Diag_X$ the map
  $f_{(D,c)}:P\to X$ extends to a quandle homomorphism
  $ Q(T,*)\to \QX$ which is invariant under colored Reidemeister
  moves.  Given $[\gamma]\in Q(T,*)$, let $\gamma:[0,1)\to M_{T}$ be a framed
  representative of $[\gamma]$ with $\gamma(0)=*$ and
  $\gamma(1)=\lim_{t\to1}\gamma(t)\in {D}$.  Using continuity we can
  view $\gamma$ as an element of $P$ and as explained above can assign
  $f_{(D,c)}(\gamma)\in X$.  We will show this assignment depends only
  on the homotopy class $[\gamma]$.

Suppose $\gamma'$ is another representative  of $[\gamma]$.  Doing the above process we obtain an isotopy $i'$ and a corresponding sequence of colored Reidemeister moves where the path $\gamma'$ becomes a short line segment near $*$ and the edge of $D$ which contains $\gamma'(1)$ is colored by an element $a' \in X$.   We will show $a=a'$.  

Assume first that $\gamma'$ and $\gamma$ are isotopic.  Let $i$ be the isotopy used to define $f_{(D,c)}(\gamma)$ as discussed
above the theorem.  Let $j$ be an isotopy of $\R^2\times [0,1]$ taking
$T\cup \gamma$ to $T\cup \gamma'$.  Now the concatenation of isotopies $i'. j .i^{-1}$ is an isotopy taking the short line segment near $*$ (which is isotopic to
$\gamma$) to the same short line segment (which is isotopic to
$\gamma'$).  Notice that the line segment near $*$ which is isotopic
to $\gamma$ and $\gamma'$ is in a 3-ball.  Thus, there exists a
isotopy $k$ which takes the initial embedding of
$i'.j.i^{-1}$ to its final embedding and does not modify
the 3-ball containing the line segment.  The isotopy  $k$ can be
represented by colored Reidemeister moves.  Since the isotopy  does not change the
3-ball the color of the edge near $*$ is the same before and after any
of these Reidemeister moves.  Moreover, when the points $\gamma(1)$
and $\gamma'(1)$ are pulled back along the path to $*$ one produces a
``doubling'' of the path with the same color but opposite orientations
on the two strands.  If the color of the double stand is $x$ and the
pull back is going to cross an edge labeled with $y$ then the crossing
becomes:
$$
\epsh{fig47}{16ex}\put(-44,-29){\ms{x}}\put(-25,-29){\ms{x}}\put(-12,-29){\ms{y}}
\put(-38,29){\ms{y}}\put(-20,29){\ms{z}}\put(1,29){\ms{z}}
$$
Since this is a double with opposite orientated edges colored with $x$, the rules of the biquandle imply that the colors of the edges are the same on both sides of crossing.  
Hence $a$ only depends of the isotopy class of $\gamma$.

Next remark that at a self crossing of $\gamma$, the pull back looks like:
$$
\epsh{fig48}{16ex}\put(-40,-29){\ms{x}}\put(-25,-29){\ms{x}}\put(-13,-29){\ms{y}}\put(1,-29){\ms{y}}
\put(-38,29){\ms{y}}\put(-24,29){\ms{y}}\put(-14,29){\ms{x}}\put(1,29){\ms{x}}
$$
Thus, changing such a crossing we see that the coloring of the diagram does not change outside a neighborhood of the crossing, so $a$ does not change.  Hence $a$ only depends of the homotopy class of $\gamma$ and this proves $a=a'$.
 
 Next, we prove that $f_{(D,c)}$ gives a $\QX$-coloring of $D$.  If $e$ is an edge of $D$, define 
 $$\Qm(e)=f_{(D,c)}(\gamma)\in \QX$$
 where $\gamma$ is a path above $D$ going from $*$ to $e$
 from the left.
 We just showed this assignment is independent of the homotopy class
 determined by $\gamma$.  We need to show that the relation given in
 Figure \ref{fig:Q-col} holds for every crossing.  Let
 $\alpha,\beta, \gamma$ be representatives of the three paths in
 $ Q(T,*)$ going above $D$ to three edges of a crossing as in left
 hand side Figure \ref{F:QpreserQunadleSt}.
 \begin{figure}
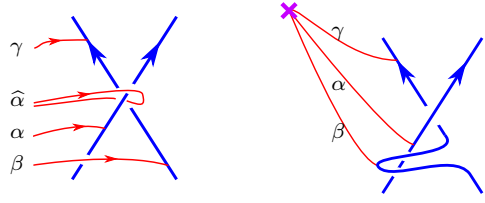

  \centering
$$\begin{array}[t]{cc}
\epsh{fig5}{12ex}\put(-52,17){\ms{\gamma}}\put(-52,0){\ms{\wh \alpha}}\put(-52,-10){\ms{\alpha}}\put(-52,-20){\ms{\beta}}\quad\,&\quad\,\epsh{fig62}{14ex}\put(-47,22){\ms{\gamma}}\put(-47,5){\ms{\alpha}}\put(-47,-10){\ms{\beta}}
\end{array}$$\hspace{5ex}\ 
\caption{The 3 pathes defining the $\QX$-coloring near a crossing can
  be isotoped together near the base point.}
  \label{F:QpreserQunadleSt}
\end{figure}
  By definition 
of the fundamental quandle, we have 
 $$\alpha\rhd\beta=\wh\alpha^{-1}.\beta= (\alpha_\ve .m.\alpha_\ve^{-1})^{-1}.\beta 
 =\alpha_\ve .m^{-1}.\alpha_\ve^{-1}.\beta= \gamma$$
 Consider an isotopy moving the neighbor of the crossing to the base
 point above the diagram.  Then the 3 pathes are retracted to the
 position in the right of Figure \ref{F:QpreserQunadleSt}.  Now, by
 the definition of the biquandle coloring, we see that the path
 $\gamma$ in Figure \ref{F:QpreserQunadleSt} ends to an edge colored
 by
 $$B_1(f_{(D,c)}(\alpha),S_1(f_{(D,c)}(\alpha),f_{(D,c)}(\beta)))=f_{(D,c)}(\alpha)\rhd
 f_{(D,c)}(\beta)$$
 where $\rhd$ is the quandle structure on $X\cong \QX$ defined in
 Equation \eqref{eq:quandle} and we have showed $\Qm$ is a
 $\QX$-coloring of $D$.
Moreover, this shows $\Qm$ is uniquely defined on objects.

So we have proved that $f_{(D,c)}:P\to X$ factors as a map  $ Q(T,*)\to \QX$.   By construction, if $(D',c')$ is a $X$-colored regular projection of $T$ related to ${(D,c)}$ by colored Reidemeister moves then $f_{(D,c)}=f_{(D',c')}$.  So we can assign 
$$\wt \Qm: \Diag_X/\text{colored Reidemeister moves} \to \T_\QX \text{ given by }(D,c) \mapsto (T,f_{(D,c)}). $$
Finally,  notice that $ \Qm$ is a functor because $f_{(D,c)}$ does not change when a tangle is glued on top of $D$.
\end{proof}

\subsection{More gauge actions} \label{SS:GaugeActionQuandle} In
Subsection \ref{SS:GaugeActionBiq} we defined a gauge action on
quandle diagrams and $\QX$-tangles.  In this subsection, we define
gauge actions on
biquandle colored diagrams.  We show that the functor $\Qm$
preserves these transformations.  To do this we extend the gauge
actions to functors.
The idea behind the
functors is easy: one use colored Reidemeister moves to slide a
component colored by element of $X$ over a diagram, the result is a
gauge transformed diagram.  To define the functor we make this idea
precise and extend it to words in $X$.

In this subsection, let $\QX$ be a quandle with a biquandle factorization $(X,B)$.   To define the desired functors we first consider the gauge action of words in $W_X$ as follows.    
Given words $w=((x_1,\epsilon_1),...,(x_{m},\epsilon_m))$ and $w'=((x'_1,\epsilon'_1),...,(x'_{n},,\epsilon'_n))$ in $ W_X$, let 
 $\chi^+_{w,w'}$ be the $(m,n)$-cable of the positive crossing.   We color and orient the bottom boundary of $\chi^+_{w,w'}$ using the corresponding color and sign in  $(w, w')$.  
 The biquandle structure allows us to extend the  coloring of the bottom boundary to a $X$-coloring of the whole diagram.  Thus, the  first $n$ components of the top boundary are associated  with a word  which we denote by $\wt B^+_1(w,w')$.  Similarly, we denote $B^+_2(w,w')$ by the word associated to the next $n$ components, see Figure \ref{fig:DiagramWords}.   
\begin{figure}
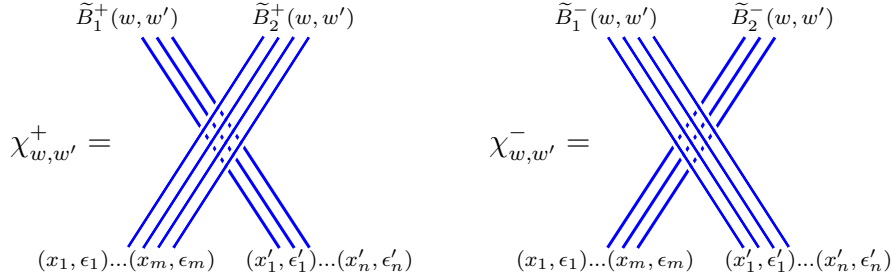

 $$\chi^+_{w,w'}=\epsh{fig67}{16ex}
 \put(-74,40) {{\tiny $\wt B^+_1(w,w')$}} \put(-19,40) {{\tiny $\wt B^+_2(w,w')$}}
\put(-86,-36) {{\tiny $(x_1,\epsilon_1)...(x_m,\epsilon_m)$}}\put(-21,-36){{\tiny $(x'_1,\epsilon'_1)...(x'_n,\epsilon'_n)$}}\,\hspace{12ex}\,\chi^-_{w,w'}=\epsh{fig68}{16ex}
 \put(-74,40) {{\tiny $\wt B^-_1(w,w')$}} \put(-19,40) {{\tiny $\wt B^-_2(w,w')$}}
\put(-86,-36) {{\tiny $(x_1,\epsilon_1)...(x_m,\epsilon_m)$}}\put(-21,-36){{\tiny $(x'_1,\epsilon'_1)...(x'_n,\epsilon'_n)$}}
 $$
 \caption{The diagram $\chi^\pm_{w,w'}$ and words $\wt B_1^\pm $ and $\wt B_1^\pm $.  The orientation is induced from the boundary. }  
  \label{fig:DiagramWords}
\end{figure}
 We obtain a map $\wt B^+=(\wt B^+_1,\wt B^+_2):W_X\times W_X\to W_X\times W_X$.

These maps can be extended to diagrams:
$$\wt
B_1^+:W_X\times\Diag_X\to\Diag_X \text{ and } \wt B_2^+:\Diag_X\times
W_X\to\Diag_X$$
 such that $\wt B_1^+(w,\any)$ and $\wt B_2^+(\any,w)$ are
functors where the symbol $\any$ is used to denote a
variable.  The value of the  functor $\wt B_1^\pm$ on a diagram is defined by using colored Reidemeister moves to slide the $w$ colored cable of a strand above (resp.\ under) the diagram (see Figure \ref{fig:FB}).  Lemma
\ref{L:isotopy} insures that this definition is well defined.
\begin{figure}
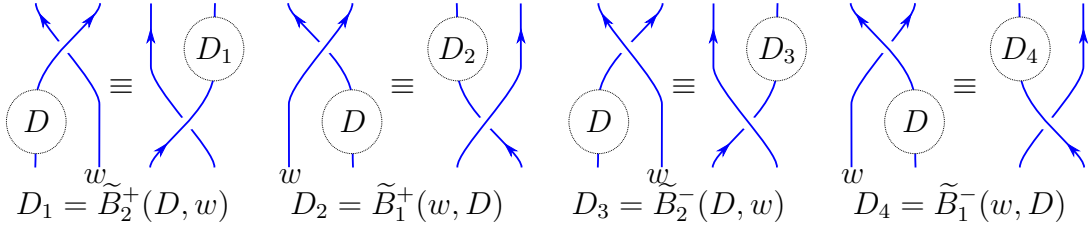

  \centering
  $$
  \begin{array}{cccc}
    \epsh{fig11}{12ex}\put(-24,-12){$D$}\put(-5,-28){$w$}
    \equiv\epsh{fig12}{12ex}\put(-15,11){$D_1$}&\hspace{1ex}
    \epsh{fig17}{12ex}\put(-13,-12){$D$}\put(-32,-28){$w$}
    \equiv\epsh{fig18}{12ex}\put(-27,11){$D_2$}&\hspace{1ex}
    \epsh{fig15}{12ex}\put(-24,-12){$D$}\put(-5,-28){$w$}
    \equiv\epsh{fig16}{12ex}\put(-15,11){$D_3$}&\hspace{1ex}
    \epsh{fig13}{12ex}\put(-13,-12){$D$}\put(-32,-28){$w$}
    \equiv\epsh{fig14}{12ex}\put(-27,11){$D_4$}\\
    D_1=\wt B^+_2(D,w)&D_2=\wt B^+_1(w,D)&
    D_3=\wt B_2^-(D,w)&D_4=\wt B_1^-(w,D)
  \end{array}
  $$
\caption{{Functors $\wt B_i^\pm$.}  Here the blue strands represent the parallel strands made from the cabling; 
 their color and orientation is induced from the boundary.   The equivalence $\equiv$ is generated by colored Reidemeister moves.}
  \label{fig:FB}
\end{figure}

These functors are not
monoidal; instead, if $f:w_1\to w_1'$ and $g:w_2\to w_2'$ are morphism in $\Diag_X$ then $$\wt
B_1^+(w,f\otimes g)=\wt B_1^+(w,f)\otimes \wt B_1^+\left(\wt
B_2^+(w,w_1),g\right)$$
and 
$$\wt B_2^+(f\otimes g,w)=\wt B_2^+\left(f,\wt
B_1^+(w_2,w)\right)\otimes \wt B_2^+(g,w).$$
 Also
$$\wt B_1^+(w^*,\any)=\wt B_1^+(w,\any)^{-1}\et
\wt B_2^+(\any,w^*)=\wt B_2^+(\any,w)^{-1}.$$ 

Analogously, we define the functors $\wt B^-_i$ by using the $(m,n)$-cable of the negative crossing $\chi^-_{w,w'}$ instead of
$\chi^+_{w,w'}$, see Figures \ref{fig:DiagramWords} and \ref{fig:FB}.  All the functors
$\wt B_i^\pm$ leave unchanged the underlying uncolored diagram and
consequently they are compatible with colored Reidemeister moves.

These functors generate an equivalence relation: we say all the diagrams $D,D_1,D_2,D_3,D_4$ of Figure \ref{fig:FB} are \emph{$B$-gauge equivalent}; we call the functors representing these equivalences \emph{$B$-gauge transformations.}   
We also say that two $\QX$-tangles $T,T'$ (resp.\ $\QX$-colored diagrams $D,D'$) are \emph{$B$-gauge equivalent} if there exists $B$-gauge equivalent $ X$-diagrams $E,E'$  such that $\wt\Qm(E)=T$ and $\wt\Qm(E')=T'$ (resp.\ $\Qm(E)=D$ and $\Qm(E')=D'$).

As sets $\QX=X$ so $B$-gauge transformations give  
bijections of $\QX$
which we see as actions of $X$ on $\QX$ and denote with up and down harpoons: if $x\in X$ and $b\in \QX$, let
\begin{equation}
  \label{eq:Baction}
  x\gp b=B_1(x,b)\et x\gm b=B_1^{-1}(x,b).
\end{equation}
We extend this bijective action to words:  for  $w\in W_X$ set
$$w\gp b= \wt B_1^{+}(w,b) \et 
w\gm b= \wt B_1^{-}(w,b).$$ 
\begin{proposition}\label{P:HarpoonAuto}
  For any $w\in W_X$, the bijections of $\QX$ given by $w\gp\any$ and
  $w\gm\any$ are quandle automorphisms of $\QX$.  
In other words, for all $b,b'\in\QX$ we
  have
  $$w\gp(b\rhd b')=(w\gp b)\rhd (w\gp b')
  \et w\gm(b\rhd b')=(w\gm b)\rhd (w\gm b').$$
  Moreover, 
$$b\rhd b'=x\gp\bp{x'\gm b'}\quad\text{ where }x=(b,+)\text{ and } x'=(b,-).$$
\end{proposition}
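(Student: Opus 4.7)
The plan is to prove both statements diagrammatically, using the functor $\Qm$ of Theorem \ref{th:LV} together with the $B$-gauge functors. The central preliminary lemma I would establish is that for any $X$-diagram $D$, the $\QX$-coloring of every edge of $\Qm(\wt B_1^{\pm}(w,D))$ is obtained from the $\QX$-coloring of the corresponding edge of $\Qm(D)$ by applying $w\gp$ (resp.\ $w\gm$) pointwise. This follows from the path pull-back description of $\Qm$: adding the $w$-cable modifies each pull-back path from the basepoint by exactly one positive (resp.\ negative) cabled crossing, whose effect on the endpoint color is $B_1(w,\any)$, equal to $w\gp$ by \eqref{eq:Baction} (resp.\ $B_1^{-1}(w,\any)=w\gm$).

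Granting this lemma, the distributivity of $w\gp$ over $\rhd$ is a short argument. Let $D$ be the $X$-diagram of a single positive crossing with bottom colors $(b,b')$; by Theorem \ref{th:LV}, the bottom $\QX$-colors of $\Qm(D)$ are $(b,b')$ and its top-left $\QX$-color is $b\rhd b'$. Applying $\wt B_1^+(w,\any)$, the key lemma transforms these to bottom $(w\gp b,w\gp b')$ and top-left $w\gp(b\rhd b')$; on the other hand, the transformed crossing, viewed on its own, must have top-left $\QX$-color $(w\gp b)\rhd(w\gp b')$ by the definition of the quandle structure on $\QX$. Equating yields the distributivity for $\gp$, and the identical argument with $\wt B_1^-(w,\any)$ handles $\gm$.

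For the last identity, compute directly: $x\gp y = B_1(b,y)$ by \eqref{eq:Baction}, while $x'\gm b' = \wt B_1^-((b,-),b')$ is the single-component output of the cabled negative crossing between a downward $b$-strand and the upward $b'$-strand. Using the pivotal structure to bend the $(b,-)$ strand upward via a cup and cap, then applying the sideways-invertibility axiom of Definition \ref{D:biquandle}, this evaluates to $S_1(b,b')$. Hence $x\gp(x'\gm b') = B_1(b,S_1(b,b')) = b\rhd b'$ by \eqref{eq:quandle}. The principal technical obstacle is establishing the key lemma for internal edges of $D$, such as the top edges of a crossing, where the pull-back path interacts with the inserted $w$-cable through a sequence of colored Reidemeister moves whose cumulative effect must be traced to match the single biquandle operation on the endpoint.
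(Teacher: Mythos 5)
Your route is genuinely different from the paper's. The paper proves the distributivity of $\gm$ over $\rhd$ entirely inside $\Diag_X$: it draws the $X$-colored diagram in which a strand colored $x$ passes under the clasp configuration of Equation \eqref{eq:quandle} computing $b\rhd b'$, slides it through by colored Reidemeister moves, and reads off $x\gm(b\rhd b')=(x\gm b)\rhd(x\gm b')$ from the unchanged upper-left color, then iterates over the letters of $w$; the functor $\Qm$ is never invoked. Your key lemma --- that $\Qm$ intertwines $\wt B_1^\pm(w,\any)$ with the edgewise harpoon action on $\QX$-colors --- is precisely Equation \eqref{eq:B-gauge} of Proposition \ref{P:B-gauge=harpoons}, which the paper proves \emph{later} by exactly the path pull-back argument you sketch. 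This is not circular (that proof uses Proposition \ref{P:HarpoonAuto} only for its final ``in particular'' statement), so your plan of proving the stronger intertwining first and deducing the automorphism property from the validity of the transformed $\QX$-coloring at a crossing is logically sound; it is simply heavier than the paper's three-picture computation, and you correctly identify where the real work lies (tracing the cumulative effect of the Reidemeister moves on internal edges). Your treatment of the last identity --- recognizing $(b,-)\gm b'$ as $S_1(b,b')$ via bending the strand and sideways invertibility, then applying \eqref{eq:quandle} --- is essentially the paper's own argument via Equation \eqref{eq:rhd2}.

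There is, however, one concretely false step. You assert that for $D=\chi^+_{b,b'}$ ``the bottom $\QX$-colors of $\Qm(D)$ are $(b,b')$ and its top-left $\QX$-color is $b\rhd b'$.'' Theorem \ref{th:LV} gives no such thing: $\Qm$ preserves the color only of edges with nothing to their left, and twists the others, as Figure \ref{F:ExHowCompute} and Proposition \ref{P:twisttensor} (where already $\Qm(D\otimes D')=\Qm(D)\otimes(w\gm\Qm(D'))$ for two disjoint strands) make explicit. For the bare crossing, the leftmost bottom and top edges keep their $X$-colors $b$ and $B_1(b,b')$, and the quandle crossing relation of Figure \ref{fig:Q-col} then forces the bottom-right $\QX$-color to be $(b,+)\gm b'$, not $b'$; so the top-left $\QX$-color is $b\rhd\bigl((b,+)\gm b'\bigr)=B_1(b,b')$, consistent with Equation \eqref{eq:gp} but not with your labels. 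The repair is short: since $\bigl((b,+)\gm\bigr)^{-1}=S_1(b,\any)$, run your argument on the crossing $\chi^+_{b,\,S_1(b,b')}$, whose $\Qm$-image has bottom $\QX$-colors exactly $(b,b')$ and top-left color $B_1(b,S_1(b,b'))=b\rhd b'$ by \eqref{eq:quandle}; equivalently, argue for arbitrary bottom quandle colors $(q_1,q_2)$ and use bijectivity of $(b,+)\gm$. With that correction your proof goes through.
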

\begin{proof}
  We prove the relation for down harpoon, the up harpoon is similar.
  Let $x\in X$ and $b,b'\in \QX$.  By applying a sequence of colored
  Reidemeister moves we have the following equivalence of $X$-colored
  diagrams:
$$
\epsh{fig40}{18ex}{\put(-43,-6){\ms{x}}\put(-25,-9){\ms{b}}
  \put(-25,-35){\ms{b'}}\put(-32,15){\ms{b\!\rhd\!b'}}
  \put(-62,33){\ms{x\gm(b\!\rhd\!b')}}}
\quad\equiv\quad
\epsh{fig41}{18ex}{\put(-32,16){\ms{x}}%
  \put(-49,-29){\ms{x\gm b'}}\put(-43,-6){\ms{x\gm b}}}
\quad\equiv\quad\hspace*{8ex}
\epsh{fig42}{18ex}{\put(-80,23){\ms{(x\gm b)\!\rhd\!(x\gm b')}}
  \put(-50,-28){\ms{x\gm b'}}\put(-51,-5){\ms{x\gm b}}}.
$$
Since the color on the upper left corner does not change under these
moves we have $x\gm(b\rhd b')=(x\gm b)\rhd (x\gm b')$.  Applying this
relation recursively we have that the desired relations holds for
words.

The last statement of the proposition follows from the fact that for $b,b'\in\QX=X$,
the quandle operation $b\rhd b'$ was defined by the $X$-coloring of
the following diagram:
\begin{equation}
  \label{eq:rhd2}
  \epsh{fig3}{16ex}\put(-27,-30){$b'$}\put(-29,0){$b$}
  \put(-46,30){$b\rhd
    b'$}\put(1,0){$z$}\quad\cong\quad
  \epsh{fig32}{14ex}\put(-33,-28){$b$}\put(-3,-24){$b'$}\put(-18,9){$z$}
  \put(-30,26){$t$}\text{ where }\left\{
  \begin{array}{l}
    z=(b,-)\gm b'\\t=(b,+)\gp z
  \end{array}
\right.
\end{equation}
Thus $b\rhd b'=(b,+)\gp\bp{(b,-)\gm b'}$. 

\end{proof}
  Note the following reformulation of the previous identity:
  \begin{equation}
    \label{eq:gp}
    \forall x\in X,\,\forall b\in\QX,\,x\gp b=x\rhd(x\gm b).
  \end{equation}

As explained in Subsection \ref{SS:GaugeActionBiq}, the quandle automorphisms of Proposition~\ref{P:HarpoonAuto} induce bijective endofunctors $w\gp\any$ and
  $w\gm\any$ on both $\Tang_\QX$ and $\Diag_\QX$,
  see Equation \eqref{eq:Q-functor}.

\begin{proposition}\label{P:B-gauge=harpoons}
  The $B$-gauge equivalence of $\QX$-tangles are generated by the
  harpoon automorphisms.
  In particular, gauge equivalent $\QX$-tangles (for the quandle gauge
  equivalence) are $B$-gauge equivalent.
\end{proposition}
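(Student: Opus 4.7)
The plan is to show that each of the four basic $B$-gauge transformations $\wt B_i^\pm$, when translated through the functor $\wt\Qm$, induces precisely a harpoon automorphism on the $\QX$-coloring of the underlying tangle, and conversely that every harpoon automorphism arises this way. The ``in particular'' statement will then follow from the last identity of Proposition~\ref{P:HarpoonAuto}.

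First I would verify, at the level of $X$-colorings, how each basic $B$-gauge transformation acts on edge colors. Fix an $X$-colored diagram $(D,c)$. Sliding a $w$-cable over $D$, as in $\wt B_1^+(w,D)$, crosses every edge of $D$ under the cable in the same uniform way. Unpacking the cabled crossing rule of Figure~\ref{fig:DiagramWords} together with the definition of the harpoon in \eqref{eq:Baction}, each edge color $c(e) \in X$ is replaced by $w \gp c(e)$; analogously $w \gm$, or the inverses, appear for the other three transformations $\wt B_2^+, \wt B_1^-, \wt B_2^-$. Translating through $\wt\Qm$: since the $\QX$-color $f_{(D,c)}$ of an edge is computed by pulling a path from the base point back to $*$ via colored Reidemeister moves, and since the entire diagram has been uniformly modified by the same cable-sliding isotopy, the new $\QX$-color at each edge is the image of the old one under the harpoon $w \gp$ (resp.\ $w \gm$). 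Thus on $\QX$-tangles each basic $B$-gauge transformation realizes one of the harpoon automorphisms of Proposition~\ref{P:HarpoonAuto}.

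Since $B$-gauge equivalence on $X$-diagrams is by definition generated by the four basic transformations $\wt B_i^\pm$, their images under $\wt\Qm$ are generated by the associated harpoon automorphisms. The converse inclusion is immediate, as each harpoon $w \gp$ or $w \gm$ is by construction the effect of one of the basic transformations on $X$-colorings. This proves the first claim. For the ``in particular'' part, given $b \in \QX$, the final identity of Proposition~\ref{P:HarpoonAuto} yields
\[ b \rhd b' \;=\; (b,+) \gp \bigl((b,-) \gm b'\bigr) \]
for every $b' \in \QX$. Hence the quandle gauge action by $b$ factors as the composition of the harpoons $(b,-) \gm$ and $(b,+) \gp$, each of which is a $B$-gauge equivalence; therefore quandle-gauge-equivalent $\QX$-tangles are $B$-gauge equivalent.

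The main obstacle is the first step: carefully tracking how the $\QX$-coloring $f_{(D,c)}$ transforms under cable-sliding and verifying that the modification is uniform across all edges. This amounts to showing that pulling back a path to the base point after a $B$-gauge transformation is compatible with first pulling back and then applying the harpoon, a local-to-global verification which relies on the homotopy invariance given by Lemma~\ref{L:isotopy} and on a case analysis separating the over/under and positive/negative variants of the cable-sliding. Once this compatibility is established, the rest of the argument is formal, consisting of composing gauge transformations and invoking Proposition~\ref{P:HarpoonAuto}.
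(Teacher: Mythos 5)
Your overall architecture is the same as the paper's (show each basic $B$-gauge transformation induces a harpoon automorphism on $\QX$-colorings, then get the ``in particular'' from the last identity of Proposition~\ref{P:HarpoonAuto}), but your key step contains a genuine gap, and the intermediate claim you base it on is false. Sliding a $w$-cable across a diagram does \emph{not} replace each $X$-color $c(e)$ uniformly by $w\gp c(e)$: at each crossing of the cable with the diagram \emph{both} colors change, so after the cable passes a first strand colored $b_1$ its own word is altered, and a second strand colored $b_2$ receives the color $\wt B_1^+\bp{\wt B_2^+(w,b_1),b_2}$, not $\wt B_1^+(w,b_2)$. This is exactly what the paper's non-monoidality formula $\wt B_1^+(w,f\otimes g)=\wt B_1^+(w,f)\otimes \wt B_1^+\bp{\wt B_2^+(w,w_1),g}$ records. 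Consequently your ``translating through $\wt\Qm$'' step, justified only by ``the entire diagram has been uniformly modified,'' asserts precisely what has to be proved: uniformity of the induced action on $\QX$-colors cannot be transported from the $X$-level, where it fails. You flag this as ``the main obstacle'' in your closing paragraph, but the case analysis and appeal to Lemma~\ref{L:isotopy} you sketch there do not produce the needed identity.

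What closes the gap in the paper is a base-point/path argument rather than a local count of color changes. For an edge $e$ of $D$, the $\QX$-color is the value of $f_{(D,c)}$ on a path $\gamma$ from $*$ to $e$ lying above the tangle. For $D_4=\wt B_1^-(w,D)$ one forms the tangle $T'$ obtained by adjoining the $w$-strands, extends $\gamma$ to a path $\gamma'$ that passes once over the added strands near the base point (which applies exactly $w\gm$ to the resulting color), and observes that $\gamma'$ is isotopic in the complement of $T'$ to the corresponding path $\gamma_4'$ used to color $e$ in $D_4$ (Figure~\ref{fig:BgaugeEqHar}). Homotopy invariance of $f_{(D,c)}$ then yields $w\gm\Qm(D)=\Qm(\wt B_1^-(w,D))$ edge by edge: the uniformity comes from every edge's defining path crossing the added cable exactly once near $*$, not from any uniform modification of $X$-colors. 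The paper also economizes by handling only $\wt B_1^\pm$ and disposing of $\wt B_2^\pm$ as their inverses via Equation~\eqref{eq:B2=B1}, whereas you would need the argument four times. Your converse inclusion and your deduction of the ``in particular'' statement from $b\rhd b'=(b,+)\gp\bp{(b,-)\gm b'}$ are correct and agree with the paper.
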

\begin{proof}
  To prove the first statement, it is enough to show the images under
  $\Qm$ of equivalences represented in Figure \ref{fig:FB} can be
  written in terms of the harpoon automorphisms.  Considering the
  second and fourth equivalence will be sufficient because
  bijections from $\wt B_2^\pm$ are inverse of those from $\wt B_1^\pm$: 
  for any $D:w_1\to w_2$ and $w\in W_X$, an obvious isotopy implies that
  \begin{equation}
    \label{eq:B2=B1}
    \wt B_2^\pm(\wt B_1^\pm(w^\pm,D),w)=D
    \text{ where }w^\pm=B_1^\pm(w_2,w)=B_1^\pm(w_1,w).
  \end{equation}
  We will show the fourth equivalence, the second analogously
  follows.
  Let $T$ and $T_4$ be standard tangles whose regular projections are
  the $X$-diagrams $D$ and $D_4$, respectively, in the fourth 
  equivalence of Figure \ref{fig:FB}.  We will show
  \begin{equation}
    \label{eq:B-gauge}
    w\gm\Qm(D)=\Qm(\wt B_1^-(w,D))
  \end{equation}
that is $w \gm \Qm(D)=\Qm(D_4)$ where
  $w$ is the word in the equivalence.

To do this, notice the edges of $D$ and $D_4$ are the same.  Fix one of these edges $e$.  Choose a path $\gamma$ in $M_{T}=\R^2\times[0,1]\setminus T$ from the base point to $e$ such that $\gamma$ goes above $T$.  Pulling this edge back to the base point as in Figure \ref{F:ExHowCompute} we obtain an $X$-colored diagram whose color near the base point is by definition $\Qm(D)(\gamma)\in X=\QX$.  Similarly, by choosing a path $\gamma_4$ from $*$ to $e$ above $M_{T_4}$ we obtain $\Qm(D)(\gamma_4)$.  

Next, we compare these values.   Let $T'$ be the standard tangle obtained from $T$ by adding strands  determined by $w$ having the two regular projections in the fourth equivalence of Figure \ref{fig:FB}.\begin{figure}
$$
\epsh{fig63}{16ex}\put(-20,-23){$D$}\put(-55,11){$\gamma'$}\qquad
\epsh{fig65}{16ex}\put(-35,8){$D_4$}\put(-55,18){$\gamma_4'$}\qquad
\epsh{fig64}{16ex}\put(-20,-23){$D$}\qquad
\epsh{fig66}{16ex}\put(-35,8){$D_4$}\qquad
$$
\caption{}
  \label{fig:BgaugeEqHar}
\end{figure}
 Let $\gamma'$ be the path in $T'$ which is the extension of $\gamma$ by going above the strands colored by $w$ then continuing by $\gamma$, see Figure \ref{fig:BgaugeEqHar}.  Let $\gamma'_4$ be the inclusion of $\gamma_4$ also depicted in Figure \ref{fig:BgaugeEqHar}.  Pulling the paths $\gamma'$ and $\gamma'_4$ back to the bases point we obtain $X$-colored diagrams whose colors near the base point are $w \gm \Qm(D)(\gamma)$ and $\Qm(D_4)(\gamma_4)$, respectively (see Figure \ref{fig:BgaugeEqHar}).  But the paths $\gamma'$ and $\gamma'_4$ are isotopic in $T'$ and so $w \gm \Qm(D)(\gamma)=\Qm(D_4)(\gamma_4)$.  Since $e$ is a general edge we have proven $w \gm \Qm(D)=\Qm(D_4)$.

The last part of the proposition follows from the last part of Proposition \ref{P:HarpoonAuto} which implies that for any $\QX$-tangle $T$,  
$$b\rhd T=(b,+)\gp\bp{(b,-)\gm T}.$$

\end{proof}
Remark that the action $\gm$ is the defect of monoidality of $\Qm$ and $\wt\Qm$:
\begin{proposition}\label{P:twisttensor} For all  $D,D'\in\Diag_X$ we have
  $$\Qm(D\otimes D')=\Qm(D)\otimes \bp{w\gm\Qm(D')} \text{ and } 
  \wt\Qm(D\otimes D')=\wt\Qm(D)\otimes \bp{w\gm\wt\Qm(D')}$$
 where $w=\partial_\pm D$.
\end{proposition}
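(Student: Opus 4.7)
The plan is to compute $\Qm(D \otimes D')$ edge by edge and match it against $\Qm(D) \otimes (w \gm \Qm(D'))$, exploiting the monoidal convention that identifies base points using a path above $D$, together with Equation \eqref{eq:B-gauge} from the proof of Proposition \ref{P:B-gauge=harpoons}.

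\emph{Setup.} Let $T$ and $T'$ be underlying tangles of $D$ and $D'$, and recall that $T\sqcup T'$ places $T'$ to the right of $T$, with the base point $*$ of the tensor product identified with the original base point of $T$ via the convention that the base point of $T'$ is joined to that of $T$ by a path \emph{above} $T$. Every edge $e$ of $D\otimes D'$ is either an edge of $D$ or an edge of $D'$; I treat these two cases separately.

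\emph{Edges of $D$.} If $e$ is an edge of $D$, then because $T'$ is planarly separated from $T$ (to its right), I can choose a representative path $\gamma:[0,1)\to M_{T\sqcup T'}$ from $*$ to $e$ that avoids $T'$ entirely. The pull-back procedure used in Theorem \ref{th:LV} to compute $f_{(D\otimes D',c)}(\gamma)$ then passes only through crossings of $D$ and returns exactly the value $f_{(D,c)}(\gamma)$. Hence $\Qm(D\otimes D')(e)=\Qm(D)(e)$ for every $D$-edge $e$.

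\emph{Edges of $D'$.} If $e$ is an edge of $D'$, the representative path $\gamma$ from $*$ to $e$ factors as a path that first travels above $T$ (via the identification of base points) and then continues to $e$ within the $T'$-region. I pull $\gamma$ back in two stages: first retract its final segment to the original base point of $T'$, producing the color $\Qm(D')(e)\in X$, and then retract the initial segment that lies above $T$ back to $*$. The second stage is precisely the cabling construction that defines $\wt B_1^-(w,\any)$ with $w=\partial_+ D$, since it slides a strand (colored by $\Qm(D')(e)$) under the top strands of $D$. Applying Equation \eqref{eq:B-gauge} from the proof of Proposition \ref{P:B-gauge=harpoons}, the net effect on the color at $*$ is $w\gm\Qm(D')(e)$. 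Choosing instead $w=\partial_- D$ gives the same result: the two words differ by the biquandle action across $D$, and the equivalence \eqref{eq:B2=B1} shows both cablings yield $B$-gauge equivalent $X$-diagrams, hence identical $\QX$-colors after applying $\Qm$.

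\emph{Conclusion and passage to $\wt\Qm$.} Combining the two cases gives the edge-wise equality $\Qm(D\otimes D')=\Qm(D)\otimes\bp{w\gm\Qm(D')}$. The corresponding statement for $\wt\Qm$ follows immediately from that for $\Qm$, since $\wt\Qm$ is obtained by post-composing $\Qm$ with the projection to isotopy classes (Theorem \ref{T:QD}), and this projection is compatible with both the tensor product and the harpoon action (the latter because harpoon actions on tangles are defined in Subsection \ref{SS:GaugeActionQuandle} through the quandle endofunctor of Equation \eqref{eq:Q-functor}). The main technical obstacle is the clean decomposition of the two-stage pull-back and the verification that the upper stage realizes the $\wt B_1^-$ cabling rather than some variant with a different sign; once this identification is made, the result reduces to Equation \eqref{eq:B-gauge}.
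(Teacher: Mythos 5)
Your proposal is correct and takes essentially the same route as the paper: the paper's (very brief) proof is exactly the observation that a path from the base point to an edge $e$ of $D'$ must first cross the edges corresponding to $\partial_\pm D$ and then continue above $D'$, so that the proof of Equation \eqref{eq:B-gauge} gives the color $w\gm\Qm(D')(e)$; your two-stage pull-back, the trivial case of edges of $D$, and the comparison of $\partial_+D$ with $\partial_-D$ via \eqref{eq:B2=B1} simply make this explicit. One convention slip should be fixed: since the monoidal structure joins the base points by a path \emph{above} $\Gf_1$, the transported strand passes \emph{over} the $\partial_\pm D$ edges, i.e.\ the $\partial_\pm D$ cable is slid \emph{under} it, which is precisely the geometric content of $\wt B_1^-$ and hence of $\gm$; your phrase ``slides a strand under the top strands of $D$'' inverts this and, taken literally, would realize $\wt B_1^+$, i.e.\ $\gp$, so only your named operation and conclusion are right, not the stated reason. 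Also, to see that the two choices of $w$ agree, cite directly the identity $B_1^\pm(\partial_+D,\any)=B_1^\pm(\partial_-D,\any)$ asserted in \eqref{eq:B2=B1} (an obvious isotopy sliding the strand across $D$), rather than arguing that $B$-gauge equivalent $X$-diagrams have ``identical $\QX$-colors after applying $\Qm$'' --- gauge equivalent diagrams do not in general have the same $\QX$-colors.
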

\begin{proof}
Let $e$ be an edge of $D'$.   To compute the value assigned to $e$ in $\Qm(D\otimes D')$ one needs to pass over the edges corresponding to $\partial_\pm D$ then extend to a path above $D'$ as done in the proof of Proposition \ref{P:B-gauge=harpoons}.  Thus, the proof of Equation \eqref{eq:B-gauge} implies the proposition.  
\end{proof}

\subsection{Examples}
\begin{example}[Group factorization]\label{s:G+G-}\ 
A \emph{group factorization} is a pair of groups $(\Gd, \Grc)$ with morphisms $\vp_+,\vp_-:\Grc\to\Gd$ such that the map $$\psi:\Grc \to \Gd \; \text{ given by } \; x \mapsto
  \vp_+(x)\vp_-(x)^{-1}$$ is bijective.
We generalize this notion as follows:  a \emph{generalized
group  factorization} is a tuple $(\Gd, \wb \Gd, \Grc, \vp_+,\vp_-)$
where:
\begin{enumerate}
\item $\Gd,\wb \Gd$ and $\Grc$ are groups such that $\Gd$ is a normal subgroup
  of $\wb \Gd$,
\item $\vp_+,\vp_-:\Grc\to\wb\Gd$ are group morphisms such that the
  map
  $$
  \psi:\Grc \to \wb\Gd \; \text{ given by } \; x \mapsto
  \vp_+(x)\vp_-(x)^{-1}
  $$
  restricts to a bijection between $\Grc$ and $\Gd$.
\end{enumerate}

Then we can associate a biquandle $B:\Grc\times\Grc\to\Grc\times\Grc$ 
to a generalized group factorization.  For $x_1,x_2\in \Grc$, the elements
$(x_4,x_3)=B(x_1,x_2)$ are the unique solutions of the system
  \begin{equation}
    \label{eq:cross}
    \left\{
      \begin{array}{ccc}
        x_4\,x_3&\!\!=\!\!&x_1\,x_2\in\Grc\\\vp_+(x_4)\vp_-(x_3)&\!\!=\!\!&
        \vp_-(x_1)\vp_+(x_2)\in\Gd.
      \end{array}
    \right.
  \end{equation}
  They are given by
  $$x_4=\psi^{-1}\bp{\vp_-(x_1)\psi(x_2)\vp_-(x_1)^{-1}}\et
  x_3=\psi^{-1}\bp{\vp_+(x_4)^{-1}\psi(x_1)\vp_+(x_4)}.$$ 
  Then $B$ satisfies the set braid
  relation (see \cite{KR,GP} for details).  Further, one can see that the sideways map $S$ is given by
  \begin{equation}\label{E:DefS}
  S=(\Id\times i)\circ B^{-1}\circ (i\times \Id)
  \end{equation}
  where $i:x\mapsto x^{-1}$ and $x^{-1}$ is the inverse of $x$ in
  $\Grc$.  Thus $B$ is sideways invertible.  Moreover, for any
  $x\in\Grc$, let $\alpha(x)=\psi^{-1}(\vp_-(x)^{-1}\vp_+(x))=\psi^{-1}(\psi(x^{-1})^{-1})$ then
  $B(x,\alpha(x))=(x,\alpha(x))$.  Thus the last condition of the
  definition of a biquandle holds.

  Let us show the quandle structure associated to this biquandle under the image of $\psi$ is the conjugacy quandle of $ \Gd$.  To do this consider   $B(x_1,x_2)=(x_4,x_3)$ and $B(x_4,x_3)=(x_6,x_5)$ for $x_1,x_2\in \Grc$.  Then from Equation \eqref{eq:quandle} we have $x_6=x_4 \rhd x_1$.  So we need to compute the image of $x_6$ under $\psi$:
  \begin{align}\label{E:AssBConj}
 \psi(x_4 \rhd x_1)= \psi(x_6)&=\vp_-(x_4)\psi(x_3)\vp_-(x_4)^{-1}\notag\\
&  =\vp_-(x_4)\vp_+(x_4)^{-1}\psi(x_1)\vp_+(x_4)\vp_-(x_4)^{-1} \notag\\
 & =\psi(x_4)^{-1}\psi(x_1)\psi(x_4).
  \end{align}

  Thus, the quandle structure on $\Grc$ associated to $B$ is the pull
  back via $\psi$ of $\Conj(\Gd)$.  Finally,
  $$\psi\circ\bp{B^{\pm}_1}\left(x,\any\right)=\vp_\mp(x)\psi(\any)\vp_\mp(x)^{-1}$$ 
  so $B$-gauge transformations are the pull back of conjugations in $\wb \Gd$.

  Note that factorizable Poisson Lie groups provide examples of such
  factorization $(\Gd, \Grc, \vp_+,\vp_-)$.  In this case $\Gd$ is a
  factorizable Poisson Lie group and $\Grc$ is its Poisson Lie dual.
\end{example}
\begin{example}
  Let $\Gd$ be a complex simple Lie group. Fix a Borel subgroup
  $\GB\subset \Gd$.  According to the Iwasawa decomposition, every element
  $g\in \Gd$ can be written as
$$g=ank=k'a'n'.$$
Here $a,a'\in {\mathsf A}\subset \GH \subset \GB$ are totally positive elements of
the Cartan subgroup of $\GB$ (for $\SL_n(\C)$ they are diagonal matrices
with positive entries).  Elements $n,n'\in \GN\subset \GB$ are unipotent
(complex) and form the maximal unipotent subgroup of $\GB$, and
$k,k'\in \GK\subset \Gd$ are elements of the compact real form of $\Gd$.  Let $\GK'=\GK$ with opposite multiplication,
then we have an example of factorization $(\Gd, \Grc, \vp_+,\vp_-)$ with 
$$\Grc=\GA\GN\times \GK',\quad\vp_+:(an,k)\mapsto an\et\vp_-:(an,k)\mapsto k^{-1}.$$

\end{example}

\begin{example}\label{Ex:SC-G}
Here we consider a particular example of a exact group factorization which we will see later is related to the $\Gd$-link invariants associated to the semi cyclic $U_q\slt$-modules of \cite{GP}, see Subsection \ref{SS:ExampleSemicyclic}.  

Let $\SL_2(\C)$ be the group of $2\times2$ matrices with determinate $1$.    Let $\wb\Gd$ be the group of $2\times2$ upper triangular invertible
  matrices, $\Gd=\wb\Gd\cap\SL_2(\C)$ and
 $$\Grc=\qn{g_{\kappa,\ve}=\bp{{\small\left(\begin{array}{cc}
     \kappa&0\\0&1
    \end{array}\right) },\mat\kappa\ve}:\kappa\in\C^*,\ve\in\C}\subset \wb\Gd\times \wb\Gd.$$
Let $\vp_+,\vp_-$ the projections on the two factors.

Then $(\Gd, \wb\Gd, \Grc, \vp_+,\vp_-)$ is a group factorization where $\psi: \Grc\to \Gd$ is given by $$\psi\bp{g_{\kappa,-\ve}}={\small \left(\begin{array}{cc}     \kappa&0\\0&1    \end{array}\right)}\mat\kappa{-\ve}^{-1}={\small \left(\begin{array}{cc}      \kappa&\ve\\0&\kappa^{-1}  \end{array}\right)}.$$

As above this induces a structure of biquandle
$B:\Grc\times\Grc\to\Grc\times\Grc$.  In particular, given
$x_1=g_{\kappa,{\ve}}$ and $x_2=g_{\kappa',\ve'}$ we can use the
formulas right after Equation \eqref{eq:cross} to compute
$B(x_1,x_2)=(x_4,x_3)$:
  $$x_4= g_{{\kappa'},{(\ve \kappa' + \ve' - \ve (\kappa')^{-1})\kappa^{-1}}}
  \et
  x_3= g_{{\kappa},{\ve(\kappa')^{-1}}}.
  $$ 
  Equation \eqref{E:AssBConj} implies the quandle associated to
  $(B,\Grc)$ is $\Conj(\Gd)$ where $\Gd$ is the upper Borel of
  $\SL_2(\C)$.

\end{example}

\begin{example}[Fibered product]\label{ss:fp}\ 
Let $(X,B)$ be a biquandle and $f:X\to Y$ be a map to a set $Y$. 
We say that $f$ is {\em invariant}
on $X$ if whenever $(x_4,x_3)=B(x_1,x_2)$, one has $f(x_4)=f(x_2)$ and
$f(x_3)=f(x_1)$.    
Given an invariant map $f:X\to Y$ and a surjective map $g:Z\to Y$,
the fiber product $X\times_{(f,g)}Z=\{(x,z):f(x)=g(z)\}$ is naturally
equipped with the biquandle structure given by
$$B((x_1,z),(x_2,z'))=((x_4,z'),(x_3,z))$$
where $(x_4,x_3)=B(x_1,x_2)$.  Furthermore, the projection on the
factors are biquandle maps.
\end{example}

\section{Biquandle braidings in pivotal categories}\label{S:BiquandleBraidings}
In this section  we define an
invariant of $\QX$-tangles analog to the
Reshetikhin-Turaev ribbon functor.  Here $\QX$ is a quandle and the
algebraic data involved is a biquandle factorization $(X,B)$ of $\QX$ and
a representation of $(X,B)$ in a pivotal category $\cat$.  The
construction can be summarized in a composition of functors:
$$\T_\QX\ \stackrel{\wt\Qm^{-1}}\longrightarrow\ \Diag_X/\!\equiv\ \stackrel F\longrightarrow\ \cat$$
where as above $\T_\QX$ is the category of $\QX$-tangles (topological intrinsic object), 
$\Diag_X$ is the category of $X$-colored 
planar diagrams (algebraic and computational object) and the equivalence ``$\equiv$'' is generated by colored
Reidemeister moves.

\subsection{Pivotal categories}\label{SS:PivCat}
Recall that a \emph{pivotal}  category is a (strict) tensor category $\cat$, with unit object~$\unit$, such that to each
object $X\in \cat$ there is associated a \emph{dual object}~$X^*\in \cat$ and four morphisms
\begin{align*}
& \ev_X \colon X^*\otimes X \to\unit,  \quad \coev_X\colon \unit  \to X \otimes X^*,\\
&   \tev_X \colon X\otimes X^* \to\unit, \quad   \tcoev_X\colon \unit  \to X^* \otimes X,
\end{align*}
such that $(\ev_X,\coev_X)$ is a left duality for $X$,
$( \tev_X,\tcoev_X)$ is a right duality for~$X$, and the induced left
and right dual functors coincide as monoidal functors (see for example
\cite{BW}).  Let $\kk=\End_\cat(\unit)$.  The tensor product gives
$\kk$ the structure of a commutative monoid which acts on left and
right on homomorphism set in $\cat$.  We assume that these left and
right actions are equal.  If $f$ is a morphism in $\cat$ and $k\in\kk$,
we just write $k.f$ for 
\begin{equation}
  \label{eq:kk}
  k.f=k\otimes f=f\otimes k.
\end{equation}
For $X\in \cat$, the \emph{right quantum dimension} $\dim_\cat(X)$ is
the element
in $\kk$ determined by
$ \dim_\cat(X)=\tev_X \circ \coev_X  $

An object $V$ of
$\cat$ is \emph{absolutely simple}
if the map
$\kk \to\End_\cat(X)$, $k \mapsto k.\Id_X$ is a bijection.
An object $V$
of $\cat$
is \emph{regular} if $V\otimes\any$
is a faithful endofunctor of $\cat$,
i.e.\ $\Id_V\otimes\any$
defines a family of injective maps on $\Hom$-sets
of $\cat$
(this is true for example when $\kk$
is a commutative ring, for $V$ a non zero object, if objects of $\cat$
are free $\kk$-modules).

\subsection{Biquandle  representations}\label{SS:BiquandleRep}
Let $(X,B)$ be a biquandle and let $\cat$ be a strict pivotal category.   Given a family of
  objects $\{V_x\}_{x\in X}$ of $\cat$ the map $B:X\times X\to X\times X$ induces a map (which we still denoted by $B$):
  $$B=(B_1, B_2): \{V_x\}_{x\in
    X}\times\{V_x\}_{x\in X}\to \{V_x\}_{x\in X}\times\{V_x\}_{x\in X}$$
 given by $B((V_x , V_y))=  (V_{B_1(x,y)}, V_{B_2(x,y)})$.

 A \emph{Yang-Baxter model}
  $(V_\any,c_{\any,\any})$ of a biquandle $(X,B)$ in $\cat$ is a family of
  objects $\{V_x\}_{x\in X}$ of $\cat$  and a family of isomorphisms
  $$\{c_{x,y}:V_x\otimes V_y\to B_1(V_x,V_y)\otimes B_2(V_x,V_y)\}_{(x,y)\in X^2}$$
  which satisfy the colored braid relation: for any elements $x,y,z$ in $X$,
  we have an equality of isomorphisms
    \begin{equation}
      \label{eq:YB}
      (c_{\any,\any}\otimes \Id_\any)\circ 
      (\Id_\any\otimes c_{\any,\any})\circ (c_{x,y}\otimes \Id_{V_z})=
      (\Id_\any\otimes c_{\any,\any})\circ
      (c_{\any,\any}\otimes \Id_\any)\circ (\Id_{V_x}\otimes c_{y,z})
    \end{equation}
    where the $\any$ objects are completed with the biquandle
    structure $B$.

  Let $(V_\any,c_{\any,\any})$ be a Yang-Baxter model of $(X,B)$ in $\cat$.  For any
  objects $V_1$, $V_2$, in $\{V_x\}_{x\in X}$, with
  $B(V_1,V_2)=(V_4,V_3)$, the isomorphisms $c_{\any,\any}$ and the pivotal
  structure give rise to sideways morphisms:
  \begin{align*}
    s^{+}_L(V_4,V_1)&=\bp{\ev_{V_4}\otimes\Id_{V_3\otimes V_2^*}}\circ
    \bp{\Id_{V_4^*}\otimes c_{V_1,V_2}\otimes \Id_{V_2^*}}\circ
    \bp{\Id_{V_4^*\otimes V_1}\otimes\coev_{V_2}},\\
    s^{-}_R(V_4,V_1)&=\bp{\Id_{V_4^*\otimes V_1}\otimes\tev_{V_2}}\circ
    \bp{\Id_{V_4^*}\otimes c_{V_1,V_2}^{-1}\otimes \Id_{V_2^*}}\circ
    \bp{\tcoev_{V_4}\otimes\Id_{V_3\otimes V_2^*}}.
  \end{align*}
  We say the model $(V_\any,c_{\any,\any})$ is \emph{sideways invertible} if $s^{+}_L$ and
  $s^{-}_R$ are inverse isomorphisms.  In other words, for any objects
  $V_1$, $V_2$, $V_3$, $V_4$ as above, we have
  \begin{align}\label{eq:R2-a}
    s^{-}_R(V_4,V_1)\circ s^{+}_L(V_4,V_1)&=\Id_{V_4^*\otimes V_1},&
    s^{+}_L(V_4,V_1)\circ s^{-}_R(V_4,V_1)&=\Id_{V_3\otimes V_2^*}.
  \end{align}
  
For $x\in X$, let $\theta_x:V_x\to V_x$ be the endomorphism 
  defined by
  $$\theta_x=\bp{\Id_{V_x}\otimes\tev_{V_{\alpha(x)}}}
  \bp{c_{V_x,V_{\alpha(x)}}\otimes\Id_{(V_{\alpha(x)})^*}}
  \bp{\Id_{V_x}\otimes\coev_{V_{\alpha(x)}}}$$
  where $\alpha: X\to X$ is the bijection given in Definition \ref{D:biquandle}.
  We say that the model $(V_\any,c_{\any,\any})$ induces a {\em twist} $\theta$ if 
  $$\theta_x=\bp{\ev_{V_{\alpha^{-1}(x)}}\otimes\Id_{V_x}}
  \bp{\Id_{(V_{\alpha^{-1}(x)})^*}\otimes c_{V_{\alpha^{-1}(x)},V_x}}
  \bp{\tcoev_{V_{\alpha^{-1}(x)}}\otimes\Id_{V_x}}.$$

\begin{definition}
  A \emph{representation} $(V_\any,c_{\any,\any})$ of a biquandle $(X,B)$ in a pivotal
  category $\cat$ is a Yang-Baxter model of $(X,B)$ in $\cat$
  which  is sideways invertible and induces a twist.
\end{definition}

\subsection{The biquandle ribbon functor}
Here we define the analog of the Reshetikhin-Turaev ribbon functor for $\QX$-tangles.  First we define a functor on $X$-diagrams.   

\begin{theorem}\label{T:FunctorF}
  Let $(V_\any,c_{\any,\any})$ be a representation of a biquandle $(X,B)$ in a
  pivotal category $\cat$.  Then there exists a unique tensor functor 
  $$
  F:\Diag_X/\!\equiv\,\to\cat$$ 
  such that for any
  $x,y$ in $X$ we have $F((x,+))=V_x$,
  $F((x,-))=V_x^*$ and the morphisms are determined by it value on the elementary tangle diagrams:
  $$F({\chi^+_{x,y}})=c_{x,y}, \;\;\; F({\chi^-_{x,y}})=c_{x,y}^{-1},$$
  $$F(\lev_x)=\lev_{V_x}, \; F(\lcoev_x)=\lcoev_{V_x},\;  F(\rev_x)=\rev_{V_x} \text{ and } F(\rcoev_x)=\rcoev_{V_x}.$$
\end{theorem}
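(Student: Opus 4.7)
The plan is to follow the standard Reshetikhin–Turaev strategy, adapted to the biquandle setting: first build $F$ on the free pivotal category generated by the elementary tangles of Lemma~\ref{L:cat_diag}, then show the defining relations (planar isotopy plus colored Reidemeister moves) are satisfied by the images. Uniqueness is automatic from the statement, since the value of $F$ is prescribed on a generating set of objects and morphisms, so the real content is existence.

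First I would define $F$ on objects of $\Diag_X$ by $F((x,\varepsilon_1)\cdots(x_p,\varepsilon_p))=V_{x_1}^{\varepsilon_1}\otimes\cdots\otimes V_{x_p}^{\varepsilon_p}$, with $V_x^+=V_x$, $V_x^-=V_x^*$. Then define $F$ on the six generating morphisms of Lemma~\ref{L:cat_diag} by the formulas in the statement and extend to arbitrary $X$-colored diagrams by composition and tensor product. At this stage, $F$ is defined on the free pivotal category on the generators, and I must check that it descends (i) to the pivotal relations of $\Diag_X$ (planar isotopy), and (ii) to the $X$-colored Reidemeister moves.

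For (i), the pivotal category axioms in $\cat$ (the compatibility of $(\lev,\lcoev)$ and $(\rev,\rcoev)$ with tensor product, and the compatibility of left and right duals) directly match the planar isotopy relations used to form $\Diag_X$, since Lemma~\ref{L:cat_diag} already encodes that any diagram is put in a form with all crossings oriented upward. For (ii) I proceed move by move, using Figure~\ref{fig:RM} as a generating set:
\begin{itemize}
\item $RII_{++}$ follows from the fact that $c_{x,y}$ is an isomorphism with inverse $c_{x,y}^{-1}$, which is the image of $\chi^-_{x,y}$ by definition.
\item $RII_{-+}$ and $RII_{+-}$ are precisely the sideways invertibility condition (Equation~\eqref{eq:R2-a}): writing out the cup/cap composites one gets $s^+_L$ and $s^-_R$, and their being mutually inverse is exactly what the mixed-orientation RII moves demand.
\item $RIII_{+++}$ is the colored braid relation~\eqref{eq:YB}, which holds by hypothesis.
\item The framed Reidemeister I move $RI^f$ translates into the equality of the two expressions for $\theta_x$ given in the definition of a representation; this is exactly the twist condition.
\end{itemize}
All other oriented/signed versions of each move (negative RII, negative RIII, RI with downward strand, etc.) reduce to the listed ones by composing with cups and caps, using that the pivotal structure allows one to ``bend strands around''. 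Concretely, any RIII with some negative crossings is obtained from $RIII_{+++}$ by pre- and post-composing with identities on sideways-inverted strands, and any RI follows from the pair of twist formulas; no new input is needed.

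The main obstacle I anticipate is bookkeeping the compatibility between the pivotal structure of $\cat$ and the biquandle colorings: one must keep track of how the color on each strand evolves through a cup or cap (which just changes the sign $\varepsilon$) versus through a crossing (which applies $B$), and verify that the indices on the objects at each intermediate height are consistent across the two sides of every Reidemeister move. Once the colored indices are shown to match on both sides of each move (this is essentially the content of Lemma~\ref{L:isotopy}, which says the coloring on the other side of a move is uniquely determined), the equality of the $F$-images reduces to the four axiom checks above. Functoriality and monoidality of $F$ then hold by construction on the elementary pieces and extend uniquely. The detailed verification of the mixed-orientation moves is the kind of lengthy but mechanical computation that I expect the authors to defer to an appendix.
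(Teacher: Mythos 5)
Your proposal is correct and takes essentially the same approach as the paper: the paper likewise defines $F$ on the generators of Lemma~\ref{L:cat_diag}, obtains planar-isotopy invariance by replacing the crossings with coupons colored by $c^{\pm1}$ and invoking the Reshetikhin--Turaev pivotal functor on $\cat$-colored planar graphs (the same coherence input you package as a free pivotal category), and then checks exactly the generating set consisting of the two $RII_{++}$ moves, $RIII_{+++}$, $RII_{+-}$, $RII_{-+}$ and $RI^f$, with uniqueness following from Lemma~\ref{L:cat_diag}. Your move-by-move matching (invertibility of $c$ for $RII_{++}$, sideways invertibility~\eqref{eq:R2-a} for the mixed $RII$ moves, the colored braid relation~\eqref{eq:YB} for $RIII_{+++}$, and the twist condition for $RI^f$) is precisely the verification the paper compresses into one sentence.
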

\begin{proof}
  The pivotal category $\cat$ gives rise to a RT-pivotal functor from
  the category of $\cat$-colored planar graphs with coupons to $\cat$.
  A diagram $D\in\Diag_X$ is made up of the elementary tangles:
  strands, cap, cup and braidings.  By replacing all the
  $\pm$-crossings of $D\in\Diag_X$ with coupons filled with the
  morphisms $c^{\pm1}$ we can obtain a $\cat$-colored planar graph
  with coupons which we denote by $D'$.  Define $F(D)$ as the
  RT-pivotal functor evaluated on $D'$.  From the properties of the
  RT-pivotal functor, $F(D)$ only depends on the planar isotopy class
  of $D'$.  To see $F$ is invariant under the equivalence generated by
  the color Reidemeister moves it is sufficient to check that it
  satisfy the generating set of six colored Reidemeister moves:
  $$\{\text{the two }RII_{++},\; RIII_{+++},\; RII_{+-}, \; RII_{-+}, \; RI^f\},$$ 
  see Figure \ref{fig:RM}.  But the axioms of the biquandle 
representation
imply
  these moves are satisfied.
Thus, we obtain a tensor functor $F:\Diag_X/\!\equiv\,\to\cat$.  By definition this functor satisfies the values on the elementary tangles given in the statement of the theorem.  By Lemma \ref{L:cat_diag} these elementary tangles generate the tensor category $\Diag_X$ and so determine the functor.  
\end{proof}
 
\begin{corollary} \label{C:RTFunctorTangle}
  Let $\QX$ be a quandle with a biquandle factorization $(X,B)$ and
  $(V_\any,c_{\any,\any})$ be a representation of $(X,B)$ in a pivotal
  category $\cat$.  Then the formula $\wt F=F\circ \wt\Qm^{-1}$
  defines a functor
  $$\wt F:\Tang_\QX\rightarrow \cat.$$
  This functor is uniquely determined by $\wt F((x,+))=V_x$,
  $\wt F((x,-))=V_x^*$,  the image of the elementary tangles
  cap, cup and braidings and by the property: for any
  $T,T'\in\Tang_\QX$,
  \begin{equation}\label{E:wtFotimes}
  \wt F(T\otimes T')=\wt F(T)\otimes\wt F(w\gm T'),
  \end{equation}
  where $w=\wt\Qm^{-1}(\partial_\pm T)^*$.
\end{corollary}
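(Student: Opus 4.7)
The plan is to take $\wt F := F\circ\wt\Qm^{-1}$ as the definition of the claimed functor and then verify the listed properties. By Theorem \ref{th:LV}, $\wt\Qm$ is a bijective functor, hence so is $\wt\Qm^{-1}$; composed with the tensor functor $F$ supplied by Theorem \ref{T:FunctorF}, this immediately produces a functor $\wt F:\Tang_\QX\to\cat$.

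The prescribed values of $\wt F$ on objects and elementary morphisms follow at once from the observation that $\wt\Qm^{-1}$ sends a letter $(x,\pm)\in W_\QX$ to the matching letter $(x,\pm)\in W_X$ (under the set identification $\QX = X$), and sends each elementary $\QX$-colored cap, cup, positive crossing and negative crossing to the corresponding elementary $X$-colored diagram with the same labels. Applying the values of $F$ on these elementary diagrams, listed in Theorem \ref{T:FunctorF}, then yields precisely the values asserted for $\wt F$.

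The key identity \eqref{E:wtFotimes} is obtained by transporting Proposition \ref{P:twisttensor} through $F$. Given $T,T'\in\Tang_\QX$, set $D = \wt\Qm^{-1}(T)$; then $\partial_\pm D\in W_X$ corresponds under $X = \QX$ to $\partial_\pm T\in W_\QX$, so that $\partial_\pm D$ agrees with $\wt\Qm^{-1}(\partial_\pm T)$. Letting $D' = \wt\Qm^{-1}(w\gm T')$ for the appropriate $w$ coming from $\partial_\pm D$, Proposition \ref{P:twisttensor} gives $\wt\Qm(D\otimes D') = T\otimes T'$, hence $\wt\Qm^{-1}(T\otimes T') = D\otimes D'$ in $\Diag_X/\!\equiv$. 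Since $F$ is a strict tensor functor, $F(D\otimes D') = F(D)\otimes F(D')$, i.e.\ $\wt F(T\otimes T') = \wt F(T)\otimes \wt F(w\gm T')$, which is exactly \eqref{E:wtFotimes}.

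Uniqueness then follows from the fact that the elementary $\QX$-tangles generate $\Tang_\QX$ under composition and tensor product, so functoriality together with the non-monoidal tensor rule \eqref{E:wtFotimes} determine any functor agreeing with $\wt F$ on the generators. The main subtlety to address is the careful matching of the sign and dualisation conventions between Proposition \ref{P:twisttensor} (which uses $w = \partial_\pm D$ together with the harpoon $\gm$) and the statement of the corollary (which uses $w = \wt\Qm^{-1}(\partial_\pm T)^*$): this bookkeeping is dictated by the orientations of the cabled strands in Figure \ref{fig:FB} relative to the boundary $\partial_\pm T$, and is the step that needs the most care.
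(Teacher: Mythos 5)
Your proposal is correct and follows essentially the same route as the paper: define $\wt F=F\circ\wt\Qm^{-1}$, deduce \eqref{E:wtFotimes} by applying $\wt\Qm^{-1}$ to Proposition \ref{P:twisttensor} and using that $F$ is a tensor functor, and get uniqueness from the decomposition of tangles into compositions of tensor products of elementary pieces. The dualisation bookkeeping you flag at the end is handled in the paper by the identity $(w\gm\any)^{-1}=(w^*\gm\any)$, justified there by precisely the Reidemeister~II slide you indicate, so your sketch matches the paper's level of detail on that step as well.
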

\begin{proof}
Clearly,  $\wt F$ is a functor.  Let us prove that Equation \eqref{E:wtFotimes} holds.
By applying $\wt \Qm^{-1}$ to both sides of the equality in Proposition \ref{P:twisttensor} we have
$$
D\otimes D'=\wt \Qm^{-1}\left( \wt\Qm(D)\otimes \bp{w\gm\wt\Qm(D')}\right)
$$
where $w=\partial_\pm D$.  
Substituting the tangles $T=\wt\Qm(D)$ and $T'=w\gm\wt\Qm(D')$  into this equation we have
$$
\wt\Qm^{-1}(T)\otimes \wt\Qm^{-1}(w^*\gm T') =\wt \Qm^{-1}( T\otimes T').
$$
where $D'=\wt\Qm^{-1}(w^*\gm T')$ because $(w \gm\any)^{-1}=(w^*\gm\any)$ which follows from an argument using a Reidemeister II move.  Since $F$ is a tensor functor the desired equation holds.  

Finally, we need to prove the functor is uniquely determined by the above properties.  Each tangle in $\Tang_\QX$ can be written as the composition of the tensor product (i.e.\ disjoint union) of the elementary tangles cap, cup and braidings.  But Equation \eqref{E:wtFotimes} says that such a tensor product is determined by the values of the elementary tangles.   
\end{proof}

\subsection{Gauge invariance of the functor $F$}\label{SS:GaugeInvF}
Here we consider gauge invariance of the restriction of
$F$ to $\QX$-links.
Recall that we assume that $\cat$ is a pivotal category for which
Equation \eqref{eq:kk} is satisfied.
\begin{definition}\label{D:reg}
  A representation $(V_\any,c_{\any,\any})$ of a biquandle $(X,B)$ in
  $\cat$ is {\em simple} (resp.\ \emph{regular}) if each object $V_x$
  is absolutely simple (resp.\ regular), see Subsection
  \ref{SS:PivCat} for definitions of absolutely simple and regular.
\end{definition}
If a regular object $V\in\cat$, has an endomorphism $f\in\kk.\Id_V$, we
denote by $\brk f$ the unique element of $\kk$ such that
$f=\brk{f}\Id_V$.
\begin{theorem}\label{T:FGaugeInv} Let $(V_\any,c_{\any,\any})$ be a simple regular representation
  of a biquandle $(X,B)$ in a pivotal category $\cat$ and $\QX$ its
  associated quandle.  For any $x,y\in X$ and any diagram
  $D_x\in\End_{\Diag_X}((x,+))$ one has
  \begin{equation}\label{E:FD=FB1D}
  \brk{F(D_x)}=\brk{F(\wt B_1^\pm(y,D_x))}.
  \end{equation}
  Moreover, if $T$ and $T'$ are $B$-gauge equivalent 1-1 $\QX$-tangle
 then $$\brk{\wt F(T)}=\brk{\wt F(T')}.$$
\end{theorem}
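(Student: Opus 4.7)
The plan is to prove the two parts in sequence, with \eqref{E:FD=FB1D} providing the key local fact and the 1--1 tangle statement then following by induction along a chain of $B$-gauge transformations. The starting observation is that since $V_x$ is absolutely simple, $F(D_x) = \brk{F(D_x)}\,\Id_{V_x}$, so the task in \eqref{E:FD=FB1D} is precisely to show that this scalar is preserved under $\wt B_1^\pm(y,\any)$.

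For \eqref{E:FD=FB1D}, my approach is to exploit the diagrammatic definition of $\wt B_1^+(y,D_x)$ (Figure \ref{fig:FB}) as $D_x$ with a $y$-colored cable slid above it, realized up to colored Reidemeister moves. By Theorem \ref{T:FunctorF}, $F$ factors through $\Diag_X/\!\equiv$, so I may work with any convenient representative, and I would choose one in which $D_x$ appears as a single ``block'' inserted into an otherwise elementary shell built from the crossings $c_{\any,\any}$, their inverses and the pivotal structure. Functoriality of $F$ then expresses $F(\wt B_1^+(y,D_x))$ as $\Phi\circ(\Id\otimes F(D_x)\otimes \Id)\circ \Psi$ for some morphisms $\Phi,\Psi$ in $\cat$. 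Substituting $F(D_x) = \brk{F(D_x)}\Id_{V_x}$ and using \eqref{eq:kk} pulls the scalar outside, leaving $\brk{F(D_x)}\cdot F(\wt B_1^+(y,\Id_{(x,+)}))$; this remaining factor is the identity on its target, again by colored Reidemeister invariance. Since the target object is absolutely simple, taking brackets recovers \eqref{E:FD=FB1D}. The $\wt B_1^-$ case is handled by the same argument with crossings inverted.

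For the second assertion, I would take $B$-gauge equivalent 1--1 $\QX$-tangles $T,T'$ and pick $X$-diagram witnesses $E,E'$ with $\wt\Qm(E)=T$ and $\wt\Qm(E')=T'$, so that $\wt F(T)=F(E)$ and $\wt F(T')=F(E')$. Because $T,T'$ are 1--1 tangles, one may arrange $E\in\End_{\Diag_X}((x,+))$ and $E'\in\End_{\Diag_X}((x',+))$ for some $x,x'\in X$. The equivalence $E\sim E'$ is generated by the four functors $\wt B_i^\pm(w,\any)$; I would use \eqref{eq:B2=B1} to rewrite every $\wt B_2^\pm$ step as a $\wt B_1^\pm$ step, and decompose each $w\in W_X$ into single letters, reducing the chain to iterated single-letter applications of $\wt B_1^\pm(y,\any)$ for $y\in X$. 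At each step the diagram remains an endomorphism of a single letter (with label shifted by the biquandle action), so \eqref{E:FD=FB1D} applies and preserves the bracket. Concatenating yields $\brk{F(E)}=\brk{F(E')}$, i.e.\ $\brk{\wt F(T)}=\brk{\wt F(T')}$.

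The hard part, I expect, will be the ``block isolation'' step in Part 1: producing a representative of $\wt B_1^+(y,D_x)$ in which $D_x$ occurs exactly once and the surrounding shell $\Phi\circ\Psi$, after removing $D_x$, coincides with $F$ of a pure gauge transformation of the identity strand, so that no extraneous factors appear. A secondary subtlety will be to check that after the biquandle relabeling the target word still labels an absolutely simple object, so that $\brk{\cdot}$ is defined on both sides of \eqref{E:FD=FB1D}; this should follow from the hypothesis that the whole family $\{V_x\}_{x\in X}$ is absolutely simple together with the bijectivity of $B$.
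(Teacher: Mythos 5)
You correctly predicted where the difficulty lies, and that is in fact where your argument breaks: the ``block isolation'' step in Part 1 cannot be carried out. The diagram $\wt B_1^+(y,D_x)$ is an endomorphism of the single letter $(B_1(y,x),+)$, so in any representative of its class in $\Diag_X/\!\equiv$ the boundary carries only that recolored letter; consequently the $y$-cable that effects the recoloring cannot reach the boundary and must be created and annihilated inside the diagram by cups and caps. Such a strand is then a closed component, and after you replace $D_x$ by the identity and slide the component off, the shell $\Phi\circ\Psi$ evaluates not to $\Id_{V_{B_1(y,x)}}$ but to $\dim_\cat(V_y).\Id_{V_{B_1(y,x)}}$. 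In a general pivotal category this quantum dimension need not be invertible, and in the motivating examples of this paper it vanishes identically --- which is precisely why modified traces are introduced --- so the claim that ``the remaining factor is the identity'' fails, and with it the factorization $F(\wt B_1^+(y,D_x))=\Phi\circ(\Id\otimes F(D_x)\otimes\Id)\circ\Psi$ as an equation of endomorphisms of $V_{B_1(y,x)}$ alone.

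The repair, which is the paper's actual proof, is to never isolate $F(\wt B_1^\pm(y,D_x))$: instead keep the $y$-strand open as a spectator. The diagrams $\Id_{(y,+)}\otimes D_x$ and $\chi^-_{B(y,x)}\circ\bigl(\wt B_1^+(y,D_x)\otimes\Id\bigr)\circ\chi^+_{y,x}$ are related by colored Reidemeister moves, so applying $F$, writing both $F(D_x)$ and $F(\wt B_1^+(y,D_x))$ as scalars via absolute simplicity, and cancelling $c^{-1}_{y,x}\circ c_{y,x}=\Id_{V_y\otimes V_x}$ gives $\brk{F(D_x)}.\Id_{V_y\otimes V_x}=\brk{F(\wt B_1^+(y,D_x))}.\Id_{V_y\otimes V_x}$. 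To pass from this to equality of the two scalars in $\kk$ one needs the \emph{regularity} hypothesis --- which your write-up never invokes: absolute simplicity alone does not make $k\mapsto k.\Id_{V_y\otimes V_x}$ injective, and closing off the spectator strand to get rid of it would again cost a quantum dimension. Your Part 2 is sound and essentially identical to the paper's: using Equation \eqref{eq:B2=B1} to trade $\wt B_2^\pm$ steps for $\wt B_1^\pm$ steps and reducing words to iterated single-letter applications of Equation \eqref{E:FD=FB1D} is exactly how the second assertion follows once Part 1 is proved correctly.
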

\begin{proof}
 We prove the positive version of Equation \eqref{E:FD=FB1D}, the negative is
  similar.  By definition, the diagrams $\Id_{(y,+)}\otimes D_x$ and
  $\chi^-_{B(y,x)}\circ\left(\wt B_1(y,D_x)\otimes \Id \right)\circ\chi^+_{y,x}$ are related
  by a sequence of colored Reidemeister moves so their images under $F$
  are equal.  Using this observation we have
  $$\brk{F(D_x)}\Id_{(y,+)}\otimes \Id_{(x,+)}=F( \Id_{(y,+)}\otimes
  D_x)$$$$=F\bp{\chi^-_{B(y,x)}\circ
    \left(\wt B_1(y,D_x)\otimes\Id_{(B_2(y,x),+)}\right)\circ
    \chi^+_{y,x}}=\brk{F(\wt B_1(y,D_x))}c^{-1}_{y,x}\circ
  c_{y,x}$$ and the result follows from the regularity of $V_x\otimes
  V_y$.
  
To prove the last statement, recall that the bijections  $\wt B_2^\pm$ are inverses of $\wt B_1^\pm$, see Equation \eqref{eq:B2=B1}.  Thus, Equation \eqref{E:FD=FB1D} implies a generating set of equivalences are satisfied and the result follows.  
\end{proof}

\begin{lemma}\label{L:qdimGauge}
 Let $(V_\any,c_{\any,\any})$
  be a regular representation of a biquandle $(X,B)$
  in a pivotal category $\cat$.  
  Then the right quantum dimension $\dim_\cat(V_\any): X \to \kk$ is gauge invariant, i.e.\ $\dim_\cat(V_x)=\dim_\cat(V_{B_1^\pm(y,x)}) $ for all $x,y\in X$.  
\end{lemma}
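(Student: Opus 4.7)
The plan is to realize $\dim_\cat(V_x)$ as the bracket $\brk{F(D_x)}$ of a concrete endomorphism diagram and then to rerun the computation that underlies Theorem~\ref{T:FGaugeInv} on this $D_x$. That theorem is stated under a simple regular hypothesis, but its argument carries through under mere regularity as soon as the $F$-images of the diagrams in question are already known to be scalar multiples of the identity; I will verify this hypothesis by hand.

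Fix $x,y \in X$ and let $D_x \in \End_{\Diag_X}((x,+))$ be the tensor product of a vertical strand colored $x$ with a small disjoint unknotted circle colored $x$. Since $F$ is a tensor functor and the circle is sent to $\tev_{V_x} \circ \coev_{V_x} = \dim_\cat(V_x)\, \Id_\unit$, we have $F(D_x) = \dim_\cat(V_x)\, \Id_{V_x} \in \kk\, \Id_{V_x}$, so $\brk{F(D_x)} = \dim_\cat(V_x)$ is well-defined. By the description of the biquandle gauge action in Subsection~\ref{SS:GaugeActionQuandle} and its identification with the harpoon action in Proposition~\ref{P:B-gauge=harpoons}, the functor $\wt B_1^\pm(y, \any)$ acts on the $X$-coloring of a diagram by uniformly replacing every edge-color $c$ by $B_1^\pm(y, c)$. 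Applied to $D_x$, this gives the same underlying diagram with both the strand and the circle now colored by $x' := B_1^\pm(y, x)$, whence $F(\wt B_1^\pm(y, D_x)) = \dim_\cat(V_{x'})\, \Id_{V_{x'}}$ and $\brk{F(\wt B_1^\pm(y, D_x))} = \dim_\cat(V_{x'})$.

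Rerunning the computation from the proof of Theorem~\ref{T:FGaugeInv} for this $D_x$, the colored Reidemeister equivalence
\[
\Id_{(y,+)} \otimes D_x \;\equiv\; \chi^{\mp}_{B(y,x)} \circ \bp{\wt B_1^\pm(y, D_x) \otimes \Id_{(B_2(y,x),+)}} \circ \chi^{\pm}_{y,x},
\]
combined with $F(\chi^\pm_{y,x}) = c^{\pm 1}_{y,x}$ and the fact that both $F(D_x)$ and $F(\wt B_1^\pm(y, D_x))$ are scalar multiples of the identity, yields
\[
\dim_\cat(V_x)\, \Id_{V_y \otimes V_x} \;=\; \dim_\cat(V_{x'})\, \Id_{V_y \otimes V_x}.
\]
Regularity is preserved under tensor products because $(V_y \otimes V_x) \otimes \any = V_y \otimes (V_x \otimes \any)$ is a composition of faithful endofunctors, so $V_y \otimes V_x$ is regular and the map $\kk \to \End_\cat(V_y \otimes V_x)$ sending $k$ to $k\, \Id$ is injective. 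Cancelling then gives $\dim_\cat(V_x) = \dim_\cat(V_{B_1^\pm(y, x)})$, as required.

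The main technical point, and the one requiring the most care, is justifying that the argument of Theorem~\ref{T:FGaugeInv} truly applies in the merely regular setting for this particular $D_x$; this rests on verifying by hand that $F(D_x)$ and $F(\wt B_1^\pm(y, D_x))$ both lie in $\kk\, \Id$, as above, rather than appealing to absolute simplicity. A related subtle point is identifying the $X$-color of the gauge-transformed circle (not only that of the strand); this is handled by the uniform-recoloring description of $\wt B_1^\pm(y, \any)$ furnished by Proposition~\ref{P:B-gauge=harpoons}.
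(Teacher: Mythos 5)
Your strategy of trading absolute simplicity for an explicit verification that the relevant $F$-images are scalars is sound, but the proof as written has a genuine error in the recoloring step. The functor $\wt B_1^\pm(y,\any)$ does \emph{not} uniformly replace every $X$-color $c$ by $B_1^\pm(y,c)$: that pointwise description is valid only for the induced $\QX$-colorings (this is what Propositions \ref{P:HarpoonAuto} and \ref{P:B-gauge=harpoons} assert, via $\Qm$), whereas at the biquandle level the sliding strand itself changes color at each crossing. In your $D_x=\Id_{(x,+)}\otimes u_x$, the $y$-strand must cross the vertical $x$-strand before it reaches the circle, at which point it is colored $B_2^\pm(y,x)$; hence in $\wt B_1^\pm(y,D_x)$ the open strand is colored $x'=B_1^\pm(y,x)$ but the circle is recolored through the strand color $B_2^\pm(y,x)$, becoming $u_{c'}$ with $c'=B_1^\pm\bp{B_2^\pm(y,x),x}$, which differs from $x'$ in general. (Sanity check in the group-factorization biquandles of Example \ref{s:G+G-}: passing under a strand conjugates by $\vp_-$ of the strand's color, and $\vp_-\bp{B_2(y,x)}\neq\vp_-(y)$ generically, e.g.\ in Example \ref{Ex:SC-G}.) So your displayed identity actually yields $\dim_\cat(V_x)=\dim_\cat(V_{c'})$, not the stated conclusion, and the appeal to Proposition \ref{P:B-gauge=harpoons} cannot close this, since that proposition lives on the quandle side.

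The lemma can still be rescued from your computation: by sideways invertibility (Axiom \eqref{DI2:biquandle}), $B_2^\pm(\any,x)$ is a bijection of $X$, so as $y$ ranges over $X$ the element $z=B_2^\pm(y,x)$ ranges over all of $X$ and $\dim_\cat(V_x)=\dim_\cat(V_{B_1^\pm(z,x)})$ for all $z$ follows — but you must say this, and as written you do not. Note also that the paper's own proof sidesteps the whole difficulty: it slides the closed unknot $u_x$ alone across a single strand colored $y$, giving $\Id_{(y,+)}\otimes u_x\equiv u_{x'}\otimes\Id_{(y,+)}$ with $x'=B_1^\pm(y,x)$ (here the strand really is a single edge colored $y$ at the crossing, so no color drift occurs), whence $F(u_x)\Id_{V_y}=F(u_{x'})\Id_{V_y}$ and regularity of $V_y$ cancels. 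Since $u_x$ is a closed diagram, $F(u_x)=\dim_\cat(V_x)\in\kk$ automatically, so no endomorphism diagram, no conjugation by $c_{y,x}$, and no surrogate for simplicity are needed; the extra $x$-strand in your $D_x$ is precisely what introduces the color drift.
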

\begin{proof}
  The proof is similar to the previous one: Let $u_x$
  denote a clockwise oriented planar unknot colored with $x\in
  X$, let $y\in X$ and $x'= B_1(x,y)$.  Then $\Id_{(y,+)} \otimes
  u_x$ and $u_{x'}\otimes
  \Id_{(y,+)}$ are related by colored Reidemeister moves thus their
  image by $F$
  are equal.  Then $F(u_x)\Id_{V_y}=F(u_{x'})\Id_{V_y}$
  and $F(u_x)=F(u_{x'})$ follows because $V_y$ is regular.
\end{proof}
  
  Let $\Link_\QX$ be the set of
  $\QX$-links, i.e.\ closed $\QX$-tangles. 
By closing 1-1 $\QX$-tangles Lemma \ref{L:qdimGauge} immediately implies a version of Theorem \ref{T:FGaugeInv} for $\QX$-links:
\begin{corollary}
  The invariant $\wt F$ restricted to the set $\Link_\QX$ of
  $\QX$-links is gauge invariant, i.e. $\wt F(L)=\wt F(w\gm L)$ and $\wt F(L)=\wt F(w\gp L)$ for all $L\in \Link_\QX$ and $w\in W_X$.
\end{corollary}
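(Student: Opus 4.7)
The plan is to reduce any $\QX$-link $L$ to the closure of a 1-1 $\QX$-tangle, apply Theorem \ref{T:FGaugeInv} to control the scalar invariant, and apply Lemma \ref{L:qdimGauge} to control the quantum dimension of the closing strand. Concretely, I would pick a component of $L$, cut it at some edge, and obtain a 1-1 $\QX$-tangle $T$ whose plat closure is $L$ and whose endpoints carry the color $b = \Qm(x)$ for some $x \in X$ lifting $b$. Assuming the hypotheses of Theorem \ref{T:FGaugeInv} (simple regular representation), $V_x$ is absolutely simple, so $\wt F(T) \in \End_\cat(V_x) = \kk\,\Id_{V_x}$, and the standard pivotal trace computation gives
\[
\wt F(L) = \brk{\wt F(T)} \cdot \dim_\cat(V_x).
\]

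Next I would track what happens under the harpoon action. Since the gauge action $w \gm \any$ is a functor on $\Tang_\QX$ compatible with tensor product and duality (Proposition \ref{P:HarpoonAuto} and Equation \eqref{eq:Q-functor}), cutting and gauging commute: the closure of $w \gm T$ is exactly $w \gm L$, and $w \gm T$ has boundary colored by $\Qm(B_1^{-1}(w,x))$. The same closure formula then gives
\[
\wt F(w \gm L) = \brk{\wt F(w \gm T)} \cdot \dim_\cat\!\bigl(V_{B_1^{-1}(w,x)}\bigr).
\]
Theorem \ref{T:FGaugeInv} applied to the $B$-gauge equivalent pair $T,\, w \gm T$ yields $\brk{\wt F(T)} = \brk{\wt F(w \gm T)}$, while Lemma \ref{L:qdimGauge} gives $\dim_\cat(V_x) = \dim_\cat(V_{B_1^{-1}(w,x)})$. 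Multiplying these two identities proves $\wt F(L) = \wt F(w \gm L)$; the $\gp$ case is entirely analogous using the positive half of Equation \eqref{E:FD=FB1D} and the $B_1^+$ version of Lemma \ref{L:qdimGauge}.

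For a link $L$ with several components, I would iterate: cut one component at a time to express $\wt F(L)$ as a product of scalars $\brk{\wt F(T_i)}$ and quantum dimensions $\dim_\cat(V_{x_i})$, each factor being invariant under the harpoon action by the above argument. Since the harpoon functor is applied uniformly to all colorings of the diagram, each factor in the product for $w \gm L$ matches the corresponding factor for $L$, so the products agree.

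The main obstacle I expect is not any single step in isolation but a bookkeeping one: verifying cleanly that cutting a component and then gauging agrees with gauging first and then cutting — i.e.\ that ``closure'' and ``$w \gm \any$'' commute as operations on $\QX$-tangles. This reduces to checking that the harpoon functor respects the cup/cap morphisms used in the closure, which follows from the fact that $w \gm \any$ is an automorphism of the quandle $\QX$ and its lifted functor on $\Tang_\QX$ is defined component-wise on colorings, leaving the underlying topological tangle (in particular, its cups and caps) unchanged.
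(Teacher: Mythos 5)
Your proposal is correct and follows essentially the same route as the paper: write $L$ as the closure of a 1-1 $\QX$-tangle $T_x$, so that $\wt F(L)=\dim_\cat(V_x)\brk{\wt F(T_x)}$, then conclude from Theorem \ref{T:FGaugeInv} (gauge invariance of the bracket) together with Lemma \ref{L:qdimGauge} (gauge invariance of the quantum dimension), checking as you do that closure commutes with the harpoon action. One correction: your final paragraph about iterating over several components is both unnecessary and wrong as stated --- cutting a single edge of any one component already yields a 1-1 tangle whose closure is all of $L$ (the other components simply remain closed inside the tangle), whereas the claimed factorization of $\wt F(L)$ into a product of per-component scalars $\brk{\wt F(T_i)}$ and dimensions fails for linked components, since $\wt F$ is not multiplicative over the components of a link; dropping that paragraph leaves a complete proof matching the paper's.
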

\begin{proof}
  Any $\QX$-link $L$ is the closure of a 1-1 $\QX$-tangle $T_x$ and
  its invariant is the product of the corresponding
  element of $\kk$ with the quantum dimension
  $\dim_\cat(V_x)\brk{\wt F(T_x)}$.  By Theorem \ref{T:FGaugeInv} and
  Lemma \ref{L:qdimGauge} both of these functions are gauge invariant,
  implying is $F(L)$.
\end{proof}

\subsection{Modified $\QX$-link invariant}\label{SS:ModInv} Here we use the modified
dimension to re-normalize the functor $\wt F$.

Let $\{V_x\}_{x\in X}$ be a family of simple modules in the pivotal
category $\cat$.  Let $\qd:X\to\kk$ be a function called the modified
dimension.  Let $\D_\cat$ be the set of $\cat$-colored
spherical 
ribbon graph with coupons whose edges are colored with the
objects $\{V_x\}_{x\in X}$.  Let $D\in\D_\cat$ and let $e$ be an edge of $D$.
By cutting $e$ we can obtain a 1-1 $\cat$-colored diagram $D_x$ where
$x\in X$ is the color of both the top and bottom boundary with
positive orientation: $ \partial_\pm D_x=(V_x,+)$.  We call $D_x $ a
cutting presentation of $D$.  

Following \cite{GPT2} we say that $(\{V_x\}_{x\in X},\qd)$ is an
\emph{ambidextrous pair}, or \emph{ambi pair} for short, if
the map $\D_\cat\to \kk=\End_\cat(\unit)$ given by
\begin{equation}
  \label{eq:md}
  D\mapsto F'(D):=\qd(x)\brk{F(D_x)}
\end{equation}
is
independent of the choice of the cutting presentation
$D_x$ of $D$.  In such a situation we have the following renormalized invariant:

\begin{theorem}\label{T:LinkF'}
  Let $\QX$ be a quandle with a biquandle factorization $(X,B)$ and
  $(V_\any,c_{\any,\any})$ be a simple regular representation of
  $(X,B)$ in $\cat$.  Assume $(\{V_x\}_{x\in X},\qd)$ is an ambi
  pair, then the function
  $$\begin{array}{rcl}
    \wt{F}':\Link_\QX&\to& \kk\\
    T&\mapsto& F'(\wt\Qm^{-1}(D))
  \end{array}$$
  is a well defined invariant of the $\QX$-link $T$ where $D$ is any
  $\QX$-diagram representing $T$.
Moreover, the function   $\wt F'$ is gauge invariant if $\qd$ is an invariant map (see
  Example \ref{ss:fp}).
\end{theorem}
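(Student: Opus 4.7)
The proof splits into two parts: well-definedness of $\wt F'$ on $\Link_\QX$, and its gauge invariance under the extra hypothesis on $\qd$.

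For well-definedness, my plan is to show that $F'$ descends to a function on the quotient of closed $X$-colored diagrams by $X$-colored Reidemeister moves. Combined with Theorem \ref{T:QD} and the bijective functor $\Qm$ of Theorem \ref{th:LV}, this will give a well defined value on each $\QX$-link. Given closed $X$-colored diagrams $E, E'$ related by a single $X$-colored Reidemeister move, which takes place in a small disk, I would pick a cutting edge $e$ of $E$ lying entirely outside this disk. Such an edge exists because Reidemeister moves are local and a closed diagram has edges arbitrarily far from any given ball. Cutting at $e$ yields cutting presentations $E_x, E'_x$ with $\partial_\pm E_x = \partial_\pm E'_x = (x,+)$ still related by the same move, so Theorem \ref{T:FunctorF} gives $F(E_x) = F(E'_x)$ in $\End_\cat(V_x)$ and $\brk{F(E_x)} = \brk{F(E'_x)}$. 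The ambi pair hypothesis then gives
\[
F'(E) = \qd(x)\brk{F(E_x)} = \qd(x)\brk{F(E'_x)} = F'(E').
\]

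For gauge invariance, by Proposition \ref{P:B-gauge=harpoons} it is enough to check invariance under the harpoon actions $w \gp \any$ and $w \gm \any$ for $w \in W_X$. I would write $L \in \Link_\QX$ as the closure of a 1-1 $\QX$-tangle $T_x$, so that $\wt F'(L) = \qd(x) \brk{\wt F(T_x)}$. After the harpoon action, $w \gp L$ is the closure of $w \gp T_x$, a 1-1 $\QX$-tangle with boundary color $x' = w \gp x = \wt B_1^+(w,x)$, and Theorem \ref{T:FGaugeInv} gives $\brk{\wt F(w \gp T_x)} = \brk{\wt F(T_x)}$. By the construction of $\wt B_1^+$ via the cabled positive crossing in Figure \ref{fig:DiagramWords}, the color $x'$ is obtained from $x$ by iterated applications of $B_1$ with the letters of $w$, and the biquandle invariance of $\qd$ from Example \ref{ss:fp} preserves the value of $\qd$ at each such step. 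Hence $\qd(x') = \qd(x)$, and
\[
\wt F'(w \gp L) = \qd(x') \brk{\wt F(w \gp T_x)} = \qd(x) \brk{\wt F(T_x)} = \wt F'(L).
\]
The $\gm$ case is identical.

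The main technical point I expect to need to handle with care is the double role of the ambi pair hypothesis: it makes $F'(E)$ an unambiguous function of a closed $X$-diagram $E$, and it allows the cut in the Reidemeister-invariance step to be performed at an edge safely away from the disk of the move. Otherwise the proof reduces to combining Theorems \ref{T:FunctorF} and \ref{T:FGaugeInv} with the translation of the invariance of $\qd$ between its formulation in Example \ref{ss:fp} and its effect on the harpoon actions via the cabled crossings of Figure \ref{fig:DiagramWords}.
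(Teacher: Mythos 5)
Your proof is correct and follows essentially the argument the paper leaves implicit: Theorem \ref{T:LinkF'} is stated without a written proof, being a direct assembly of Theorems \ref{th:LV}, \ref{T:QD}, \ref{T:FunctorF} and \ref{T:FGaugeInv} together with the ambi-pair property, which is exactly what you do, and your closure-of-a-1-1-tangle computation for gauge invariance is the same one the paper spells out in the corollary following Lemma \ref{L:qdimGauge} and again in the proof of Theorem \ref{T:DefF'Generic}. One cosmetic repair: rather than asking for an edge of $E$ ``entirely outside'' the disk of the move (the edges adjacent to the modified crossings may have endpoints inside it), cut at a point of a strand lying outside the disk; by the definition of an $X$-colored Reidemeister move the color there is unchanged, and the rest of your argument goes through verbatim.
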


We end the subsection by recalling how one can obtain an ambi pair.
Suppose that $\kk=\End_\cat(\unit)$ is a commutative ring and $\cat$
is a \emph{tensor \kt category}: $\cat$ is a tensor category such that
its hom-sets are left \kt modules, the composition and tensor product
of morphisms are \kt bilinear.

By an \emph{ideal} of  $\cat$ we mean a full subcategory $\ideal$ of $\cat$ such that:  
\begin{enumerate}
\item  If $V\in \ideal$ and $W\in \cat$, then $V\otimes W,\; W\otimes V  \in \ideal$;
\item If $V\in \ideal$ and if $W\in \cat$ is a retract of $V$,
  then $W\in \ideal$.  
\end{enumerate}
 A \emph{trace} on an ideal  $\ideal$ is a family of linear functions
$$\{\mt_V:\End_\cat(V)\rightarrow \kk \}_{V\in \ideal}$$
such that:
\begin{enumerate}
\item  If $U,V\in \ideal$ then for any morphisms $f:V\rightarrow U $ and $g:U\rightarrow V$  in $\cat$ we have 
\begin{equation*}
\mt_V(g f)=\mt_U(f  g).
\end{equation*} 
\item \label{I:Prop2Trace} If $U\in \ideal$ and $W\in \cat$ then we have
\begin{equation*}
\mt_{U\otimes W}\bp{f}=\mt_U \bp{(\Id_U \otimes \tev_W)(f\otimes \Id_{W^*})(\Id_U \otimes \coev_W)},
\end{equation*}
\begin{equation*}
\mt_{W\otimes U}\bp{g}=\mt_U \bp{( \ev_W \otimes \Id_U)( \Id_{W^*}\otimes g)(\tcoev_W \otimes \Id_U)}
\end{equation*}
for any $f\in \End_\cat(U\otimes W)$ and $g\in \End_\cat(W\otimes U)$.  
\end{enumerate}

Let $\{V_x\}_{x\in X}$ be any family of simple objects of $\cat$.  
Let $\{\mt_V \}_{V\in \ideal}$ be a trace on an ideal $\ideal$ in
$\cat$ such that $V_x\in \ideal$ for each $x\in X$.  Then Theorem 5 of \cite{GPV} implies 
$(\{V_x\}_{x\in X},\qd)$ is an {ambi pair} where the modified
dimension $\qd:X\to \kk$ is defined by $\qd(x)=\mt_{V_x}(\Id_{V_x})$.

\subsection{Example: Semi cyclic $U_q\slt$-modules}\label{SS:ExampleSemicyclic}
In \cite{GP}, $\Gd$-links invariants associated to semi cyclic
$U_q\slt$-modules are introduced.  In this subsection we 
discuss a biquandle which has a representation formed from the semi cyclic modules of quantum $\slt$.
Using a modified trace, these representation gives rise to a renormalized invariant which we show is equal to the ADO type invariants of
  $\C$-colored link in $S^3$ defined in \cite{CGP1}, which is a generalization of \cite{ADO}.

Fix a positive integer $r\ge2$ and let $\xi=e^{\frac{i\pi}{r}}$ be a
$2r^{th}$-root of unity. If $x\in \C$ then let
$\xi^x=\e^{\frac{2ix\pi}{N}}$, $\qn x=\xi^x-\xi^{-x}$ and let
$\s=\xi^{-\frac{r(r-1)}2}=i^{1-r}$.

Let $(\Gd, \wb\Gd, \Grc, \vp_+,\vp_-)$ be the generalized group factorization given
in Example \ref{Ex:SC-G} with $\Gd$ the upper Borel of
$\SL_2(\C)$.  Let $(X,B)$ be the sub biquandle of the
fibered product $\Grc\times_{\wt\kappa,f}\C$ where
$\wt\kappa:g_{\kappa,\ve}\mapsto\kappa$ and $f(\alpha)=\xi^{r\alpha}$,
see Example \ref{ss:fp}.  That is
$$X=\qn{\bp{g_{\kappa,\ve},\alpha}:\alpha\in\C\setminus\Z,\kappa=\xi^{r\alpha},
  \ve\in\C}\cup\qn{\left(g_{1,0},0\right)}$$ where $g_{\kappa,\ve}=\bp{{\small\left(\begin{array}{cc}
      \kappa&0\\0&1
    \end{array}\right) },\mat\kappa\ve}\in\wb\Gd \times \wb\Gd$.  Its associated quandle is $\QX=\Conj(\Gd)\times_{\wt\kappa,f}\C$
where $\Gd$ is the upper Borel of $\SL_2(\C)$.

In \cite{GP}, it is shown that the pivotal $\C$-category $\cat$ of semi-cyclic
$U_\xi\slt$-modules gives a representation of the braid group which supports a Markov trace.  
In Appendix \ref{Ap:ProofSemiCyclic} we discuss how these semi-cyclic
$U_\xi\slt$-modules lead to a regular simple representation $(V_\any,c_{\any,\any})$ of the biquandle 
  $(X,B)$ which we will now use in  this subsection.  Let $\wt F$ be the functor associated to this biquandle representation given in Corollary \ref{C:RTFunctorTangle}.  

 In \cite{GP4} a modified trace on the projective modules of unrestricted quantum group is defined.  This trace restricts to a trace $\mt$ on the ideal of projective modules $\Proj$ of $\cat$.  The modules in the representation  $(V_\any,c_{\any,\any})$ are all projective.  Thus, Theorem \ref{T:LinkF'} implies there exists a $\Q$-link invariant $\wt{F}':\Link_{\QX}\to \C$.

\begin{theorem}\label{T:F'=M'} The invariant $\wt{F}'$ is equal to the semi-cyclic link invariant $M':\Link_{\QX}\to \C$
  of \cite{GP}. \end{theorem}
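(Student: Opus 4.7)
The plan is to compare the two invariants by unpacking their constructions on a common 1-1 tangle presentation of the link. Both $\wt F'$ and $M'$ present a $\QX$-link $L$ as the closure of a 1-1 $\QX$-tangle $T_x$ with both endpoints colored by $x\in X$, evaluate a Reshetikhin--Turaev style functor on $T_x$ to extract a scalar in $\C$, and multiply by a dimension depending on $x$. So I would fix a diagram $D$ with $\wt\Qm(D)=L$, cut an edge colored by some $x\in X$ to obtain a 1-1 cutting presentation $D_x$, and compute both invariants through $D_x$.

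First, I would check that the biquandle representation $(V_\any,c_{\any,\any})$ produced in Appendix \ref{Ap:ProofSemiCyclic} is built from exactly the semi-cyclic $\Uxi\slt$-modules and the same $R$-matrix used in \cite{GP}. In particular, the image of $D_x$ under the biquandle ribbon functor $F$ of Theorem \ref{T:FunctorF} is the same endomorphism of $V_x$ as the one produced by the $\Gd$-tangle functor of \cite{GP}, because both functors agree on every generating crossing, cap and cup. Since $V_x$ is absolutely simple, this gives an identification of scalars $\brk{F(D_x)}$ with the corresponding scalar appearing in the definition of $M'(L)$ via the cutting presentation $D_x$.

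Second, I would identify the modified dimension $\qd(x)=\mt_{V_x}(\Id_{V_x})$ with the normalization coefficient used in \cite{GP} to define $M'$. The semi-cyclic modules $V_x$ are simple projectives, and $\mt$ is the restriction to $\cat$ of the modified trace on projective modules of the unrestricted quantum group constructed in \cite{GP4}. Invoking the comparison between the modified trace on projectives and the Markov trace used in \cite{GP}, one gets that $\qd(x)$ equals the dimension-like normalization appearing in the formula for $M'$. Together with the first step, this yields $\wt F'(L)=\qd(x)\brk{F(D_x)}=M'(L)$.

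The main obstacle is the second step: tracing through the conventions (orientations of closures, choice of meridian, gauge and sign normalizations) to match the ambi-pair identity underlying the definition of $\wt F'$ via Equation \eqref{eq:md} with the Markov/cyclic trace property underlying $M'$ in \cite{GP}. Once the normalizations are aligned, independence of both invariants from the choice of cutting presentation (the ambi-pair property on the $\wt F'$ side, and the Markov property on the $M'$ side) makes the equality pointwise on all $\QX$-links.
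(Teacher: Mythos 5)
Your proposal is correct and is essentially the paper's own argument: the paper likewise notes that $M'$ was \emph{defined} in \cite{GP} by the Markov trace coming from the modified trace, so that for a colored braid $\sigma$ with closure $L$ one has $\wt F'(L)=\mt(\wt F(\sigma))=M'(L)$, the two functors agreeing on generators since they use the same semi-cyclic modules and $R$-matrix. Your cutting-presentation formulation with the ambi-pair property is just the same mechanism (both reduce to the trace properties of $\mt$ from \cite{GP4}), so the "main obstacle" you flag is exactly the identification the paper dispatches in one line.
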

\begin{proof}
  In \cite{GP}, the use of a Markov trace on the colored braid group
 was used because at that time the existence of a modified right  trace was only known (from \cite{GP4} we have a modified left and right trace).  The above construction is more
  general because colored braids are $\QX$-tangles and if $\sigma$ is
  such a braid whose braid closure is a $\QX$-link $L$ then
  $\wt F'(L)=\mt(\wt F(\sigma))$ by the properties of a modified trace.
  But this was the definition of $M'$ given in \cite{GP}.
\end{proof}
This new approach and the following lemma imply that $M'=\wt F'$ is
gauge invariant:
\begin{lemma}\label{L:SCunknotGauge}
The modified dimension $\qd$ and the invariant $ \wt F'$ are both gauge invariant.
\end{lemma}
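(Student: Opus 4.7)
The plan is to reduce both statements to the observation that $\qd$ factors through the projection $\pi: X \to \C$ onto the $\alpha$-coordinate of the fibered product, and then invoke the gauge-invariance clause of Theorem \ref{T:LinkF'}. Since $X = \Grc \times_{\wt\kappa,f} \C$, the biquandle structure of Example \ref{ss:fp} swaps the $\C$-components of the two bottom corners of a crossing onto the top corners: if $B((x_1,\alpha_1),(x_2,\alpha_2))=((x_4,\alpha_2),(x_3,\alpha_1))$ then $\pi(x_4)=\pi(x_2)$ and $\pi(x_3)=\pi(x_1)$. Hence $\pi$ is an invariant map in the sense of Example \ref{ss:fp}, and any function that factors through $\pi$ is automatically invariant; in particular one then has $\qd(B_1^\pm(y,x))=\qd(x)$ for all $x,y\in X$, which is the gauge invariance of $\qd$. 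Once this is established, the gauge invariance of $\wt F'$ on $\QX$-links is an immediate application of the last assertion of Theorem \ref{T:LinkF'}.

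The substantive step is therefore to show $\qd=d\circ\pi$ for some $d:\C\to\C$. For this I would use the explicit description of the semi-cyclic modules from Appendix \ref{Ap:ProofSemiCyclic}: the module $V_x$ associated to $x=(g_{\kappa,\ve},\alpha)$ is an $r$-dimensional $U_\xi\slt$-module whose central character is determined solely by $\alpha$, the parameters $\kappa$ and $\ve$ being auxiliary data that can be absorbed by rescaling a highest-weight-like basis vector. Consequently, for fixed $\alpha$ the modules $\{V_{(g_{\kappa,\ve},\alpha)}\}_{\kappa,\ve}$ are pairwise isomorphic as objects of the pivotal category $\cat$, and since the modified trace $\mt$ depends only on the isomorphism class in $\Proj$, one gets $\qd(x)=\mt_{V_x}(\Id_{V_x})=d(\alpha)$ for a well-defined function $d$.

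The main obstacle is this isomorphism-class computation: one needs to exhibit an explicit intertwiner between $V_{(g_{\kappa,\ve},\alpha)}$ and $V_{(g_{\kappa,\ve'},\alpha)}$ (for $\ve\neq\ve'$ with the same $\alpha$, hence the same $\kappa=\xi^{r\alpha}$) in order to conclude equality of modified dimensions. Everything else is formal: the factorization $\qd=d\circ\pi$ implies invariance of $\qd$ as in the first paragraph, and then Theorem \ref{T:LinkF'} delivers the gauge invariance of $\wt F'$.
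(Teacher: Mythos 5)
Your overall architecture is sound and matches the paper's: reduce everything to gauge invariance of $\qd$, note that the relevant data is the $\C$-component of the fibered product (which the biquandle of Example \ref{ss:fp} merely swaps across a crossing, so any function of it is invariant), and then invoke the last assertion of Theorem \ref{T:LinkF'} for $\wt F'$. That part is correct.

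However, the step you yourself flag as substantive contains a genuine error: the modules $V_{(g_{\kappa,\ve},\alpha)}$ and $V_{(g_{\kappa,\ve'},\alpha)}$ with $\ve\neq\ve'$ are \emph{not} isomorphic, and no intertwiner can be exhibited. The parameter $\ve$ is not auxiliary data absorbable by rescaling a basis vector: under the identification of $\Grc$ with $Z_0$-characters, $E^r$ is a \emph{central} element of $U_\xi$ acting on $V_{(g_{\kappa,\ve},\alpha)}$ by the scalar $\qn1^{-r}\ve$, so the two modules have distinct central characters and $\Hom_{U_\xi}(V_{(g_{\kappa,\ve},\alpha)},V_{(g_{\kappa,\ve'},\alpha)})=0$ unless $\ve=\ve'$. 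Hence your proposed route to the factorization $\qd=d\circ\pi$ collapses. The factorization itself is nevertheless true, but for a different reason, and this is exactly how the paper argues: by \cite{GP4}, the modified trace $\mt(\Id_{V_x})$ depends only on the scalar $\Omega_x$ by which the Casimir acts on $V_x$ (see the explicit formula \eqref{E:modNilCyc}, which expresses $\qd$ as a function of $\alpha$ alone, equivalently of $\chi(\Omega)$); this is a property of the trace function across non-isomorphic modules, not a consequence of the modules being isomorphic. Since $x\mapsto\Omega_x$ depends only on the $\C$-component $\alpha$, it is invariant in the sense of Subsection \ref{ss:fp}, and gauge invariance of $\qd$ follows; Theorem \ref{T:LinkF'} then gives gauge invariance of $\wt F'$, as you say. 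So to repair your proof, replace the (false) isomorphism claim with the cited dependence of the modified trace on the Casimir eigenvalue.
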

\begin{proof}
 From Theorem \ref{T:LinkF'} it is enough to show $\qd$ gauge invariant.
  By definition  $\mt(\Id_{V_x})$ is the modified dimension of the
  semi-cyclic module $V_x$.  In  \cite{GP4}  it is shown that $\mt(\Id_{V_x})$  only
  depends on the value $\Omega_x$ of the casimir action on $V_x$.  But
  now the map $x\mapsto\Omega_x$ is invariant (as in Subsection
  \ref{ss:fp}) on $X$ so the modified dimension is gauge invariant.
\end{proof}

 Let $\pi:\QX\to \C$ be the group morphism given by
$$\bp{{g_{\kappa,\ve}
}
,\alpha}\mapsto\alpha.$$
    The following theorem answers a question of \cite{GP}:
\begin{theorem}
  By applying $\pi$ to the colors of the strands of a $\QX$-link $L$, one
  gets a $\C$-colored link which we denote by $\pi_*L$.  Let $M'$ be the semi-cyclic
  invariant of $\QX$-links of \cite{GP} and let $ADO$ be the nilpotent invariant of
  $\C$-colored links in $S^3$ given in \cite{CGP1}.   Then for any
  $\QX$-link $L$, we have

 $$M'(L)=ADO(\pi_*L).$$
\end{theorem}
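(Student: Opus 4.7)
The plan is to reduce the computation of $M'(L)$ to the nilpotent setting via a canonical section of $\pi$ combined with the gauge invariance established in Lemma~\ref{L:SCunknotGauge}. Define the section $\iota\colon\C\setminus\Z\to X$ by $\iota(\alpha)=(g_{\xi^{r\alpha},0},\alpha)$. Plugging $\ve=\ve'=0$ into the explicit formulas of Example~\ref{Ex:SC-G} one checks that
$$
B\bp{\iota(\alpha),\iota(\beta)}=\bp{\iota(\beta),\iota(\alpha)},
$$
so the image of $\iota$ is a sub-biquandle on which $B$ acts by swap. In particular, any $X$-colored braid diagram whose bottom coloring lies in $\iota(\C\setminus\Z)$ has all of its arcs colored by $\iota$-elements.

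First I would identify the algebraic data. On the image of $\iota$, the semi-cyclic $U_\xi\slt$-module $V_{\iota(\alpha)}$ is, as shown in Appendix~\ref{Ap:ProofSemiCyclic}, a nilpotent module in the sense of \cite{CGP1} of weight $\alpha$; the biquandle braiding $c_{\iota(\alpha),\iota(\beta)}$ coincides with the standard nilpotent $R$-matrix used to build $ADO$. The modified dimension $\qd(\iota(\alpha))$ agrees with $\qd_{ADO}(\alpha)$ because both are defined via the modified trace of \cite{GP4} restricted to the projective ideal of $U_\xi\slt$-modules, and the Casimir value on $V_{\iota(\alpha)}$ matches that of the nilpotent module of weight $\alpha$. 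Consequently, once a $\QX$-link is presented with an $\iota$-coloring, the recipe defining $\wt F'$ becomes term-for-term identical to the recipe defining $ADO$ on $\pi_*L$.

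The key reduction step is to show that every $\QX$-link $L$ is $B$-gauge equivalent to a $\QX$-link $L'$ admitting an $\iota$-colored braid presentation. Given a braid presentation of $L$ with bottom word $w=(x_1,\ve_1)\cdots(x_n,\ve_n)\in W_X$ where $x_i=(g_{\kappa_i,\ve_i},\alpha_i)$, the formulas of Example~\ref{Ex:SC-G} show that the gauge action on $\ve$-parameters is affine in the $\ve_i$; more precisely, modifying $x_i$ by $\ve_i\mapsto 0$ amounts to solving a triangular linear system in the $\ve$-coordinates of a gauge word $v\in W_X$ whose non-degeneracy is guaranteed whenever all $\kappa_i\neq\pm1$, i.e., outside the discrete locus $\alpha_i\in\tfrac12\Z$. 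Proposition~\ref{P:B-gauge=harpoons} translates this algebraic gauge into a genuine $B$-gauge equivalence of $\QX$-links, producing the desired $L'$. I expect this to be the main obstacle: one must verify solvability strand-by-strand along the braid while maintaining compatibility with the closure condition, and handle the excluded locus by continuity, using that $\wt F'$ and $ADO$ are both analytic in $\alpha$.

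With these ingredients in place the proof closes quickly: by gauge invariance of $\wt F'$ (Lemma~\ref{L:SCunknotGauge}) and of $\pi$, we have $\wt F'(L)=\wt F'(L')$ and $\pi_*L=\pi_*L'$; the identification of algebraic data in the previous step gives $\wt F'(L')=ADO(\pi_*L')$; combining with Theorem~\ref{T:F'=M'} which states $M'=\wt F'$ yields
$$
M'(L)=\wt F'(L)=\wt F'(L')=ADO(\pi_*L')=ADO(\pi_*L),
$$
as required.
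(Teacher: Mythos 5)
The decisive step of your argument---that every $\QX$-link is $B$-gauge equivalent to one admitting an $\iota$-colored (i.e.\ diagonal) braid presentation---is false, and this is exactly where the paper has to work differently. By Example \ref{s:G+G-}, $\psi\circ B_1^\pm(x,\any)=\vp_\mp(x)\psi(\any)\vp_\mp(x)^{-1}$, so the harpoon action of any word $v\in W_X$ is conjugation of \emph{all} edge colors simultaneously by a single element of $\wb\Gd$; there is no strand-by-strand gauge and hence no ``triangular linear system'' in independent $\ve$-coordinates. Concretely, conjugating $\psi(x_i)=\left(\begin{smallmatrix}\kappa_i&\ve_i\\0&\kappa_i^{-1}\end{smallmatrix}\right)$ by an upper triangular matrix fixes each $\kappa_i$ and sends $\ve_i\mapsto c_1\ve_i+c_2\bp{\kappa_i-\kappa_i^{-1}}$ with constants $c_1\in\C^*$, $c_2\in\C$ that are the \emph{same} for every strand: two effective parameters against $n$ conditions. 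Already for two strands the reduction generically fails, since simultaneously diagonalized color matrices would commute while generic upper triangular ones do not. So your $L'$ does not exist in general, and the chain $\wt F'(L)=\wt F'(L')=ADO(\pi_*L')$ collapses.

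The paper keeps your other two ingredients---agreement of $M'$ with $ADO$ on the diagonal sub-quandle $\QX_d$, which holds by the very construction of $M'$ in \cite{GP}, and gauge invariance of $M'=\wt F'$ via Theorem \ref{T:F'=M'} and Lemma \ref{L:SCunknotGauge}---but replaces your exact reduction by a limiting one: gauging by the diagonal element $x_\kappa$ multiplies every $\ve_i$ by $\kappa^2$, so $\pi_*L=\lim_{\kappa\to 0}x_\kappa\rhd L$ lies in the \emph{closure} of the gauge orbit without (generically) belonging to it. The missing analytic ingredient is then continuity of $M'$ in the $X$-colors of the edges, read off from the explicit holonomy $R$-matrix formulas of \cite{GP}; this yields $M'(L)=M'(x_\kappa\rhd L)\to M'(\pi_*L)=ADO(\pi_*L)$ as $\kappa\to0$. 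Note also that your appeal to analyticity ``in $\alpha$'' is aimed at the wrong variable: the $\alpha_i$ and $\kappa_i$ are gauge invariants, and the continuity actually needed is in the $\ve$-coordinates. To salvage your outline, prove this continuity and run the argument along the one-parameter family $x_\kappa\rhd L$ instead of seeking an honest gauge representative with all $\ve_i=0$.
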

\begin{proof}
 The invariant $M'$ is designed as an extension of ADO:  
  let $\QX_d$ be the sub-quandle of $\QX$ formed by elements with diagonal matrices; if $L$ is a $\QX_d$-link then by definition we have $M'(L)=ADO(\pi_*(L))$. 
  The restriction
  $\pi_{|\QX_d}:\QX_d\to\C$
   is a bijection.  
Theorem \ref{T:F'=M'} and  Lemma \ref{L:SCunknotGauge} imply that $M'$ is
  gauge invariant.  Fixing a regular projection of a $\QX$-link $L$,
  we get a diagram $D$.  From the algebraic formulas for the holonomy
  R-matrix of \cite{GP}, one can see that the invariant $M'$ is a
  continuous function of the $X$-colors of the edges of $\Qm^{-1}(D)$
  and so of the finite set of $\QX$-colors of the edges of $D$.  Let
  $x_\kappa\in\QX$ be an element whose matrix part is 
  ${\small \left(\begin{array}{cc}
          \kappa^{-1}&0\\0&\kappa \end{array}\right)}$.  Then the
    colors of $x_\kappa\rhd D$ are the same as those of $D$ except
    that their upper diagonal coefficient is multiplied by $\kappa^2$.
    Hence we have that $\pi_*L=\lim_{\kappa\to 0}x_\kappa\rhd L$
    (after identifying $\QX_d\cong\C$).  Thus, using the continuity of $M'$, we get
    $$M'(L)=M'(\lim_{\kappa\to 0}x_\kappa\rhd L)=ADO(\pi_*L).$$
\end{proof}

\section{Generically defined biquandles in pivotal categories}\label{S:GenericBiqu}
Recall that in Section \ref{S:Quandles}, we used quandles to give the
needed topological notion of this paper.  Then in Sections
\ref{S:Biquandles} and \ref{S:BiquandleBraidings} we used biquandles
as the algebraic objects associated to this topological notion.  These
settings are related by the functor $ \Qm$ given in Theorem
\ref{th:LV}, which sends a biquandle coloring to a quandle coloring
and is equivariant
for gauge transformations.

The rest of the paper is motivated by our main example which is the
conjugacy quandle of the group $ \SL_2(\C)$, see Example
\ref{E:ConjQuandle}.  The topological setting for this example is a
locally flat $ \SL_2(\C)$-bundle over the complement of a tangle.  In
this example the underlying algebraic object is not a biquandle but
instead what we call a generic biquandle factorization.  This is
almost a biquandle in the sense that there is a ``generically''
defined binary operation $B$ which (generically) satisfies the
Yang-Baxter equation and the other axioms of a biquandle.  What is
interesting here is that the topological notion is defined everywhere
but the associated algebraic object is not.  In other words, quandle
colorings always exist but not all biquandle colorings
exist.
Thus, in the setting of this section,
we do not have a bijective functor $\Qm$ which relates biquandle and
quandle
colorings.
Instead, we require there exists a map $\Qm$ from biquandle colorings
to quandle colorings which is injective and up to gauge transformation
onto (loosely speaking, one could say the map $\Qm$ is generically
bijective).

Let us explain how this is useful in our example.  We start with a locally flat $ \SL_2(\C)$-bundle over the complement of a tangle.   This is interrupted in terms of  a quandle closely related to $\Conj( \SL_2(\C))$.  This quandle has a generic biquandle factorization which is related to the representation theory of the unrestricted quantum group.  Up to gauge, the bundle gives a generic biquandle coloring of a diagram of the tangle.  Then representation theory of unrestricted quantum group can be used to get an invariant of the tangle  which only depends on the bundle.

\subsection{Generically defined biquandle}\ 
Let $Y$ be a set and $\gen \subset \mathcal{P}(Y)$ a set of subset of $Y$ such that 
\begin{enumerate}
\item $Y \in \gen$, $\emptyset \notin \gen$, 
\item if $Z_1, Z_2\in \gen$ then $Z_1 \cap Z_2\in \gen$.
\end{enumerate}
A natural example comes from taking $Y$ as a non empty topological space then the set of open
dense subsets $\gen$ of $Y$ satisfies the above axioms (other examples
are the set of full measure subspaces of a measured space or the set
of comeagre sets of a Baire space).

We use the following terminology:  
\begin{description}
\item[Partial map] By a \emph{partial map} $f:A\to B$ we mean a map whose domain is some subset of $A$ and range is~$B$.
\item[Generic bijection] By a {\em generic bijection} of $Y$  we mean a bijection $f:Z_1\to Z_2$ where $Z_1,Z_2\in\gen$ such that for any $Z\in\gen$, the sets $f(Z\cap Z_1)$ and $f^{-1}(Z\cap Z_2)$ are elements of $\gen$.
\item[Generic $x$ in $Y$] If $\mathscr P(x)$ is a statement partially defined  for $x\in Y$ then ``$\mathscr P(x)$ is true for generic $x\in Y$'' means ``there exists $ Z\in\gen$ such that $\mathscr P(x)$ is true for all $x\in Z$.''
\end{description}

Next we give the notion of a generically defined biquandle associated to a quandle.  This definition requires the existence of a functor $\Qm$ relating biquandle colorings and quandle colorings.  This functor is modeled after the bijective functor $\Qm$ given in Theorem \ref{th:LV} (we use the same notation for the map and functor).   In Section \ref{S:Biquandles}, we showed the bijective functor $\Qm$ was compatible with $B$-gauge transformations, this is done using the action $w\gm\any$, see both Proposition~\ref{P:twisttensor} and the proof of Proposition \ref{P:B-gauge=harpoons}.   In this section, this compatibility is expressed using a map, which we also denote by $w\gm\any$.

\begin{definition}\label{d:genbirack}
  Let $(\QX,\rhd)$ be a quandle.  A \emph{generic biquandle
    factorization} of $\QX$ is a tuple $(Y,\gen,B,\Qm,\gm)$ equipped
  with four partial maps
  $$B, S, B^{-1},S^{-1}: Y\times Y \rightarrow Y\times Y$$ 
  satisfying the following axioms:
 \begin{enumerate}
\item \label{I:BandSrelateGenDef}
 For any $(x_1,x_2,x_3,x_4)\in  Y^4$,
 $$\begin{array}[t]{rl}  (x_4,x_3)=B(x_1,x_2)&\iff(x_1,x_2)=B^{-1}(x_4,x_3)\\  
& \iff (x_3,x_2)=S(x_4,x_1)\\
 &\iff(x_4,x_1)=S^{-1}(x_3,x_2).\end{array}$$
\item \label{I:BBSSGenDef} For all $x\in Y$, the eight maps
  $B_1^\pm(x,\any),\, B_2^\pm(\any,x),\, S_1^\pm(x,\any)$ and 
  $S_2^\pm(\any,x)$ are 
  generic bijections of $Y$.
\item There exists a function $Y\to\Aut(\QX,\rhd)$,
  $x\mapsto (x\gm\any)$.  Since $x\gm\any$ is a quandle morphism it induces a functor 
    $$x\gm\any :\Diag_\QX\to\Diag_\QX$$
    see Subsection \ref{SS:GaugeActionBiq}  and Equation \eqref{eq:Q-functor}.
\item As in the case of a biquandle, we use $B$ to define $Y$-colored diagrams that form a
  category $\Diag_Y$ (see Remark \ref{r:notBiquandle}).  Then there exists a functor $\Qm :\Diag_Y\to\Diag_\QX$
  inducing the identity on the underlying uncolored diagrams, which is
  injective on $Y$-colored diagram and satisfies:
 \begin{equation}\label{E:Qmxw}
 \Qm\big((x,\pm)\otimes w\big)  =\Qm\big((x,\pm)\big)\otimes  (x\gm\any)^{\pm1}\big(\Qm(w)\big)
 \end{equation}
 for all $x\in Y$ and $w\in W_Y$.  
\item \label{I:QinvxD}  
For any $D\in\Diag_\QX$,  $\Qm^{-1}(x\gm D)$ exists for generic $x\in Y$.
\end{enumerate}

\end{definition}

 Let $(Y,\gen,B,\Qm, \gm)$ be a generic biquandle
    factorization of a quandle $(\QX,\rhd)$.  
Let $D$ and $D'$ be two diagrams of a tangle which are related by a
Reidemeister move.  
If $D$ is $Y$-colored then 
it may happen that the coloring of $D$ does not induce a coloring on
$D'$ via the Reidemeister move.
In the case when a Reidemeister move is colored we use the following notation: if two colored diagrams $D$ and $D'$ in $\Diag_Y$ or in $\Diag_\QX$ are related by a single colored Reidemeister move we write $D\stackrel1\equiv D'$.  
\begin{proposition}\label{P:guitarRM}
   The functor $\Qm$ is compatible with colored Reidemeister moves:
   For any $D,D'\in\Diag_Y$ we have 
   $D\stackrel1\equiv D'\iff \Qm(D)\stackrel1\equiv\Qm(D').$
\end{proposition}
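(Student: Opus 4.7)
The plan is to prove the two implications separately. Both directions hinge on the fact, built into axiom (4) of Definition \ref{d:genbirack}, that $\Qm$ induces the identity on underlying uncolored diagrams. Consequently, whenever either $D\stackrel1\equiv D'$ or $\Qm(D)\stackrel1\equiv \Qm(D')$ holds, the uncolored diagrams $|D|$ and $|D'|$ differ by the same single framed Reidemeister move, performed in a localized small disk $R$; so in both directions we may fix this move and work relative to $R$.

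For the forward direction, I would exploit the path-pullback description of $\Qm$ arising from the construction of $f_{(D,c)}$ in the proof of Theorem \ref{th:LV}. Given an edge $e$ of $|D|$ that lies outside $R$, the $\QX$-color assigned to $e$ by $\Qm(D)$ is obtained by pulling a path from the basepoint to $e$ back through the $Y$-colored diagram. Choosing this path to avoid the disk $R$, the computation involves only the $Y$-colors of $D$ on edges outside $R$, which agree with those of $D'$ by assumption. The very same path gives the $\QX$-color of $e$ in $\Qm(D')$ and so the two values agree. Combining this with the shared underlying Reidemeister move yields $\Qm(D)\stackrel1\equiv \Qm(D')$.

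For the backward direction, suppose $\Qm(D)\stackrel1\equiv \Qm(D')$. The plan is to construct a candidate $Y$-coloring $\wt D$ of $|D'|$ by keeping the $Y$-colors of $D$ on every edge outside $R$ and determining the $Y$-colors inside $R$ from the boundary data of $R$ via the partial maps $B,S,B^{-1},S^{-1}$. By construction $\wt D\stackrel1\equiv D$, so the forward direction gives $\Qm(\wt D)\stackrel1\equiv \Qm(D)\stackrel1\equiv \Qm(D')$. Both $\Qm(\wt D)$ and $\Qm(D')$ are then $\QX$-colorings of $|D'|$ obtained from $\Qm(D)$ by the same single $\QX$-colored Reidemeister move. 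Since a colored Reidemeister move on $R$ is determined on its interior by its boundary colors through the quandle rules of Figure \ref{fig:Q-col}, this forces $\Qm(\wt D)=\Qm(D')$. The injectivity of $\Qm$ on $Y$-colored diagrams (axiom (4) of Definition \ref{d:genbirack}) then gives $\wt D=D'$, and hence $D\stackrel1\equiv D'$.

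The main obstacle I anticipate is justifying that the candidate $\wt D$ in the backward direction is actually a valid element of $\Diag_Y$: the partial maps $B,S$ and their inverses must be defined on the boundary colors of $R$ coming from $D$, and the resulting local assignment inside $R$ must satisfy the Yang-Baxter-type consistency required of a $Y$-colored diagram. I would resolve this by a short case analysis over the generating Reidemeister moves $RII_{++}$, $RII_{-+}$, $RII_{+-}$, $RIII_{+++}$ and $RI^f$ of Figure \ref{fig:RM}, using axiom (2) of Definition \ref{d:genbirack} (that the components of $B$ and $S$ are generic bijections) together with the given existence of $D'$, which certifies that the relevant maps are defined on a neighborhood of the boundary data; in non-generic boundary configurations a continuity/perturbation argument within the structure $\gen$ should suffice.
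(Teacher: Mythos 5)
There is a genuine gap, and it lies at the heart of your forward direction: you invoke the path-pullback description of $\Qm$ coming from the construction of $f_{(D,c)}$ in Theorem \ref{th:LV}, but Proposition \ref{P:guitarRM} is stated in the generic setting of Definition \ref{d:genbirack}, where $\Qm$ is an \emph{abstract} piece of the data $(Y,\gen,B,\Qm,\gm)$. The only properties you may use are that $\Qm$ is a functor inducing the identity on underlying uncolored diagrams, that it is injective on $Y$-colored diagrams, and that it satisfies Equation \eqref{E:Qmxw}; no computation of $\QX$-colors by dragging paths through the $Y$-colored diagram is available (indeed, in the main example of Subsection \ref{ss:glgf} the functor $\Qm$ is built from groupoid representations following Kashaev--Reshetikhin, not as a guitar map). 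The paper's proof works purely from these axioms: it localizes the move in a slice $\Id_{w_1}\otimes E\otimes\Id_{w_2}$ using functoriality, transfers equality of sources and targets between the $Y$-side and the $\QX$-side via injectivity of $\Qm$ (applied to identity diagrams and to the pieces of the diagram outside the disk), and then exploits the fact that $\QX$, being a quandle, is in particular a biquandle with $B_1(a,b)=a\rhd b$, $B_2(a,b)=a$, so that a $\QX$-coloring of a braid-like diagram is \emph{uniquely determined by its incoming boundary}; hence $\Qm(D')$ must coincide with the diagram obtained by performing the $\QX$-colored move on $\Qm(D)$, and conversely.

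Your backward direction has a second, related problem. The existence of the candidate coloring $\wt D$ requires the partial maps $B,S,B^{\pm1},S^{\pm1}$ to be defined at the boundary colors of $D$, and your justification---that the existence of $D'$ ``certifies'' definedness---is circular until you know that the boundary colors of $D$ and $D'$ agree, which is precisely what must be proved (it follows from injectivity of $\Qm$, not from anything local). The fallback ``continuity/perturbation argument within $\gen$'' has no footing: $\gen$ is merely a filter of subsets with no topology in the general definition, the axioms impose no continuity on $B$ or $S$, and definedness is needed at the \emph{exact} given colors, which cannot be certified by perturbing them. Note also that once boundary agreement is transported back to the $Y$-side via injectivity, the detour through $\wt D$ is unnecessary: $D'$ itself is then the unique $Y$-coloring of its underlying diagram extending that boundary (the structure maps are functions where defined), so the agreement of colors on unmodified edges, i.e.\ $D\stackrel1\equiv D'$, follows at once.
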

The proof of the proposition is given in Appendix \ref{A:ProofPropTheorGat}.
The proposition implies the functor $\Qm$ induces a functor 
$$\wt \Qm: \Diag_Y/\text{colored Reidemeister moves} \to \T_\QX.  $$

\begin{remark}
  Proposition \ref{P:guitarRM} implies that whenever both side are defined,
  the set Yang-Baxter equation for $B$ holds.  Also, if
  $(x_4,x_3)=B(x_1,x_2)$ then $x_1=x_4\iff x_2=x_3$.  Indeed, if one
  of the two equalities holds, we can form the $Y$-colored diagram of
  a twist with colors $\{x_1,x_2,x_3,x_4\}$.  Then its image by $\Qm$
  is an endomorphism which implies that the two other colors are
  equal.
\end{remark}

As mentioned above $Y$-colored Reidemeister moves may not exist; however, the following theorem says they generically exist after tensoring with the identity. 
\begin{theorem}\label{T:stableRM}
  Let $D$ and $D'$ be two $Y$-colored diagrams such that $\Qm(D)$ and
  $\Qm(D')$ represent isotopic $\QX$-tangles.  Then 
 for generic $x\in Y$ the diagrams 
  $\Id_{(x,+)}\otimes D$ and $\Id_{(x,+)}\otimes D'$ are related by a
  sequence of $Y$-colored Reidemeister moves.
\end{theorem}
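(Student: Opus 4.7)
The plan is to lift a sequence of $\QX$-colored Reidemeister moves between $\Qm(D)$ and $\Qm(D')$ to a $Y$-colored sequence between $\Id_{(x,+)}\otimes D$ and $\Id_{(x,+)}\otimes D'$. The key ingredients will be axiom~\ref{I:QinvxD} of Definition~\ref{d:genbirack}, which gives generic $Y$-lifts of intermediate $\QX$-diagrams; Proposition~\ref{P:guitarRM}, which promotes $\QX$-colored Reidemeister moves to $Y$-colored ones between such lifts; and strand-passing sequences analogous to the biquandle gauge functor $\wt B_1^-$ of Subsection~\ref{SS:GaugeActionQuandle}, which bridge the tensored endpoints of the theorem to the endpoints of the lifted sequence.

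First, by Theorem~\ref{T:QD} the isotopy between the $\QX$-tangles $\Qm(D)$ and $\Qm(D')$ decomposes into a finite sequence of single $\QX$-colored Reidemeister moves $\Qm(D) = E_0 \stackrel1\equiv E_1 \stackrel1\equiv \cdots \stackrel1\equiv E_n = \Qm(D')$. Since $x\gm\any$ is a quandle automorphism for every $x\in Y$, the induced endofunctor of $\Diag_\QX$ preserves colored Reidemeister moves, so applying it yields the parallel sequence $x\gm E_0 \stackrel1\equiv \cdots \stackrel1\equiv x\gm E_n$. For each $i$, axiom~\ref{I:QinvxD} provides $Z_i\in\gen$ such that $\Qm^{-1}(x\gm E_i)$ exists for $x\in Z_i$; by the finite intersection property of $\gen$ the set $Z:=\bigcap_{i=0}^n Z_i$ lies in $\gen$, and for $x\in Z$ I obtain simultaneous $Y$-lifts $F_i^x := \Qm^{-1}(x\gm E_i)\in\Diag_Y$. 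Proposition~\ref{P:guitarRM} then upgrades each single $\QX$-colored move $x\gm E_i \stackrel1\equiv x\gm E_{i+1}$ to a single $Y$-colored move $F_i^x \stackrel1\equiv F_{i+1}^x$, producing the lifted sequence
$$F_0^x \stackrel1\equiv F_1^x \stackrel1\equiv \cdots \stackrel1\equiv F_n^x \quad \text{in } \Diag_Y.$$

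The remaining step is to attach the tensored endpoints $\Id_{(x,+)}\otimes D$ and $\Id_{(x,+)}\otimes D'$ to this lifted sequence. To do so I would construct, at each end, a sequence of $Y$-colored Reidemeister moves that passes the extra $(x,+)$ strand across the main diagram, thereby realizing the gauge transformation $x\gm$ on the interior coloring in the spirit of the functor $\wt B_1^-$ of Figure~\ref{fig:FB}. Equation~\eqref{E:Qmxw} already tells us that at the level of $\Qm$ the diagram $\Id_{(x,+)}\otimes D$ has its interior coloring twisted by $x\gm$, so once such a strand-passing sequence is performed the interior part becomes a repositioning of $F_0^x$, by injectivity of $\Qm$ on $Y$-diagrams. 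Tensoring the lifted sequence with the strand in its new position and then running a symmetric passage in reverse at the other end recovers $\Id_{(x,+)}\otimes D'$, and concatenation yields the desired $Y$-colored Reidemeister sequence.

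The main obstacle will be showing that the strand-passing sequences are themselves realizable by honest $Y$-colored Reidemeister moves. Each local move introduces new crossings between the $(x,+)$ strand and a strand of $D$, $D'$, or some $F_i^x$, and assigning a $Y$-coloring to such a crossing requires a value of one of the partial maps $B_1^\pm(x,\any)$ or $B_2^\pm(\any,x)$ to be defined. Axiom~\ref{I:BBSSGenDef} asserts that each of these sections is a generic bijection of $Y$; since only finitely many crossings appear across the entire construction, intersecting $Z$ with the corresponding finitely many generic subsets yields a final set $Z'\in\gen$ for which every required move exists, completing the argument.
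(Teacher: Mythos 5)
Your overall strategy coincides with the paper's proof of Theorem \ref{T:stableRM}: the first half of your argument --- decomposing the isotopy into single $\QX$-colored Reidemeister moves, gauge-transforming by a generic $x$ so that Axiom \ref{I:QinvxD} together with finite intersections in $\gen$ yields simultaneous lifts $F_i^x=\Qm^{-1}(x\gm E_i)$, and applying Proposition \ref{P:guitarRM} to lift each move to $\Diag_Y$ --- reproduces the paper's argument step for step. Your ``strand-passing'' bridge between the tensored endpoints and the lifted sequence is exactly the paper's Lemma \ref{L:stab}, which asserts that for generic $x$ the diagram $\Id_{(x,+)}\otimes D$ is related by $Y$-colored moves to the conjugated diagram $(\chi^-_{x,w})^{-1}\circ\bigl[\Qm^{-1}(x\gm\Qm(D))\otimes\Id\bigr]\circ\chi^-_{x,w}$ (the paper parks the strand via bottom and top cabled crossings rather than moving it to the far side, but this is cosmetic).

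The genuine gap is in your justification of that bridging step. You identify the post-slide interior with $F_0^x$ ``by Equation \eqref{E:Qmxw} and injectivity of $\Qm$,'' but Equation \eqref{E:Qmxw} is an axiom about \emph{objects}, i.e.\ words in $W_Y$, and says nothing about $\Qm$ on morphisms: in the generic setting nothing guarantees a priori that $\Qm(\Id_{(x,+)}\otimes D)=\Id_{(q_x,+)}\otimes\bigl(x\gm\Qm(D)\bigr)$ as colored diagrams, nor that $\Qm$ commutes with tensoring a diagram by an identity strand --- the analogous biquandle statement, Proposition \ref{P:twisttensor}, is a theorem whose proof uses the path-based construction of $\Qm$, which is not available for an abstract generic factorization. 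Proving precisely this diagram-level statement is the content of Lemma \ref{L:stab}, and the paper has to work for it: Step 1 treats $D=\Id\otimes E\otimes\Id$ with $E$ a single crossing, cup or cap, comparing $\Qm$-images via the fact that an elementary $\QX$-colored diagram is determined by its colored boundary; Step 2 extends to compositions $D_2\circ D_1$ using the cancellation $\chi^-_{x,w_2}\circ(\chi^-_{x,w_2})^{-1}\equiv\Id$, with all auxiliary diagrams existing for a common generic $x$. A smaller inaccuracy: your genericity bookkeeping for the new crossings constrains, at the $k$-th crossing, the \emph{already-transformed} color of the sliding strand, not $x$ itself; one needs that generic bijections pull sets in $\gen$ back to sets in $\gen$ (built into their definition, and the reason the paper invokes closure of generic bijections under composition), not a bare intersection of subsets of $Y$. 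That point is repairable in a line, but the identification of the interior with $F_0^x$ requires the full inductive argument of Lemma \ref{L:stab} and cannot be read off from Equation \eqref{E:Qmxw}.
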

The proof of Theorem \ref{T:stableRM} is given in Appendix \ref{A:ProofPropTheorGat}.  

\subsection{Example: generic Lie group factorization of $\SL_2(\C)$}\label{ss:glgf}
We will give an example of a generic biquandle factorization of the conjugacy quandle of $\SL_2(\C)$.  To do this we first consider a generic Lie group factorization which is closely related to the
generalized group factorization of Example \ref{s:G+G-} in the case of $\SL_2(\C)$.

Let $\Gd=\SL_2(\C)$, $\wb \Gd=\GL_2(\C)$ and
$$\Grc=\qn{ 
  \bp{
    \bp{\begin{array}{cc} \kappa&0\\\vp&1  \end{array}}
    ,
    \bp{\begin{array}{cc} 1&\ve\\0&\kappa \end{array}}
    }
  :\ve,\vp\in\C,\kappa\in\C^*}
\subset\wb \Gd\times\wb \Gd.$$
Let $\vp_+, \vp_-:\Grc\to\wb\Gd$  be the maps defined by 
$$\vp_+((M_1, M_2))=M_1, \;\;\; \vp_-((M_1, M_2))=M_2$$
where $(M_1, M_2)\in \Grc$.  
Then $\psi:\Grc\to\Gd$ is given by 
\begin{equation}\label{E:DefpsiSL2}
\psi \bp{\bp{\begin{array}{cc}
      \kappa&0\\\vp&1
    \end{array}},\bp{\begin{array}{cc}
      1&\ve\\0&\kappa
    \end{array}}}=\bp{\begin{array}{cc}
      \kappa&0\\\vp&1
    \end{array}}\bp{\begin{array}{cc}
      1&\ve\\0&\kappa
    \end{array}}^{-1}=\bp{\begin{array}{cc}
      \kappa&-\ve\\\vp&\dfrac1\kappa-\dfrac{\ve\vp}\kappa
    \end{array}}.
    \end{equation}
  The map $\psi$ is a bijection from $\Grc$ to the Zariski open dense
  subset $\Gd'$ of $\Gd$, where $\Gd'$ is the set of determinant $1$ matrices $M=(m_{ij})$ such that $m_{11}\neq0$.  Remark that each conjugacy class 
  $C$ 
  in $\Gd$ is
  non-empty, connected and contains a trigonal matrix then since $\Gd'$ contains all
  invertible trigonal matrices we have 
  $C\cap \Gd'$
   is a Zariski open
  dense subset of 
  $C$. 

Next we explain how $Y=\Grc$ and $\gen$ extends to a generic biquandle factorization of the quandle $\Conj(\SL_2(\C))$.  First, 
analogously to the case of a generalized group factorization the tuple $(\Gd, \wb\Gd, \Grc, \vp_+,\vp_-)$ gives rise to a partial map $B:\Grc\times\Grc\to\Grc\times\Grc$ coming from solving Equations \eqref{eq:cross}.   
In particular, for $y_1,y_2\in
\Grc$, the elements $(y_4,y_3)=B(y_1,y_2)$ are
given by 
\begin{equation}\label{E:Formulasy4y3}
y_4=\psi^{-1}\bp{\vp_-(y_1)\psi(y_2)\vp_-(y_1)^{-1}} \text{ and }
y_3=\psi^{-1}\bp{\vp_+(y_4)^{-1}\psi(y_1)\vp_+(y_4)}.
\end{equation}  This
definition make sense if ${\vp_-(y_1)\psi(y_2)\vp_-(y_1)^{-1}}$ and
$\vp_+(y_4)^{-1}\psi(y_1)\vp_+(y_4)$ are in $\Gd'$.  
These formulas also imply that $B$ has a partially defined inverse $B^{-1}$.   As in the group factorization,  Equation~\eqref{E:DefS} defines a partially defined invertible sideways map $S$. 

Let  $\QX=(\Conj(\SL_2(\C)),\rhd)$ be the conjugacy quandle structure on $\Gd$, see Example~\ref{E:ConjQuandle}.
Define the harpoon automorphisms as follows: for any $x\in Y$ it is easy to show
$$x\gm \any\,=\,\vp_+(x)^{-1}\rhd\any\,=\,\vp_+(x)\,\any\,\vp_+(x)^{-1}: \QX\to \QX$$ is a quandle automorphism.  
Finally, the definition of the functor $\Qm:\Diag_\Grc\to \Diag_\Gd$ was given in \cite{KR}, also see \cite{GP}.   In Appendix \ref{Ap:ProofSL2CGenericFact} we recall this definition and prove the following theorem:
\begin{theorem}\label{T:YisGenericBiquConjSL2C}
The tuple 
$$(Y=\Grc,\gen=\{\text{Zariski open subsets of $Y$}\},B,\Qm,\gm)$$
 is a generic biquandle factorization of the quandle $\Conj(\SL_2(\C))$.  
\end{theorem}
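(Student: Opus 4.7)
The plan is to verify, in order, each of the five axioms of Definition \ref{d:genbirack}, treating $\Qm$ as postponed to the appendix and emphasizing where Zariski openness is doing the work.

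First I would handle Axiom \eqref{I:BandSrelateGenDef}. The partial map $B$ is defined by the system in Equation \eqref{eq:cross} with solutions \eqref{E:Formulasy4y3}, and $S$ is built from $B$ via Equation \eqref{E:DefS}; the equivalence of the four characterizations of $B,B^{-1},S,S^{-1}$ is then a direct translation of these defining equations (the main point being that both halves of \eqref{eq:cross} are preserved under the substitutions $(x_1,x_2)\leftrightarrow(x_4,x_3)$ and under the twist by $i\colon x\mapsto x^{-1}$ in $\Grc$). This is formal once one checks that the solution formulas define genuine elements of $\Grc$ whenever the conjugates $\vp_-(y_1)\psi(y_2)\vp_-(y_1)^{-1}$ and $\vp_+(y_4)^{-1}\psi(y_1)\vp_+(y_4)$ land in $\Gd'$.

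For Axiom \eqref{I:BBSSGenDef} the key observation (already recorded in Example \ref{s:G+G-}) is
\[
\psi\circ B_1^\pm(x,\any)=\vp_\mp(x)\,\psi(\any)\,\vp_\mp(x)^{-1},
\]
with analogous formulas for $B_2^\pm(\any,x)$ and $S_i^\pm$. Since $\psi\colon\Grc\to\Gd'$ is a bijection onto a Zariski open dense set and conjugation is a regular automorphism of $\Gd$ preserving $\Gd'$, pulling back through $\psi$ shows that the domain and range of each of the eight maps are Zariski open, and that the maps are bijective between these open sets and pull Zariski open sets to Zariski open sets — exactly the definition of a generic bijection. Axiom (3) is immediate: by inspection of \eqref{E:DefpsiSL2}, $x\gm\any=\vp_+(x)^{-1}\rhd\any$ is conjugation by $\vp_+(x)$, and conjugation by any element of $\GL_2(\C)$ is a quandle automorphism of $\Conj(\SL_2(\C))$ since it commutes with the binary operation $g\rhd h=g^{-1}hg$.

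For Axiom (4) I would recall from \cite{KR} the explicit construction of $\Qm$ on $Y$-colored diagrams: for each edge $e$ of a $Y$-colored diagram $(D,c)$ choose a path $\gamma_e$ from the basepoint going above the diagram to $e$, ``pull $\gamma_e$ back'' using the biquandle rules to reach a $Y$-color $y_e\in Y$ at the basepoint, and set the $\QX$-color of $e$ to be $\psi(y_e)\in\Gd$. The proof that this is well defined and functorial mirrors the biquandle proof of Theorem \ref{th:LV}, using Equation \eqref{E:AssBConj} to check compatibility with $\rhd$ at crossings. Injectivity on $Y$-colorings comes from the fact that $\psi$ is a bijection onto $\Gd'$: the $\QX$-color of one edge together with the $Y$-colors of its neighbors recovers the $Y$-color via $\psi^{-1}$, then one propagates. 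Equation \eqref{E:Qmxw} reflects the geometric fact that tensoring an $(x,\pm)$-strand on the left adds exactly one crossing over every strand of $w$ when pulling back, and this crossing by $\vp_+(x)^{\mp 1}$ is precisely the harpoon action $x\gm\any$.

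The main obstacle is Axiom \eqref{I:QinvxD}: given an arbitrary $\QX$-colored diagram $D\in\Diag_\QX$, we must show that for generic $x\in Y$ the conjugated diagram $x\gm D$ lifts through $\Qm$. The strategy is to interpret the $Y$-lifting problem crossing by crossing. Fix an edge and choose a candidate $Y$-lift of its color; the biquandle equations \eqref{eq:cross} then force the $Y$-lifts of all other edges uniquely, and the obstructions to existence are a finite collection of Zariski closed conditions of the form ``some intermediate $\SL_2(\C)$-element fails to lie in $\Gd'$''. Conjugation by $\vp_+(x)$ with $x$ a generic element of $Y$ simultaneously moves every such intermediate element into $\Gd'$ (the set of $x$ for which one conjugate lies in the complement of $\Gd'$ is Zariski closed and proper, and finite intersections of Zariski open dense sets are Zariski open dense); hence $\Qm^{-1}(x\gm D)$ exists for generic $x$. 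Together with the preceding steps this finishes the verification that $(Y,\gen,B,\Qm,\gm)$ is a generic biquandle factorization of $\Conj(\SL_2(\C))$.
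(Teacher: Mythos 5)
Your handling of Axioms \eqref{I:BandSrelateGenDef}, \eqref{I:BBSSGenDef}, (3) and (5) essentially tracks the paper's Appendix \ref{Ap:ProofSL2CGenericFact}: the paper likewise observes that the eight maps are rational and pairwise inverse (hence generic bijections), that $x\gm\any$ is conjugation by $\vp_+(x)$, and proves Axiom \eqref{I:QinvxD} by intersecting finitely many hypersurface complements $Z_g\in\gen$, one for each holonomy $g$ occurring in the diagram. (One small slip on your side: conjugation does \emph{not} preserve $\Gd'$; what is true and suffices is that the domain of $\psi\circ B_1^{+}(x,\any)\circ\psi^{-1}$ is $\Gd'\cap c^{-1}(\Gd')$, where $c$ is conjugation by $\vp_-(x)$, which is still Zariski open dense.) The genuine gap is in your verification of Axiom (4), i.e.\ in your construction of $\Qm$.

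You define $\Qm$ by the pull-back (guitar map) of Theorem \ref{th:LV}: pull $\gamma_e$ back to the basepoint through biquandle moves to obtain $y_e\in Y$, and color $e$ by $\psi(y_e)$. In the generic setting the operations $B_1,S_1,\dots$ invoked at each crossing of the doubled path are only \emph{partially} defined, so $y_e$ need not exist even when $(D,c)$ carries a perfectly good global $Y$-coloring; your $\Qm$ is therefore only a partial functor, whereas Definition \ref{d:genbirack} requires it on all of $\Diag_Y$ (this totality is the whole point of the section: quandle colorings always exist, $Y$-colorings do not). Concretely, take $D=\Id_{(x,+)}\otimes\Id_{(x_1,+)}$ with no crossings at all: Equation \eqref{E:Qmxw} forces the right strand of $\Qm(D)$ to be colored $x\gm\psi(x_1)=\vp_+(x)\psi(x_1)\vp_+(x)^{-1}$, and if $x$ has coordinates $(\kappa,\ve,\vp)$ and $\psi(x_1)=(m_{ij})$ then the $(1,1)$ entry of this color is $m_{11}-\vp\, m_{12}$, which vanishes for suitable $\vp$ whenever $m_{12}\neq0$; the resulting $\QX$-color then lies outside $\Gd'=\psi(\Grc)$ and is not $\psi(y_e)$ for any $y_e\in Y$. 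The paper avoids this by never lifting intermediate colors back to $Y$: following \cite{KR,GP} it assigns $\vp_\pm(\text{color})$ to the positive/negative elementary paths between adjacent regions of the diagram, extends this to a representation of the groupoid $\pi_1(M_T,\{P_i\})$ with values in $\wb\Gd$, and restricts to $\pi_1(M_T,P_0)$, where meridians receive conjugates of $\psi$-values; every step is a product of always-defined elements of $\wb\Gd$, so totality, injectivity and \eqref{E:Qmxw} all fall out, and your ``crossing-by-crossing obstruction'' heuristic for Axiom \eqref{I:QinvxD} is replaced by the precise region conditions \eqref{eq:rho-adm} (one condition, hence one $Z_g$, per region). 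Your genericity argument for Axiom (5) is correct once the obstructions are packaged this way, but as written it inherits the gap, since ``intermediate $Y$-lifts'' is not a well-defined notion under your partial $\Qm$.
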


As we will now explain, the generic biquandle factorization of Theorem~\ref{T:YisGenericBiquConjSL2C} can be extended to a generic biquandle factorization  $(Y', \gen',B',\Qm',\gm)$ of the quandle $\Conj(\SL_2(\C))\times_{\Ch_r} \C$ defined using a fibered product:  Following
Example \ref{ss:fp} let $f:Y=\Grc\to \C$ be given by
$f(y)=\operatorname{trace}(\psi(y))$.   If $(y_4,y_3)=B(y_1,y_2)$ then using
the formulas for $y_3$ and $y_4$ given in Equation \eqref{E:Formulasy4y3} it
is easy to see $f(y_1)=f(y_3)$ and $f(y_2)=f(y_4)$.  
Let $\ell\geq 3$ and set $r=\ell/2$ if $\ell$ is even and $r=\ell$ else.
Let $\Ch_r:\C\to\C$ be the renormalized $r^{th}$
Chebyshev polynomial (determined by $\Ch_r(2\cos\theta)=2\cos(r\theta)$).
Then 
\begin{equation}\label{E:DefOfY'}
Y'=Y\times_{(f,\Ch_r)}\C=\{(y,z)\in Y\times \C :f(y)=\Ch_r(z)\}
\end{equation} is naturally
equipped with a partial map given by
\begin{equation}\label{E:DefB'}
B'((y_1,z),(y_2,z'))=((y_4,z'),(y_3,z))
\end{equation}
which is defined whenever $B(y_1,y_2)=(y_4,y_3)$.  This assignment naturally
gives maps $(B')^{-1}$ and $(S')^{\pm}$ which satisfy Axiom
\eqref{I:BandSrelateGenDef} of Definition \ref{d:genbirack}.  Setting 
$\gen'=\{Z\times \C \cap Y': Z\in \gen\}$ 
then these maps satisfy  Axiom \eqref{I:BBSSGenDef}.

The quandle $\QX'=\Conj(\SL_2(\C))\times_{\Ch_r} \C$ is defined as follows.   The elements of the quandle are pairs $(y,z)$ such that $y\in \SL_2(\C)$ and $\operatorname{trace}(y)=\Ch_r(z)$.  The quandle structure is given by 
$(y,z) \rhd (y',z') =(y^{-1}y'y, z')$.  
There is a function $Y'\to \Aut(\QX',\rhd)$ given by $(y,z)\gm\any=(\vp_+(y)^{-1},z)\rhd\any $.   Finally, the functor $\Qm:\Diag_Y\to\Diag_{\QX}$ extends to a functor $\Qm':\Diag_{Y'}\to\Diag_{\QX'}$ which satisfies the last two axiom of Definition \ref{d:genbirack}.    Thus, the  tuple $(Y', \gen',B',\Qm',\gm)$ is a generic biquandle factorization of the quandle $\Conj(\SL_2(\C))\times_{\Ch_r} \C$.  

In our main example, given in Section \ref{S:MainExCycle}, we need to consider a sub-quandle of $\Conj(\SL_2(\C))\times_{\Ch_r} \C$ and its corresponding factorization.  Let us discuss this now.  Let $\QX_{\rel}$  be the sub-quandle of $\QX'$ defined by $$
 \QX_{\rel}=\{(x,z) \in \QX' : \operatorname{trace}(x)=\Ch_r(z)\neq \pm 2 \}\cup \left\{\left((-1)^{r-1}\Id_{2\times 2}, 2(-1)^{\ell-1} \right) \right\}.
 $$
Let 
$$Y_{\rel}=\{(y,z)\in Y' : \Ch_r(z)\neq \pm 2 \}\cup
\left\{\left(\psi^{-1}\left((-1)^{r-1}\Id_{2\times 2}\right), 2(-1)^{\ell-1} \right)\right\}.
 $$
Then the maps $(B')^{\pm}$ and  $(S')^{\pm}$ restrict to maps $B^{\pm}_{\rel}$ and  $S^{\pm}_{\rel}$ which satisfy Axiom 
\eqref{I:BandSrelateGenDef} of Definition \ref{d:genbirack}.  Setting 
$\gen_{\rel}=\{Z\times \C \cap Y_{\rel}: Z\in \gen\}$ 
then these maps satisfy  Axiom \eqref{I:BBSSGenDef}.   Finally, the functor $\Qm':\Diag_{Y'}\to\Diag_{\QX'}$ restricts to a functor $\Qm_{\rel}:\Diag_{Y_{\rel}}\to\Diag_{\QX_{\rel}}$ which satisfies the last two axiom of Definition~\ref{d:genbirack}.   
Thus, the results of this subsection can be summarized by the following theorem.
\begin{theorem}\label{T:ExistY'genB'}
  The tuple $(Y_{\rel}, \gen_{\rel},B_{\rel},\Qm_\ell,\gm)$ is a
  generic biquandle factorization of the quandle $\QX_{\rel}$ which is
  a sub-quandle of $\Conj(\SL_2(\C))\times_{\Ch_r} \C$.
\end{theorem}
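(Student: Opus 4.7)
The plan is to verify each of the five axioms of Definition \ref{d:genbirack} for the tuple $(Y_\rel, \gen_\rel, B_\rel, \Qm_\ell, \gm)$ by restricting the corresponding data already constructed for $(Y', \gen', B', \Qm', \gm)$. The strategy relies on one essential observation: the biquandle operations on $Y'$ act on the matrix parts of the colors by conjugation (see Equation \eqref{E:Formulasy4y3}), which preserves traces. Since $\QX_\rel$ (resp.\ $Y_\rel$) is obtained from $\QX'$ (resp.\ $Y'$) by a trace condition together with the addition of a single central point, closure of the operations under restriction will essentially be automatic.

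First I would check that $B_\rel^{\pm 1}$ and $S_\rel^{\pm 1}$ are well-defined partial maps from $Y_\rel\times Y_\rel$ to itself. If $(y_1,z_1),(y_2,z_2) \in Y_\rel$ with $\Ch_r(z_i) \ne \pm2$, the formulas in Equation~\eqref{E:Formulasy4y3} give $\psi(y_3)$ conjugate to $\psi(y_1)$ and $\psi(y_4)$ conjugate to $\psi(y_2)$, so their traces remain distinct from $\pm2$ and the outputs stay in $Y_\rel$. For the special element $\psi^{-1}((-1)^{r-1}\Id)$, the matrix $(-1)^{r-1}\Id$ is central in $\SL_2(\C)$, hence fixed under any conjugation; thus whenever this element appears as an input, the corresponding output coordinate is again the special element (with the prescribed $z = 2(-1)^{\ell-1}$). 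Axiom \eqref{I:BandSrelateGenDef} (the four equivalent relations between $B$ and $S$) then restricts immediately from $(Y', B')$. For Axiom \eqref{I:BBSSGenDef}, I observe that $Y_\rel$ is the union of a Zariski open subset of $Y'$ (the locus $\Ch_r(z) \ne \pm2$) with a single point, so $\gen_\rel$ consists of the traces of $\gen'$-sets in $Y_\rel$; the generic bijection property for the eight restricted maps then follows from the genericity statement on $Y'$ combined with the preservation of $Y_\rel$ established above.

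For Axioms~3 and~4, the harpoon action $(y,z)\gm\any$ is conjugation by $\vp_+(y)^{-1}$ on the matrix component and the identity on $z$; since conjugation preserves trace and fixes the central element, this is an automorphism of $\QX_\rel$, and the induced functor on $\Diag_{\QX_\rel}$ is obtained by restriction. The functor $\Qm_\rel$ is defined as the restriction of $\Qm'$ to $\Diag_{Y_\rel}$; its image lies in $\Diag_{\QX_\rel}$ because the trace condition is preserved, and injectivity plus Equation~\eqref{E:Qmxw} are inherited. The main obstacle will be Axiom \eqref{I:QinvxD}: given $D \in \Diag_{\QX_\rel}$, I need $\Qm_\rel^{-1}(x\gm D)$ to exist for generic $x \in Y_\rel$. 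The corresponding property on $Y'$ guarantees generic existence of $\Qm'^{-1}(x\gm D)$ for $x\in Y'$, and the lifted $Y'$-coloring has all edges colored by elements whose $\psi$-image is conjugate to one of the matrix parts appearing in $D$; since those traces avoid $\pm 2$ (or equal $(-1)^{r-1}\cdot 2$ at the central element), every lifted color automatically lands in $Y_\rel$. The subtlety is to check that the $\gen$-set of valid $x$ on $Y'$ meets $Y_\rel$ in a $\gen_\rel$-set, which follows from the fact that $Y_\rel$ contains a Zariski open dense subset of $Y'$; intersecting with this dense locus preserves genericity.
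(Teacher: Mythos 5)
Your proposal is correct and follows essentially the same route as the paper: the theorem there is stated as a summary of the restriction construction in Subsection \ref{ss:glgf}, which likewise restricts the fibered-product factorization $(Y',\gen',B',\Qm',\gm)$ of $\Conj(\SL_2(\C))\times_{\Ch_r}\C$ to $Y_\rel$ and $\QX_\rel$, with trace-preservation of the conjugation formulas in Equation \eqref{E:Formulasy4y3} (and the central element being fixed under conjugation) as the key point. Your treatment of Axiom \eqref{I:QinvxD} via the observation that lifted $Y'$-colors have $\psi$-images conjugate to the quandle colors of $D$, together with $\gen_\rel=\{Z\times\C\cap Y_\rel:Z\in\gen\}$, spells out details the paper leaves implicit, but it is the same argument.
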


\subsection{Representation of a generically defined birack}
Here we give the notion of a representation of a generic biquandle factorization of a quandle.  This is almost exactly the same as the biquandle representations defined in Subsection~\ref{SS:BiquandleRep} except that we only require morphisms to be defined when the generic biquandle is defined.    We use the notation and terminology of Subsection \ref{SS:BiquandleRep}.  

Let $(Y,\gen,B,\Qm, \gm)$ be a generic biquandle factorization of
a quandle $(\QX,\rhd)$ and $\cat$ be a pivotal
category.  Let $A$
be the set of $(x,y)\in Y\times Y$ such that $B(x,y)$ is defined.  A
\emph{representation} of $Y$ in $\cat$ is a family of objects 
$\{V_y\}_{y\in Y}$ and a family of isomorphisms of $\cat$
$$\{c_{x,y}:V_x\otimes V_y\to
B_1(V_x,V_y)\otimes B_2(V_x,V_y)\}_{(x,y)\in A}$$
which satisfy the colored braid relation, is sideways invertible and induces a twist 
whenever relevant elements of the biquandle $B$ are defined.

As in the case of  biquandles (see Theorem \ref{T:FunctorF}), there is a well
defined functor at level of $Y$-diagrams:
\begin{theorem}
  Let $(V_\any,c_{\any,\any})$ be a representation of $Y$ in a
  pivotal
  category $\cat$.  Then there exists a unique tensor
  functor $$F:\Diag_Y\to\cat,$$ such that for any $x,y$ in $Y$ one has
  $F((x,+))=V_x$, $F((x,-))=V_x^*$ and the morphisms are determined by its
  value on the elementary tangle diagrams:
  $$F({\chi^+_{x,y}})=c_{x,y}, \; F({\chi^-_{x,y}})=c_{x,y}^{-1} \;\text{ if $(x,y)$ is in the domain of $B^\pm$}, $$
  $$F(\lev_x)=\lev_{V_x}, F(\lcoev_x)=\lcoev_{V_x}, F(\rev_x)=\rev_{V_x} \text{ and }  F(\rcoev_x)=\rcoev_{V_x}.$$  
  Furthermore, the functor $F$ is invariant by $Y$-colored
  Reidemeister moves.
\end{theorem}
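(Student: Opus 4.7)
The plan is to follow the same strategy as the proof of Theorem \ref{T:FunctorF}, with care taken for the fact that the biquandle maps $B^{\pm}$ are only partially defined. First I would observe that the analogue of Lemma \ref{L:cat_diag} holds verbatim for $\Diag_Y$: as a tensor category it is generated by the elementary $Y$-colored tangles (positive and negative crossings, and left/right (co)evaluations), modulo the relations of planar isotopy. This is purely a statement about the underlying uncolored diagrams, and the $Y$-coloring just goes along for the ride; the point is that the morphism spaces of $\Diag_Y$ are by construction in bijection with such generator-and-relation presentations whose colors satisfy the compatibility at each crossing dictated by $B$ (which, by definition of a $Y$-colored diagram, forces $B$ to be defined on each crossing of any $D\in\Diag_Y$).

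Next I would invoke the standard Reshetikhin--Turaev-type pivotal functor $\mathsf{RT}$ from the category of $\cat$-colored planar ribbon graphs with coupons to $\cat$, available for any pivotal category $\cat$. Given a $Y$-colored diagram $D\in\Diag_Y$, replace each edge colored by $(x,\pm)$ with an edge colored by $V_x$ or $V_x^*$, and each positive (resp.\ negative) crossing $\chi^\pm_{x,y}$ by a coupon labeled by $c_{x,y}$ (resp.\ $c_{x,y}^{-1}$). These coupon labels exist because a $Y$-coloring of $D$ requires $B(x,y)$, and hence $c_{x,y}$ and $c_{x,y}^{-1}$, to be defined at every crossing of $D$. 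Define $F(D)=\mathsf{RT}(D')$ where $D'$ is the resulting coupon graph. Functoriality in $\Diag_Y$, compatibility with tensor product, and invariance under planar isotopy then follow directly from the corresponding properties of $\mathsf{RT}$. Uniqueness is forced by the prescribed values on generators together with the tensor and composition structure.

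It remains to check invariance under $Y$-colored framed Reidemeister moves. Using the generating set from Figure \ref{fig:RM} ($RII_{++}$, $RII_{+-}$, $RII_{-+}$, $RIII_{+++}$, $RI^f$), one checks each one separately. The move $RII_{++}$ holds because $c_{x,y}^{-1}$ is a two-sided inverse of $c_{x,y}$, which is built into the definition. The moves $RII_{+-}$ and $RII_{-+}$ correspond exactly to the sideways invertibility condition \eqref{eq:R2-a} on $s^+_L$ and $s^-_R$. The move $RIII_{+++}$ is the colored braid relation \eqref{eq:YB}. The move $RI^f$ is the twist condition: the two expressions for $\theta_x$ in the definition of an induced twist are precisely the images under $\mathsf{RT}$ of the two framed unknot diagrams related by $RI^f$. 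The crucial point is that the hypothesis of a representation of $Y$ requires all these axioms to hold "whenever the relevant elements of $B$ are defined," and a $Y$-colored Reidemeister move by definition presents two $Y$-colored diagrams, so on both sides of the move every crossing has its biquandle data defined; hence the axioms apply.

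The only mild subtlety, and in my view the main place to be careful, is the partiality of $B$: one must verify that whenever two diagrams $D,D'$ are related by a $Y$-colored Reidemeister move, not only the crossings present in $D$ and $D'$ but also the auxiliary crossings involved in the move (for example the two crossings of $RIII_{+++}$ reassembled into three, or the pair of crossings in an $RII$ move) are automatically in the domain of the corresponding $c_{\cdot,\cdot}$. This follows because a $Y$-colored Reidemeister move is defined only when both sides carry coherent $Y$-colorings, which in turn forces $B$ to be defined at each of the relevant crossings, so each axiom of the representation may legitimately be applied. This yields a tensor functor $F:\Diag_Y\to\cat$ invariant under $Y$-colored Reidemeister moves, and uniqueness follows from the generation statement in the first paragraph.
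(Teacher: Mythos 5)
Your proposal is correct and follows essentially the same route as the paper: the paper's proof of this theorem simply says it is identical to that of Theorem \ref{T:FunctorF}, i.e.\ evaluate the diagram via the Reshetikhin--Turaev pivotal functor after replacing crossings with coupons filled by $c^{\pm1}$, check the generating set of colored Reidemeister moves against the representation axioms, and get uniqueness from the generation statement of Lemma \ref{L:cat_diag}. Your extra care about the partiality of $B$ (that a $Y$-coloring of a diagram, and of both sides of a colored move, forces the relevant $c_{x,y}$ to exist, so the axioms apply) is exactly the point the paper leaves implicit, and it is handled correctly.
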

\begin{proof}
The proof is the same as the proof of Theorem \ref{T:FunctorF}. 
\end{proof}
Two $Y$-colored diagrams representing the same $\QX$-tangle are not
necessarily related by a sequence of $Y$-colored Reidemeister moves.
Nevertheless, we have:
\begin{theorem} Assume that the representation of $Y$ is regular as in
  Definition~\ref{D:reg}.  If $D$ and $D'$ are two $Y$-diagrams such
  that $\Qm(D)$ and $\Qm(D')$ represent isotopic $\QX$-tangles then
  $F(D)=F(D')$.
\end{theorem}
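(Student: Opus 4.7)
The plan is to reduce the statement to the Reidemeister-move-invariance of $F$ by applying the ``stabilization'' theorem \ref{T:stableRM}, then to cancel the stabilizing factor using regularity in the same way that Theorem \ref{T:FGaugeInv} was proved. Since $\Qm(D)$ and $\Qm(D')$ represent isotopic $\QX$-tangles they have the same source and target words in $W_\QX$, and because $\Qm$ is injective on $Y$-colored diagrams and induces the identity on underlying uncolored diagrams, $D$ and $D'$ already share the same source and target in $W_Y$; in particular $F(D)$ and $F(D')$ are parallel morphisms of $\cat$.

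By Theorem \ref{T:stableRM} there exists $x\in Y$ such that the stabilized diagrams $\Id_{(x,+)}\otimes D$ and $\Id_{(x,+)}\otimes D'$ are related by a finite sequence of $Y$-colored Reidemeister moves. Since $F$ is invariant under $Y$-colored Reidemeister moves (the last sentence of the previous theorem), we obtain
\[
F(\Id_{(x,+)}\otimes D)=F(\Id_{(x,+)}\otimes D').
\]
Functoriality (tensoriality) of $F$ rewrites this equality as
\[
\Id_{V_x}\otimes F(D)=\Id_{V_x}\otimes F(D')
\]
in $\Hom_\cat(V_x\otimes F(\partial_- D),\,V_x\otimes F(\partial_+ D))$. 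Regularity of the representation means precisely that $V_x\otimes\any$ is a faithful endofunctor of $\cat$, so the tensor factor $\Id_{V_x}$ can be cancelled and $F(D)=F(D')$ follows.

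The genuinely non-trivial input is Theorem \ref{T:stableRM}: once one knows that isotopy classes of $\QX$-tangles can always be realized by $Y$-colored Reidemeister moves after tensoring with a single generic strand, the present theorem is a one-line consequence of the regularity hypothesis. Thus I do not expect any further obstruction here beyond checking that the objects appearing in the stabilized equality really do allow the cancellation, which is automatic because regularity was assumed for every $V_x$ in the family.
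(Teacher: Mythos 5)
Your proposal is correct and is essentially identical to the paper's own proof: both invoke Theorem \ref{T:stableRM} to get $\Id_{(x,+)}\otimes D\equiv\Id_{(x,+)}\otimes D'$ for (generic, hence some) $x\in Y$, apply invariance of $F$ under $Y$-colored Reidemeister moves, and cancel $\Id_{V_x}$ using regularity of $V_x$. The extra observation about matching sources and targets is harmless and implicit in the paper's argument.
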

\begin{proof}
  By Theorem \ref{T:stableRM}, for generic $x\in Y$, $\Id_x\otimes
  D\equiv\Id_x\otimes D'\in\Diag_Y$.  As $F$ is invariant by
  $Y$-colored Reidemeister moves, we have that 
  $$\Id_{V_x}\otimes
  F(D)=F(\Id_x\otimes D)=F(\Id_x\otimes D')=\Id_{V_x}\otimes F(D').$$
  Now the theorem follows from the fact that $V_x$ is regular.
\end{proof}

There is a natural equivalence on $\QX$-tangles, which we call \emph{$B$-gauge equivalence}, generated by the automorphisms $(a\rhd\any)_{a\in\QX}$ and
$(x\gm\any)_{x\in Y}$.
The terminology is justified by Equation
\eqref{eq:gp}.  As in the case of biquandle representations, $F$
restricts to a $\kk$-valued $B$-gauge invariant function for links or
1-1-tangles:
\begin{theorem}\label{c:ginv} 
Assume that the representation of $Y$ is regular and simple as in
  Definition~\ref{D:reg}.
  Let $D,D'$ be two $Y$-colored diagrams such that $\Qm(D)$ and $\Qm(D')$ are $B$-gauge equivalent 1-1 $\QX$-tangles (resp.\ $\QX$-links) then $\brk{F(D)}=\brk{F(D')}$ (resp.\  $F(D)=F(D')$).
\end{theorem}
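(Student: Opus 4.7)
The plan is to adapt the proof of Theorem \ref{T:FGaugeInv} from the biquandle setting to the generic one, with Theorem \ref{T:stableRM} and Axiom \eqref{I:QinvxD} of Definition \ref{d:genbirack} supplying the tools to handle partial definedness. First I would reduce to a single gauge generator. Since $B$-gauge equivalence is the relation on $\QX$-tangles generated by $(a \rhd \any)_{a \in \QX}$ and $(x \gm \any)_{x \in Y}$, by induction on the length of a sequence connecting $\Qm(D)$ to $\Qm(D')$ (freely re-choosing $Y$-colored representatives at each step via the preceding isotopy-invariance theorem), it suffices to handle each generator separately. The $a \rhd \any$ generator can be expressed in terms of the $\gp, \gm$ actions after lifting $a$ to $Y$, which is available generically by Axiom \eqref{I:QinvxD}; thus it suffices to handle the $x \gm \any$ case.

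For the core case $\Qm(D') = z \gm \Qm(D)$ with $z \in Y$ and $y = \partial_+ D$, I would mimic the biquandle proof by considering a pair of $Y$-colored $2$-$2$ diagrams of the form $E_1 = \Id_{(z,+)} \otimes D$ and $E_2 = \chi^-_{z,y} \circ (\wt D' \otimes \Id_{(B_2(z,y),+)}) \circ \chi^+_{z,y}$, where $\wt D'$ is a $1$-$1$ $Y$-colored diagram with boundary $B_1(z,y)$ whose $\Qm$-image agrees (up to $\QX$-isotopy) with $\Qm(D')$. The existence of $\wt D'$ is provided, possibly after stabilization by a strand colored by a generic $w \in Y$, by Axiom \eqref{I:QinvxD} together with isotopy invariance. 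A Reidemeister-II move cancels $\chi^+_{z,y}$ against $\chi^-_{z,y}$, so the underlying tangles of $E_1$ and $E_2$ are isotopic; combined with the $\Qm$-agreement of $\wt D'$ and $D'$, this makes $\Qm(E_1)$ and $\Qm(E_2)$ isotopic as $\QX$-tangles. The preceding isotopy-invariance theorem then yields $F(E_1) = F(E_2)$. Evaluating via simplicity of $V_y$ and of the boundary object of $\wt D'$ gives
\[
\brk{F(D)}\, \Id_{V_z \otimes V_y} = F(E_1) = F(E_2) = \brk{F(\wt D')}\, c_{z,y}^{-1} \circ c_{z,y} = \brk{F(\wt D')}\, \Id_{V_z \otimes V_y},
\]
and regularity extracts $\brk{F(D)} = \brk{F(\wt D')} = \brk{F(D')}$ (the last equality by isotopy invariance applied to $\wt D'$ and $D'$).

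For the link case, any $\QX$-link is the closure of a $1$-$1$ $\QX$-tangle, and $B$-gauge equivalent link diagrams cut at corresponding points yield $B$-gauge equivalent $1$-$1$ tangles. Combining the $1$-$1$ case with the closure formula $F(D) = \dim_\cat(V_y) \brk{F(T)}$ and a generic analog of Lemma \ref{L:qdimGauge} (gauge invariance of $\dim_\cat(V_y)$, proved by the same $RI^f$-move argument on a $Y$-colored unknot) yields $F(D) = F(D')$.

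The main obstacle will be the bookkeeping around places where $B$ is not defined on the required pairs and where $\QX$-elements do not admit lifts to $Y$. Both issues are resolved by repeated invocation of Theorem \ref{T:stableRM} and Axiom \eqref{I:QinvxD} to tensor with generic auxiliary strands; regularity ensures these stabilizations do not corrupt the scalar comparisons, but care is needed to verify this at each inductive step and to confirm that the auxiliary diagram $\wt D'$ can always be chosen compatibly.
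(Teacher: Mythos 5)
Your core computation is sound as far as it goes: conjugating the diagram by a crossing and extracting scalars by regularity is exactly the mechanism of the paper's proof (Appendix \ref{Ap:ProofThmFBgauge}, Claim (1), resting on Lemma \ref{L:stab}). But note that this mechanism is only available for \emph{generic} gauge parameters: for the specific $z$ with $\Qm(D')=z\gm\Qm(D)$, the crossings $\chi^{\pm}_{z,y}$ need not exist at all, since Axiom \eqref{I:BBSSGenDef} only makes $B_1^{\pm}(z,\any)$ a generic bijection, and tensoring with an auxiliary generic strand does nothing to make $B(z,y)$ defined. So your argument proves invariance under generic harpoons, not under the given generator, and the passage from the former to the latter is precisely what is missing.

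The missing ingredient is a well-definedness statement for the generic push, and it is the heart of the paper's proof. Intermediate tangles in the gauge chain need not lie in the image of $\Qm$ (so ``freely re-choosing $Y$-colored representatives at each step'' is not free), and if $T$ is unliftable while $x\gm T$ and $y\gm T$ are both liftable, the two values $\brk{F(\Qm^{-1}(x\gm T))}$ and $\brk{F(\Qm^{-1}(y\gm T))}$ are not compared by your single-generator case: $(y\gm\any)\circ(x\gm\any)^{-1}$ is generally not of the form $w\gm\any$, and Theorem \ref{T:stableRM} cannot help since it only compares diagrams whose $\Qm$-images are \emph{isotopic}, whereas gauge translates are not. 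The paper resolves this with a braid-conjugation claim (Claim (2)) applied to an explicit $\QX$-colored braid with colors $q_x,q_y$, together with the exchange identity $(z\gm\any)\circ(x\gm\any)=(x'\gm\any)\circ(z'\gm\any)$ where $(x',z')=B^{-1}(z,x)$, and only then extends $F_{\Dk}$ to a total, gauge-invariant map. Relatedly, your reduction of the $a\rhd\any$ generator is circular: $a\in\QX$ need not lift to $Y$ (Axiom \eqref{I:QinvxD} yields liftability of $x\gm a$ for generic $x$, but not of $a$ itself), and in the generic setting $\gp$ is not a primitive of Definition \ref{d:genbirack}; it is \emph{defined} through $\rhd$ via Equation \eqref{eq:gp}, so expressing $a\rhd\any$ through $\gp$ reduces the problem to itself. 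The paper's Claim (5) avoids any lift of $q$ by implementing $q\rhd\any$ as conjugation by a $\QX$-colored cabled positive crossing with over-strand colored $q$ and feeding this to the braid-conjugation claim. Without these two ingredients your induction over the gauge chain cannot be closed; your treatment of the link case via gauge invariance of $\dim_\cat$ is fine once the 1-1 case is genuinely established.
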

The proof of Theorem \ref{c:ginv} is given in Appendix \ref{Ap:ProofThmFBgauge}.

\subsection{$Q$-link invariants}
Here  we define two invariants of $\QX$-links, using the following hypothesis:
\begin{enumerate}
\item $\QX$ is a quandle and $L_\QX$ is the set of isotopy class of $\QX$-links,
\item $(Y, \gen, B, \Qm, \gm)$ is a generic biquandle factorization of $\QX$,
\item $(V_\any,c_{\any,\any})$ is a regular simple representation of
  $Y$ in a pivotal category $\cat$ and $\kk=\End_\cat(\unit)$.
\end{enumerate}
\begin{theorem} Under these hypothesis,
 the partial map $F\circ\wt \Qm^{-1} :L_\QX\to\kk$
   extends uniquely to a gauge invariant globally defined map 
   $$
   \wt F: L_\QX\to\kk \text{ given by } L\mapsto F\circ\wt \Qm^{-1}(x \gm L)
   $$
 where $x$ is any element of $ Y$ such that $\Qm^{-1}(x \gm L)$ is defined.  
\end{theorem}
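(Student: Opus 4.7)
The plan is to combine Axiom \eqref{I:QinvxD} of Definition \ref{d:genbirack}, which ensures that every $\QX$-link can be ``pulled back'' to a $Y$-colored diagram after a generic harpoon twist, with the gauge invariance statement for closed $Y$-colored diagrams given by Theorem \ref{c:ginv}. Together, these two ingredients are essentially the only tools needed; the rest is bookkeeping around independence of choices and extension.

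First I would show the map is well defined. Fix a $\QX$-link $L$ and a $\QX$-diagram $D_L$ representing $L$. By Axiom \eqref{I:QinvxD}, the set $U_{D_L} := \{x\in Y\,:\,\Qm^{-1}(x\gm D_L)\text{ exists}\}$ belongs to $\gen$ and is in particular nonempty. For any $x\in U_{D_L}$, set $\wt F(L) := F(\Qm^{-1}(x\gm D_L))\in\kk$. To check this value is independent of $x$, take $x_1,x_2\in U_{D_L}$ and put $E_i = \Qm^{-1}(x_i\gm D_L)$. The $\QX$-links $\Qm(E_1)$ and $\Qm(E_2)$ are the $B$-gauge transforms $x_1\gm L$ and $x_2\gm L$ of the same link $L$, so they are $B$-gauge equivalent in the sense preceding Theorem \ref{c:ginv}. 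Applying Theorem \ref{c:ginv} (the link version, equality in $\kk$, not merely up to brackets) yields $F(E_1) = F(E_2)$. Independence of the auxiliary diagram $D_L$ is verified identically: if $D_L'$ is another $\QX$-diagram for $L$, for $x\in U_{D_L}\cap U_{D_L'}\in\gen$ the diagrams $\Qm^{-1}(x\gm D_L)$ and $\Qm^{-1}(x\gm D_L')$ have $\Qm$-images representing the same $\QX$-link $x\gm L$, and Theorem \ref{c:ginv} again concludes.

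Second, $\wt F$ extends $F\circ\wt\Qm^{-1}$: if $L$ already lies in the image of $\wt\Qm$, say $L=\wt\Qm(E)$, then for any $x\in U_{D_L}$ the diagrams $E$ and $\Qm^{-1}(x\gm D_L)$ have $\Qm$-images representing the $B$-gauge equivalent links $L$ and $x\gm L$, whence $F(E)=F(\Qm^{-1}(x\gm D_L))=\wt F(L)$ by one more application of Theorem \ref{c:ginv}. For gauge invariance, let $a\in\QX$ and $y\in Y$; then $a\rhd L$ and $y\gm L$ are $B$-gauge equivalent to $L$ by definition, so picking a single generic $x\in Y$ valid for a diagram of $L$ (and hence for diagrams of $a\rhd L$ and $y\gm L$, since harpoon actions preserve genericity and we may shrink to the intersection of the relevant sets in $\gen$), the three resulting values of $F$ coincide by Theorem \ref{c:ginv}.

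Finally, uniqueness is immediate: any gauge invariant extension $\wt F'$ of $F\circ\wt\Qm^{-1}$ must satisfy $\wt F'(L)=\wt F'(x\gm L)=F\circ\wt\Qm^{-1}(x\gm L)$ for every $L$ and every generic $x$, which is exactly our formula. The principal obstacle in this argument is the transversality of generic sets used at each stage: one must repeatedly intersect members of $\gen$ (for the two diagrams, for the two choices of $x$, for $\gauge$-translates) and use the axioms of $\gen$ to guarantee the intersection remains nonempty; this is exactly why $\gen$ was required to be closed under finite intersections and to contain $Y$ itself in Definition \ref{d:genbirack}.
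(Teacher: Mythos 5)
Your proposal is correct and takes essentially the same route as the paper, whose entire proof is the single line ``This is an immediate consequence of Theorem \ref{c:ginv}'': like the paper, you make Theorem \ref{c:ginv} (in its link version, giving equality in $\kk$) the sole substantive ingredient, with Axiom \eqref{I:QinvxD} of Definition \ref{d:genbirack} supplying the generic $x$ needed for global definedness. Your write-up simply spells out the bookkeeping (independence of the choices of $x$ and of the diagram, the extension property, gauge invariance, and uniqueness) that the paper leaves implicit, and all of these verifications are sound.
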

\begin{proof}
  This is an immediate consequence of Theorem \ref{c:ginv}.
\end{proof}
In the main examples of this paper this map is trivial.  However, 
one can renormalize $\wt F$ using modified dimension: let us assume
\begin{enumerate}
\item[(4)] There is a gauge invariant modified dimension function
  $\qd$ on $\{V_x\}_{x\in Y}$ such that $(V_\any,\qd)$ is an ambi
  pair.
\end{enumerate}
Then recall the map $F'$ is defined on closed $Y$-colored diagrams, see 
Equation~\eqref{eq:md}.
\begin{theorem} \label{T:DefF'Generic} Under the four hypothesis listed in this subsection, the partial map
 $F'\circ\wt \Qm^{-1}:\Link_\QX\to\kk$
 extends uniquely to a gauge invariant globally defined map 
   $$
   \wt F': L_\QX\to\kk \text{ given by } L\mapsto F'\circ\wt \Qm^{-1}(x \gm L)
   $$
 where $x$ is any element of $ Y$ such that $\Qm^{-1}(x \gm L)$ is defined.  
\end{theorem}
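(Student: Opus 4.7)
The plan is to reduce Theorem \ref{T:DefF'Generic} to Theorem \ref{c:ginv} by upgrading gauge invariance from $F$ (defined on $Y$-diagrams) to $F'$ (defined via the ambi pair). The argument splits into three parts: existence of a valid $x$, gauge invariance of $F'$ on closed $Y$-diagrams, and the consequent well-definedness and uniqueness of $\wt F'$.

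For existence, I would invoke Axiom~\eqref{I:QinvxD} of Definition~\ref{d:genbirack}: given a $\QX$-link $L$ with diagram $D \in \Diag_\QX$, the set of $x \in Y$ for which $\Qm^{-1}(x \gm D)$ is defined belongs to $\gen$ and is in particular non-empty. So the formula $\wt F'(L) = F'(\wt\Qm^{-1}(x \gm L))$ is meaningful for at least one~$x$. The heart of the proof is then to show that if $E, E' \in \Diag_Y$ are $Y$-colored diagrams with $\wt\Qm(E)$ and $\wt\Qm(E')$ $B$-gauge equivalent $\QX$-links, then $F'(E) = F'(E')$. I would cut $E$ at some edge to obtain a 1-1 $Y$-tangle $E_y$ with boundary color $y \in Y$, and cut $E'$ at the corresponding edge (which makes sense because the $B$-gauge action leaves the underlying uncolored diagram fixed) to get $E'_{y'}$. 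By the compatibility of $\Qm$ with the gauge action in Equation~\eqref{E:Qmxw}, the colors $y$ and $y'$ are gauge translates of each other in $Y$. Hence hypothesis~(4) yields $\qd(y) = \qd(y')$, while the 1-1 tangle case of Theorem~\ref{c:ginv} yields $\brk{F(E_y)} = \brk{F(E'_{y'})}$. Combining via the ambi pair formula~\eqref{eq:md},
\[ F'(E) \;=\; \qd(y)\,\brk{F(E_y)} \;=\; \qd(y')\,\brk{F(E'_{y'})} \;=\; F'(E'). \]

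With this gauge invariance in hand, well-definedness of $\wt F'$ is immediate: for two admissible choices $x_1, x_2 \in Y$, both $\wt\Qm(E_i) = x_i \gm L$ ($E_i := \wt\Qm^{-1}(x_i \gm L)$) are $B$-gauge equivalent to $L$, hence to each other, so $F'(E_1) = F'(E_2)$. The same argument shows gauge invariance of $\wt F'$ itself, and that $\wt F'$ extends $F' \circ \wt\Qm^{-1}$ wherever the latter is defined (apply the equality to $\wt\Qm^{-1}(L)$ and $\wt\Qm^{-1}(x \gm L)$). Uniqueness of the gauge invariant extension follows since every $L$ is $B$-gauge equivalent to some $x \gm L$ in the domain of $\wt\Qm^{-1}$, forcing the value to be $F'(\wt\Qm^{-1}(x \gm L))$.

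The main obstacle is the bookkeeping around cutting presentations under the gauge action: one must verify that cutting two $B$-gauge equivalent $\QX$-links at ``corresponding'' edges produces 1-1 $\QX$-tangles that are again $B$-gauge equivalent, and that the associated $Y$-colors $y, y'$ of the cut edges are gauge translates so that $\qd(y) = \qd(y')$. This rests on the functoriality of $\Qm$ and Equation~\eqref{E:Qmxw}, together with the observation that the harpoon actions $x \gm \any$ are realized by cabling, which commutes with cutting at any edge not involved in the cable. Once this matching is in place, everything else is a formal combination of Theorem~\ref{c:ginv}, the ambi pair formula, and the gauge invariance of $\qd$.
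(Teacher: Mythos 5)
Your proposal is correct and follows essentially the same route as the paper's proof: both reduce the statement to Theorem \ref{c:ginv} applied to 1-1 tangles, combine it with the gauge invariance of $\qd$ from hypothesis (4), and assemble the link invariant through the ambi-pair cutting-presentation formula \eqref{eq:md}. The bookkeeping you supply about cutting $B$-gauge equivalent links at corresponding edges and matching the boundary colors $y,y'$ is precisely what the paper's terse ``this clearly extends uniquely'' elides, so you have made the same argument explicit rather than found a different one.
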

\begin{proof}
  Clearly, $\qd$ induces a map on gauge equivalence class of elements
  of $\QX$. Theorem \ref{c:ginv} implies that $\brk{\wt F(\any)}$
  uniquely extends to a globally defined gauge invariant map on 1-1
  $\QX$-tangles.
  If $T_a$ is a 1-1 $\QX$-tangle whose open strand is colored by $a$ then the map $\wt F'$ is given on the braid closure $\wh T_a$ of $T_a$ by $\qd(a)\brk{\wt F(T_a)}$.  
  This
  clearly extends uniquely to a global gauge invariant
  $Q$-link invariant.
\end{proof}
\begin{conclusion}\label{SS:conclusionGenericInv}
The following steps can be used to compute $\wt F'(L)$:
\begin{enumerate}
\item Choose a diagram $D\in\Diag_\QX$ representing $L$.
\item If $\Qm^{-1}(D)$ exists then let $D'=D$ else, replace $D$ by a gauge
  equivalent diagram $D'=y\gm D$ (for some $y\in Y$) which is in the
  image of $\Qm$.
\item Consider any diagram $D'_x$ which is a cutting presentation of
$\Qm^{-1}(D')$
  and compute its image by $F$:
  $F(D'_x)=\brk{F(D'_x)}.\Id_{V_x}$.
\item Finally, multiply this bracket with the modified dimension
  $\qd(V_x)$:
$$\wt F'(L)=\qd(V_x)\brk{F(D'_x)}.$$
\end{enumerate}
\end{conclusion}

\section{A biquandle representation from cyclic quantum $\slt$ modules}\label{S:MainExCycle}
Recall the generic biquandle factorization $(Y_{\rel}, \gen_{\rel},B_{\rel},\Qm_\ell,\gm)$ of the quandle $\QX_{\rel}$ given in Theorem \ref{T:ExistY'genB'}.  In this section we define a representation of $Y_{\rel}$ from cyclic quantum $\slt$ modules.  There are two main objects needed to do this:  a holonomic braiding and a modified trace.  In the context of quantum $\slt$ both of these objects have been studied in 
\cite{KR1} and \cite{GP4}, respectively.   In Subsections \ref{SS:UslRRmatR} and  \ref{SS:ModTraceQuSl} we review the material we need from these papers.  Then in Subsection \ref{SS:NegReidIIMove} we prove the holonomic braiding leads to a generically defined Yang-Baxter model which is sideways invertible.  Finally in Subsection \ref{SS:Twist} we show the representation induces a twist.

\subsection{The algebra $U_q=\Uq$ and cyclic $U_\xi$-modules}\label{SS:CyclicMod}
Let $U_q=\Uq$ be the $\C[q]$-algebra given by generators $E, F, K, K^{-1}$ 
and relations:
\begin{align}\label{E:RelDCUqsl}
  KEK^{-1}&=q^2E, & KFK^{-1}&=q^{-2}F, &
  [E,F]&=\frac{K-K^{-1}}{q-q^{-1}}.
\end{align}
The algebra $\Uq$ is a Hopf algebra where the coproduct, counit and
antipode are defined by
\begin{align}\label{E:HopfAlgDCUqsl}
  \Delta(E)&= 1\otimes E + E\otimes K, 
  &\varepsilon(E)&= 0, 
  &S(E)&=-EK^{-1}, 
  \\
  \Delta(F)&=K^{-1} \otimes F + F\otimes 1,  
  &\varepsilon(F)&=0,& S(F)&=-KF,
    \\
  \Delta(K)&=K\otimes K
  &\varepsilon(K)&=1,
  & S(K)&=K^{-1}.
\end{align}

Let $\xi=\e^{2i\pi/\ell}$ be a $\ell$ root of unity.   Set $r=\ell/2$ if $\ell$ is even and $r=\ell$ else.
Let $U_\xi=U_\xi\slt$ be the specialization of $\Uq$ at $q=\xi$.  
 Let $Z_0$ be the subalgebra of $U_\xi$ generated by the central elements $K^{\pm r},E^r$ and $F^r$.  
The center $Z$ of $U_\xi$ is generated by $Z_0$ and
the Casimir element 
$$\Omega=\qn1^2EF+K\xi^{-1}+K^{-1}\xi=\qn1^2FE+K\xi+K^{-1}\xi^{-1}$$
where $\qn x=\xi^x-\xi^{-x}$.
The Casimir satisfies the polynomial equation:
\begin{equation}\label{E:ChOmega}
\Ch_r\bp{\Omega}=\qn1^{2r}E^rF^r-(-1)^\ell (K^r+K^{-r})\in Z_0
\end{equation} where $\Ch_r$
is the renormalized $r^{th}$ Chebyshev polynomial
(determined by $\Ch_r(2\cos\theta)=2\cos(r\theta)$). 

By a \emph{$U_\xi$-weight module} we mean a finite-dimensional module over $U_\xi$ which restrict to a semi-simple module over $Z_0$.  Let $\cat$ be the tensor category of $U_\xi$-weight modules.  
The category $\cat$ is a pivotal $\C$-category where for any 
object $V$ in $\cat$, the dual object and the duality
morphisms are defined as follows: $V^* =\Hom_\C(V,\C)$ and
\begin{align}\label{E:DualityForCat}
  \coev_{V} :\, & \C \rightarrow V\otimes V^{*} \text{ is given by } 1 \mapsto
  \sum
  v_j\otimes v_j^*,\notag\\
  \ev_{V}:\, & V^*\otimes V\rightarrow \C \text{ is given by }
  f\otimes w \mapsto f(w),\notag\\
  \tev_{V}:\, & V\otimes V^{*}\rightarrow \C \text{ is given by } v\otimes f
  \mapsto f(K^{1-{r}}v),\notag
  \\
  \tcoev_V:\, & \C \rightarrow V^*\otimes V \text{ is given by } 1 \mapsto \sum
  v_j^*\otimes K^{{r}-1}v_j,
\end{align}
where $\{v_j\}$ is a basis of $V$ and $\{v_j^*\}$ is the dual basis of $V^*$.

The set of characters on $Z_0$ (resp.\ $Z$) is $\Hom_{alg}(Z_0, \C)$
(resp. $\Hom_{alg}(Z, \C)$).
The set $\Hom_{alg}(Z_0, \C)$ is a group where the multiplication is given by
$\chi_1\chi_2 =(\chi_1\otimes \chi_2) \Delta$.  For each $\chi$
character on $Z_0$ let $\cat_{\chi}$ be the full subcategory of $\cat$
whose objects are modules where each $z \in Z_0$ acts by $\chi(z)$.
At the end of the next subsection we will see that $\cat_{\chi}$ is
semi-simple if $\chi(\Ch_r(\Omega))\neq \pm2$.  For such a character
$\chi$ the simple modules in $\cat_{\chi}$ are called \emph{cyclic};
the name comes from the fact that $E^r$ and $F^r$ can act by non-zero scalars creating a
circular diagram depicting the action on the weight vectors.

\newcommand{\rr}{{r^2}}
\newcommand{\catro}{\cat_{/\rr}}

\subsection{The algebra isomorphism $\RR$ and the matrix $R$} \label{SS:UslRRmatR} 
Let $U_h=U_h(\slt)$ be h-adic completion version of the $\Uq$.
  Following \cite{KR1}, there exists an algebra isomorphism
$$\RR: U_h\otimes U_h\to U_h\otimes U_h$$
given by conjugation of the $R$-matrix in the h-adic completion.    
As in \cite{KR1}, this isomorphism induces an outer automorphism of the division ring $\overline{U_\xi^{\otimes 2}}$ of $U_\xi^{\otimes 2}$ which we also denote by the same letter $\RR$.  In particular, it induces a map
\begin{equation}
  \label{eq:curlyR}
  \RR:U_\xi\otimes U_\xi\to U_\xi\otimes U_\xi[W^{-1}]
\end{equation}
where $$\W=\bp{1-q^r\qn1^{2r}(K^{-1}E)^r\otimes (FK)^r}
=\bp{1+(-1)^\ell\qn1^{2r}K^{-r}E^r\otimes F^rK^r},$$
which satisfies
\begin{equation}\label{E:DeltaRR}
(\Delta \otimes 1)\RR (u\otimes v)= \RR_{13}\RR_{23}(\Delta (u)\otimes v) \text{ and } (1 \otimes \Delta)\RR (u\otimes v)= \RR_{13}\RR_{12}(u\otimes \Delta (v))
\end{equation}
and 
\begin{equation}\label{E:epsilonRR}
(\epsilon \otimes 1)\RR (u\otimes v)= \epsilon(u)v \text{ and } (1 \otimes \epsilon)\RR (u\otimes v)= \epsilon(v)u.
\end{equation}

The map $\RR$ is given on $Z_0\otimes Z_0$ by
\begin{align}\label{E:RRGenZ0}
\RR(K^r\otimes 1)=(K^r\otimes 1)\W, \;\;\; \RR(1\otimes K^r)=(1\otimes K^r)\W^{-1},\notag \\
\RR(E^r\otimes 1)=E^r\otimes K^r, \;\;\; \RR(1\otimes F^r)=K^{-r}\otimes F^r,\notag \\
\RR(1\otimes E^r)=K^r\otimes E^r+E^r\otimes1\bp{1-(1\otimes K^{2r})\W^{-1}}, \notag\\
\RR(F^r\otimes 1)=F^r\otimes K^{-r}+1\otimes F^r\bp{1-(K^{-2r}\otimes 1)\W^{-1}}.
\end{align}

Recall the generic Lie group factorization associated to $\SL_2(\C)$ given in Subsection \ref{ss:glgf}:  $\Gd=\SL_2(\C)$, $\wb \Gd=\GL_2(\C)$ and 
\begin{equation}\label{E:Mkappavevp}
\Grc=\qn{ 
  M\bp{\kappa,\ve,\vp}=\bp{
    \bp{\begin{array}{cc} \kappa&0\\\vp&1  \end{array}}
    ,
    \bp{\begin{array}{cc} 1&\ve\\0&\kappa \end{array}}
    }
  :\ve,\vp\in\C,\kappa\in\C^*}
\end{equation}
This factorization was used to define a generic biquandle factorization of $\QX=\Conj(\SL_2(\C))$ with a map $B: \Grc \times \Grc \to   \Grc \times \Grc$.  Next we show that this map also arises from $\RR$ and characters on $Z_0$.

Given a character $\chi$ on $Z_0$, let 
$$\vp_+(\chi)=\bp{\begin{array}{cc}
      \kappa&0\\\vp&1
      \end{array}} \;\text{  and } \; \vp_-(\chi)=\bp{\begin{array}{cc}
      1&\ve\\0&\kappa
    \end{array}}$$
   where $\kappa=\chi(K^r),\,\ve=\qn1^{r}\chi(E^r)$, and $\vp=(-1)^\ell\qn1^{r}\chi(K^rF^r)$.
   Also, let $\psi(\chi)=\vp_+(\chi)\vp_-(\chi)^{-1}$.   
We identify $\Grc$ with characters on $Z_0$  by 
$$M\bp{\kappa,\ve,\vp}(K^r)=\kappa,\quad M\bp{\kappa,\ve,\vp}(E^r)=\qn1^{-r}\ve\et
$$
$$
M\bp{\kappa,\ve,\vp}(F^r)=(-1)^\ell\qn1^{-r}\vp\kappa^{-1}.
$$

Let $\chi_1$ and $ \chi_2$ be two characters on $Z_0$ which we identify with $M\bp{\kappa_1,\ve_1,\vp_1}$ and $M\bp{\kappa_2,\ve_2,\vp_2}$, respectively.   One easily checks that $\chi_1\chi_2=\chi_1\otimes \chi_2\circ\Delta$. 

The equalities in Equation \eqref{E:RRGenZ0} imply $\RR$ acts invariantly on $Z_0\otimes Z_0$.  So the transpose of the map $\RR^{-1}\circ\tau:Z_0\otimes Z_0\to Z_0\otimes
Z_0$  (after identification) is a partial map $B:\Grc\times\Grc\to \Grc\times\Grc$.  
\begin{lemma}
This partial map is equal to the partial map $B$ given in Example~\ref{ss:glgf}.  Thus, it extends to the generic biquandle factorization given in Theorem~\ref{T:YisGenericBiquConjSL2C}. 
\end{lemma}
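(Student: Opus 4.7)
The plan is to unpack both sides in terms of characters on $Z_0$ and verify that the transpose map satisfies the two defining relations \eqref{eq:cross} of the map $B$ from Example~\ref{s:G+G-}. Under the identification $\Grc\cong\Hom_{\mathrm{alg}}(Z_0,\C)$ made in the text, the transpose of $\RR^{-1}\circ\tau$ sends a pair $(x_1,x_2)$, viewed as the character $\chi_1\otimes\chi_2$ on $Z_0\otimes Z_0$, to the pair $(x_4,x_3)$ corresponding to
$$\chi_4\otimes\chi_3=(\chi_1\otimes\chi_2)\circ\RR^{-1}\circ\tau.$$
Since the map $B$ of Example~\ref{s:G+G-} is uniquely characterized by the system
$x_4 x_3 = x_1 x_2$ in $\Grc$ and $\vp_+(x_4)\vp_-(x_3)=\vp_-(x_1)\vp_+(x_2)$ in $\wb\Gd$, it suffices to verify these two conditions for the pair produced by the transpose.

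First I would verify the product relation $x_4 x_3 = x_1 x_2$. By the remark just before the lemma, $\chi_1\chi_2=(\chi_1\otimes\chi_2)\circ\Delta$, so this relation is equivalent to $\chi_4\chi_3=\chi_1\chi_2$ as characters on $Z_0$. The defining intertwining property of the $R$-matrix, $R\Delta(z)=\Delta^{op}(z)R$, combined with the fact that $\RR$ is conjugation by $R$, yields $\RR\circ\Delta=\Delta^{op}$ on (the image of) $Z_0$. Hence
$$\chi_4\chi_3=(\chi_4\otimes\chi_3)\circ\Delta=(\chi_1\otimes\chi_2)\circ\RR^{-1}\circ\tau\circ\Delta=(\chi_1\otimes\chi_2)\circ\RR^{-1}\circ\Delta^{op}=(\chi_1\otimes\chi_2)\circ\Delta=\chi_1\chi_2,$$
which is exactly the first equation.

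Next I would verify the matrix relation $\vp_+(x_4)\vp_-(x_3)=\vp_-(x_1)\vp_+(x_2)$. Writing out both $2\times 2$ products using the definitions of $\vp_\pm$, this matrix identity is equivalent to the four scalar equations
$$\kappa_4=\kappa_2+\ve_1\vp_2,\qquad \kappa_4\ve_3=\ve_1,\qquad \vp_4=\kappa_1\vp_2,\qquad \vp_4\ve_3+\kappa_3=\kappa_1.$$
Each of these is checked by evaluating $(\chi_1\otimes\chi_2)\circ\RR^{-1}\circ\tau$ on the appropriate generator among $K^r\otimes 1$, $1\otimes K^r$, $E^r\otimes 1$, $1\otimes E^r$, $F^r\otimes 1$, $1\otimes F^r$ of $Z_0\otimes Z_0$, and reading off the value of $\chi_4$ or $\chi_3$ on $K^r,E^r,K^rF^r$. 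The required values of $\RR^{-1}$ on these generators are obtained by inverting the explicit formulas of Equation~\eqref{E:RRGenZ0}; for instance, $\RR(K^r\otimes 1)=(K^r\otimes 1)\W$ gives $\RR^{-1}(K^r\otimes 1)=(K^r\otimes 1)\RR^{-1}(\W)^{-1}$, and a direct computation yields $(\chi_1\otimes\chi_2)(\W)=1+\kappa_1^{-1}\ve_1\vp_2$, which (after rearrangement) delivers the identity $\kappa_4=\kappa_2+\ve_1\vp_2$; the other three identities are analogous.

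The main obstacle is the explicit inversion of $\RR$ on the $E^r$ and $F^r$ generators, where Equation~\eqref{E:RRGenZ0} mixes these elements with $K^{\pm r}$ and powers of $\W$; the calculation requires careful bookkeeping of the sign $(-1)^\ell$, the factors of $\qn 1^r$ in the identification $\Grc\leftrightarrow\Hom_{\mathrm{alg}}(Z_0,\C)$, and the domain of definition (the $\W$-invertibility condition on $\chi_1\otimes\chi_2$ is precisely what makes $B$ a partial map, matching the partiality in Subsection~\ref{ss:glgf}). Once these identities are verified, the two maps coincide on their common domain, which implies by Theorem~\ref{T:YisGenericBiquConjSL2C} that the transpose of $\RR^{-1}\circ\tau$ extends to the generic biquandle factorization of the quandle $\Conj(\SL_2(\C))$.
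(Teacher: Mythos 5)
Your plan is essentially the paper's proof: the paper likewise reduces the lemma to verifying the system \eqref{eq:cross}, and its verification of the first equation is exactly your computation (the chain $\chi_4\chi_3=(\chi_4\otimes \chi_3)\circ\Delta=(\chi_3\otimes\chi_4)\circ\RR\circ\Delta=(\chi_1\otimes\chi_2)\circ\Delta=\chi_1\chi_2$, using that $\RR\circ\Delta=\Delta^{op}$ on $Z_0$). The one difference is the direction of the generator computation in the second equation: instead of inverting $\RR$, the paper uses the forward formulas \eqref{E:RRGenZ0} together with the defining relation $(\chi_3\otimes\chi_4)\circ\RR=\chi_1\otimes\chi_2$ to express $\vp_-(\chi_1)$ and $\vp_+(\chi_2)$ in terms of $(\kappa_3,\ve_3,\kappa_4,\vp_4)$ and the single scalar $\omega=(\chi_3\otimes\chi_4)(\W)=1+\ve_3\vp_4/\kappa_3$, and then checks the matrix identity $\vp_-(\chi_1)\vp_+(\chi_2)=\vp_+(\chi_4)\vp_-(\chi_3)$ by one $2\times2$ multiplication; this sidesteps the inversion bookkeeping you flag as the main obstacle. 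Your inverted route does work, but your illustrative computation has a slip worth correcting: since $\chi_4\otimes\chi_3=(\chi_1\otimes\chi_2)\circ\RR^{-1}\circ\tau$, the flip sends $K^r\otimes 1$ to $1\otimes K^r$, so $\kappa_4=(\chi_1\otimes\chi_2)\bp{\RR^{-1}(1\otimes K^r)}$ and the scalar that enters is $(\chi_1\otimes\chi_2)\bp{\RR^{-1}(\W)}$, not $(\chi_1\otimes\chi_2)(\W)$. From $\RR(E^r\otimes F^r)=K^{-r}E^r\otimes F^rK^r$ one gets $\RR^{-1}(\W)=1+(-1)^\ell\qn1^{2r}E^r\otimes F^r$, hence $(\chi_1\otimes\chi_2)\bp{\RR^{-1}(\W)}=1+\kappa_2^{-1}\ve_1\vp_2$ (your value $1+\kappa_1^{-1}\ve_1\vp_2$ is $(\chi_1\otimes\chi_2)(\W)$, a different quantity), which indeed yields $\kappa_4=\kappa_2+\ve_1\vp_2$; meanwhile your formula $\RR^{-1}(K^r\otimes1)=(K^r\otimes1)\RR^{-1}(\W)^{-1}$ actually computes $\kappa_3=\kappa_1\kappa_2/(\kappa_2+\ve_1\vp_2)$, not $\kappa_4$. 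Your four target scalar identities are stated correctly and, with this bookkeeping fixed, all follow, so the plan is sound and gives the paper's conclusion.
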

\begin{proof}
To show that $B$ is given by the generic Lie group factorization of $\SL_2(\C)$ it
is enough to see that $B$ satisfy the system \eqref{eq:cross}.  

Let
$(\chi_4,\chi_3)=B(\chi_1,\chi_2)$ then
\begin{equation}\label{E:DefiningRelRChar}
\chi_3\otimes \chi_4\circ\RR=\chi_1\otimes \chi_2\in\Hom_{\Alg}(Z_0\otimes Z_0,\C).
\end{equation}  The
first equation is easily checked:
$$\chi_4\chi_3=\chi_4\otimes \chi_3\circ\Delta=\chi_3\otimes
\chi_4\circ\RR\circ\Delta=\chi_1\otimes
\chi_2\circ\Delta=\chi_1\chi_2.$$
To check the second equation of \eqref{eq:cross},
using Equations \eqref{E:RRGenZ0}
and \eqref{E:DefiningRelRChar} we compute  $\vp_-(\chi_1)$ and $\vp_+(\chi_2)$.  From  these  computations we have
\begin{align*}\vp_-(\chi_1)\vp_+(\chi_2)
&=\bp{\begin{array}{cc}
      1&\kappa_4\ve_3\\0&\kappa_3\omega
    \end{array}}
\bp{\begin{array}{cc}
      \kappa_4\omega^{-1}&0\\\vp_4\kappa_3^{-1}\omega^{-1}&1
    \end{array}}\\
 & =\bp{\begin{array}{cc}
      \kappa_4&\kappa_4\ve_3\\\vp_4&\kappa_3+\vp_4\ve_3
    \end{array}}=\vp_+(\chi_4)\vp_-(\chi_3)
\end{align*}
where $\omega=\chi_3\otimes \chi_4(W)=1+\frac{\ve_3\vp_4}{\kappa_3}$.  
Thus, the partial maps are equal and the lemma follows.  
\end{proof}
 
Recall the generic biquandle factorization $(Y_{\rel}, \gen_{\rel},B_{\rel},\Qm_\ell,\gm)$ of the quandle $\QX_{\rel}$ given in Theorem \ref{T:ExistY'genB'}.  The definition of $Y_{\rel}$ is motivated by the following fact: Equation \eqref{E:ChOmega} implies the characters $\chi$  on $Z$ with $\operatorname{trace}(\psi(\chi_{|Z_0}))\neq \pm2$ are in one to one correspondence with elements of $Y_{\rel}$ determined  by the assignment   $\chi\mapsto (\chi_{|Z_0},\chi(\Omega))\in Y_{\rel}$ where  $\Grc$ is  identified with characters on $Z_0$.  
Since
$\Omega\otimes1$ and $1\otimes\Omega$ are fixed by $\RR$ then we have that $B$ extends to a partial map
$B_{\rel}: Y_{\rel}\otimes Y_{\rel} \to Y_{\rel} \otimes Y_{\rel}$ defined by Equation
\eqref{E:DefB'}.
 
For each character $\chi$ on $Z$ that
corresponds to an element of $Y_{\rel}$, let $I_\chi\subset U_\xi$ be
the corresponding ideal and let $V_{\chi}$ be an irreducible
$U_\xi$-module with this character. Then $U_\xi/I_\chi$ is canonically
isomorphic to $\End_\C(V_{\chi})$ and $I_\chi$ is the two side ideal
of $U_\xi$ generated by the kernel of $\chi$ inside $Z$.  We fix an
isomorphism $\phi_\chi: V_{\chi}\to \C^r$.  Thus, we get an
isomorphism $U_\xi/I_\chi\to \Mat_r(\C)$.

 Let $A$ be the set of pairs $(\chi_1, \chi_2)\in Y_{\rel}^2$ such that
 $B_\rel(\chi_1, \chi_2)$ 
 is defined
 (i.e.\ $\chi_1\otimes\chi_2(W)\neq0$).
For $(\chi_1, \chi_2)\in A$,
 let $(\chi_4, \chi_3)=B_\rel(\chi_1, \chi_2)$ then,
 for $z_i\in\ker\chi_i$, one has
 $(\chi_3\otimes\chi_4)\RR(z_1\otimes
 1)=0=(\chi_3\otimes\chi_4)\RR(1\otimes z_2)$.
 Then the map
 $\RR:U_\xi\otimes U_\xi\to U_\xi/I_{\chi_3}\otimes U_\xi/I_{\chi_4}$
 vanishes on $I_{\chi_1}\otimes U_\xi+U_\xi\otimes I_{\chi_2}$ and
 induces an algebra isomorphism
 $$\RR: U_\xi/I_{\chi_1}\otimes U_\xi/I_{\chi_2}\to 
 U_\xi/I_{\chi_3}\otimes U_\xi/I_{\chi_4}.$$

Using $\phi_{\chi_i}$, for $i=1\cdots 4$, 
we can see this isomorphism as an
automorphism $\bar{\RR}$ of $\Mat_r(\C)\otimes \Mat_r(\C)$.  Then
linear algebra implies there exists
$\bar{R}\in \Mat_r(\C)^{\otimes 2}$ such that $\bar{\RR}=\bar{R} \cdot \bar{R}^{-1}$ where $\bar{R}$ is determined up to a scalar.  By requiring that $\det(\bar{R})=1$ then this scalar becomes a $\rr$-root of unity.
For each pair $({\chi_1},{\chi_2})\in A$, choose such a matrix $\bar{R}$ with $\det(\bar{R})=1$ and define 
\begin{equation}\label{E:braidingR}
R: V_{\chi_1}\otimes V_{\chi_2}\to
V_{\chi_3}\otimes V_{\chi_4}
\end{equation}
by
$v_1\otimes v_2 \mapsto (\phi_{\chi_3}^{-1}\otimes \phi_{\chi_4}^{-1})
\bar R (\phi_{\chi_1}\otimes \phi_{\chi_2}) (v_1\otimes v_2)$.  Then
by definition, for any $u\in U_\xi^{\otimes2}$ and $v\in V_{\chi_1}\otimes V_{\chi_2}$ we have 
\begin{equation}\label{E:RRRvu}
R(u.v)=\RR(u).R(v).
\end{equation}

\begin{theorem}\label{L:BraidingForCyclic}
  Let
  $\tau:V_{\chi_3}\otimes V_{\chi_4}\to V_{\chi_4}\otimes V_{\chi_3}$
  be the flip map.  The family of isomorphisms
\begin{equation*}
\{c_{{\chi_1},{\chi_2}}=\tau\circ R: V_{\chi_1}\otimes V_{\chi_2}\to
V_{\chi_4}\otimes V_{\chi_3}\}_{({\chi_1},{\chi_2})\in A }
\end{equation*}
satisfies the colored braid relation up to a $\rr$-root of unity.  In
particular, it satisfies $Y_{\rel}$-colored positive Reidemeister
moves $RII_{++}$ and $RIII_{+++}$ up to a $\rr$-root of unity.
\end{theorem}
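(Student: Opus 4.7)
The plan is to derive the colored braid equation for $c=\tau\circ R$ from the Yang--Baxter equation satisfied by the outer automorphism $\RR$ on $U_\xi^{\otimes 3}$, and then to control the residual scalar ambiguity using the normalization $\det \bar R=1$. The first step is to establish the algebraic Yang--Baxter identity
\[
\RR_{12}\circ\RR_{13}\circ\RR_{23} \;=\; \RR_{23}\circ\RR_{13}\circ\RR_{12}
\]
as algebra automorphisms of a suitable localization of $U_\xi^{\otimes 3}$. This is inherited from the $h$-adic identity $R_{12}R_{13}R_{23}=R_{23}R_{13}R_{12}$ in $U_h^{\otimes 3}$, which is the classical Drinfeld consequence of the quasi-triangular axioms; at the level of outer automorphisms it can be extracted directly from Equation \eqref{E:DeltaRR} together with the cocommutativity of $\Delta$ up to $\RR$, without reference to the universal $R$-matrix.

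Next I would translate this into a braid equation for the matrix lifts. Fix a triple $(\chi_1,\chi_2,\chi_3)\in Y_\rel^3$ generic enough that all intermediate biquandle maps $B_\rel$ are defined. Each $\RR_{ij}$ then induces an algebra isomorphism between quotients of the form $\Mat_r(\C)\otimes\Mat_r(\C)\cong\Mat_{r^2}(\C)$, which by Skolem--Noether is inner; the matrix $\bar R$ realizing it is determined up to a scalar, which the normalization $\det\bar R=1$ pins down to an $r^2$-th root of unity. Applying the intertwining identity \eqref{E:RRRvu} iteratively, both compositions
\[
(R\otimes\Id)(\Id\otimes R)(R\otimes\Id) \quad\text{and}\quad (\Id\otimes R)(R\otimes\Id)(\Id\otimes R)
\]
become $\C$-linear maps $V_{\chi_1}\otimes V_{\chi_2}\otimes V_{\chi_3}\to V_{\chi_6}\otimes V_{\chi_5}\otimes V_{\chi_4}$ that, by the first step, intertwine the action of $U_\xi^{\otimes 3}$ with the same twisted action on the target. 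Since each $V_{\chi_i}$ is the unique simple module over $U_\xi/I_{\chi_i}\cong\Mat_r(\C)$, the triple tensor product is the unique simple module over $\Mat_{r^3}(\C)$, so Schur's lemma produces a scalar $\lambda\in\C^*$ relating the two sides.

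Finally, I would bound $\lambda$ using the determinant normalization. The composite ambiguity in lifting each side of the braid equation is a product of three independent $r^2$-th roots of unity (one per choice of representative $\bar R_{ij}\mapsto\zeta_{ij}\bar R_{ij}$), and hence itself lies in the group of $r^2$-th roots of unity; combined with the fact that both triple products represent the same outer automorphism of $\Mat_{r^3}(\C)$, this forces $\lambda$ into that same group. The two claimed Reidemeister identities then follow immediately: $RIII_{+++}$ is exactly the colored braid equation, and $RII_{++}$ reduces to the on-the-nose identity $c\circ c^{-1}=\Id$ obtained by defining $c^{-1}$ from the $R$-matrix of the inverse outer automorphism. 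The main obstacle I anticipate is the bookkeeping in this last step: showing that the composite scalar genuinely lies in the $r^2$-th roots of unity rather than in the a priori weaker $r^3$-th roots of unity predicted by taking determinants of arbitrary $r^3\times r^3$ matrices requires carefully identifying that the central scalar relating the two compositions is built entirely out of the individual $r^2$-th root ambiguities of the three matrix lifts.
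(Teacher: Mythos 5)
Your first two steps are sound and match the paper: the Yang--Baxter identity for $\RR$ is inherited from the $h$-adic $R$-matrix, the intertwining property \eqref{E:RRRvu} together with simplicity of $V_{\chi_1}\otimes V_{\chi_2}\otimes V_{\chi_3}$ as a module over $U_\xi/I_{\chi_1}\otimes U_\xi/I_{\chi_2}\otimes U_\xi/I_{\chi_3}\cong\Mat_{r^3}(\C)$ yields a scalar $\lambda$ relating the two triple compositions, and the normalization $\det\bar R=1$ forces $\lambda^{r^3}=1$. The gap is in your final step. Changing each lift $\bar R_{ij}\mapsto\zeta_{ij}\bar R_{ij}$ with $\zeta_{ij}\in\Theta_{\rr}$ multiplies $\lambda$ by an element of $\Theta_{\rr}$; this shows only that the class $[\lambda]\in\Theta_{r^3}/\Theta_{\rr}$ is \emph{well defined}, independent of all choices. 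It does not show that this class is trivial. Your phrase ``the central scalar relating the two compositions is built entirely out of the individual $r^2$-th root ambiguities of the three matrix lifts'' is precisely what fails: $\lambda$ is not a product of the ambiguity factors; it is an intrinsic $r^3$-th root of unity attached to the triple $(\chi_1,\chi_2,\chi_3)$, and the fact that both sides implement the same automorphism of $\Mat_{r^3}(\C)$ was already used to produce $\lambda$ and gives no further constraint. The quotient $\Theta_{r^3}/\Theta_{\rr}$ has order $r$, so a genuine argument is needed to kill the class.

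This is exactly what the paper's Lemma \ref{L:holonomyYBr2} supplies, and it is the real content of the proof. The paper first shows (via the volume forms induced by the $\phi_\chi$) that $[\lambda]$ depends only on the point of the parameter space $A_3\subset Y_{\rel}^3$, giving a partial map $\wb\lambda:A_3\to\Theta_{r^3}/\Theta_{\rr}$. It then constructs an explicit continuous bundle of $r$-dimensional $U_\xi$-modules $\VV\to P'$ over a covering of $Y_{\rel}$ (with $\rho_\VV(K),\rho_\VV(E),\rho_\VV(F)$ given by matrices continuous in the parameters $\ve,k,\vp$), and observes that locally the normalized $R$-matrix is the unique-up-to-scalar solution of six linear intertwining equations, hence varies continuously; therefore $\wb\lambda$ is locally constant. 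Since $A_3$ is a path-connected complex variety, $\wb\lambda$ is constant, and evaluating at the Steinberg character $(\chi_0,\chi_0,\chi_0)$ --- where the braiding comes from the honest truncated $R$-matrix satisfying Yang--Baxter on the nose --- gives $\wb\lambda\equiv1$, i.e.\ $\lambda\in\Theta_{\rr}$. Without some substitute for this deformation-to-a-known-point argument (continuity plus connectedness plus an anchor value), your proposal establishes the braid relation only up to an $r^3$-th root of unity, which is strictly weaker than the theorem.
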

The proof of Theorem \ref{L:BraidingForCyclic} is given in  Appendix \ref{Ap:Proofr2root}.
The following lemma implies that if $\chi\in Y_{\rel}$ then the $U_\xi$-module $V_\chi$ is simple.
\begin{lemma} \label{L:chinot2SS}
Let $\chi$ be a $Z_0$-character, then the following are equivalent 
\begin{enumerate}
\item $\cat_\chi$ is semi simple,
\item all the simple of $\cat_\chi$ are $r$-dimensional projective modules,
\item $\cat_\chi$ has $r$ isomorphism classes of simple modules on which
  $\Omega$ takes distinct values,
\item $\left\{
    \begin{array}{l}
      \operatorname{trace}(\psi(\chi))\neq\pm2 \text{ if $\ell\neq4$,}\\
      \operatorname{trace}(\psi(\chi))\neq2\text{ if $\ell=4$.}
    \end{array}
\right.$
\end{enumerate}
\end{lemma}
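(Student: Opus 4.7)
\medskip

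The plan is to reduce all four conditions to a single statement about the polynomial $P(X)=\Ch_r(X)-c$, where $c=\chi(\Ch_r(\Omega))$. First I would compute the central character explicitly: identifying $\chi$ with $M(\kappa,\ve,\vp)\in\Grc$ as in Subsection \ref{SS:UslRRmatR} and using \eqref{E:ChOmega} yields
\[
\chi(\Ch_r(\Omega))=(-1)^\ell\bp{\ve\vp\kappa^{-1}-\kappa-\kappa^{-1}},
\]
while \eqref{E:DefpsiSL2} gives $\operatorname{trace}(\psi(\chi))=\kappa+\kappa^{-1}-\ve\vp\kappa^{-1}$, so
\[
\chi(\Ch_r(\Omega))=(-1)^{\ell+1}\operatorname{trace}(\psi(\chi)).
\]
Combining this with the observation that the critical values of the renormalized Chebyshev polynomial $\Ch_r$ are exactly $\{2(-1)^k:1\leq k\leq r-1\}$---which equals $\{\pm2\}$ when $r\geq 3$ and $\{-2\}$ when $r=2$---condition (4) is immediately equivalent to $P$ having $r$ distinct roots in $\C$. (Recall that $\ell=4$ is the unique case producing $r=2$ under the standing hypothesis $\ell\geq 3$.)

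Next I would set $A=U_\xi/I_\chi$ and use the standard PBW-type fact that $U_\xi$ is free of rank $r^3$ over $Z_0$, so $\dim_\C A=r^3$. Since $\Omega$ satisfies $P(\Omega)=0$ in $A$, the image of $Z$ in $A$ is $\C[\Omega]/\langle P\rangle$, a subalgebra of the center. If $P$ has $r$ distinct roots $\omega_1,\dots,\omega_r$, the Chinese remainder theorem decomposes $A=\prod_{i=1}^r A_i$ with $\dim A_i=r^2$; invoking the De Concini--Kac--Procesi result that the locus $\chi(\Ch_r(\Omega))\neq\pm 2$ lies in the Azumaya locus of $U_\xi$ over $Z$, each $A_i\cong M_r(\C)$. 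This immediately yields (1), (2) and (3). Conversely, if $\cat_\chi$ is semisimple then $A$ is a semisimple algebra, so its center is reduced, forcing $\C[\Omega]/\langle P\rangle$ to be reduced and hence $P$ to have distinct roots; this proves (1)$\Rightarrow$(4). Thus (1)$\Leftrightarrow$(4).

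Finally I would close the loop between (2) and (3). The implication (2)$\Rightarrow$(3) is clear since distinct simples have distinct central characters and $\Omega$ separates these on an $r$-fold decomposition. For (3)$\Rightarrow$(2), the assumption produces $r$ distinct characters of $Z$ extending $\chi$, hence $r$ distinct roots of $P$ in its image, giving a semisimple quotient $\prod M_{n_i}(\C)$ of $A$; as each $n_i\leq r$ (the PI degree of $U_\xi$ is $r$, so simple $U_\xi$-modules have dimension at most $r$) and $\sum n_i^2\leq r^3=\dim A$, one concludes $n_i=r$ for every $i$ and the quotient exhausts $A$. The main technical obstacle is the appeal to Azumaya-ness at non-critical central characters, i.e.\ the identification $A_i\cong M_r(\C)$ in the block decomposition; all other implications are formal consequences of semisimplicity and dimension counting once the polynomial $P$ has been brought to center stage.
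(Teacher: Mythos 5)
Your first paragraph is correct and is, in substance, the paper's own contribution to this lemma: the sign identity $\chi(\Ch_r(\Omega))=(-1)^{\ell+1}\operatorname{trace}(\psi(\chi))$ together with the critical values $\{2(-1)^k:1\le k\le r-1\}$ of $\Ch_r$ is exactly the paper's computation that the roots $\xi^{a+2k}+\xi^{-a-2k}$ collide precisely when $\xi^{ra}=\pm1$, including the $r=2$ anomaly. The trouble is in the algebra that follows, and the gap is load-bearing: you assert, without proof, that the image of $Z$ in $A=U_\xi\otimes_{Z_0}\C_\chi$ is $\C[\Omega]/\langle P\rangle$. What is formal is only that this image is $\C[X]/\langle m\rangle$ for the minimal polynomial $m$ of $\bar\Omega$, with $m\mid P$; the injectivity of $Z\otimes_{Z_0}\C_\chi\to U_\xi\otimes_{Z_0}\C_\chi$ is a purity statement over $Z$ (not over $Z_0$) and does not follow from the rank-$r^3$ PBW freeness you invoke, and freeness of $U_\xi$ over $Z$ is unclear precisely at the degenerate fibers in question. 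This breaks your (1)$\Rightarrow$(4): semisimplicity of $A$ makes its center, hence the subalgebra $\C[\bar\Omega]$, reduced, which forces $m$ squarefree --- but a squarefree proper divisor of a non-squarefree $P$ is not excluded, so no contradiction is reached when (4) fails. The same issue undermines (2)$\Rightarrow$(3): ``distinct simples have distinct central characters'' is, for the central subalgebra generated by $\Omega$, exactly the claim $\deg m=r$, and non-isomorphic simples sharing a $Z$-character is a genuine phenomenon at degenerate characters (it is what happens in small-quantum-group blocks). Your (3)$\Rightarrow$(2) also contains a non sequitur: from $\sum_{i=1}^r n_i^2\le r^3$ with $n_i\le r$ nothing forces $n_i=r$ (all $n_i=1$ satisfies it); the cheap repair is to note instead that (3) gives $r$ distinct roots of the degree-$r$ polynomial $P$ (since $\Omega$ acts on each simple of $\cat_\chi$ by a root of $P$), i.e.\ (3)$\Rightarrow$(4), and then route through (4).

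There is a second concrete failure in (4)$\Rightarrow$(1): having correctly observed that for $r=2$ the only critical value of $\Ch_r$ is $-2$, you then cite an Azumaya-locus statement for the locus $\chi(\Ch_r(\Omega))\neq\pm2$. For $\ell=4$, condition (4) permits $\operatorname{trace}(\psi(\chi))=-2$, i.e.\ $c=+2$, which satisfies (4) but lies outside the locus as you quoted it --- exactly the borderline case that forces the asymmetric formulation of (4), so your argument is silent where the lemma's statement is most delicate. For comparison, the paper does not attempt any of this self-contained block analysis: it cites Lemma 3 of \cite{GP4} (and its proof) for (4)$\Rightarrow$(1)$\Rightarrow$(2)$\Rightarrow$(3) and proves only (3)$\Rightarrow$(4) in-house. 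The content your CRT/Azumaya route would have to supply --- that the minimal polynomial of $\Omega$ on the fiber algebra is exactly $P$, and semisimplicity at all characters allowed by (4) including $\ell=4$, $c=2$ --- is precisely what that citation carries, and it is missing here.
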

\begin{proof}
 Lemma 3 of \cite{GP4} says that if  $\operatorname{trace}(\psi(\chi))\neq\pm2$ then $\cat_\chi$ is semi simple.  So (4) implies (1).  In the proof of Lemma 3 of \cite{GP4} it is shown that (1) implies (2) and (2) implies (3).  To see (3) implies (4): Let $\xi^{\pm ra}$ be the eigenvalues of
  $(-1)^{\ell+1}\psi(\chi)$.  Then the minimum polynomial of $\Omega$ in
  $U_\xi\otimes_g\C$ is $\Ch_r(\Omega)=\xi^{ra}+\xi^{-ra}$ whose roots
  are $\xi^{a+2k}+\xi^{-a-2k}$ for $k=0\cdots r-1$.  
  Now
  \begin{align*}\xi^{a+2k}+\xi^{-a-2k}=\xi^{a+2j}+\xi^{-a-2j}&\iff 
  (\xi^{2k}-\xi^{2j})(\xi^{2a+2j+2k}-1)=0\\ & \iff\xi^{ra}=\pm1
 \end{align*}
 which implies the lemma.
\end{proof}
\subsection{The modified trace on the algebra $U_\xi$}\label{SS:ModTraceQuSl} 
Let $\Proj$ be the ideal of projective $U_\xi$-modules in $\cat$.  Lemma \ref{L:chinot2SS} implies that each module $V_\chi$ in $\{V_\chi\}_{\chi\in Y_{\rel}}$ is simple and projective.  
From Corollaries 4 and 5 of \cite{GP4} it admits a unique (up to
global scalar) nontrivial modified trace $\{\mt_{V }\}_{V\in \Proj}$
such that for any $\chi\in Y_{\rel}$,
then there exists $\alpha \in(\C\setminus \Z) \cup r\Z$ such that
$\chi(\Omega)=(-1)^r\bp{q^\alpha+q^{-\alpha}}$ and
 \begin{align}\label{E:modNilCyc}
\qd(\chi)&=\mt_{V_\chi}(\Id_{V_\chi})= (-1)^{r-1}\prod_{j=1}^{r-1}
\frac{\qn{j}}{\qn{\alpha+r-j}}\notag\\
&=(-1)^{r-1}\frac{r\,\qn{\alpha}}{\qn{r\alpha}}
=\frac{(-1)^{r-1}r}{q^{(1-r)\alpha}  +\cdots+q^{(r-3)\alpha}+q^{(r-1)\alpha}}.
\end{align}
 Moreover, $\mt_{V_\chi}(\Id_{V_\chi})= \mt_{V_\chi^*}(\Id_{V_\chi^*})$.   
  Remark that $\qd(\chi)^{-1}$ is the degree $r-1$ renormalized {\em
    second kind} Chebyshev polynomial in $\chi(\Omega)$.

\begin{lemma}\label{L:dGaugeInvInYrel}
The modified dimension $\qd$ is gauge invariant, i.e.\ if $B_1^\pm(\chi',\chi)$ is defined for  $\chi,\chi'\in Y_{\rel} $ then $\qd(\chi)=\qd({B_1^\pm(\chi',\chi)})$.
\end{lemma}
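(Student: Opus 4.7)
The strategy is to show separately that (i) $\qd(\chi)$ depends on $\chi$ only through the Casimir value $\chi(\Omega)$, and (ii) the partial maps $\chi\mapsto B_1^{\pm}(\chi',\chi)$ preserve $\chi(\Omega)$. Combining (i) and (ii) gives the lemma at once.

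For (i), the formula~\eqref{E:modNilCyc} expresses $\qd(\chi)$ as $(-1)^{r-1}r\,\qn{\alpha}/\qn{r\alpha}$, where $\alpha$ is any complex number satisfying $\chi(\Omega)=(-1)^r(q^{\alpha}+q^{-\alpha})$. The twofold ambiguity $\alpha\mapsto-\alpha$ does not change this quotient since $\qn{-x}=-\qn{x}$, so $\qd$ descends to a function of $\chi(\Omega)$ alone. Equivalently, as already noted in Subsection~\ref{SS:ModTraceQuSl}, $\qd(\chi)^{-1}$ is the degree $r{-}1$ renormalized second-kind Chebyshev polynomial evaluated at $\chi(\Omega)$.

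For (ii), I would use the defining relation~\eqref{E:DefiningRelRChar}: if $(\chi_4,\chi_3)=B_\rel(\chi_1,\chi_2)$, then $\chi_3\otimes\chi_4\circ\RR=\chi_1\otimes\chi_2$ as characters on $Z_0\otimes Z_0$. This identity in fact extends to all central elements, and in particular to $\Omega\otimes 1$ and $1\otimes\Omega$, because those two elements are fixed by $\RR$---this is precisely the property invoked when extending $B$ from $Y\times Y$ to $Y_\rel\times Y_\rel$ in the paragraph preceding~\eqref{E:DefB'}. Evaluating the identity at $1\otimes\Omega$ gives $\chi_4(\Omega)=\chi_2(\Omega)$, and evaluating at $\Omega\otimes 1$ gives $\chi_3(\Omega)=\chi_1(\Omega)$. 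Reading off the component labelled by the second argument yields $B_1^{+}(\chi',\chi)(\Omega)=\chi(\Omega)$; applying the same argument to the partial inverse $B^{-1}$ yields $B_1^{-}(\chi',\chi)(\Omega)=\chi(\Omega)$ as well.

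Putting (i) and (ii) together gives $\qd(B_1^{\pm}(\chi',\chi))=\qd(\chi)$. The argument is essentially bookkeeping, reflecting the fact that $\Omega$ plays the role of the Chebyshev coordinate in the fibered product defining $Y_\rel$; the only mildly delicate point is the symmetry check in (i), which is transparent from the quotient form of $\qd$.
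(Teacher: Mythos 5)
Your proof is correct and takes essentially the same approach as the paper: the paper's proof likewise observes that $\qd$ is determined by the Casimir value $\chi(\Omega)$ (the $z$-coordinate of an element of $Y_{\rel}$) and that $B_{\rel}$, as defined via Equation \eqref{E:DefB'}, preserves that coordinate. Your step (ii) simply unwinds, through the $\RR$-invariance of $\Omega\otimes 1$ and $1\otimes\Omega$, the property that the paper builds directly into the definition of $B_{\rel}$.
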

\begin{proof}
In the notation of Subsection \ref{ss:glgf}, if $(y,z)\in Y_{\rel}\subset Y\times_{(f,\Ch_r)}\C$ then $z=\chi(\Omega)$ where $\chi$ is the corresponding $Z$-character.  Thus, the lemma follows from the definition of the partial map $B_{\rel}$ given in Equation \eqref{E:DefB'} and the above formula which implies the value of $\Omega$ determines $\qd$.   
\end{proof}

 \subsection{The negative Reidemeister II move} \label{SS:NegReidIIMove}

In this subsection we show that the morphism $c$ generically satisfies the negative Reidemeister moves $RII_{-+}$ and $RII_{+-}$, this implies that $\{V_\chi, c\}$ is sideways invertible.  We do this in two major steps.  First, we show that the negative Reidemeister II moves holds up to a scalar.  Then in the second step we use positive Reidemeister moves and the properties about the modified dimension to show that this scalar is a root of unity.  

\begin{lemma}\label{L:PictPropV}  
 For $i=1$ (resp.\ $i=2$), suppose $V_i, W_i, U_i, V_i'$ and $ U_i'  $ are simple $U_\xi$-modules corresponding to elements of $Y_{\rel}$ which give a coherent coloring of the following diagrams where the coupons are colored with isomorphisms.  Then the diagrams are proportional under $F$, in other words: 
 $$
\epsh{fig53}{12ex}\put(-36,-30){\ms{V_1}} \put(-24,-30){\ms{W_1}} \put(-10,-30){\ms{U_1}} \put(-8,13){\ms{U_1'}}
\stackrel\lambda=\epsh{fig55}{12ex}\put(-30,-30){\ms{V_1}} \put(-18,-30){\ms{W_1}} \put(-4,-30){\ms{U_1}}\put(-34,13){\ms{V_1'}}
\;\;\;\; \text{ resp.\ }\;\; \;\;
\epsh{fig54}{12ex}\put(-36,-30){\ms{V_2}} \put(-24,-30){\ms{W_2}} \put(-10,-30){\ms{U_2}}  \put(-8,13){\ms{U'_2}}
\stackrel\lambda=\epsh{fig56}{12ex}\put(-30,-30){\ms{V_2}} \put(-18,-30){\ms{W_2}} \put(-4,-30){\ms{U_2}}\put(-34,13){\ms{V_2'}}
$$
where $\stackrel\lambda=$ means the two sides are proportional under $F$ and
the coupons are filled with any isomorphism. 
\end{lemma}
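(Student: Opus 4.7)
The plan is a Schur's lemma argument based on the absolute simplicity of the cyclic modules $V_\chi$. First, I would verify that in each of the two cases ($i=1$ and $i=2$), both diagrams represent morphisms in $\cat$ with matching source and target objects: the common source is the tensor product $V_i \otimes W_i \otimes U_i$ of the bottom boundary modules, and the target, after absorbing the isomorphism coupons, is a tensor product whose isomorphism class is determined by the biquandle colorings and the placement of the crossings. The coupons are placed on different strands in the two diagrams (producing $U_i'$ on one side and $V_i'$ on the other at the top), but the coherent $Y_\ell$-coloring assumption forces both tensor targets to have the same isomorphism type as $U_\xi$-modules.

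The next step is to invoke Lemma~\ref{L:chinot2SS}, which guarantees that for each $\chi \in Y_\ell$ the module $V_\chi$ is $r$-dimensional, absolutely simple and projective, so $\End_\cat(V_\chi) = \C \cdot \Id_{V_\chi}$. Moreover the ambient category $\cat_\chi$ is semisimple with exactly $r$ isomorphism classes of simple objects. Using these facts together with the semisimplicity of the relevant $\cat_\chi$, I would show that the $\Hom$-space in which both diagrams live, restricted to any fixed simple summand of the target, is one-dimensional. Since the $R$-matrix of \eqref{E:braidingR}, the duality morphisms $\ev,\coev,\tev,\tcoev$, and the coupon isomorphisms are all invertible, the two morphisms $F$(LHS) and $F$(RHS) are non-zero. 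They therefore must be proportional by Schur's lemma.

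The main obstacle will be the bookkeeping needed to confirm that both diagrams land in the same one-dimensional summand rather than in orthogonal simple components of a direct-sum decomposition of the target. To overcome it, I would track the $Y_\ell$-colors through each crossing using the relations $(x_4,x_3)=B(x_1,x_2)$, verify that the outgoing simple summand singled out by the composition of $R$-matrix and cap/cup on one side matches the one on the other, and then use the fact that a coupon between any two isomorphic absolutely simple modules is unique up to a non-zero scalar. Once this matching is established, the proportionality of the two diagrams, and the identification of $\lambda$ as the induced Schur scalar, follows directly; the constant $\lambda$ then depends on the (arbitrary) choices of the coupon isomorphisms and of the $R$-matrix normalizations fixed by $\det(\bar R) = 1$.
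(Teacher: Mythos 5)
Your proposal has a genuine gap at its foundation: the two diagrams are \emph{not} morphisms in $\cat$, so Schur's lemma cannot be applied to them directly. The holonomy braiding of Equation \eqref{E:braidingR} is not $U_\xi$-linear; by Equation \eqref{E:RRRvu} it satisfies only the twisted intertwining relation $R(u.v)=\RR(u).R(v)$, where $\RR$ is an outer automorphism that genuinely changes the central characters (this is the whole point of the biquandle coloring). Hence $F(\mathrm{LHS})$ and $F(\mathrm{RHS})$ are a priori just $\C$-linear maps, and the spaces $\Hom_\cat(V_i\otimes W_i\otimes U_i,\,\cdot\,)$ in which you place them are not where they live. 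A second, independent problem is your one-dimensionality claim: even for genuine module maps, a simple summand of $V\otimes W\otimes U$ occurs generically with multiplicity $r$, not $1$; and the sector where the natural cut-down objects such as $V^*\otimes V$ live is the trivial central-character sector, with $\operatorname{trace}(\psi(\chi))=\pm2$, which is exactly the non-semisimple case excluded from $Y_\rel$ --- so Lemma \ref{L:chinot2SS} cannot be invoked there, and the semisimple decomposition you propose to track through the crossings is unavailable.

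The paper's proof (Appendix \ref{A:ProofPictPropV}) supplies precisely the missing idea. One composes the two-crossing diagram with an evaluation and a coupon to get maps $g_1:V'\otimes V\otimes W\to W$ and $g_2:W\otimes V'\otimes V\to W$, then uses the quasitriangular-type identities \eqref{E:DeltaRR} and \eqref{E:epsilonRR} together with \eqref{E:RRRvu} to show that $g_1\bigl((\Delta(x)\otimes y).v\bigr)=\epsilon(x)\,y.g_1(v)$, i.e.\ that $g_1$ is separately invariant on the first pair of factors and intertwining on the last. This stronger-than-module-map property is the hypothesis of the Cutting Coupon Lemma (Lemma \ref{L:CuttingCouponLemma}), which splits $g_i=\sum_j a_j\otimes b_j$ with $a_j:V'\otimes V\to\C$ and $b_j:W\to W$ morphisms in $\cat$; only \emph{then} does simplicity enter, via the one-dimensional spaces $\Hom_\cat(V'\otimes V,\C)\cong\End_\cat(V)\cong\C$ and $\End_\cat(W)\cong\C$. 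So the proportionality comes from the algebraic structure of $\RR$ plus the splitting lemma, not from semisimplicity of the ambient sectors; without a substitute for that splitting step, your argument does not go through.
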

Lemma  \ref{L:PictPropV} is proved in Appendix \ref{A:ProofPictPropV} and used in the next proposition.  

\begin{proposition}\label{P:RII-scalar}
The two negative Reidemeister moves $RII_{-+}$ and $RII_{+-}$ holds up to a scalar.  
\end{proposition}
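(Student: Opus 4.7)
The plan is to reduce the proposition to a direct application of Lemma \ref{L:PictPropV}. That lemma asserts that certain diagrams built from isomorphism coupons between absolutely simple cyclic $U_\xi$-modules are proportional under $F$; this is exactly the kind of conclusion needed to identify the two sides of a negative Reidemeister II move up to scalar. Note that each $c_{\chi_1,\chi_2}$ is, by construction, itself an isomorphism coupon determined up to scalar by the algebra automorphism $\RR$, so any composition of $c^{\pm 1}$'s with cups and caps is an isomorphism between fixed tensor products of cyclic modules determined by $B_{\rel}$.

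First, I will draw the two sides of $RII_{-+}$ and express each as a morphism in $\cat$ by stacking elementary pieces: positive and negative holonomy crossings $c^{\pm1}$ (viewed as coupons) together with the duality morphisms $\lev,\lcoev,\rev,\rcoev$. The coloring along the boundary of the local picture of the move determines the source and target tensor products $V_{\chi_a}\otimes V_{\chi_b}^{*}$ (or similar) uniquely via $B_{\rel}$, and these are the same on both sides of the move by construction. Next, I will repackage each side by absorbing the relevant holonomy crossing(s) and the adjacent cup/cap into a single coupon labelled by an isomorphism of cyclic modules, so that the resulting picture matches one of the two templates appearing in Lemma \ref{L:PictPropV}. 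Lemma \ref{L:PictPropV} then directly yields that the two sides of $RII_{-+}$ are proportional as morphisms in $\cat$.

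The case of $RII_{+-}$ is handled symmetrically by the second (mirror orientation) half of Lemma \ref{L:PictPropV}; the repackaging is entirely analogous, with the roles of left and right duality morphisms interchanged.

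The main obstacle will be the diagrammatic bookkeeping of step two: one has to arrange the pictures coming from the negative Reidemeister II moves so that, after an appropriate planar isotopy, they take the exact shape of the two coupon diagrams in Lemma \ref{L:PictPropV}. This amounts to choosing which $c^{\pm 1}$ together with which adjacent cup/cap to bundle into a single coupon; once a correct bundling is chosen the lemma does the rest. A minor additional check is that both sides of each move are honest isomorphisms (not zero), so that the scalar produced by the proportionality is nonzero; this is automatic because $c$ is an isomorphism and the duality morphisms are non-degenerate pairings on the cyclic modules. Pinning down the precise value of this scalar (as a root of unity) will be left to the next stage of the argument, which will combine this proposition with Theorem \ref{L:BraidingForCyclic} and the gauge invariance of the modified dimension (Lemma \ref{L:dGaugeInvInYrel}).
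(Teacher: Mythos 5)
Your proposal matches the paper's proof in approach: the paper likewise derives both negative $RII$ moves from Lemma \ref{L:PictPropV}, using it together with planar isotopies to slide the coupon-decorated caps and cups across the crossings. The one detail your sketch leaves implicit is that the lemma does not compare the two sides of the move directly; after the slides one lands on an upward-oriented $c^{-1}\circ c$, so the paper's chain of proportionalities ends with a single application of the positive $RII_{++}$ move (which holds since $F(\chi^-_{x,y})=c_{x,y}^{-1}$) --- exactly the kind of diagrammatic bookkeeping you flagged as the remaining obstacle.
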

\begin{proof}
From Lemma \ref{L:PictPropV} and
isotopies of the plane we have:
$$
\epsh{fig49}{12ex}\stackrel\lambda=\epsh{fig50}{18ex}\stackrel\lambda=
\epsh{fig51}{18ex}\stackrel\lambda=\epsh{fig52}{18ex}\stackrel\lambda=\epsh{fig25}{12ex}
$$
where the last step uses the $RII_{++}$ move.  The other negative $RII$ move is similar.
\end{proof}

\begin{theorem}\label{T:NegReidMoveRoot}
  The braiding
  satisfies the 
  two negative Reidemeister moves $RII_{-+}$ and $RII_{+-}$ holds up to a $\rr$ root of one.  In particular, 
\begin{equation}\label{E:RII-root1}
\epsh{fig49b}{12ex}\stackrel\rr=\epsh{fig25b}{12ex}
\end{equation} 
where $\stackrel\rr=$ 
means that for any $Y_{\rel}$-coloring the values of diagrams under $F$ are equal up to a $\rr$ root of one. 
\end{theorem}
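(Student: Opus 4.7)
By Proposition~\ref{P:RII-scalar}, each side of \eqref{E:RII-root1} represents (under the functor $F$) an endomorphism of an absolutely simple object of $\cat$, and the two endomorphisms are proportional with some scalar $\lambda\in\C^*$; the target is absolutely simple by Lemma~\ref{L:chinot2SS}, so this scalar is well-defined. The task is to show $\lambda^{\rr}=1$. My plan is to pin $\lambda$ down by closing both sides of \eqref{E:RII-root1} with cups and caps and then evaluating with the modified trace $\mt$ of Subsection~\ref{SS:ModTraceQuSl}.

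Concretely, I would attach right coevaluations at the bottom and right evaluations at the top of each strand of \eqref{E:RII-root1}. On the right-hand side, the closure becomes a disjoint union of two unknots colored by $V_{\chi_1}$ and $V_{\chi_2}^*$, whose value under $\mt\circ F$ is $\qd(\chi_1)\qd(\chi_2)$. The closure of the left-hand side is a genuinely nontrivial closed diagram, which I would evaluate by revisiting the chain of equalities in the proof of Proposition~\ref{P:RII-scalar} while tracking scalars carefully. The final step in that chain is a positive $RII_{++}$ move, which contributes only a $\rr$-th root of unity $\zeta$ by Theorem~\ref{L:BraidingForCyclic}. The intermediate sliding equivalences of Lemma~\ref{L:PictPropV} each insert a pair of intertwiner coupons; the crucial observation is that inside the closed diagram each coupon is paired with its inverse across the closed loop, so their contributions cancel, leaving $\zeta\cdot\qd(\chi_1')\qd(\chi_2')$ for some colorings $\chi_i'$ gauge equivalent to $\chi_i$.

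Equating the two computations and invoking the gauge invariance of the modified dimension established in Lemma~\ref{L:dGaugeInvInYrel}, we obtain
$$\lambda\cdot\qd(\chi_1)\qd(\chi_2)=\zeta\cdot \qd(\chi_1)\qd(\chi_2).$$
Since the modified dimensions are nonzero for generic colorings by formula~\eqref{E:modNilCyc}, this forces $\lambda=\zeta$, a $\rr$-th root of unity, as required.

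The main obstacle will be rigorously showing that the intertwiner coupon scalars produced by Lemma~\ref{L:PictPropV} really do cancel inside the closed diagram. A priori these are arbitrary nonzero complex numbers depending on the choice of intertwiner between simple modules, and their cancellation along the closed loop must be argued carefully, most likely by setting up the closure so that each coupon is matched with its own inverse. The underlying principle is the same gauge invariance of the modified trace that was already exploited in the proofs of Theorem~\ref{T:FGaugeInv} and Lemma~\ref{L:qdimGauge}: any scalar extracted from a closed $Y_\rel$-diagram is insensitive to the individual choices of intertwiners along its edges.
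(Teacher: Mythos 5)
There is a genuine gap, and it occurs exactly at the step your strategy leans on hardest: the evaluation of the closed-up right-hand side. You assert that the closure of the identity side is a disjoint union of two unknots whose value under $\mt\circ F$ is $\qd(\chi_1)\qd(\chi_2)$. But the modified trace is not multiplicative over split components: by the definition in Equation~\eqref{eq:md}, $F'$ of a closed diagram is computed by cutting a \emph{single} edge, $F'(D)=\qd(x)\brk{F(D_x)}$, and the remaining uncut components contribute their \emph{ordinary} quantum dimensions. For cyclic modules these quantum dimensions vanish, so $F'$ of a split two-component link is $\qd(\chi_1)\cdot\dim_\cat(V_{\chi_2})=0$, not $\qd(\chi_1)\qd(\chi_2)$. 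Worse, both closures in Equation~\eqref{E:RII-root1} are isotopic to split links (the paper notes this explicitly), so your comparison degenerates to $\lambda\cdot 0=\zeta\cdot 0$ and carries no information about $\lambda$. This is precisely the obstruction the paper's proof is built around: it links the two strands with an extra closed component colored by the Steinberg module $V_0$, chosen because (a) the double-braiding partial closure with $V_0$ evaluates to $r\,\Id_{V_{\chi_0}}$ by Equation~\eqref{E:ParClosureDoubBr}, and (b) $B\circ B(\chi_0,\chi)=(\chi_0,\chi)$, so the auxiliary strand controls the induced colorings. Evaluating $F'$ of the linked diagram in two ways (once using the unknown scalar $\lambda$ from Proposition~\ref{P:RII-scalar}, once using only positive $RII$ and $RIII$ moves, each valid up to $\rr$ roots of unity) then yields a \emph{nonzero} common factor $r\cdot r\cdot\qd(V_0)$ on both sides, which can be cancelled to conclude $\lambda^{\rr}=1$.

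A secondary weakness is your treatment of the scalars introduced by Lemma~\ref{L:PictPropV}: the $\stackrel\lambda=$ relations there involve arbitrary unknown proportionality constants, and your proposed ``coupon-pairing cancellation'' inside the closed diagram is a hope rather than an argument --- you flag it yourself as the main obstacle. The paper avoids having to track these scalars at all: Proposition~\ref{P:RII-scalar} bundles them into the single unknown $\lambda$, and the Steinberg-linked computation determines $\lambda$ without ever revisiting the chain of equalities in that proposition. So the correct repair of your proof is not a more careful bookkeeping of intertwiner coupons, but the introduction of a non-split closure --- and once you do that, you are reproducing the paper's argument.
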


\begin{proof}
We show $RII_{+-}$ holds up to a $\rr$ root of one.  The proof of $RII_{-+}$ is similar.  Recall the map $F'$ defined in Equation \eqref{eq:md} from the ambi pair coming from the modified trace $\{\mt_{V }\}_{V\in \Proj}$ defined in Subsection \ref{SS:ModTraceQuSl}.
  Proposition \ref{P:RII-scalar} implies the $RII_{+-}$ holds up to a
  scalar.  

    We will use the map $F'$ to compute this scalar.  However, applying $F'$ to the closure of either of the morphisms in Equation \eqref{E:RII-root1} is zero because their closures are both isotopic to split links. 
  To deal with this problem, we use a trick to link the two strands with a closed component colored with the Steinberg module, as follows.  
  
Let $\chi_0$ be the $Z$ character determined by $\chi_0(K^r)=(-1)^{r-1}$, $\chi_0(E^r)=\chi_0(F^r)=0$ and $\chi_0(\Omega) = 2(-1)^{\ell -1}$.  The $U_\xi$-module corresponding to $\chi_0$ is the Steinberg module $V_0$ which is a highest weight module with a highest weight vector $v_0$ such that $Kv_0=q^{r-1}v_0$ and $Ev_0=0$.  This module has two nice properties related with the braiding:

First,  in Section 4.2 of \cite{GP4} it is shown that there is a truncated $R$-matrix in the h-adic completion which can be specialized at the root of unity $\xi$ to an element 
$\check R^<\in U_\xi\otimes U_\xi$.  Since $\RR$ comes from the conjugation of the $h$-adic $R$-matrix and since $F^r$ and $E^r$ act by zero on $V_0$ it follows that for any $\chi\in Y_{\rel}$  we can choose $\bar R$ in the definition of the braiding so that both $c_{{\chi_0},{\chi}} $ and $c_{{\chi},{\chi_0}} $ are given by the action of $\check R^<$.   This is the same braiding used in \cite{GP4} to compute the open Hopf link.   In particular,  from Lemma 1 of \cite{GP4} we have that
 \begin{equation}\label{E:ParClosureDoubBr}(   \tev_{V_\chi} \otimes \Id_{V_{\chi_0}})(\Id \otimes c_{\any,\any} \circ c_{{\chi},{\chi_0}} )(  \coev_{V_\chi} \otimes \Id_{V_{\chi_0}})=r \Id_{V_{\chi_0}}.
 \end{equation}

Second,
given $\chi=(M\bp{\kappa,\ve,\vp},z)\in Y_{\rel} $  (for notation see Equation \eqref{E:Mkappavevp}), a direct computation using Equation \eqref{E:Formulasy4y3} shows 
$$ B(\chi_0, \chi)=(\chi^-,\chi_0) \text{ and } B(\chi,\chi_0)=(\chi_0, \chi^-)$$
where $\chi^{-}=(M\bp{\kappa,(-1)^{r-1}\ve,(-1)^{r-1}\vp},z)\in Y_{\rel}$.    In particular,
$$B\circ B(\chi_0, \chi)=(\chi_0, \chi) \text{ and } B\circ B(\chi,\chi_0)=(\chi,\chi_0).$$ 

Therefore, we can add a strand colored with $V_0$ to a link and control the new induced colors.  In particular, a strand colored with $V_0$ does not change when it is braided with any other strand.  With this in mind, we close the first tangle in Equation \eqref{E:RII-root1} with two parallel strands encircled by a closed component colored with $V_0$ (in red) to obtain the first tangle in Equation~\eqref{E:FcloserVSc}.  Now we compute the value of this tangle under $F'$ in two ways.

In the following diagrams, the coloring of the red strand is always $V_0$ and from above does not change in any move.  However, the colorings of the blue and black strands can change but as we will see it is not important to know the exact coloring.   
Let us compute: 
\begin{equation}\label{E:FcloserVSc}
F'\bp{\epsh{fig57}{16ex}}=\lambda F'\bp{\epsh{fig59}{16ex}}\stackrel\rr=
\lambda\brk{\epsh{fig60}{12ex}}\brk{\epsh{fig61}{12ex}}\qd(V_0)
\end{equation}
where $\lambda$ is the scalar coming from Proposition
\ref{P:RII-scalar} and $\brk{\;}$ is defined in Subsection \ref{SS:GaugeInvF}.  On the other hand, we have 
$$
F'\bp{\epsh{fig57}{16ex}}\stackrel\rr=F'\bp{\epsh{fig58}{16ex}}\stackrel\rr=\brk{\epsh{fig60}{12ex}}\brk{\epsh{fig61}{12ex}}\qd(V_0)
 $$
 where the first equality (modulo a $\rr$ root of unity) comes from doing
 positive RII and RIII moves.
 But it follows from Equation \eqref{E:ParClosureDoubBr} that for
any coloring of the closed blue and black stands we have
$$\brk{\epsh{fig60}{12ex}}\stackrel\rr=\brk{\epsh{fig61}{12ex}}\stackrel\rr=r.$$
Thus, we have $\lambda$ is a $\rr$ root of unity.
\end{proof}

\subsection{Left and right twist are equal up to a $\rr$ root of unity}\label{SS:Twist}
We now show that the left and right twists coincide.
\begin{theorem}\label{P:YBInducesTwist}
  The family $(\{V_\chi\}_{\chi\in Y_{\rel}},c)$ induces a twist, up to a $\rr$ root of unity.  

\end{theorem}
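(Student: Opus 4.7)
The plan is as follows. Both expressions for $\theta_x$ are images under $F$ of 1-1 tangle diagrams $T^{(1)}, T^{(2)} \in \Diag_{Y_{\rel}}$ with input and output colored by $V_x$. By Lemma~\ref{L:chinot2SS}, the condition $\chi \in Y_{\rel}$ forces $V_x$ to be absolutely simple, so each $\theta_x^{(i)}$ is a scalar multiple of $\Id_{V_x}$; it therefore suffices to show $F(T^{(1)}) = \lambda\, F(T^{(2)})$ for some $\rr$-root of unity $\lambda$. Diagrammatically, $T^{(1)}$ is a positive right-handed kink on the $V_x$ strand with the loop colored by $V_{\alpha(x)}$; note that the axiom $S(x,x)=(\alpha(x),\alpha(x))$ forces $B(x,\alpha(x)) = (x,\alpha(x))$, so no color changes occur at the crossing, making the diagram well-defined. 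Similarly, $T^{(2)}$ is a positive left-handed kink with loop colored by $V_{\alpha^{-1}(x)}$.

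The topological observation is that, as framed tangles in $3$-space, $T^{(1)}$ and $T^{(2)}$ are isotopic: both represent a single strand with framing $+1$, differing only in which side of the strand the kink bulges. I would choose an explicit isotopy that sweeps the kink from right to left around the main strand, keeping the writhe constant throughout. By the framed Reidemeister theorem this isotopy is realized by a finite sequence of framed Reidemeister moves, and since the writhe never changes we may arrange for only $RII_{++}$, $RII_{+-}$, $RII_{-+}$, and $RIII_{+++}$ to appear, with no framed $RI^f$ required. Theorem~\ref{L:BraidingForCyclic} shows that $RII_{++}$ and $RIII_{+++}$ are preserved by $F$ up to a $\rr$-root of unity, and Theorem~\ref{T:NegReidMoveRoot} gives the analogous statement for $RII_{+-}$ and $RII_{-+}$. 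Composing the factors along the sequence yields $F(T^{(1)}) = \lambda\, F(T^{(2)})$ with $\lambda$ a $\rr$-root of unity, as desired.

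The main obstacle will be that $(Y_{\rel}, B_{\rel})$ is only a \emph{generic} biquandle factorization, so the intermediate diagrams appearing in the Reidemeister sequence need not admit $Y_{\rel}$-colorings; without genuine colored moves, the factor-by-factor analysis above is not available. To overcome this I would invoke Theorem~\ref{T:stableRM}: tensoring both $T^{(1)}$ and $T^{(2)}$ on the left with $\Id_{(y,+)}$ for a sufficiently generic $y \in Y_{\rel}$, the chosen sequence of framed Reidemeister moves lifts to an honest sequence of $Y_{\rel}$-colored Reidemeister moves between $\Id_{(y,+)} \otimes T^{(1)}$ and $\Id_{(y,+)} \otimes T^{(2)}$. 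Applying the preceding argument to these stabilized diagrams gives $\Id_{V_y} \otimes F(T^{(1)}) = \lambda\,\bigl(\Id_{V_y} \otimes F(T^{(2)})\bigr)$; regularity of $V_y$ (automatic since $V_y$ is finite-dimensional over the field $\kk = \C$, hence free) then lets us cancel the $\Id_{V_y}$ factor, yielding $\theta_x^{(1)} = \lambda\,\theta_x^{(2)}$ and completing the argument.
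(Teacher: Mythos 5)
There is a genuine gap, and it sits at the heart of your argument: the claim that the isotopy between the right-handed kink $T^{(1)}$ and the left-handed kink $T^{(2)}$ ``may be arranged'' to use only $RII_{++}$, $RII_{+-}$, $RII_{-+}$ and $RIII_{+++}$, with no $RI^f$, is false. Planar isotopy together with the $RII$ and $RIII$ moves generates exactly \emph{regular} isotopy of diagrams, and regular isotopy preserves the turning (Whitney rotation) number of each strand. The two kinks have equal writhe but their turning numbers differ by two full turns, so no sequence of $RII$/$RIII$ moves can connect them; this obstruction survives stabilization, since tensoring with $\Id_{(y,+)}$ does not alter the turning number of the kinked strand. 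Indeed, the move relating $T^{(1)}$ and $T^{(2)}$ is precisely the move $RI^f$ of Figure \ref{fig:RM}, which is an independent generator of framed Reidemeister equivalence, and invariance of $F$ under colored $RI^f$ is exactly the twist property being proved. Your appeal to Theorem \ref{T:stableRM} does not repair this: that theorem guarantees a sequence of $Y_{\rel}$-colored moves between the stabilized diagrams, but any such sequence must contain colored $RI^f$ moves, whose compatibility with $F$ is the conclusion of the theorem. The argument is therefore circular.

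The paper's proof supplies the missing input from a genuinely non-diagrammatic source: the modified trace. Computing $\mt(c_{x,y})$ by the left and right partial trace properties yields $\qd(x)\,\theta^R_x \stackrel{\rr}{=} \qd(y)\,\theta^L_y$ with $y=\alpha(x)$, and gauge invariance of the modified dimension (Lemma \ref{L:dGaugeInvInYrel}) gives $\qd(x)=\qd(y)$, hence $\theta^R_x\stackrel{\rr}{=}\theta^L_y$; a separate computation using only moves that genuinely admit $Y_{\rel}$-colorings then shows $\theta^L_x\stackrel{\rr}{=}\theta^L_y$, closing the loop. In other words, where your proposal tries to trade the twist axiom for braid-type moves (which cannot work for the rotation-number reason above), the paper replaces it with the trace identity $\mt(fg)=\mt(gf)$ in its partial-trace form. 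Your framing of the reduction is fine --- absolute simplicity from Lemma \ref{L:chinot2SS} does reduce everything to comparing scalars, and regularity does allow cancelling a stabilizing factor --- but the core step needs the modified trace (or some equivalent ambidextrousness statement), not a Reidemeister sequence.
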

\begin{proof}
Let $\alpha$ be the partial map defined for $x\in Y_{\rel}$ by
$B^{\pm1}(x,\alpha(x))=(x,\alpha(x))$ and $\alpha^{-1}$ its partial
inverse.  For $x\in Y_{\rel}$, if there exists $y=\alpha(x)$ and $z=\alpha^{-1}(x)$,
we can define the right and the left twist:
$$\theta^R_x=\brk{\ \ \epsh{fig29}{8ex}\put(-35,-18){$x$}\put(1,0){$y$}\ }
\et\theta^L_x=\brk{\
  \epsh{fig30}{8ex}\put(0,-18){$x$}\put(-36,0){$z$}\ \ }$$
We will show that $\theta^R_x\stackrel\rr=\theta^L_x$.

Lemma \ref{L:dGaugeInvInYrel} implies the  modified dimensions of
$V_x$, $V_y$ and $V_z$ are equal.  Then using the left and right
partial trace property of the modified trace, we can compute $\mt(c_{x,y})$ in
two different ways:
$$\qd(x)\theta^R_x\stackrel\rr=F'\bp{\ \epsh{fig35}{5ex}\put(-30,0){$x$}\put(0,0){$y$}\ }
\stackrel\rr=\qd(y)\theta^L_y.$$
This implies that $\theta^R_x=\theta^L_y$.  On the other side, we have
$$\theta^L_x\stackrel\rr=\brk{F\bp{\ \epsh{fig36}{18ex}\put(-48,2){$x$}
    \put(-8,-30){$x$}\put(-27,0){$z$}\
  }}\stackrel\rr=
\brk{F\bp{\epsh{fig37}{18ex}\put(-48,2){$x$}
    \put(-10,-32){$x$}\put(-50,-32){$x$}\put(0,0){$y$}\put(-15,6){$y$}\
  }}\stackrel\rr=\theta^L_y\brk{\!F\bp{\epsh{fig38}{12ex}\put(-11,-27){$x$}\put(-11,27){$x$}\put(-11,0){$y$}}}\stackrel\rr=\theta^L_y$$
where the second and fourth equalities hold because the diagrams are related by
Reidemeister moves colored by $x,y$ and $z$.
\end{proof}

\subsection{Conclusion of main example}
Let $\catro$ be the pivotal category with the same objects as $\cat$
and whose morphisms are the orbits of morphisms of $\cat$ for
 the
action of $\Z/\rr\Z$ given by multiplication by an $\rr$-root of
unity.  Then 
$$\kk=\End_{\catro}(\unit)$$ is the set of complex numbers
up to a $\rr$-root of unity (in bijection with $\C$ through
$z\mapsto z^{\rr}$).  
 \begin{proposition} The family $(\{V_\chi\}_{\chi\in Y_{\rel}},c)$  is a simple regular representation in $\catro$ of the generic biquandle factorization $(Y_{\rel}, \gen_{\rel},B_{\rel},\Qm_\ell,\gm)$.  
 \end{proposition}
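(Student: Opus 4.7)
The plan is to pass to $\catro$—where morphisms differing by multiplication by an $\rr$-th root of unity are identified—and verify the five requirements (the three representation axioms: Yang--Baxter, sideways invertibility, and induced twist; together with absolute simplicity and regularity) using the material already developed. The whole point of introducing $\catro$ is precisely that every ``$\stackrel\rr=$'' statement from Section~\ref{S:MainExCycle} becomes an honest equality once one descends to $\catro$.

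The three representation axioms then follow directly from the results of Subsections~\ref{SS:UslRRmatR}--\ref{SS:Twist}. Since scalar multiplication by an $\rr$-th root of unity commutes with composition, tensor product, and the duality maps $\ev,\coev,\tev,\tcoev$, all of these operations (and the pivotal structure they encode) descend to $\catro$; the endomorphism ring $\kk=\End_{\catro}(\unit)$ is $\C$ modulo multiplication by $\rr$-th roots of unity, identified with $\C$ via $z\mapsto z^\rr$, as noted. Given this, the colored braid equation for $c_{\chi_1,\chi_2}$ is exactly Theorem~\ref{L:BraidingForCyclic}; sideways invertibility, once the pivotal structure is used to unfold the maps $s^{+}_L$ and $s^{-}_R$, reduces to the two negative Reidemeister~II moves of Theorem~\ref{T:NegReidMoveRoot}; and the equality of the left and right twist is Theorem~\ref{P:YBInducesTwist}.

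For simplicity and regularity: when $\chi\in Y_{\rel}$ satisfies $\operatorname{trace}(\psi(\chi_{|Z_0}))\neq\pm2$, Lemma~\ref{L:chinot2SS} tells us $V_\chi$ is an $r$-dimensional simple projective $U_\xi$-module; for the remaining Steinberg point, $V_\chi$ is the Steinberg module, which is also simple of dimension $r$ with the required central character. Schur's lemma then gives $\End_\cat(V_\chi)=\C\,\Id_{V_\chi}$, so the canonical map $\kk\to\End_{\catro}(V_\chi)$ is an isomorphism, proving absolute simplicity in $\catro$. Regularity already holds in $\cat$ because $V_\chi$ is nonzero and finite-dimensional over $\C$: $\Id_{V_\chi}\otimes f=0$ forces $f=0$ at the level of underlying linear maps. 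Faithfulness then passes to $\catro$ because the orbit of the zero morphism under multiplication by $\rr$-th roots of unity is $\{0\}$, so $[\Id_{V_\chi}\otimes f]=0$ in $\catro$ forces $\Id_{V_\chi}\otimes f=0$ in $\cat$, hence $[f]=0$.

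The only genuine obstacle is bookkeeping: one must check that the axioms of a representation of a generic biquandle factorization, as stated in Section~\ref{S:BiquandleBraidings} and adapted at the start of Section~\ref{S:GenericBiqu}, translate literally into the statements of Theorems~\ref{L:BraidingForCyclic}, \ref{T:NegReidMoveRoot}, and~\ref{P:YBInducesTwist}, and that the $\rr$-th-root-of-unity discrepancies appearing in composed identities continue to lie in the multiplicatively closed set of $\rr$-th roots of unity. Both are routine, and once this translation is made the proposition is essentially immediate from the preceding subsections.
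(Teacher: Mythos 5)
Your proof is correct and follows essentially the same route as the paper's: pass to $\catro$ so that the ``up to an $\rr$-th root of unity'' statements of Theorems~\ref{L:BraidingForCyclic}, \ref{T:NegReidMoveRoot} and~\ref{P:YBInducesTwist} become the colored braid relation, sideways invertibility, and the twist, while simplicity and regularity of each $V_\chi$ in $\cat$ descend to $\catro$. Your added details (Schur's lemma, the Steinberg point, and the check that the quotient by $\Theta_{\rr}$ preserves faithfulness) merely make explicit what the paper leaves as routine.
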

 \begin{proof}
Since each morphism in $\cat$ has an image in $\catro$, the proof is a consequence of the above results about $\cat$:  If $\chi\in Y_{\rel}$ then $V_\chi$ is simple and regular in $\cat$ and it follows that it is  simple and regular in $\catro$.  Theorems \ref{L:BraidingForCyclic} implies the model $(\{V_\chi\},c)$ satisfies the colored braid relation in $\catro$.  Theorem \ref{T:NegReidMoveRoot} implies it is sideways invertible (see Equation \eqref{eq:R2-a}).  Finally, Theorem \ref{P:YBInducesTwist} give the twist.  \end{proof}

Let $\qd:  Y_{\rel}\to \kk$ be the function defined in Equation \eqref{E:modNilCyc}.  
\begin{proposition}\label{P:AmbiPairCatro}
  The pair $(\{V_\chi\}_{\chi\in Y_{\rel}},\qd)$ is an ambi pair in $\catro$ and $\qd$ is gauge invariant.
\end{proposition}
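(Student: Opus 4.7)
The plan is to transfer both statements from the ambient category $\cat$ to $\catro$.

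Gauge invariance of $\qd$ is essentially immediate: Lemma~\ref{L:dGaugeInvInYrel} already gives the equality $\qd(\chi) = \qd(B_1^\pm(\chi',\chi))$ in $\C$ whenever $B_1^\pm(\chi',\chi)$ is defined. Since $\kk$ is obtained from $\C$ by quotienting out $\rr$-roots of unity, the same equality descends to $\kk$, so $\qd \colon Y_\rel \to \kk$ is gauge invariant.

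For the ambi pair property, I would descend the modified trace $\{\mt_V\}_{V\in\Proj}$ of Subsection~\ref{SS:ModTraceQuSl} from $\cat$ to $\catro$ and then apply the general construction at the end of Subsection~\ref{SS:ModInv}. The key point is that $\mt$ is $\C$-linear, so two $\cat$-endomorphisms representing the same $\catro$-endomorphism---that is, differing by multiplication by an $\rr$-root of unity---have traces differing by exactly that root, which becomes trivial in $\kk$. Consequently $\mt$ induces a well-defined $\kk$-valued trace on the image in $\catro$ of the ideal $\Proj$, and all trace axioms (cyclicity and left/right partial traces) transport without change, since each axiom is a $\C$-linear identity uniformly rescaled by any scalar ambiguity. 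By Lemma~\ref{L:chinot2SS} every $V_\chi$ with $\chi \in Y_\rel$ is simple and projective, so it lies in this ideal. Applying Theorem~5 of \cite{GPV}, reproduced at the end of Subsection~\ref{SS:ModInv}, to the descended trace then exhibits $(\{V_\chi\}_{\chi \in Y_\rel}, \qd)$ as an ambi pair in $\catro$, with modified dimension $\qd(\chi) = \mt_{V_\chi}(\Id_{V_\chi})$ given by Equation~\eqref{E:modNilCyc} read modulo $\rr$-roots of unity.

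The main---but very minor---obstacle is verifying that the $\rr$-root ambiguity forced by Theorems~\ref{L:BraidingForCyclic}, \ref{T:NegReidMoveRoot} and~\ref{P:YBInducesTwist} is compatible with the trace and bracket operations. Since both are $\C$-linear, any $\rr$-root scaling transports unchanged through them into the quotient $\kk$, so no algebraic difficulty arises beyond bookkeeping.
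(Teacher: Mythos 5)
Your gauge-invariance argument is exactly the paper's: Lemma \ref{L:dGaugeInvInYrel} gives the equality in $\C$, and it trivially descends along $\C\to\kk$. For the ambi-pair half, however, you and the paper descend at different points. The paper first gets the ambi pair in $\cat$ (where $\qd$ comes from the modified trace of Subsection \ref{SS:ModTraceQuSl} and Theorem 5 of \cite{GPV} applies legitimately) and then pushes the ambi-pair \emph{property itself} through the natural map $\D_\cat\to\D_{\catro}$: every $\catro$-colored diagram lifts to a $\cat$-colored one, well-defined up to multiplying each coupon by an $\rr$-root of unity, so $F'$ and the independence of the cutting presentation descend to $\kk$ with no further work. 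You instead descend the \emph{trace} to $\catro$ and propose to rerun the construction of Subsection \ref{SS:ModInv} there.

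The step you cannot take as literally stated is ``applying Theorem 5 of \cite{GPV} to the descended trace.'' That theorem, and the notion of trace it uses, presupposes that $\kk=\End(\unit)$ is a commutative ring and that the category is a tensor $\kk$-category with hom-sets that are $\kk$-modules and $\kk$-bilinear composition. In $\catro$ none of this holds: $\kk$ is $\C$ modulo $\rr$-roots of unity, which carries a multiplication but no well-defined addition; hom-sets are orbit sets rather than modules; and your descended $\mt$ is not a family of \emph{linear} functions, since linearity is not even meaningful there. What you have verified is weaker but in fact sufficient: cyclicity, the two partial-trace identities, and $\mt(\lambda\,\Id_{V_\chi})=\lambda\,\qd(\chi)$ all survive the quotient because each is multiplicatively equivariant under the $\rr$-root ambiguity. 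So your route can be completed, but only by redoing the cutting-independence argument from those multiplicative identities rather than by citing the theorem. The paper's order of operations --- ambi pair in $\cat$ first, then the map $\D_\cat\to\D_{\catro}$ --- buys you exactly this: it never requires a trace (in the linear sense) on $\catro$ at all, and is the cleaner fix for your write-up.
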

\begin{proof}
 Since $\qd$ comes from the modified trace in $\cat$ given in Subsection \ref{SS:ModTraceQuSl} we have $(\{V_\chi\}_{\chi\in Y_{\rel}},\qd)$ is an ambi pair in $\cat$.  
Then the natural map $\D_\cat \to  \D_{\catro}$ implies the proposition (for notation of $\D_\cat$ see Subsection \ref{SS:ModInv}).  
 \end{proof}

The last two propositions immediately imply:
\begin{corollary}\label{C:CycHolInv}
The family of cyclic modules and holonomy braidings $(\{V_\chi\}_{\chi\in Y_{\rel}},c)$  is a representation in $\catro$ of the  generic biquandle factorization $(Y_{\rel}, \gen_{\rel},B_{\rel},\Qm_\ell,\gm)$ of the quandle $\QX_{\rel}$ which is a sub-quandle of $\Conj(\SL_2(\C))\times_{\Ch_r} \C$.  Moreover, Theorem \ref{T:DefF'Generic} gives a gauge invariant map 
$$\wt F':=F'\circ\wt \Qm_\ell^{-1}:\Link_{\QX_{\rel}}\to\kk.$$  
\end{corollary}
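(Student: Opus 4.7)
The plan is to assemble the statement by directly feeding the two immediately preceding propositions into the machinery of Theorem \ref{T:DefF'Generic}. No new construction is needed; the work is in verifying that the four hypotheses required to invoke Theorem \ref{T:DefF'Generic} are exactly what has been established.

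First, I would recall from Theorem \ref{T:ExistY'genB'} that $(Y_\ell,\gen_\ell,B_\ell,\Qm_\ell,\gm)$ is a generic biquandle factorization of the quandle $\QX_\ell$, which identifies the topological/algebraic setting (hypotheses (1) and (2) in the list preceding Theorem \ref{T:DefF'Generic}). For the first part of the corollary, I would simply invoke the proposition just proved: $(\{V_\chi\}_{\chi\in Y_\ell},c)$ is a simple regular representation in $\catro$ of this generic biquandle factorization. This is hypothesis (3).

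Next, I would invoke Proposition \ref{P:AmbiPairCatro} to obtain hypothesis (4): the pair $(\{V_\chi\}_{\chi\in Y_\ell},\qd)$ is an ambi pair in $\catro$ and $\qd$ is gauge invariant. With all four hypotheses of Theorem \ref{T:DefF'Generic} in force, the theorem applies verbatim and produces the gauge invariant globally defined extension
\[
\wt F' := F'\circ \wt\Qm_\ell^{-1} : \Link_{\QX_\ell}\to \kk,
\]
where $\kk=\End_{\catro}(\unit)$ is $\C$ modulo $r^2$-th roots of unity.

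There is no real obstacle here, since the two preceding propositions were precisely packaged so that this combination is mechanical. The only point worth a sentence of emphasis is that passing from $\cat$ to $\catro$ is essential: the braiding $c$, the sideways invertibility, and the equality of left and right twists hold in $\cat$ only up to $r^2$-th roots of unity (Theorems \ref{L:BraidingForCyclic}, \ref{T:NegReidMoveRoot}, \ref{P:YBInducesTwist}), whereas in $\catro$ these ambiguities are quotiented out so that the axioms of a representation of $(Y_\ell,\gen_\ell,B_\ell,\Qm_\ell,\gm)$ hold on the nose. Once this is noted, the corollary is immediate.
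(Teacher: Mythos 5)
Your proposal is correct and matches the paper's own argument, which likewise derives the corollary immediately from the preceding two propositions (the simple regular representation in $\catro$ and Proposition \ref{P:AmbiPairCatro}) combined with Theorem \ref{T:DefF'Generic}. Your added remark that the passage from $\cat$ to $\catro$ is what turns the up-to-$r^2$-root-of-unity identities of Theorems \ref{L:BraidingForCyclic}, \ref{T:NegReidMoveRoot} and \ref{P:YBInducesTwist} into genuine axioms is exactly the point the paper makes inside the proof of that preceding proposition, so nothing is missing.
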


\begin{remark} In Subsection \ref{SS:conclusionGenericInv} we summarized how to compute $\wt F'$.  Also, let us give the following interpretation of the invariant in Corollary \ref{C:CycHolInv}.   An element of
$\Link_{\QX_{\rel}}$ modulo gauge equivalence can be interpreted as 1)
a link $L\subset S^3$, 2) an equivalence class of flat $\SL_2(\C)$-bundle
on the complement of $L$ and 3) a choice for each component $L_i$ of $L$
of a $r$\textsuperscript{th} root of the monodromy $g_i$ around $L_i$
(conjugacy class in $\SL_2(\C)$).   The flat bundles with projective
parabolic monodromy are excluded (i.e.\  no $g_i$ with 
$\operatorname{trace}(g_i)=\pm2$).
\end{remark}

\appendix

\section{Proof related with generically define functor}\label{A:ProofPropTheorGat}
In this appendix we prove Proposition \ref{P:guitarRM} and Theorem \ref{T:stableRM}. 

\begin{proof}[Proof of Proposition \ref{P:guitarRM}]
  Assume the underlying diagrams of $D$ and $D'$ are related by a
  Reidemeister move.  By functoriality (and injectivity) of $\Qm$ it suffices to assume  $D$ and $D'$ are  diagrams  of the form 
  $\Id_{w_1}\otimes E\otimes \Id_{w_2}:w_1w_3w_2\to w_1w_4w_2$ and
  $\Id_{w_1'}\otimes E'\otimes \Id_{w_2'}:w_1'w_3'w_2'\to
  w_1'w_4'w_2'$
  where $E$ and $E'$ are the diagrams of the Reidemeister
  move and $w_i, w_i'$ are words in $Y$.  Since $\Qm$ is injective on $Y$-colored diagrams, it is clear that the source and target
  of $D$ and $D'$ are equal if and only if the source and target of
  $\Qm(D)$ and $\Qm(D')$ are equal.  
  Since $\QX$ is a quandle then in particular it is a biquandle (where $B_1(a,b)=a  \rhd b$ and $B_2(a,b)=a$ for $a,b\in \QX$).  It follows that the coloring of the incoming boundary  of a braid extends uniquely to a $\QX$-coloring of the complete braid.  Thus, the coloring $\Qm(D')$ exists,
it has the same shape as $D'$ and the same boundary colors as $\Qm(D)$.
But there is an unique such $\QX$-colored diagram and it is also obtained by doing a $\QX$-colored Reidemeister move to $\QX(D)$.  Therefore, we obtain the if and only if stated in the proposition.  
\end{proof}

To prove Theorem \ref{T:stableRM} we first state and prove a key lemma.   
Recall the $(m,n)$-cable of the positive and negative crossings given in Figure \ref{fig:DiagramWords}.  Given words,  $w,w'\in W_Y$ one can try to use the generic biquandle structure to color one of these diagrams (coloring may not exist because the biquandle maps are not defined everywhere).  If such a coloring exists we denote the corresponding $Y$-colored diagram by $\chi^+_{w,w'}$ and $\chi^-_{w,w'}$, respectively.  Suppose $\chi^-_{w,w'}$ and $\chi^+_{w'',w'''}$ exists and are ``inverses,'' i.e.  $ \chi^+_{w'',w'''}\circ \chi^-_{w,w'}$ and $\Id_w \otimes \Id_{w'} $ are related by a sequence of 
  $Y$-colored Reidemeister moves.
  In this situation we use a slight abuse of notation and denote $\chi^+_{w'',w'''}$ by $\left(\chi^-_{w,w'}\right)^{-1}$.

\begin{lemma}\label{L:stab}
   Let $D:w\to w'\in\Diag_Y$, then for generic $x\in Y$ the morphism
\begin{equation}\label{E:chiSlideDchi}
\left(\chi^-_{x,w}\right)^{-1}\circ \left[\Qm^{-1}\left(x\gm \Qm(D) \right) \otimes \Id \right]\circ\chi_{x,w}^{-}
\end{equation}
  exists and is related to $\Id_{(x,+)}\otimes D$ by a sequence of
  $Y$-colored Reidemeister moves.
\end{lemma}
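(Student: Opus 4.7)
The plan is to interpret the composition in the lemma pictorially as the diagram obtained from $\Id_{(x,+)}\otimes D$ by pulling the leftmost $(x,+)$\nobreakdash-strand to the right across the bottom of $D$ (via the negative cable $\chi^-_{x,w}$), passing it straight up alongside the body of $D$, and then bringing it back to the left across the top of $D$ (via an inverse cable $(\chi^-_{x,w'})^{-1}$, which is the positive cable that undoes the first slide). The entire region of $D$ lying to the right of the travelling strand then inherits a $Y$-coloring obtained from the original by sliding $x$ under each crossing, cap and cup of $D$; by the definition of the harpoon action on $\Diag_\QX$ in Subsection \ref{SS:GaugeActionBiq} and Equation \eqref{E:Qmxw}, its image under $\Qm$ is precisely $x\gm\Qm(D)$, so this inner diagram is $\Qm^{-1}\bp{x\gm\Qm(D)}$, which exists for generic $x$ by Axiom \eqref{I:QinvxD} of Definition \ref{d:genbirack}.

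I would first establish the lemma when $D$ is one of the elementary tangles generating $\Diag_Y$: a single positive or negative crossing, a left/right cup or cap, or an identity strand. In each case the sliding is realized by a single $Y$\nobreakdash-colored Reidemeister move of type $RII$ or $RIII$ (together with planar isotopy for the cup/cap cases), and the existence of the required coloring reduces to finitely many biquandle values $B_i^{\pm}(x,\any)$, $S_i^{\pm}(x,\any)$ being defined at the colors appearing in $D$. By Axiom \eqref{I:BBSSGenDef} each of these maps is a generic bijection, so each individual constraint defines an element of $\gen$.

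Next I would proceed by induction on the number of elementary pieces in a decomposition of $D$ as compositions and tensor products: writing $D=D_1\circ D_2$ or $D=D_1\otimes D_2$ and applying the inductive hypothesis to each $D_i$, one slides $x$ past $D_2$ and then past $D_1$; composing the two sequences of $Y$\nobreakdash-colored Reidemeister moves yields the claimed equivalence, and the genericity conditions from the two halves are combined by finite intersection, which is again in $\gen$ by the assumption on $\gen$. That each intermediate $Y$\nobreakdash-coloring matches along the shared boundaries follows from Proposition \ref{P:guitarRM} (compatibility of $\Qm$ with colored Reidemeister moves) together with the uniqueness of $Y$\nobreakdash-colorings extending a prescribed boundary at a braid.

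The main obstacle I anticipate is the bookkeeping: one must check that after each elementary slide the resulting $Y$\nobreakdash-coloring on the body of $D$ is consistent and globally assembles into the coloring $\Qm^{-1}\bp{x\gm\Qm(D)}$, rather than some other lift along $\Qm$. This is handled by the injectivity of $\Qm$ on $Y$\nobreakdash-colored diagrams, which forces any $Y$\nobreakdash-coloring of the body whose image under $\Qm$ equals $x\gm\Qm(D)$ to coincide with $\Qm^{-1}\bp{x\gm\Qm(D)}$; combined with the Reidemeister\nobreakdash-move level identification of the harpoon action with sliding-under, this pins down the middle factor of the composition as claimed.
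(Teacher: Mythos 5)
Your plan follows the paper's own proof almost step for step: the paper likewise first proves the statement for a slice $D=\Id\otimes E\otimes\Id$ with $E$ elementary, getting existence of the cables $\chi^-_{x,w}$ and $(\chi^-_{x,w})^{-1}$ for generic $x$ from Axiom \ref{I:BBSSGenDef} (compositions of generic bijections are generic bijections), existence of the middle diagram from Axiom \ref{I:QinvxD}, the identification of the middle coloring through injectivity of $\Qm$ together with the fact that an elementary $\QX$-colored diagram is determined by its incoming boundary, and the upgrade from a $\QX$-colored to a $Y$-colored Reidemeister move through Proposition \ref{P:guitarRM}; it then inducts on vertical composition, cancelling $\chi^-_{x,w_2}\circ(\chi^-_{x,w_2})^{-1}$ by always-colorable $RII$ moves, using functoriality of $D\mapsto\Qm^{-1}(x\gm\Qm(D))$, and keeping genericity by finite intersections in $\gen$ — all of which you have.

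The one step of your outline that would fail as written is the tensor-product branch of the induction. For $D=D_1\otimes D_2$ your inductive hypotheses concern $\Id_{(x,+)}\otimes D_i$, not the padded diagrams such as $\Id_{(x,+)}\otimes\Id_{w_1}\otimes D_2$, and, more seriously, identifying the middle factor as $\Qm^{-1}\bp{x\gm\Qm(D_1\otimes D_2)}$ requires knowing how $\Qm$ interacts with tensor products of arbitrary diagrams. In the generic setting $\Qm$ is not monoidal: Equation \eqref{E:Qmxw} is an axiom only for words (identity diagrams), and the twisted-monoidality formula $\Qm(D\otimes D')=\Qm(D)\otimes\bp{w\gm\Qm(D')}$ of Proposition \ref{P:twisttensor} is proved only for honest biquandles, so you cannot invoke it here without extra argument — and even granting it, the right-hand factor acquires a harpoon twist rather than sitting in plain juxtaposition. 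The repair is exactly what the paper does and costs nothing: drop the tensor branch, take as base cases the full-width slices $\Id\otimes E\otimes\Id$ (your single-move analysis extends verbatim, the identity strands contributing only colorable $RII$-type slides), and induct on composition alone, since every diagram in $\Diag_Y$ decomposes vertically into such slices.
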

\begin{proof}  
 The composition of generic
bijections of $Y$ is
  a generic
bijection of $Y$.  
   Thus for any $w\in W_Y$, 
  Axiom \ref{I:BBSSGenDef} of Definition \ref{d:genbirack}
implies that  for generic
  $x\in Y$, there exists a (unique) $Y$-coloring of the diagrams $\chi^-_{x,w}$
  and
  $(\chi^-_{x,w})^{-1}$.  We conclude that the morphism in Equation~\eqref{E:chiSlideDchi} exist for generic $x\in Y$.  

We prove the second statement in the lemma holds in two steps as follows. \\ \textbf{Step 1}: We will show that statement holds for $D=\Id\otimes E\otimes \Id:w\to w'$ be where $E$ is
  an elementary diagram made by a crossing or one cup or cap.  From Axiom \ref{I:QinvxD} of Definition   \ref{d:genbirack}, for generic $x\in Y$ there exist $D'\in \Diag_Y $ such that  
  $\Qm(D')=x\gm \Qm(D)$.   As explained in the previous paragraph,  for generic $x\in Y$ there exists  
  $$\chi^-_{x,w}:x\otimes w\to w_x\otimes y \et  \left(\chi^-_{x,w}\right)^{-1} :w_x\otimes y \to x\otimes w $$ where
  $w_x\in W_Y$ and $y\in Y$.  The  image of the morphism $\chi^-_{x,w}$ under $\Qm$ is a map with domain $\Qm(x\otimes w)=\Qm(x)\otimes (x\gm \Qm(w))$ and range $\Qm(w_x\otimes y)=\Qm(w_x)\otimes q_y$  for some $q_y\in\QX$.  Since $\chi^-_{x,w}$ comes from a negative crossing the quandle color of the over-strand does not change and so we have  $\Qm(w_x)=x\gm \Qm(w)$.
The last two sentences imply that the two $\QX$-diagrams 
$$
\Qm(D'\otimes \Id_y)\; \text{ and } \; \left(x\gm\Qm(D)\right)\otimes \Id_{q_y}
$$
 have the same incoming $\QX$-colored boundaries.   Now, since both of these $\QX$-diagrams have the same underlying  diagram which is elementary and so determined by its incoming $\QX$-colored boundary,  we conclude the diagrams are equal in $\Diag_\QX$.
 In particular, taking the preimage under $\Qm$ we have
 $$
 D'\otimes \Id_y =\Qm^{-1}\left(x\gm\Qm(D)\right) \otimes \Id_{y}.
 $$
 Finally, in $\Diag_\QX$ we have
\begin{align*}   \Qm\bp{\left(\chi^-_{x,w}\right)^{-1}} \Qm(D'\otimes \Id_y)  \Qm\bp{\chi_{x,w}^{-}}&\stackrel1\equiv\Qm\bp{\left(\chi^-_{x,w}\right)^{-1}} \Qm\bp{\chi_{x,w'}^-}\Qm\bp{\Id_x\otimes D}\end{align*}
 where the Reidemeister equivalence $\stackrel1\equiv$ comes from the fact that underlying diagrams differ by a single Reidemeister move which is  $\QX$-colored.  Since both the diagrams in the above equation are  in the image of $\Qm$ then Proposition~\ref{P:guitarRM} implies their preimage are related by a $Y$-colored  Reidemeister move.  Thus, the last two equation combined with the facts that $\Qm$ is a functor and $\bp{\chi_{x,w'}^-}^{-1}\circ \bp{\chi_{x,w'}^-}$ is related by $Y$-colored  Reidemeister moves to the identity we can conclude the lemma is true when $D=\Id\otimes E\otimes \Id$.

 \textbf{Step 2}:
  We will show, if the lemma is true for $D_1:w_1\to w_2$ and 
  $D_2:w_2\to w_3$ then  it is true for $D=D_2\circ D_1$.   To simplify notation, for $x\in Y$, we let  $ H_x: \Diag_Y\to \Diag_Y$ be the partial map given by 
  $$H_x(D)=\Qm^{-1}(x\gm\Qm(D)).$$  
  Definition \ref{d:genbirack} implies, for generic $x\in Y$ all six diagrams
  $\chi^-_{x,w_1}$, $\chi^-_{x,w_2}$, $(\chi^-_{x,w_1})^{-1}$, $(\chi^-_{x,w_2})^{-1}$, $H_x(D_1)$ and $H_x(D_2)$ exist.   Also,  
  for such $x$, we have the range of $(\chi^-_{x,w_1})^{-1}$ is $x\otimes w_1$ and the diagram 
  $\chi^-_{x,w_2}\circ(\chi^-_{x,w_1})^{-1}$ is related to the identity by a sequence of $Y$-colored Reidemeister moves (since colorings of Reidemeister II moves reducing the number of crossing always exist).   Therefore, we have 
  \begin{align*}
    \Id_{(x,+)}\otimes D&\equiv\bp{\Id_{(x,+)}\otimes D_2}\bp{\Id_{(x,+)}\otimes D_1}\\
                    &\equiv \bp{(\chi^-_{x,w_2})^{-1} \bp{H_x(D_2)\otimes \Id}
                 \chi_{x,w_2}^{-}} \bp{(\chi^-_{x,w_1})^{-1} 
                 \bp{H_x(D_1)\otimes \Id}\chi_{x,w_1}^{-}}\\
                             &\equiv(\chi^-_{x,w_2})^{-1} \bp{H_x(D_2)\otimes \Id}
                           \bp{\chi_{x,w_2}^{-}(\chi^-_{x,w_2})^{-1}} 
                          \bp{H_x(D_1)\otimes \Id}\chi_{x,w_1}^{-}\\
                      &\equiv(\chi^-_{x,w_2})^{-1} 
                          \bp{ \bp{H_x(D_2)\circ H_x(D_1)}\otimes \Id}
                          \chi_{x,w_1}^{-}\\
                        &\equiv(\chi^-_{x,w_2})^{-1} 
                           \bp{H_x(D)\otimes \Id}
                          \chi_{x,w_1}^{-}.
  \end{align*}
\end{proof}
\begin{proof}[Proof of Theorem \ref{T:stableRM}]
Since $\Qm(D)$ and
  $\Qm(D')$ represent isotopic $\QX$-tangles there exists a sequence of $\QX$-colored Reidemeister moves
 $$\Qm(D)= D_0^\QX\stackrel1\equiv D_1^\QX\stackrel1\equiv\cdots\stackrel1\equiv 
   D_n^\QX=\Qm(D').$$
It is possible that $\Qm^{-1}(D_i^\QX)$ does not exist for some $i\in \{1,...,n-1\}$.  To address this we use a gauge transformation by a generic $x\in Y$.  
Axiom \ref{I:QinvxD} of Definition \ref{d:genbirack}, implies that 
  for generic $x\in Y$, the diagrams
  $$D_i=\Qm^{-1}\bp{x\gm D_i^\QX}\in\Diag_Y$$ exists
 for all $i\in \{0,\cdots, n\}$.   
  Since the diagrams underlying  $\Qm(D_i)=x\gm D_i^\QX$ and $\Qm(D_{i+1})=x\gm D_{i+1}^\QX$ are related by a single Reidemeister move, it extends to a $\QX$-colored move. Then Proposition
  \ref{P:guitarRM} ensures that $ D_0\stackrel1\equiv
  D_1\stackrel1\equiv\cdots\stackrel1\equiv D_n$ is a sequence of
  $Y$-colored Reidemeister moves, so the same is true for
$$(\chi_{x,\any}^-)^{-1} \bp{D_0\otimes \Id_\any}\chi_{x,\any}^{-}\stackrel1\equiv\cdots\stackrel1\equiv(\chi_{x,\any}^-)^{-1} \bp{  D_n\otimes\Id_\any}\chi_{x,\any}^{-}$$ 
  where $\any$ represents the appropriate word in $W_Y$.  
  Lemma \ref{L:stab} implies that
    $$\Id_{(x,+)}\otimes D \equiv(\chi_{x,\any}^-)^{-1} \bp{ D_0\otimes   \Id_\any}\chi_{x,\any}^{-}\ets \Id_{(x,+)}\otimes D'  \equiv(\chi_{x,\any}^-)^{-1} \bp{ D_n\otimes   \Id_\any}\chi_{x,\any}^{-}.$$
    Combining the above equivalence we have 
   $$\Id_{(x,+)}\otimes D\equiv\Id_{(x,+)}\otimes D'$$ for
generic $x\in Y$.
\end{proof}

\section{Proof of Theorem \ref{T:YisGenericBiquConjSL2C}}\label{Ap:ProofSL2CGenericFact}
Here we prove Theorem \ref{T:YisGenericBiquConjSL2C}.  We need to show  $(Y,\gen, B, \Qm, \gm)$ as defined in Subsection \ref{ss:glgf} satisfy the axioms of Definition \ref{d:genbirack}.   The maps $B, B^{- 1}, S$ and $S^{- 1}$ defined in Subsection \ref{ss:glgf} by construction satisfy Axiom \ref{I:BandSrelateGenDef}.  

 For $x\in Y$, the maps
  $$B_1^\pm(x,\any),\; S_1^\pm(x,\any),\; B_2^\pm(\any,x),\; S_2^\pm(\any,x)$$
  are all rational maps (as $\psi^{\pm1},\vp_\pm$ and matrix inversion are rational maps).  In fact these maps are pairwise inverse and so they
  send Zariski open dense sets to Zariski open dense sets.  Thus, Axiom \ref{I:BBSSGenDef} of Definition \ref{d:genbirack} is satisfied.

Next we recall the definition of the functor $\Qm$, see \cite{KR} and \cite{GP}.  
Let $T$ be a standard tangle in $(0,+\oo)\times\R\times[0,1]$.  Let $D$ be a $Y$-colored diagram in $(0,+\oo)\times\{0\}\times[0,1]$ which we assume is a projection of $T$ in the $y$-coordinate direction.  We assume $T$ is close to $D$.  To define the functor we consider two kinds of paths: (1) \emph{positive paths} which are in $(0,+\oo)\times [0, +\oo)\times[0,1]$ and to the right of $T$ and (2) \emph{negative paths} in $(0,+\oo)\times (-\oo,0]\times[0,1]$ and to the left of $T$.  Let $M_+$ (resp.\ $M_-$) be the set of points traced out by all positive (resp.\ negative) paths.  Then $M_+\cap M_-$ is a subset of $(0,+\oo)\times\{0\}\times[0,1]$ whose connected components $r_i$ are delimited by the diagram $D$.   The closure of one of these components contains $\{0\}\times\{0\}\times [0,1]$, we denote this component by $r_0$.  We fix a point $P_i$ in each region $r_i$.   
Recall that a quandle $\Gd$-coloring of $D$ is
  equivalent to the data of a 
  representation 
  $\rho:\pi_1(M_T,P_0)\to \Gd$, see Remark \ref{R:QtanGtan} and Theorem~\ref{T:QD}.

 Let $(\overrightarrow{P_iP_j})_\pm$ be a path in $M_\pm$ from $P_i$ to $P_j$.    The space $M_\pm$ is contractible so a path
  $(\overrightarrow{P_iP_j})_\pm$ in the groupoid $\pi_1(M_\pm,\{P_i\})$
  is uniquely determined by its end points.  The positive and negative paths to and from adjacent regions generate the
  groupoid $\pi_1(M,\{P_i\})$ (for relations see Lemma 3.4 of \cite{GP}) so a representation of the groupoid
  in $\Gd$ is determined by their image.  If an edge $e$ of $D$
  with $Y$-color $x$ has region $r_i$ on the left and $r_j$ on the
  right, we assign to the path $(\overrightarrow{P_iP_j})_\pm$ the
  element $\vp_\pm(x)$.  The properties of the $Y$-coloring imply that this assignment satisfies the defining relations of the groupoid $\pi_1(M_T,\{P_i\})$ given in Lemma 3.4 of \cite{GP}.  Thus,   
  this assignment extends (uniquely) to a
  representation $\wb \rho$ of $\pi_1(M_T,\{P_i\})$ in $\Gd$.  The
  meridian of the edge $e$, as above, is the loop
  $(\overrightarrow{P_iP_j})_+.(\overrightarrow{P_iP_j})_-$ and its image
  by $\wb\rho$ is $\psi(x)=\vp_+(x)\vp_-(x)^{-1}\in \Gd$.  As the fundamental group
  $\pi_1(M_T,P_0)$ is generated by the meridians of the edges we have
  the restriction of $\wb\rho$ to $\pi_1(M_T,P_0)$ takes values in $\Gd$
  and thus is a $\Gd$-tangle structure $\rho$ on $T$.

  The partially defined inverse map is constructed as follows: we want to extend a
  representation
  $$\rho\in\Hom_{\text{\tiny{quandle}}}(Q(T,P_0),\Conj(\Gd))
  \cong\Hom_{\text{\tiny{group}}}(\pi_1(M_T,P_0),\Gd)$$
   to a
  representation
  $\wb\rho\in\Hom_{\text{\tiny{groupoid}}}(\pi_1(M_T,\{P_i\}),\Gd)$
  that will send $(\overrightarrow{P_iP_j})_\pm$ to
  $\vp_\pm(x)$.  This can be done if and only if 
  \begin{equation}
    \label{eq:rho-adm}
   \rho\left((\overrightarrow{P_0
      P_i})_+.(\overrightarrow{P_iP_0})_-\right)\in\psi(\Grc)
  \end{equation}
  for all points $P_i$.  
  Assuming that this is true, let
  $$g_i=\psi^{-1}\left(\rho\left((\overrightarrow{P_0
    P_i})_+.(\overrightarrow{P_iP_0})_-\right)\right)$$ then by defining 
  $\wb\rho((\overrightarrow{P_iP_j})_\pm)=\vp_\pm(g_i^{-1}g_j)$ we 
  obtain the desired representation. 
   If an edge $e$ of $D$ has
  region $r_i$ on the left and $r_j$ on the right, the $Y$-coloring
  associated to $\wb\rho$ assign to $e$ the color $g_i^{-1}g_j$.

  Hence we obtain an injective functor $\Qm$ whose image is formed by
  $\Gd$-colorings whose underlying quandle map $\rho$ satisfy Equation
  \eqref{eq:rho-adm}.  Next we check that this functor satisfies Equation \eqref{E:Qmxw}.  
  Let $x\in Y$ and $w\in W_Y$.  As above let $T_1$ and $T_2$ be two standard tangles close the trivial $Y$-colored diagrams $\Id_w$ and $\Id_{x\otimes w}$, respectively.   Let $\rho_i: \pi_1(M_{T_i},P_0)\to \Gd$ be their representation constructed above.  Let $e$ be a strand corresponding to a letter in the word $w$ (we think of $e$ in both $T_1$ and $T_2$).  Since $T_2$ is just $T_1$ with and extra strand on the left we see that to $\rho_2(e)=\rho_2((\overrightarrow{P_0P_1})_+)\rho_1(e)\rho_2((\overrightarrow{P_0P_1})_+)^{-1}$ where the conjugation of $\rho_2((\overrightarrow{P_0P_1})_+)$ come from passing over first strand of $T_2$ and back.  But by definition $\rho_2((\overrightarrow{P_0P_1})_+)=\vp_+(x)$.  Thus,  $\rho_2(e)=x\gm \rho_1(e)$ and the functor satisfies Equation \eqref{E:Qmxw}.

  Finally, for fixed $g\in\Gd$, the set of $h$ in
  $\vp_+(\Grc)$ such that $hgh^{-1}\in\psi(\Grc)$ is the complement of
  an algebraic hypersurface.  The preimage by $\vp_+$ of this set is
  an open set $Z_g\in\gen$.  Then the set of $x\in\Grc$ such that
  $ \Qm^{-1}(x\gm D)$ exists is 
  $\bigcap
  \left\{Z_g:g\in\{\rho\bp{\overrightarrow{P_\oo
        P_i}_+.\overrightarrow{P_iP_\oo}_-}\}\right\}\in\gen$.  This
  proves Axiom \eqref{I:QinvxD} and completes the proof of Theorem \ref{T:YisGenericBiquConjSL2C}.
  
  \section{Proof of Theorem \ref{c:ginv}}\label{Ap:ProofThmFBgauge}

 To prove the theorem it is convenient to prove a more general proposition about the disjoint union of diagrams of  links and 1-1 tangles.  With this in mind, consider the disjoint union of $\QX$-colored diagram $\sqcup_i D_i$ where $D_i$ is a diagram 
  representing a $\QX$-link or a 1-1 $\QX$-tangle;  
 let  $\Dk$ be the set of all such unions.  
  Also, let $\DQ$ be the set of all $\QX$-colored diagram that are in the image of
  $\Qm$.  
    For $D\in\DQ\cap\Dk$, let
  $$\FQ(D)=\brk{F\bp{\Qm^{-1}(D)}}\in\kk,$$
  where the bracket is defined as the scalar corresponding to the scalar endomorphism $F\bp{\Qm^{-1}(D)}$.
    Remark that tensoring on the right by
  any identity does not affect $\FQ$, i.e.\ $\FQ(D\otimes\Id)=\FQ(D)$.  Remark
  also that for a $Y$-colored diagram $D$ with $\Qm(D)\in \Dk$ then $\FQ(\Qm(D))=\brk{F(D)}$.   In particular, if $D$ is a $Y$-colored diagram of a link then $\FQ(\Qm(D))=F(D)$.  
  Thus, Theorem~\ref{c:ginv}.
 is a corollary of the following proposition:
  \begin{proposition} The partial map $\FQ:  \Dk\to \kk$ is $B$-gauge invariant, i.e.\ invariant under the equivalence generated by $(x\gm\any)_{x\in Y}$ and $(a\rhd\any)_{a\in \QX}$.  
  \end{proposition}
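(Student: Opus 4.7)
The plan is to prove the proposition by reducing to a single component (via multiplicativity of the bracket) and then invoking Lemma~\ref{L:stab}, using simplicity of the boundary color modules to extract scalar identities.

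\textbf{Multiplicativity reduction.} First I would observe that $\FQ$ factors multiplicatively over disjoint unions. If $D=\bigsqcup_i D_i\in\DQ\cap\Dk$ with each $D_i$ either a closed $\QX$-link or a 1-1 $\QX$-tangle, then $\Qm^{-1}(D)=\bigsqcup_i\Qm^{-1}(D_i)$ and $F(\Qm^{-1}(D))=\bigotimes_i F(\Qm^{-1}(D_i))$. Each factor is already a scalar morphism, an element of $\kk$ for a link and a scalar multiple of $\Id_{V_y}$ for a 1-1 tangle whose open strand is colored by $y$ (by simplicity of $V_y$), so $\FQ(D)=\prod_i\FQ(D_i)$. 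Both $a\rhd\any$ and $x\gm\any$ act componentwise on disjoint unions, reducing gauge invariance of $\FQ$ to the case of a single component.

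\textbf{The $\gm$-generator for a single component.} Let $D_0$ be a single link or 1-1 tangle in $\DQ\cap\Dk$ and let $x\in Y$ with $x\gm D_0\in\DQ$; set $E=\Qm^{-1}(D_0)$. By Lemma~\ref{L:stab} there is a dense open $U\subset Y$ such that for every $z\in U$,
$$\Id_{(z,+)}\otimes E\ \equiv\ (\chi^-_{z,w})^{-1}\circ\bp{\Qm^{-1}(z\gm D_0)\otimes\Id}\circ\chi^-_{z,w},$$
where $w=\partial_-E$ and $\equiv$ denotes $Y$-colored Reidemeister equivalence. Applying $F$, using its invariance under $Y$-colored Reidemeister moves, factoring out the scalars coming from $F(E)$ and $F(\Qm^{-1}(z\gm D_0))$ via simplicity, and cancelling $F(\chi^-_{z,w})^{-1}\circ F(\chi^-_{z,w})=\Id$, yields the scalar identity $\brk{F(E)}=\brk{F(\Qm^{-1}(z\gm D_0))}$, equivalently $\FQ(D_0)=\FQ(z\gm D_0)$ for every $z\in U$.

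\textbf{Bridging to arbitrary gauges.} To upgrade the identity from a generic $z\in U$ to the specific $x$ (and to the $\rhd$-generator), I would bridge via an auxiliary generic $\gm$-action: for generic $y,y'\in Y$ one has $\FQ(D_0)=\FQ(y\gm D_0)$ and $\FQ(x\gm D_0)=\FQ(y'\gm\, x\gm D_0)$, so choosing $y,y'$ so that $y\gm D_0$ and $y'\gm\, x\gm D_0$ agree as $\QX$-diagrams closes the chain. In the main $\SL_2(\C)$ example this is arranged by the partial group law on $Y=\Grc$, which gives $(y'\gm)\circ(x\gm)=(y'x)\gm$ whenever $y'x\in\Grc$, so one solves for $y'=y\cdot x^{-1}$ by a generic translation on $Y$ (the intersection of the translates of the two generic sets is again open dense). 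The quandle generator $a\rhd\any$ is treated by the same bridging, using the identity $x\gm\any=\vp_+(x)^{-1}\rhd\any$ from the Lie group factorization (and its analogue for general generic biquandle factorizations) to realize the $\rhd$-action on the finitely many $\QX$-colors appearing in $D_0$ as a composition of $\gm$-actions.

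\textbf{Main obstacle.} The hard step is the bridging in the last paragraph: extending the generic identity $\FQ(D_0)=\FQ(z\gm D_0)$, valid for $z$ in an open dense subset of $Y$, to \emph{every} gauge transformation leaving $D_0$ inside $\DQ$, in particular to the quandle generator $a\rhd\any$, which a priori is not of the form $z\gm$. This depends on the partial algebraic structure of $Y$ supplied by the generic biquandle factorization (Definition~\ref{d:genbirack}, especially Axiom~\eqref{I:QinvxD}) and on the precise compatibility between $\rhd$ and $\gm$, which in the main examples comes from the group law on $\Grc$ and the realization of $\gm$ as conjugation by $\vp_+(Y)$ inside $\wb\Gd$.
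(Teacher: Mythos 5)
There is a genuine gap, and it sits exactly where you flag your ``main obstacle'': your bridging step invokes structure that the general setting does not have. You use a partial group law on $Y=\Grc$ to write $(y'\gm\any)\circ(x\gm\any)=(y'x)\gm\any$, and the identity $x\gm\any=\vp_+(x)^{-1}\rhd\any$ to decompose the quandle generator $a\rhd\any$ into harpoons. Both are features of the $\SL_2(\C)$ example only; Definition \ref{d:genbirack} supplies merely a function $Y\to\Aut(\QX,\rhd)$, with no composition law on harpoons and no expression of $\rhd$ in terms of $\gm$. The paper's proof replaces these by two mechanisms available axiomatically: (a) the exchange relation $(z\gm\any)\circ(x\gm\any)=(x'\gm\any)\circ(z'\gm\any)$ where $(x',z')=B^{-1}(z,x)$, which does not compose two harpoons into one but swaps their order, and which preserves genericity because $z\mapsto z'=B^{-1}_2(z,x)$ is a generic bijection by Axiom \eqref{I:BBSSGenDef}; and (b) a braid-conjugation lemma (the paper's Claims (2)--(3)): if $\QX$-diagrams built from $D_1,D_2$ are intertwined by a $\QX$-colored braid $\sigma$, then $\FQ(D_1)=\FQ(D_2)$. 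This lemma is what makes the extension $\wh F(D):=\FQ(x\gm D)$ well defined independently of the gauging element $x$ (via a single-crossing braid exchanging the strands colored $q_x,q_y$), and it handles the generator $a\rhd\any$ directly by taking $\sigma$ to be the cabled positive crossing realizing the quandle action --- no decomposition of $\rhd$ into harpoons is ever needed. Your proposal contains no substitute for (a) or (b), so in the stated generality the argument does not close.

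A secondary but concrete error is your multiplicativity reduction: $\Qm^{-1}(\bigsqcup_i D_i)\neq\bigsqcup_i\Qm^{-1}(D_i)$ in general, because $\Qm$ is not monoidal --- by Equation \eqref{E:Qmxw} (and Proposition \ref{P:twisttensor} in the biquandle case) the later factors of a tensor product are gauge-twisted by the boundary words of the earlier ones. So the components of $\Qm^{-1}(D)$ are harpoon-twisted copies of the $\Qm^{-1}(D_i)$, and arguing that this twisting is harmless presupposes the gauge invariance you are trying to prove. This is why the paper never reduces to single components: the class $\Dk$ of disjoint unions is kept throughout precisely so that tensoring with identity strands (as in Lemma \ref{L:stab} and the conjugation arguments) stays inside the class. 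Your first step, the generic $\gm$-invariance for $D\in\DQ\cap\Dk$ via Lemma \ref{L:stab} and regularity/simplicity, does coincide with the paper's Claim (1) and is fine as stated.
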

  \begin{proof}
  We prove the following five claims:
\begin{enumerate}
\item If $D\in\DQ\cap\Dk$ then for generic $x\in Y$,
  $\FQ(D)=\FQ(x\gm D)$.
\item Let $D_1, D_2\in\DQ\cap\Dk$ and $D'_1,D'_2$ be two $\QX$-colored
  diagrams of the form
  $D'_i=\Qm(\Id\otimes \Qm^{-1}(D_i))\otimes \Id$.  Suppose that there
  exists a $\QX$-colored braid diagram $\sigma$ such that
  $D'_2\circ\sigma\equiv \sigma\circ D'_1$, then
  $\FQ(D_1)=\FQ(D_2)$.
\item For any $x,y\in Y$ and any $D\in\Dk$, if $x\gm D,y\gm D\in\DQ$
  then $\FQ(x\gm D)=\FQ(y\gm D)$.
\item The partial map $\FQ$ on $\Dk$ is invariant for the
  equivalence relation generated by $(x\gm\any)_{x\in Y}$.
\item
  The partial map $\FQ$ on $\Dk$ is invariant for the
  equivalence relation generated by $(a\rhd\any)_{a\in \QX}$.
\end{enumerate}
$\bullet$
To  prove Claim (1) we see that Lemma \ref{L:stab} implies: 
 If $D=\Qm(D')$ with
$D':w\to w$ then
$$\Id_{V_x}\otimes\FQ(D).\Id_{F(w)}=F\bp{\chi^-_{x,w}}^{-1}\circ\bp{\FQ(x\gm
  D).\Id_{V_{x'}}\otimes\Id_{F(w')}}\circ F\bp{\chi^-_{x,w})}.$$
The claim then follows because $V_x\otimes F(w)$ is regular.\\

\noindent
$\bullet$ For Claim (2), let $D^Y_i= \Qm^{-1}(D_i)$ for i=1,2.  We have that for
generic $x\in Y$, the diagrams $x\gm (D'_2\circ\sigma)$ and $x\gm (\sigma\circ D'_1)$
are in $\DQ$ and they represent isotopic tangles so
$F\circ\Qm^{-1}(x\gm (D'_2\circ\sigma))=F\circ\Qm^{-1}(x\gm
(\sigma\circ D'_1))$.
Thus,
$$\FQ(x\gm D'_2).F\circ\Qm^{-1}(x\gm \sigma)=\FQ(x\gm
D'_1).F\circ\Qm^{-1}(x\gm \sigma)$$ and as
$F\circ\Qm^{-1}(x\gm \sigma)$ is invertible and
objects
are regular,
we get $\FQ(x\gm D'_1)=\FQ(x\gm D'_2)$.  Next
$$\FQ(x\gm D'_i)=\FQ(x\gm [\Qm(\Id\otimes D^Y_i)\otimes\Id])=\FQ(x\gm [\Qm(\Id\otimes D^Y_i)]),$$
then as $\Qm(\Id\otimes D^Y_i)\in\DQ\cap\Dk$ we can apply Claim (1)
twice and we get that for generic $x$,
$$\FQ(\Qm(\Id\otimes D^Y_1))=\FQ(x\gm [\Qm(\Id\otimes D^Y_1)])=\FQ(x\gm [\Qm(\Id\otimes D^Y_2)])$$$$\ \hspace*{38ex}=\FQ(\Qm(\Id\otimes D^Y_2)).$$
Finally, we see Claim (2) follows from
$\FQ(\Qm(\Id\otimes D^Y_i))=\FQ(D_i)$.
\\

\noindent
$\bullet$ We are now ready to prove Claim (3): Let $D:w\to w$ with
$D\in\Dk$.  Define $q_x,q_y\in \QX$ by
$$\Qm(\Id_{(x,-)}\otimes \Qm^{-1}(x\gm D))=\Id_{(q_x,-)}\otimes D$$
 and
$$\Qm(\Id_{(y,-)}\otimes \Qm^{-1}(y\gm D))=\Id_{(q_y,-)}\otimes D.$$
Let $\sigma$ be the $\QX$-colored braid
$$\sigma=\epsh{fig34}{10ex}\put(-30,-25){$q_x$}\put(-30,27){$q_y$}
\put(-15,-25){$w$}\put(-15,27){$w$}$$
then
$(\Id\otimes D\otimes \Id)\circ\sigma\equiv\sigma\circ(\Id\otimes D\otimes
\Id)$ and Claim (2) implies that $\FQ(x\gm D)=\FQ(y\gm D)$.
\\

\noindent
$\bullet$ To prove Claim (4) let us first extend the partial map $\FQ$ to a
total map $\wh F$ on $\Dk$ by $\wh F(D):=\FQ(x\gm D)$ for any $x\in Y$
such that $x\gm D\in\DQ$.  Claim (3) implies that this extension is well
defined and Claim (1) implies that $\FQ$ is indeed the restriction of
$\wh F$ on $\DQ\cap \Dk$.  Now Claim (4) is equivalent to
\begin{equation}
  \label{eq:hatFinv}
  \wh F(x\gm D)=\wh F(D), \quad \forall (x,D)\in Y\times\Dk.
\end{equation}
If $x\gm D\in \DQ$, this follows from the definition of $\wh F$.  Else, we
have 
$$\wh F(x\gm D)=\FQ(z\gm(x\gm D))$$
 for generic $z\in Y$.  But for
generic $z$, $(z\gm\any)\circ(x\gm\any)=(x'\gm\any)\circ(z'\gm\any)$ where
$(x',z')=B^{-1}(z,x)$ and in particular $z'$ is the image of $z$ by the
generic bijection $B^{-1}_2(\any,x)$.  Thus for generic $z$, $z'\gm D\in \DQ$
and then $\FQ(z\gm(x\gm D))=\FQ(x'\gm(z'\gm D))=\FQ(z'\gm D)=\wh F(D)$.
\\

\noindent
$\bullet$ Finally we prove 
$$ \wh F(q\rhd D)=\wh F(D), \quad \forall (q,D)\in \QX\times\Dk,$$
which is equivalent to Claim (5).  By \eqref{eq:hatFinv}, we can replace $q$ with
$x\gm q$ and $D$ with $x\gm D$ because $x\gm(q\rhd D)=(x\gm q)\rhd (x\gm D)$.
Doing this for generic $x$ we are left with the case where $D,q\rhd D\in\DQ$.
Then, if $D:x\to w$, the result follows by applying Claim (2) for $D_1=D$,
$D_2=q\rhd D$ and
$$\sigma=\qquad\epsh{fig3}{10ex}
\put(-18,-25){$w$}\put(-38,22){$q\rhd w$}\put(1,22){$q$}.$$
This completes the proof of the proposition.  
\end{proof}

\section{Proof of colored braid relation} \label{Ap:Proofr2root}
In this appendix we prove Theorem \ref{L:BraidingForCyclic}.
Let $\bar{R}$ be the matrix chosen when defining $c_{{\chi_1},{\chi_2}}$.
Since $\RR$ satisfies the YB equation then
$\bar{R}$
 satisfies a holonomy YB
equation up to a scalar.  Again, as all determinants are $1$ then the
holonomy YB equation is true up to a $r^3$-root of unity.
The following lemma shows that  the
this equation is actually  true up to a $r^2$-root of unity (and thus implies Theorem \ref{L:BraidingForCyclic}).
\begin{lemma}\label{L:holonomyYBr2}
  The holonomy YB equation is true up to a $\rr$-root of unity.
\end{lemma}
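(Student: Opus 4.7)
The goal is to improve the bound $\mu^{r^3}=1$---which follows from taking determinants on both sides of the holonomy braid equation $\bar R_{12}\bar R_{13}\bar R_{23}=\mu\,\bar R_{23}\bar R_{13}\bar R_{12}$ in $\End(V_{\chi_1}\otimes V_{\chi_2}\otimes V_{\chi_3})$---to the sharper $\mu^{\rr}=1$.

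My plan starts by observing that while $\mu$ depends on the $\rr$-root of unity choice in each of the six $\bar R$ matrices involved in the braid identity (three on each side), the quantity $\mu^{\rr}$ does not: rescaling any $\bar R_{ij}$ by an $\rr$-root multiplies $\mu$ by an $\rr$-root, so $\mu^{\rr}$ is unaffected. Combined with $\mu^{r^3}=1$, this makes $\mu^{\rr}$ a canonically defined $r$-th root of unity attached to the triple $(\chi_1,\chi_2,\chi_3)$.

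Next I would argue $\mu^{\rr}$ is a rational function of the parameters encoding $(\chi_1,\chi_2,\chi_3)$ on its Zariski open domain of definition. Indeed, $\RR$ is given by rational formulas (see \eqref{E:RRGenZ0}), and $\bar R$ is constructed from $\RR$ by rational operations on $\Mat_r(\C)^{\otimes 2}$; being intrinsic, $\mu^{\rr}$ is thus a rational function of the parameters. Since a rational function valued in the finite set of $r$-th roots of unity is locally constant, and since its domain is an open subset of the irreducible variety parametrising triples of characters, $\mu^{\rr}$ is in fact a single global constant.

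To evaluate this constant, I would specialize one character, say $\chi_3$, to the Steinberg character $\chi_0$. As recalled in the proof of Theorem~\ref{T:NegReidMoveRoot}, the braidings involving the Steinberg module are realized by the canonical truncated $R$-matrix $\check R^<\in U_\xi\otimes U_\xi$, which has no scalar ambiguity and which is the specialization of the exact h-adic $R$-matrix. At this specialization only $\bar R_{12}$ carries a scaling choice, and its ambiguity cancels in $\mu$ because it appears exactly once on each side of the braid equation. Using that the h-adic $R$-matrix satisfies the braid equation exactly, the YB identity at this point becomes an exact identity, so $\mu=1$ and consequently $\mu^{\rr}=1$---and by the constancy established above, the same holds for all triples.

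The main obstacle is the rigorous passage to the Steinberg specialization: $\chi_0$ lies on the boundary $\operatorname{trace}(\psi)=\pm 2$ of the generic semisimple locus, so one must verify that $\mu^{\rr}$ extends continuously to $\chi_0$ (or approach it tangentially along a curve in the parameter space and appeal to rationality of $\mu^{\rr}$), and that $\check R^<$ indeed realizes the braiding on $V_\chi\otimes V_0$ up to the ambiguity absorbed into $\mu^{\rr}$. The careful comparison between the exact h-adic braid identity and its specialization through $\check R^<$ is the principal technical point.
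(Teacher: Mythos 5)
Your skeleton coincides with the paper's at the top level (canonicity of $\mu^{\rr}$ modulo the scaling choices, constancy over a connected parameter space, evaluation at the Steinberg character), but the load-bearing step is asserted rather than proved. The claim ``being intrinsic, $\mu^{\rr}$ is thus a rational function of the parameters'' is precisely where the work lies: each $\bar R$ is defined pointwise through arbitrary identifications $\phi_\chi:V_\chi\to\C^r$, so before one can speak of rational or even continuous dependence on $(\chi_1,\chi_2,\chi_3)$ one must produce a coherent family of modules whose structure constants vary algebraically over $Y_{\rel}$ --- including over the finite cover recording $\chi(\Omega)$ and across the locus $\chi(F^r)=0$ containing the Steinberg point. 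This is exactly what the paper constructs: an $r$-fold cover $P'\to Y_{\rel}$ and an explicit bundle $\rho_\VV:U_\xi\to\Mat_r(Z_\rel)$, with the function $\ve(p)$ designed to extend continuously over the exceptional set $P^0$; the $R$-matrix is then locally continuous as the unique-up-to-scalar solution of six linear intertwining equations, and the induced class $\wb\lambda=\mu\bmod\Theta_{\rr}$ is locally constant on the path-connected set $A_3$, hence constant. Your rationality shortcut could plausibly be carried out on a Zariski-open locus, but without the family construction it is an assertion, and in any case rationality on the generic locus does not by itself reach the Steinberg point, which sits on the boundary $\operatorname{trace}(\psi)=\pm2$ that you yourself flag as unresolved --- the paper's bundle is engineered precisely to be continuous there.

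The evaluation step also fails as stated. Specializing only $\chi_3=\chi_0$ does not make the remaining ambiguity cancel: by the computation in the proof of Theorem \ref{T:NegReidMoveRoot}, $B(\chi,\chi_0)=(\chi_0,\chi^-)$ where $\chi^-$ multiplies $\ve,\vp$ by $(-1)^{r-1}$, so for $r$ even the two generic-generic crossings on the two sides of the braid identity carry \emph{different} colors $(\chi_1,\chi_2)$ and $(\chi_1^-,\chi_2^-)$; they are different matrices with independent $\Theta_{\rr}$ scaling choices, so your claim that ``$\bar R_{12}$ appears exactly once on each side'' and its ambiguity cancels is false, and $\mu$ is not canonical at your specialization. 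Moreover, even when the colors happen to agree, the conclusion $\mu=1$ does not follow from exactness of the h-adic braid relation, because the generic $\bar R_{12}$ is a Skolem--Noether solution normalized by $\det=1$, not a specialization of the h-adic $R$-matrix. The paper sidesteps both problems by evaluating at the all-Steinberg triple $(\chi_0,\chi_0,\chi_0)$: since $B(\chi_0,\chi_0)=(\chi_0,\chi_0)$, all six crossings are realized by the single canonical truncated matrix $\check R^<$, the Yang--Baxter identity holds on the nose there, and $\wb\lambda(\chi_0,\chi_0,\chi_0)=1$. You should replace your one-strand specialization by this all-Steinberg evaluation, and replace the rationality assertion by (some version of) the explicit algebraic family of modules.
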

\begin{proof}
For $n\in\N$, let $\Theta_n$ be the set of complex
$n$\textsuperscript{th} roots of unity which acts by multiplication on
complex vector spaces.  If $f$ is a bijection between two
$n$-dimension
$\C$-vector spaces equipped with volume forms then, up
to $\Theta_n$,  there is a unique normalization $\lambda.f$ which send one
volume form to the other.  The $R$-matrix is
such a
renormalization
where the volume forms are induced from the maps $\phi_\chi$.  Hence,
if two representations
 $V,V'$  equipped with volume forms 
 have the same character then they are isomorphic by an unique volume
 preserving isomorphism up to $\Theta_{r}$.  Furthermore, as the
 tensor product of volume preserving isomorphisms is a volume
 preserving isomorphism, the $R$-matrices constructed from $V$ and
 those constructed from $V'$ correspond through these isomorphisms.

  $\bullet$ Consider now the subset $A_3$ of $Y_{\rel}^3$ formed by triplet
  $(x,y,z)$ where the set YB equation is defined
  \begin{equation}
    \label{eq:YB2}
    \begin{array}{rl}
    (\Id_{V_x}\otimes c_{y,z})^{-1}\circ (c_{\any,\any}\otimes \Id_\any)^{-1}
    \circ (\Id_\any\otimes c_{\any,\any})^{-1}\circ&\!\!\!(c_{\any,\any}\otimes \Id_\any)
    \\\circ (\Id_\any\otimes c_{\any,\any})\circ (c_{x,y}\otimes \Id_{V_z})
    =&\lambda(x,y,z)\Id_{V_x\otimes V_y\otimes V_z}   
    \end{array}
  \end{equation}
  where the $\any$ objects are completed with the biquandle structure
  $B$ and where $\lambda:A_3\to\C$ is a function.  The set $A_3$ is a 9
  dimensional complex variety obtain from $\C^9$ by removing complex
  hyper-surfaces and taking finite coverings.  In particular, it is
  path connected.  Moreover, the $R$-matrix has determinant $1$, so we have
  that $\lambda^{r^3}=1$ and $\lambda\in\Theta_{r^3}$ has discrete
  values.  Remark that the previous paragraph implies that $\lambda$
  induces a partial map
  $\wb\lambda:A_3\to \Theta_{r^3}/\Theta_{r^2}$
  which is independent of the representation
  $(V_\chi)_{\chi\in Y_\rel}$.
  
  $\bullet$ We now construct a bundle of $r$-dimensional
  $U_\xi$-module $\VV\to P'$ where $P'\to Y_{\rel}$ is onto and is a
  local homeomorphism.  
  
  Let $P=\{(y,k)\in Y_{\rel}\times \C^*: k^r=(-1)^{r-1}y(K^r)\}$ be the $r$
  fold cover of $Y_{\rel}$.  If an element $(g,\omega)\in Y_{\rel}$ satisfies
  $g(F^r)=0$, then $\Ch_r\bp{\omega}=-(-1)^\ell g(K^r+K^{-r})$ and
  there exists $k\in\C$ such that $k^r=(-1)^{r-1}g(K^r)$ and
  $\omega=(-1)^{\ell-1}\bp{k+k^{-1}}$.  We denote by $P^0$ the set of
  such $(g,\omega,k)$.  Remark that if $(g,\omega,k)\in P^0$, then
  $k\neq \pm\xi^i$ for $i=1\cdots r-1$ else one would have
  $\Ch_r\bp{\omega}=\pm2$ and $\omega=2(-1)^{\ell-1}$.

  Let $P'=P\setminus\{(g,\omega,k):g(F^r)=0\}\cup P^0$ and let
  $\ve:P'\to\C$ be defined on $p=(g,\omega,k)$ by
  $$\ve(p)=\left\{
    \begin{array}{l}
      \dfrac{\omega+(-1)^\ell\bp{k+k^{-1}}}{\qn1^2g(F^r)}\text{ if }p\notin P^0\\
      \dfrac{(-1)^{r-1}\qn1^{2r}g(E^r)}{\prod_{i=1}^{r-1}\qn i(k\xi^{-i}-k^{-1}\xi^i)}
      \text{ if }p\in P^0
    \end{array}\right..$$
  One easily checks that $\ve$ is continuous on $P'$.  Let
  $Z_\rel\subset\mathcal C^0(P',\C)$ be the polynomial algebra
  $Z_\rel=\C[\ve,k^{\pm1},\vp]$ where $\vp:(g,\omega,k)\mapsto g(F^r)$.
  Following \cite[Section VI.5]{Kas} we define a bundle of
  $U_\xi$-module $\VV=P'\times\C^r$ by giving an algebra map
  $\rho_\VV:U_\xi\to \Mat_r(Z_\rel)$ as follows:
  $$\rho_\VV(K)_{i,i}=k\xi^{r+1-2i},
  \quad \rho_\VV(F)_{i+1,i}=1,\quad \rho_\VV(F)_{1,r}=\vp$$
  $$\rho_\VV(E)_{i,i+1}=\vp\ve-(-1)^\ell\dfrac{(k\xi^{-i}-k^{-1}\xi^i)\qn i}{\qn1^2},\quad 
  \rho_\VV(E)_{r,1}=\ve$$ and other coefficients are $0$.
  
  The obvious map $\pi:P'\to Y_{\rel}$ is surjective and it is locally an
  homeomorphism.  For any $(p_1,p_2,p_3,p_4)\in (P')^4$, if
  $B(\pi(p_1),\pi(p_2))=(\pi(p_4),\pi(p_3))$ then the $R$-matrix is up
  to a scalar the unique solution in
  $$\Hom_\C\bp{\VV_{p_1}\otimes\VV_{p_2},\VV_{p_4}\otimes\VV_{p_3}}$$ of
  the six linear equations
  $$\left\{
    \begin{array}{l}
      \rho_{\VV_{p_3}\otimes\VV_{p_4}}\bp{\RR(X\otimes
      1)}R=R\rho_{\VV_{p_1}\otimes\VV_{p_2}}\bp{1\otimes X}\\
      \rho_{\VV_{p_3}\otimes\VV_{p_4}}\bp{\RR(1\otimes
      X)}R=R\rho_{\VV_{p_1}\otimes\VV_{p_2}}\bp{X\otimes 1}
    \end{array},\,X\in\qn{K,E,F}.\right.$$
  Hence the $R$-matrix is continuous in 
  the following way: there exists open neighbors $N_i$ of $p_i$ such 
  that $B$ is a bijection 
  $\pi(N_1)\times\pi(N_2)\to\pi(N_4)\times\pi(N_3)$ which lift to 
  a continuous isomorphism of bundle $\VV_{|N_1}\otimes\VV_{|N_2}
  \to\VV_{|N_4}\otimes\VV_{|N_3}$.  
  This implies that the map $\wb\lambda$ defined above is continuous thus locally constant. 
  
  $\bullet$ Finally as $\wb\lambda$ is locally constant and $A_3$ is
  connected, $\wb\lambda$ is constant and this
  constant is $\wb\lambda(\chi_0,\chi_0,\chi_0)=1$ where $\chi_0$ is the character of
  the Steinberg module.

This finishes the proof of Lemma \ref{L:holonomyYBr2}.  
\end{proof}

\section{Proof of Lemma \ref{L:PictPropV}}\label{A:ProofPictPropV}
 We need the following lemma which appears in \cite{DGP}.
\begin{lemma}[Cutting Coupon Lemma]\label{L:CuttingCouponLemma}
Let $H$ be a pivotal Hopf algebra over a field $\kk$. Let $\cat$ be the pivotal $\kk$-category of finite dimensional $H$-modules.  If $f:V_1\otimes V_2 \to V_3\otimes V_4$ is a morphism in $\cat$ such that 
$$
f (hx \otimes y) =(h \otimes 1) f (x\otimes y) \;\; \text{ or } \;\; f (x \otimes hy) =(1 \otimes h) f (x\otimes y)
$$
for all $h\in H$ and $x\otimes y\in V_1\otimes V_2$ then there exists $a_i:V_1\to V_3$ and $b_i:V_2\to V_4$ such that $f=\sum_i a_i \otimes b_i$.  
\end{lemma}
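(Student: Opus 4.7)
The lemma asks not merely that $f$ decomposes as $\sum a_i \otimes b_i$ at the level of $\kk$-linear maps---which is trivial by finite-dimensionality---but that each $a_i$ and $b_i$ can be chosen to be a morphism in $\cat$, i.e.\ $H$-linear. This is precisely what is needed to ``cut the coupon.'' Two pieces of equivariance are in hand: $f$ intertwines the diagonal action via $\Delta$ (because $f$ is a morphism in $\cat$), and $f$ intertwines the $H$-action on exactly one of the two tensor factors. My plan has two steps: first upgrade these partial equivariances to genuine $(H \otimes H)$-equivariance using the antipode, then conclude by a minimal-decomposition argument.

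For the first step, assume without loss of generality that $f(hx \otimes y) = (h \otimes 1) f(x \otimes y)$. I will show that $f(x \otimes hy) = (1 \otimes h) f(x \otimes y)$ also holds. Starting from the identity $1 \otimes h = \sum \bigl(S(h_{(1)}) \otimes 1\bigr)\, \Delta(h_{(2)})$ in $H \otimes H$ (an immediate consequence of the antipode and counit axioms in Sweedler notation), I would apply it to $f(x \otimes y)$, push $\Delta(h_{(2)})$ through $f$ by $H$-linearity, then use the assumed first-factor equivariance to pull $S(h_{(1)})$ back out of $f$, and collapse by the antipode axiom. The argument occupies only a few Sweedler-notation lines.

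For the second step, $f$ is now an intertwiner of $(H \otimes H)$-modules, where the two copies of $H$ act separately on the two tensor factors of source and target. Using finite-dimensionality, write $f$ in a \emph{minimal} tensor decomposition $f = \sum_{i=1}^n a_i \otimes b_i \in \Hom_\kk(V_1, V_3) \otimes \Hom_\kk(V_2, V_4)$, meaning both families $(a_i)$ and $(b_i)$ are linearly independent. Applying the first-factor equivariance to arbitrary $x \in V_1$ and $y \in V_2$ yields
$$\sum_i \bigl(a_i(hx) - h \cdot a_i(x)\bigr) \otimes b_i(y) = 0 \text{ in } V_3 \otimes V_4, \quad \forall y \in V_2,$$
equivalently $\sum_i \bigl(a_i(hx) - h \cdot a_i(x)\bigr) \otimes b_i = 0$ in $V_3 \otimes \Hom_\kk(V_2, V_4)$; linear independence of the $b_i$ then forces each $a_i$ to be $H$-linear. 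The symmetric argument, using the second-factor equivariance and linear independence of the $a_i$, shows each $b_i$ is $H$-linear.

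The substantive step is the first one: the antipode is what permits trading diagonal $H$-equivariance plus a single one-sided equivariance for both one-sided equivariances. After that, the conclusion is a clean consequence of the finite-dimensional tensor decomposition and linear-algebraic independence; the pivotal structure on $H$ plays no role in the proof, it is merely part of the ambient data carried over from the surrounding categorical framework.
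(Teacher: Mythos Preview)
The paper does not supply a proof of this lemma; it simply cites \cite{DGP}. Your argument is correct and self-contained, so it actually fills in what the paper omits.

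Your Step~1 is sound: the Sweedler identity $1\otimes h=\sum (S(h_{(1)})\otimes 1)\Delta(h_{(2)})$ holds by the antipode axiom, and your chain of equalities pushing through the $\cat$-morphism property and then the first-factor equivariance is valid. Your Step~2 is also fine: a minimal-rank tensor decomposition $f=\sum_{i=1}^n a_i\otimes b_i$ in $\Hom_\kk(V_1,V_3)\otimes\Hom_\kk(V_2,V_4)$ exists by finite-dimensionality and has both families linearly independent; the passage from $\sum_i c_i\otimes b_i(y)=0$ for all $y$ to $\sum_i c_i\otimes b_i=0$ in $V_3\otimes\Hom_\kk(V_2,V_4)$ uses that the natural map $V_3\otimes\Hom_\kk(V_2,V_4)\to\Hom_\kk(V_2,V_3\otimes V_4)$ is injective over a field, after which linear independence of the $b_i$ finishes. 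Your closing remark that the pivotal structure is irrelevant to the argument is accurate.
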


Let $V$ and $W$ be generic simple $U_\xi$-modules.
 Recall the morphism $R$ given in Equation \eqref{E:braidingR}.  Let $V'$  be a  simple $U_\xi$-modules such that there exists an
$\cat$-isomorphism
 $\beta: V'\to V^*$.     Consider the following 
morphisms
$$ f_1:=\tau_{12}R_{12}\tau_{23}R_{23}=\tau_{12}\tau_{23}R_{13}R_{23}: V'\otimes V\otimes W\to W\otimes  U'_1\otimes U_1$$
and
$$f_2:=\tau_{23}R_{23}\tau_{12}R_{12}=\tau_{23}\tau_{12}R_{13}R_{12}: W\otimes V'\otimes V\to  U'_2\otimes U_2 \otimes W$$
where the colors $U_i$ and $ U'_i$ are determined by the morphisms.   
 There exists isomorphisms  $\alpha_i:U'_i \to U_i^*$ for $i=1,2$.  Consider the following 
 morphisms:   
$$g_1=(\Id\otimes\ev_{U_1})(\Id\otimes \alpha_1 \otimes \Id)f_1: V'\otimes V\otimes W\to W$$
and
$$
g_2=(\ev_{U_2} \otimes\Id)( \alpha_2\otimes \Id \otimes \Id )f_2:W\otimes V'\otimes V\to W.
$$
\begin{lemma}\label{L:g=lambdaf}
There exist a scalar $\lambda_1, \lambda_2 \in \C$ such that 
$$g_1=\lambda_1 (\ev_V \otimes\Id)( \beta \otimes \Id\otimes \Id) \;\; \text{ and } \;\; g_2=\lambda_2 (\Id \otimes\ev_{V})( \Id \otimes \beta\otimes \Id).
$$
\end{lemma}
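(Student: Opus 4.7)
The plan is to identify both $g_1$ and the right-hand side of the first equation as elements of a one-dimensional Hom-space in $\cat$ (and similarly for $g_2$), using the Cutting Coupon Lemma (Lemma~\ref{L:CuttingCouponLemma}) together with Schur's lemma and the simplicity of $V$ and $W$.

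First I would verify that $g_1$, $g_2$, and both proposed right-hand sides are morphisms in $\cat$, i.e.\ $U_\xi$-equivariant. For the braiding $c_{\any,\any}=\tau R$ this follows from the intertwining relation $R(u\cdot v)=\RR(u)\cdot R(v)$ (Equation~\eqref{E:RRRvu}), which after composition with the flip $\tau$ produces a genuine module map between the relevant weight modules. The isomorphisms $\alpha_i$ and $\beta$ are module maps by hypothesis, and the cups and caps are module maps in any pivotal category.

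Next I would apply the Cutting Coupon Lemma to $g_1$, regarded as a morphism
$(V'\otimes V)\otimes W\to\unit\otimes W$ with the indicated splittings. The hypothesis to verify is that acting by $h\in U_\xi$ only on the $V'\otimes V$ block produces $\varepsilon(h)$ times the original image,
$$g_1\bigl(h\cdot(x\otimes y)\otimes w\bigr)=\varepsilon(h)\,g_1(x\otimes y\otimes w).$$
Granting this, Lemma~\ref{L:CuttingCouponLemma} produces a decomposition $g_1=\phi\otimes\psi$ with $\phi:V'\otimes V\to\unit$ and $\psi:W\to W$ in $\cat$. Since $W$ is simple, Schur's lemma forces $\psi=\mu\,\Id_W$ for some $\mu\in\C$. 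The isomorphism $\beta:V'\to V^*$ identifies $\Hom_\cat(V'\otimes V,\unit)\cong\Hom_\cat(V^*\otimes V,\unit)$, and by simplicity of $V$ this space is one-dimensional and spanned by $\ev_V$; hence $\phi=\nu\,\ev_V\circ(\beta\otimes\Id_V)$ for some $\nu\in\C$. Setting $\lambda_1=\mu\nu$ yields the first identity. The argument for $g_2$ is entirely symmetric, using the splitting $W\otimes(V'\otimes V)\to W\otimes\unit$ and the right-side cap in place of $\ev_V$.

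The main obstacle is verifying the cutting-coupon hypothesis for $g_1$: namely, that the $U_\xi$-action on the $V'\otimes V$ input is absorbed by the cup $\ev_{U_1}\circ(\alpha_1\otimes\Id_{U_1})$ that closes off the $U'_1\otimes U_1$ output. The conceptual reason is that this cup is $U_\xi$-invariant and therefore factors any $U_\xi$-action on $U_1^*\otimes U_1$ through the counit $\varepsilon$; transporting this back through the half-twist $f_1=c_{V'\otimes V,W}$ using the $\RR$-intertwining of $R$ produces the required identity. I expect this step to be the delicate one, since the braiding alone does not satisfy the cutting-coupon condition---it is the combination braiding-then-cup that does. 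Once this is in hand, the remainder is a routine application of Schur's lemma and the one-dimensionality of $\Hom_\cat(V^*\otimes V,\unit)$.
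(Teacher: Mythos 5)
Your proposal follows essentially the same route as the paper's proof: apply the Cutting Coupon Lemma to $g_1$ and $g_2$, then conclude by simplicity of $V$ and $W$, since $\Hom_\cat(V'\otimes V,\unit)\cong\End_\cat(V)\cong\C$ and $\End_\cat(W)\cong\C$ force each factor of the decomposition to be proportional to $\ev_V\circ(\beta\otimes\Id)$ and $\Id_W$ respectively. The delicate step you correctly flag---that the $U_\xi$-action on the $V'\otimes V$ block is absorbed into the counit only after composing the double braiding with the invariant cup---is precisely the computation the paper carries out explicitly, using the intertwining relation \eqref{E:RRRvu} together with the coproduct and counit compatibilities \eqref{E:DeltaRR} and \eqref{E:epsilonRR} of $\RR$ to establish $g_1\bigl((\Delta(x)\otimes y).v\bigr)=\epsilon(x)y.g_1(v)$.
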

\begin{proof}
We prove the theorem for $g_1$ the case for $g_2$ is similar.  To do this we need to derive several equations.  Let $v\in V'\otimes V\otimes W$ and $x,y\in U_\xi$.  Then 
\begin{align}
f_1
\big((\Delta (x)\otimes y) . v\big) 
&= \tau_{12}\tau_{23}R_{13}R_{23} \big((\Delta (x)\otimes y) . v\big) \notag \\
&= \tau_{12}\tau_{23}\RR_{13}\RR_{23}\big(\Delta (x)\otimes y\big) R_{13}R_{23} (v) \notag \\
&= \tau_{12}\tau_{23} \Delta_1 \big( \RR(x\otimes y)\big) R_{13}R_{23} (v) \notag \\
&=   \Delta_2 \big(\tau \big( \RR(x\otimes y)\big)\big).
f_1
( v) \label{E:fDeltaxyv}
\end{align}
where the
second equality comes Equation \eqref{E:RRRvu} and the third equality
follows from Equation~\eqref{E:DeltaRR}.  Consider the morphism $\alpha'=
\ev_{U_1}(\alpha_1 \otimes \Id)$.
We have  
\begin{align*}
g_1\big((\Delta (x)\otimes y) . v\big) 
&=(\Id\otimes \alpha') \Big(  \Delta_2 \big(\tau \big( \RR(x\otimes y)\big)\big) .
f_1
(v) \Big)\\
&= \epsilon_2 \big( \tau \big( \RR(x\otimes y)\big) \big) .\big((\Id\otimes \alpha')
f_1
 \big)(v)\\
& =  \epsilon_1 \big(  \RR(x\otimes y)\big).
g_1(v)\\
&= \epsilon(x)y.
g_1(v)
\end{align*}
where the first equality comes from Equation \eqref{E:fDeltaxyv}, the second since $\alpha'$ is an invariant morphism and the fourth from  Equation \eqref{E:epsilonRR}.  

Thus, we have proved that
$\cat$-morphism
 $g:V'\otimes V\otimes W\to \C\otimes W$ satisfies the hypothesis of Lemma \ref{L:CuttingCouponLemma} and so there exists
$\cat$-morphisms
$a_i:V'\otimes V\to \C$ and $b_i :W\to W$ so that $g=\sum_i a_i\otimes b_i$.  But
$$\Hom_{\cat}(V'\otimes V, \C)\cong \End_{\cat}(V)\cong \C$$ and $\End_{\cat}(W)\cong \C$
 since $V$ and $W$ are  simple.  Thus, 
 for each $i$, 
 $a_i$ and $b_i$ are proportional to 
  $\ev_V(\beta \otimes \Id_V)$ and $\Id_W$, respectively.
\end{proof}
Lemma \ref{L:g=lambdaf} is an algebraic version of Lemma \ref{L:PictPropV}.

\section{Proof semi cyclic module give a representation}\label{Ap:ProofSemiCyclic}
Here we discuss why the biquandle give in 
 Subsection \ref{SS:ExampleSemicyclic} has a representation coming from the semi cyclic modules.   Let  $U_\xi\slt$ be quantum quantum $\slt$ where $\xi$ is the $2r^{th}$-root of unity.   Let $\cat$ be the category of $U_\xi\slt$ weight modules has defined in Subsection 6.1.3 of \cite{GP}.  Recall the set $X$ given in Subsection \ref{SS:ExampleSemicyclic}.   For  $x\in X$ there exists a semi cyclic module $V_x$ in $\cat$, see Theorem 6.6 of \cite{GP}.   Moreover, Equation (23) of \cite{GP} defines a Yang-Baxter model for  $\{V_x\}_{x\in X}$:   for all $x,y\in X$
\begin{equation}\label{E:BraidingSemiCyclic}
 c_{x,y}:V_x\otimes V_y\to B_1(V_x,V_y)\otimes B_2(V_x,V_y)
\end{equation}
 which satisfies the colored braid relations.  Remark that the semi cyclic modules are cyclic modules as in Subsection \ref{SS:CyclicMod}.  The braiding given in Equation \eqref{E:BraidingSemiCyclic} intertwines the actions of the generators $K$, $E$ and $F$ of $U_\xi\slt$ and their image by $\RR$.  Thus, up to a scalar, this braiding is equal to the braiding given in Theorem \ref{L:BraidingForCyclic} when the modules are semi cyclic.  Thus, the braiding in Equation \eqref{E:BraidingSemiCyclic} is sideways invertible, up to a root of unity.  

In Subsections \ref{SS:NegReidIIMove} and \ref{SS:Twist} we showed the Yang-Baxter model associated to the cyclic modules was sideways invertible and induced a twist.  Since the Yang-Baxter model of the cyclic modules was only defined up to a root of unity we were only able to show the sideways invertibility and twist held up to a root of unity.  However, the proof works in the context of semi cyclic modules and in this situation we have equalities independent of any root of unity.  We explain this now.  

As mentioned above the braiding is sideways invertible, up to a root of unity.  In other words, Yang-Baxter model satisfies the two negative Reidemeister moves $RII_{-+}$ and $RII_{+-}$ up to a scalar.  Then the proof of Theorem \ref{E:RII-root1} holds in for the semi cyclic.  The only difference is that our braiding is defined on the nose and not just up to a scalar.  Thus, we get that $\{V_x, c_{x,y}\}$ satisfies the $RII_{-+}$ and $RII_{+-}$.  Also, the proof of Theorem \ref{P:YBInducesTwist} show that $\{V_x, c_{x,y}\}$ induces a twist.

\end{document}